\documentclass[10pt,draft,twoside]{article}

\usepackage{etex}                                               
\usepackage[english]{babel}                                     
\usepackage{amsmath,amsthm,amssymb,amsfonts,latexsym,cancel,centernot}    
\usepackage{enumerate}                                          
\usepackage{enumitem} \setlist[itemize]{noitemsep,topsep=0pt}   
\usepackage{pifont}                                             
\usepackage[latin1]{inputenc}                                   
\usepackage{lmodern}                                            
\usepackage[T1]{fontenc}                                        
\usepackage[none]{hyphenat}                                     
\usepackage{ragged2e} \justifying                               
\usepackage{marvosym,manfnt,calligra}                           
\usepackage[mathscr]{euscript}                                  
\usepackage{makeidx} \makeindex                                 
\usepackage[dvips]{graphicx}                                    
\usepackage{graphicx}                                           
\usepackage{color}                                              
\usepackage{verbatim}                                           
\usepackage{amssymb}                                            
\usepackage{amsfonts}                                           
\usepackage{amsmath}                                            
\usepackage{latexsym}                                           
\usepackage{amsthm}                                             
\usepackage{dsfont}                                             
\usepackage{float}                                              
\usepackage{wrapfig}                                            
\usepackage[rflt]{floatflt}                                     
\usepackage{nccmath}                                            
\usepackage{booktabs,longtable,multirow}
\usepackage[x11names,table]{xcolor}                             
\usepackage{multicol} 
\usepackage{tikz}                                               
\usetikzlibrary{intersections}                                  
\usetikzlibrary{mindmap,backgrounds,shapes,arrows,chains}       
\usepackage[all,matrix,arrow]{xy}                               
\usepackage{pgf,tikz-cd} \usetikzlibrary{arrows}                   
\usepackage{lipsum}
\usepackage{titling}
\usepackage[hidelinks]{hyperref}

\newtheoremstyle{newstyle}   
{} 			 				 
{12pt plus 1pt} 			 
{\mdseries} 				 
{} 
{\bfseries} 				 
{.} 						 
{ } 						 
{} 							 

\theoremstyle{newstyle}
\newtheorem{definition}{Definition}[section]
\newtheorem{theorem}[definition]{Theorem}

\newtheorem{lemma}[definition]{Lemma}
\newtheorem{proposition}[definition]{Proposition}
\newtheorem{fact}[definition]{Fact}

\newtheorem{question}[definition]{Question}

\allowdisplaybreaks
\DeclareMathAlphabet{\mathcalligra}{T1}{calligra}{m}{n}
\DeclareMathAlphabet{\mathpzc}{OT1}{pzc}{m}{it}
		
\def\f{\mathcal F}

\def\i{\mathcal I}

\title{ \textbf{Combinatorics of Ramsey ideals} }
\author{ Juli\'{a}n C. Cano $\;\;\;\;\;\;\;\;$ Carlos A. Di Prisco $\;\;\;\;\;\;\;\;$ Carlos Uzc\'{a}tegui-Aylwin}
\date{}
		
\newcommand{\Addresses}{{
		\bigskip
		\footnotesize
		
		Juli\'{a}n C. Cano, \textsc{Universidad de Los Andes (Bogot\'a).}\par\nopagebreak
		\textit{E-mail address}, J.C.~Cano: \texttt{jc.canor@uniandes.edu.co}
		
		\medskip
		
		Carlos A. Di Prisco, \textsc{Universidad de Los Andes (Bogot\'a), Instituto Venezolano de Investigaciones Cient\'{\i}ficas (Caracas), and Universidad Nebrija (Madrid).}\par\nopagebreak
		\textit{E-mail address}, C.A.~Di Prisco: \texttt{ca.di@uniandes.edu.co}
		
		\medskip
		
		Carlos Uzc\'{a}tegui-Aylwin, \textsc{Universidad Industrial de Santander (Bucaramanga).}\par\nopagebreak
		\textit{E-mail address}, C.~Uzc\'{a}tegui-Aylwin: \texttt{cuzcatea@saber.uis.edu.co}
		
}}
		
\begin{document}

\maketitle
			
\sloppy
			
\kern-2em			
\begin{abstract}
\noindent
We primarily study several combinatorial properties of Ramsey-type ideals on countably infinite sets. Specifically, we show new combinatorial characterizations of Ramsey ideals through various partition and convergence properties. Furthermore, we analyze ideal versions of some relevant high-dimensional Ramsey-type theorems, in order to research ideals related to finite colorings of barriers on the natural numbers as well as ideals associated with finite partitions of any family of finite subsets of the natural numbers. In particular, Galvin ideals are introduced as an intermediate combinatorial concept between Ramsey ideals and semiselective ideals. 

\smallskip

\noindent \textit{Key words and phrases:} Ideals of sets, Ramsey ideals, Galvin ideals.

\smallskip

\noindent \textit{2020 Mathematics Subject Classification:} 03E02, 03E05, 03E15, 05D10. 
\end{abstract}


\section{Introduction}

Combinatorics of ideals and filters of sets and its interaction with aspects of Ramsey theory constitutes a rich area of research. A central theme in this direction is the analysis of combinatorial and topological properties of ideals that reflect, extend, or localize classical partition principles. Within this framework, the development of ideal versions of Ramsey-type theorems has led to a variety of deep notions of ideals, which have been extensively studied from different perspectives in combinatorial set theory (see, for example, \cite{CGH2025, Hrusak2011, Uzc}). In this article, we continue this line of research by investigating new combinatorial aspects of Ramsey ideals, with particular emphasis on ideal versions of certain classical Ramsey-type results that have not been studied previously.

\medskip

Ramsey's theorem is a fundamental combinatorial principle asserting that, for any way of coloring the family of all finite subsets of $\omega$ of a fixed size with finitely many colors, there exists an infinite subset of $\omega$ such that its finite subsets of that size have the same color (see \cite{Ramsey}). In this spirit, the development of Ramsey-type theorems concerns the study of optimal conditions on certain families of subsets of $\omega$ endowed with some combinatorial structure that ensure the existence of infinite homogeneous sets for specific colorings of that structure. For instance, Erd\H{o}s--Rado's theorem deals with canonical colorings of the family of finite subsets of $\omega$ of a fixed size (see \cite{Erdos-Rado}), Nash-Williams' theorem deals with finite colorings of fronts and barriers on $\omega$ (see \cite{NashWilliams}), Galvin's lemma deals with finite colorings of the family of all finite subsets of $\omega$ (see \cite{Galvin}), and Ellentuck's theorem deals with Baire-measurable finite colorings of the family of all infinite subsets of $\omega$ (see \cite{Ellentuck}).

\medskip

Local Ramsey theory studies the combinatorial conditions on ideals on $\omega$ that lead to localized versions of Ramsey-type theorems, where homogeneous sets can be found within the associated coideals. In this vein, selective ideals were introduced by Mathias in \cite{Mathias} as a combinatorial property that implies an ideal version of Ellentuck's theorem (see also \cite[Section 9]{Todorcevic1997}). Subsequently, semiselective ideals were introduced by Farah in \cite{Farah1998} as the precise combinatorial notion required to obtain exactly an ideal version of Ellentuck's theorem (see also \cite[Section 7.1]{Todorcevic2010}). 

\medskip

On the other hand, Ramsey ideals are characterized as those special families that give rise to a localized version of Ramsey's theorem, in the sense that homogeneous sets whose existence is guaranteed by this theorem can be restricted to the corresponding coideals. Roughly speaking, an ideal $\mathcal{I}$ on $\omega$ is Ramsey if, for every infinite set outside $\mathcal{I}$ and every coloring of its pairs into two colors, there exists an infinite subset outside $\mathcal{I}$ that is homogeneous for the coloring. Ramsey ideals and their diverse combinatorial properties were initially studied by Booth and Grigorieff in the 1970s (see \cite{Booth, Grigorieff}), and they continue to be an active area of research at present (see, for example, \cite{FMRS07, FMRS11, GrebikUzca2019, Hrusak2011, Hrusak2017, Meza, PG, Todorcevic2010, Uzc}). In fact, an open question of particular interest, posed by Hru\v{s}\'{a}k, is whether analytic Ramsey tall ideals exist (see \cite[Question 5.19]{Hrusak2011} and \cite[Question 6.4]{Hrusak2017}). Consequently, the study of definable ideals and filters on countable sets remains of central importance, as does the interplay between their topological and combinatorial aspects.

\medskip

Our research deals with combinatorial aspects of Ramsey ideals, with an exploratory focus on ideal versions of certain Ramsey-type theorems. The material presented throughout this article is, as far as possible, self-contained and is organized as follows.

\medskip

In Section \ref{Ideals on countable sets}, we begin with a brief introduction to ideals of sets, along with some of their combinatorial classifications and main features, providing a general overview of the preliminaries needed for the development of this article.

\medskip

In Section 3, we review some recent results on Ramsey-type properties of ideals, and then we propose new characterizations of Ramsey ideals in terms of partition and convergence properties. In particular, we analyze an ideal version of the Erd\H{o}s--Rado canonization theorem in relation to Ramsey ideals. Furthermore, we answer a question posed in \cite[Last Remark of Section 4]{FMRS11} and \cite[Question 5.6]{HMTU2017} by showing that, for any member of a Ramsey coideal, every finite coloring of the family of its subsets of a fixed finite size admits a homogeneous set belonging to the corresponding coideal.

\medskip

In Section \ref{An ideal version of Galvin's lemma}, we introduce the notion of Galvin ideal, which is the corresponding ideal version of Galvin's lemma. We show that Galvin ideals constitute an intermediate combinatorial notion between Ramsey ideals and semiselective ideals, and we provide an example of a Ramsey tall ideal that is not Galvin as well as an example of a Galvin tall ideal that is not semiselective. Furthermore, we analyze ideals related to Nash-Williams' theorem on finite colorings of barriers and leave some questions on this topic open.

\medskip

In Section \ref{Remarks on non-hereditary Ramsey ideals}, we conclude with a discussion of non-hereditary Ramsey-type properties for tall ideals, including some remarks, combinatorial facts, and examples in this context. Specifically, we investigate the interplay among the non-hereditary Ramsey, Nash-Williams, and Galvin properties for tall ideals.

\medskip

In our view, we remark that the results of this article deepen the understanding of Ramsey-type properties of ideals and their ideal-theoretic counterparts. We hope that the approach developed here will prove useful for further investigations into the theory of ideals and filters as well as their connections with Ramsey theory and combinatorial set theory.

\section{Ideals on countable sets} 
\label{Ideals on countable sets}

In this section, we collect the preliminary material needed for the development of the
rest of the article, by introducing the
basic notation, definitions, and facts concerning combinatorial properties of ideals on countable sets.

\subsection{Some well-known combinatorial properties of ideals}

We use standard set-theoretic notation. In particular, for any set $X$ and $n \in \omega$, we denote by $[X]^{n}$ the family of subsets of $X$ of size $n$; similarly, $[X]^{<\omega}$ represents the family of finite subsets of $X$, and $[X]^{\omega}$ represents the family of countably infinite subsets of $X$. Moreover, for $s,t \in [\omega]^{<\omega}$ and $A\in [\omega]^{\omega}$, we write $s \sqsubset A$ if $s$ is an initial segment of $A$ in its increasing enumeration; analogously, we write $s \sqsubseteq t$ if $s$ is an initial segment of $t$, and we put $s \sqsubset t$ whenever $s \sqsubseteq t$ and $s\neq t$. Also, for $s\in [\omega]^{<\omega}$ and $n\in\omega$, we write $s<n$ to mean that either $s = \emptyset$ or $\max s < n$. Finally, given $A\in [\omega]^{\omega}$ and $n\in\omega$, the tail of $A$ above $n$ is the set $A/n = \{ m\in A : n<m \}$.

\begin{definition}
Let $\mathcal{I} \subseteq \wp(X)$ be a collection of subsets of a countably infinite set $X$. This collection $\mathcal{I}$ is said to be an \textit{ideal} on $X$ if it satisfies the following conditions:
\setlist{nolistsep}
\begin{enumerate}
\setlength{\itemsep}{0pt} 
    \item[i.] $[X]^{<\omega} \subseteq \mathcal{I}$ and $X \notin \mathcal{I}$.
    \item[ii.] If $A\subseteq B \subseteq X$ and $B \in \mathcal{I}$ then $A\in\mathcal{I}$.
    \item[iii.] If $A,B\in \mathcal{I}$ then $A\cup B \in \mathcal{I}$.
\end{enumerate}
\end{definition}

Given an ideal $\mathcal{I}$ on $X$, we write $\mathcal{I}^{+}$ to denote the family $\mathcal{I}^{+} = \{Y \in [X]^{\omega}:  Y \notin \mathcal{I}\}$, in which case we say that $\mathcal{I}^{+}$ is a coideal on $X$, and its elements are said to be $\i$-positive. If $Y\in \mathcal{I}^{+}$ is any $\mathcal{I}$-positive set, we define $\mathcal{I} \!\restriction\! Y = \{ A\in \mathcal{I} : A\subseteq Y \}$, which is an ideal on $Y$, called the restriction of $\mathcal{I}$ to $Y$. Similarly, we write $\mathcal{I}^{+} \!\restriction\! Y$ to indicate the collection $\mathcal{I}^{+} \!\restriction\! Y = \mathcal{I}^{+} \cap [Y]^{\omega}$. Also, for any family $\mathcal{K} \subseteq \wp(X)$ of subsets of $X$, we denote by $\mathcal{I}_{\mathcal{K}}$ the ideal on $X$ generated by $\mathcal{K}$, which is defined by $\mathcal{I}_{\mathcal{K}} = \{ A \subseteq X : (\exists \, \mathcal{S} \in [\mathcal{K}]^{<\omega} ) (A \subseteq^{*} \bigcup \mathcal{S}) \}$. 

\medskip

We adopt the standard descriptive set-theoretic notions and conventions. In particular, we identify $\wp(X)$ with $2^{X}$ through the natural homeomorphism that assigns to each subset of $X$ its characteristic function. Accordingly, we say that an ideal $\mathcal{I}$ on $X$ is $F_{\sigma}$, Borel, analytic, coanalytic, etc., whenever it is respectively $F_{\sigma}$, Borel, analytic, coanalytic, etc., as a subset of the Cantor cube $2^{X}$. Similarly, the same convention applies to any family $\mathcal{K} \subseteq \wp(X)$ of subsets of $X$.

\medskip

Fix an ideal $\mathcal{I}$ on $X$, and fix a quasi-order\footnote{A quasi-order, also known as a preorder, is a binary relation that is reflexive and transitive.} relation $\preceq$ on $\mathcal{I}^{+}$; also, let $\mathcal{D} \subseteq \mathcal{I}^{+}$ be given. We say that $\mathcal{D}$ is \textit{dense} on $(\mathcal{I}^{+} , \preceq  )$ whenever for every $N \in \mathcal{I}^{+}$ there is $M \in \mathcal{D}$ such that $M \preceq  N$. We say that $\mathcal{D}$ is \textit{open} on $(\mathcal{I}^{+} , \preceq )$ whenever for each $N \in \mathcal{I}^{+}$ and each $M \in \mathcal{D}$, if $N \preceq  M$ then $N \in \mathcal{D}$.  We say that $\mathcal{D}$ is \textit{dense-open} on $(\mathcal{I}^{+} , \preceq )$ if $\mathcal{D}$ is both dense and open. 

\medskip

For an ideal $\mathcal{I}$ on $X$ and a quasi-order relation $\preceq$ on $\mathcal{I}^{+}$, we say that:
\setlist{nolistsep}
\begin{itemize}
\setlength{\itemsep}{0pt} 

\item $(\mathcal{I}^{+} , \preceq  )$ is $\omega$-closed if for every decreasing sequence $\{A_{n}\}_{n\in \omega}$ on $(\mathcal{I}^{+} , \preceq  )$, there is $A\in \mathcal{I}^{+}$ such that $A \preceq A_{n}$ for each $n\in\omega$.

\item $(\mathcal{I}^{+} , \preceq  )$ is $\omega$-distributive if the intersection of countably many dense-open sets on $(\mathcal{I}^{+} , \preceq  )$ is also a dense-open set on $(\mathcal{I}^{+} , \preceq  )$.

\item $(\mathcal{I}^{+} , \preceq)$ is $(\omega,2)$-distributive if for every $B\in \mathcal{I}^{+}$ and every sequence $\{A_{n}\}_{n\in \omega}$ on $(\mathcal{I}^{+} , \preceq  )$, there is $A\in \mathcal{I}^{+}$ with $A \preceq B$ such that for each $n\in\omega$ either $A \preceq A_{n}$ or $A \preceq X \setminus A_{n}$.
\end{itemize}

\medskip

The \textit{Kat\v{e}tov (pre)order} $\leq_{K}$ on the class of all ideals on countable sets is defined as follows: for ideals $\mathcal{I}$ and $\mathcal{J}$ on $\omega$, we write $\mathcal{I} \leq_{K} \mathcal{J}$ if there exists a function $f\in \omega^{\omega}$ such that $f^{-1} [A] \in \mathcal{J}$ for all $A\in\mathcal{I}$. Moreover, the ideals $\mathcal{I}$ and $\mathcal{J}$ are said to be Kat\v{e}tov equivalent if $\mathcal{I} \leq_{K} \mathcal{J}$ and $\mathcal{J} \leq_{K} \mathcal{I}$.

\medskip

An ideal $\mathcal{I}$ on $X$ is said to be \textit{tall} if for every $A\in \mathcal{I}^{+}$ there exists $B\in [A]^{\omega}$ such that $B\in \mathcal{I}$. More generally, a family $\mathcal{K} \subseteq \wp(X)$ of subsets of $X$ is said to be \textit{tall} if $[A]^{\omega} \cap \mathcal{K} \neq \emptyset$ for every $A\in [X]^{\omega}$.

\medskip

On the other hand, a collection $\mathcal{U} \subseteq [X]^{\omega}$ of infinite subsets of $X$ is an \textit{ultrafilter} if it is a maximal coideal on $X$, or equivalently, a coideal closed under taking finite intersections of its elements.

\medskip

The central theme of this article is the combinatorial notion of a Ramsey ideal, which may be viewed as an ideal-theoretic counterpart of the Ramsey theorem. We now introduce the corresponding definition.

\begin{definition} \label{def_Ramsey_ideal*}
    An ideal $\mathcal I$ on $\omega$ is \textit{Ramsey} if for every $A\in \mathcal{I}^{+}$ and every coloring $\pi:[A]^{2} \rightarrow 2$ there exists $H\in \mathcal{I}^{+} \!\restriction\! A$ such that $\pi \,\text{''}\, [H]^{2} = \{i\}$ for some $i\in 2$.
\end{definition}

We review several well-known combinatorial properties of ideals that are closely related to the notion of a Ramsey ideal and also play an important role in the study of ideals and filters in combinatorial set theory (see, for example, \cite{CGH2025, Hrusak2011, Hrusak2017, HMTU2017, Todorcevic2010, Uzc}).

\begin{definition}
Given an ideal $\mathcal{I}$ on $\omega$, we say that:

\setlist{nolistsep}
\begin{itemize}
\setlength{\itemsep}{0pt} 

\item $\mathcal{I}$ is $p^{+}$ if for every sequence $\{A_{n}\}_{n\in\omega} \subseteq \mathcal{I}^{+}$ for which $A_{n+1} \subseteq^{*} A_{n}$ for each $n\in\omega$, there exists some $B\in \mathcal{I}^{+}$ such that $B\subseteq^{*} A_{n}$ for all $n\in \omega$.

\item $\mathcal{I}$ is $p^{w}$ if for every $A\in \mathcal{I}^{+}$ and every sequence $\{ \mathcal{D}_{n} \}_{n\in\omega} \subseteq \wp(\mathcal{I}^{+})$ of dense-open sets on $(\mathcal{I}^{+}, \subseteq)$ or $(\mathcal{I}^{+}, \subseteq^{*})$, there exists some $B\in \mathcal{I}^{+} \!\restriction\! A$ with the property that for each $n\in\omega$ there is $A_{n}\in \mathcal{D}_{n}$ such that $B \subseteq^{*} A_{n}$.

\item $\mathcal{I}$ is $p^{-}$ if for every $A\in \mathcal{I}^{+}$ and every infinite partition $A = \bigcup_{k\in\omega} f_{k}$, where each $f_{k} \in \mathcal{I}$, there exists some $B\in \mathcal{I}^{+} \!\restriction\! A$ such that for each $k\in \omega$ the set $f_{k}\cap B$ is finite. 

\item $\mathcal{I}$ is $q^{+}$ if for every $A\in \mathcal{I}^{+}$ and every infinite partition $A = \bigcup_{k\in\omega} f_{k}$, where each $f_{k}$ is finite, there exists some $B\in \mathcal{I}^{+} \!\restriction\! A$ such that for each $k\in \omega$ the set $f_{k}\cap B$ has at most one element.

\item $\mathcal{I}$ is $\textrm{h-FinBW}$\footnote{The notation $\textrm{h-FinBW}$ abbreviates the hereditary finite Bolzano--Weierstrass property (see \cite[Section 2]{FMRS07}).} if for every $A\in\mathcal{I}^{+}$ and every bounded sequence of real numbers $\{ x_{n} \}_{n\in A} \subseteq \mathbb{R}$ indexed by $A$, there exists some $B\in \mathcal{I}^{+} \!\restriction\! A$ such that the subsequence $\{ x_{n} \}_{n\in B}$ is convergent.

\item $\mathcal{I}$ is $\textrm{h-Mon}$\footnote{The notation $\textrm{h-Mon}$ abbreviates the hereditary monotone property (see \cite[Subsection 3.2]{FMRS11}).} if for every $A\in\mathcal{I}^{+}$ and every sequence of real numbers $\{ r_{n} \}_{n\in A} \subseteq \mathbb{R}$ indexed by $A$, there exists some $B\in \mathcal{I}^{+} \!\restriction\! A$ such that the subsequence $\{ r_{n} \}_{n\in B}$ is monotone.

\item $\mathcal{I}$ is \textit{selective} if for every sequence $\{ {A}_{n} \}_{n\in \omega} \subseteq \mathcal{I}^{+}$ for which $A_{n+1} \subseteq^{*} A_{n}$ for each $n\in\omega$, there exists some $A_{\infty} \in \mathcal{I}^{+}$ such that $A_{\infty}/m \subseteq {A}_{m}$ for all $m\in A_{\infty}$. In this case $A_\infty$ is said to be a diagonalization of the sequence  $\{ {A}_{n} \}_{n\in \omega}$.

\item $\mathcal{I}$ is \textit{semiselective} if for each $A\in \mathcal{I}^{+}$ and for every sequence $\{ \mathcal{D}_{n} \}_{n\in \omega} \subseteq \wp(\mathcal{I}^{+})$ of dense-open sets on $(\mathcal{I}^{+}, \subseteq)$, there exists some $D_{\infty} \in \mathcal{I}^{+} \!\restriction\! A$ such that $D_{\infty}/m \in \mathcal{D}_{m}$ for all $m\in D_{\infty}$. In this case $D_\infty$ is said to be a diagonalization of the sequence $\{\mathcal {D}_{n} \}_{n\in \omega}$.

\item $\mathcal{I}$ is \textit{weakly selective} if for every $A\in \mathcal{I}^{+}$ and every infinite partition $A = \bigcup_{k\in\omega} f_{k}$, where each $f_{k} \in \mathcal{I}$, there exists some $B\in \mathcal{I}^{+} \!\restriction\! A$ such that for each $k\in \omega$ the set $f_{k}\cap B$ has at most one element.

\item $\mathcal{I}$ is \textit{bisequential} if for every ultrafilter $\mathcal{U} \subseteq \mathcal{I}^{+}$ there exists a sequence $\{A_{n}\}_{n\in\omega} \subseteq \mathcal{U}$ with the property that for each $I\in \mathcal{I}$ there is some $n\in\omega$ such that $I\cap A_{n} = \emptyset$.  

\item $\mathcal{I}$ is \textit{Fr\'{e}chet} if for every $A\in \mathcal{I}^{+}$ there is $B\in \mathcal{I}^{+} \!\restriction\! A$ such that $[B]^{\omega} \subseteq \mathcal{I}^{+}$.

\end{itemize}    
\end{definition}

The relationships and interactions among the combinatorial properties of ideals introduced above are summarized in the following diagram, which displays the strict implications between these properties.

\begin{equation*}
\small
\xymatrix@C=2.5em@R=2.1em{
 & \text{Bisequential} \ar@{->}[r] \ar@{->}[d] & \text{Fr\'echet} \ar@{->}[d] & & &
 \\
& \text{Selective} \ar@{->}[r] \ar@{->}[d] & \text{Semiselective} \ar@{->}[r] \ar@{->}[d] & \text{Ramsey} \ar@{->}[r] \ar@{->}[d] & \text{Weakly selective} \ar@{->}[r] \ar@{->}[d] & q^{+}
 \\
F_{\sigma} \ar@{->}[r] & p^{+} \ar@{->}[r] & p^{w} \ar@{->}[r] & \textrm{h-FinBW} \ar@{->}[r] & p^{-} &
}
\end{equation*}

\medskip

It is worth mentioning several remarks concerning the combinatorial properties of ideals introduced above. First, the property $\textrm{h-Mon}$ does not appear in the diagram because, as shown in \cite[Theorem 3.16]{FMRS11}, it is equivalent to the ideal Ramsey property. Furthermore, it is straightforward to verify that an ideal is weakly selective if and only if it is both $q^{+}$ and $p^{-}$. Moreover, note that an ideal $\mathcal{I}$ is Fr\'{e}chet if and only if, for every $A \in \mathcal{I}^{+}$, the restricted ideal $\mathcal{I} \!\restriction\! A$ is not tall. In particular, there are no tall Fr\'{e}chet ideals, and consequently no tall bisequential ideals. 

\medskip

On the other hand, Todor\v{c}evi\'{c} proved in \cite[Theorem 7.53]{Todorcevic2010} that every analytic selective ideal is bisequential, and hence Fr\'{e}chet. Earlier, Mathias proved in \cite[Proposition 4.6]{Mathias} that there are no analytic selective tall ideals, and this result was later strengthened by Farah in \cite{Farah1998}, who showed that there are no analytic semiselective tall ideals (see also \cite[Corollary 7.20]{Todorcevic2010}). Finally, Mazur proved in \cite{Mazur} that every $F_{\sigma}$ ideal is $\mathrm{p}^{+}$ (see also \cite[Lemma 3.3]{HMTU2017}), from which it follows that there are no $F_{\sigma}$ Ramsey tall ideals. Nevertheless, it remains an open question, posed by Hru\v{s}\'{a}k in \cite[Question 5.19]{Hrusak2011} and \cite[Question 6.4]{Hrusak2017}, whether there exists an analytic Ramsey tall ideal.

\medskip

We conclude this brief overview of some combinatorial properties of ideals and their interrelations by presenting two important characterizations of selective and semiselective ideals, respectively (see, for example, \cite[Lemmas 7.4 and 7.9]{Todorcevic2010}).

\begin{proposition}[\cite{Todorcevic2010, Todorcevic1997}] \label{selective=(p)+(q)}
Let $\mathcal I$ be an ideal on $\omega$. Then, the following statements are equivalent:
\setlist{nolistsep}
\begin{itemize}
\setlength{\itemsep}{0pt} 
\item[(a)] $\mathcal I$ is selective.
\item[(b)] $\mathcal I$ is $q^{+}$ and $p^{+}$.
\item[(c)] $\mathcal I$ is $q^{+}$ and $(\mathcal{I}^{+}, \subseteq^{*})$ is $\omega$-closed.
\end{itemize}
\end{proposition}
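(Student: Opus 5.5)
We prove (a)$\Rightarrow$(c)$\Rightarrow$(b)$\Rightarrow$(a); the last step is the only one with real content.

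\emph{(a)$\Rightarrow$(c).} First, $\omega$-closedness of $(\mathcal I^+,\subseteq^*)$ is immediate from selectivity. Given a $\subseteq^*$-decreasing sequence $\{A_n\}_{n\in\omega}\subseteq\mathcal I^+$, any diagonalization $A_\infty$ satisfies $A_\infty/m\subseteq A_m$ for $m\in A_\infty$. Since $A_\infty$ is infinite and the $A_n$ decrease mod finite, we get $A_\infty\subseteq^* A_n$ for every $n$. (Choose $m\in A_\infty$ with $m\ge n$; then $A_\infty/m\subseteq A_m\subseteq^* A_n$.) Second, for $q^+$, fix $A\in\mathcal I^+$ and a partition $A=\bigcup_k f_k$ into finite sets. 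Consider the sets $A_n=A\setminus\bigcup\{f_k: f_k\cap n\neq\emptyset\}$. Only finitely many $f_k$ meet $n$, so each $A_n$ is a cofinite subset of $A$, hence in $\mathcal I^+$, and the sequence is $\subseteq$-decreasing. A diagonalization $A_\infty$ has the property that for $m\in A_\infty$, the set $A_\infty/m$ avoids the piece containing $m$. Therefore each $f_k$ meets $A_\infty$ in at most one point. Replacing $A_\infty$ by $A_\infty\cap A$ is harmless, because $A_\infty\subseteq^* A_0=A$ and removing finitely many points keeps it positive. Note that we use only the version of selectivity stated in the paper, with no restriction below $A$; working inside $A$ is handled by taking $A_0=A$.

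\emph{(c)$\Rightarrow$(b).} $\omega$-closedness of $(\mathcal I^+,\subseteq^*)$ is literally the $p^+$ property.

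\emph{(b)$\Rightarrow$(a).} Let $\{A_n\}_{n\in\omega}$ be $\subseteq^*$-decreasing in $\mathcal I^+$. By $p^+$ pick $B\in\mathcal I^+$ with $B\subseteq^* A_n$ for all $n$. Define $g(n)=$ the least $g>g(n-1)$ such that $B/g\subseteq A_0\cap\dots\cap A_n$; this is a strictly increasing function.

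Partition $B$ into the finite intervals $I_j=B\cap[g^{(j)},g^{(j+1)})$, where $g^{(0)}=0$ and $g^{(j+1)}=g(g^{(j)})$. Split $B$ into the even-indexed intervals and the odd-indexed intervals. One of these two unions is positive, say $B'=\bigcup_{j \text{ even}} I_j$. Apply $q^+$ to $B'$ with the partition into the pieces $I_j$ ($j$ even), obtaining $C\in\mathcal I^+\restriction B'$ meeting each $I_j$ in at most one point.

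Now verify diagonalization. Let $m\in C$ with $m\in I_j$, so $m<g^{(j+1)}$. Every later point of $C$ lies in some $I_{j'}$ with $j'\ge j+2$, hence is $\ge g^{(j+2)}=g(g^{(j+1)})\ge g(m)$. Since $B/g(m)\subseteq A_m$, we get $C/m\subseteq A_m$.

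The only delicate point is this bookkeeping: skipping one interval guarantees that the next chosen point exceeds $g(m)$ rather than merely $g(\max I_j)$. Monotonicity of $g$ gives $g(m)\le g(g^{(j+1)})$, which is exactly the bound used.
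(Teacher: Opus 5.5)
Your strategy is the standard one, and (b)$\Leftrightarrow$(c) is indeed literally the same condition under the paper's definitions. The paper gives no proof here and only cites Todor\v{c}evi\'{c}, but it runs the same interval-partition, $q^{+}$, parity-split argument in the proof of Proposition~\ref{SD}; there $q^{+}$ is applied before splitting by parity, which is an inessential difference. However, two steps fail as written, and each needs a small repair.

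\emph{The $q^{+}$ part of (a)$\Rightarrow$(c).} With $n=\{0,\dots,n-1\}$, your set $A_m$ discards only the pieces that have an element below $m$. If $m=\min f_k$, then $f_k$ survives in $A_m$. So $A_\infty/m\subseteq A_m$ does not stop $A_\infty$ from containing $m$ together with a second point of $f_k$, and you only get $|A_\infty\cap f_k|\le 2$. The fix is to use $A_n=A\setminus\bigcup\{f_k : f_k\cap(n+1)\neq\emptyset\}$ instead. These sets are still cofinite in $A$ and $\subseteq$-decreasing, and $A_\infty\subseteq^{*}A_0\subseteq A$. Now the piece containing $m$ really is removed from $A_m$.

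\emph{The definition of $g$ in (b)$\Rightarrow$(a).} The constraint $g>g(n-1)$ is vacuous at $n=0$, so nothing prevents $g(0)=0$; this happens whenever $B\setminus\{0\}\subseteq A_0$. In that case $g^{(j)}=0$ for every $j$, every $I_j$ is empty, and the $I_j$ do not partition $B$. The fix is to require $g(n)>n$. For instance, $g(0)\ge 1$ suffices, since strict monotonicity then gives $g(n)\ge n+1$, and so the iterates $g^{(j)}$ strictly increase.

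\emph{The final estimate.} Since $B/g(m)$ excludes $g(m)$ itself, you need $x\ge g(g^{(j+1)})>g(m)$, not just $g(m)\le g(g^{(j+1)})$. This strict inequality follows from $m<g^{(j+1)}$ and the \emph{strict} monotonicity of $g$.

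With these fixes the proof is complete.
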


\begin{proposition}[\cite{Farah1998, Todorcevic2010}] \label{semiselective=(p^w)+(q)}
Let $\mathcal I$ be an ideal on $\omega$. Then, the following statements are equivalent:
\setlist{nolistsep}
\begin{itemize}
\setlength{\itemsep}{0pt} 
\item[(a)] $\mathcal I$ is semiselective.
\item[(b)] $\mathcal I$ is $q^{+}$ and $p^{w}$.
\item[(c)] $\mathcal I$ is $q^{+}$ and $(\mathcal{I}^{+}, \subseteq^{*})$ is $\omega$-distributive.
\end{itemize}
\end{proposition}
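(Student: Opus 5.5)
The plan is to prove the cycle (a)$\Rightarrow$(b)$\Rightarrow$(c)$\Rightarrow$(a). Throughout, $\mathcal{I}$ is an ideal on $\omega$.

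\emph{(a)$\Rightarrow$(b).} Let $\mathcal{I}$ be semiselective.

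For $q^{+}$, fix $A\in\mathcal{I}^{+}$ and a partition $A=\bigcup_k f_k$ into finite sets. Let $\mathcal{D}_m$ consist of those $B\in\mathcal{I}^{+}$ that are disjoint from the (finitely many) pieces $f_k$ meeting $[0,m]$, together with all $B\in\mathcal{I}^{+}$ not contained in $A$ modulo finite. Each $\mathcal{D}_m$ is open. It is dense because removing finitely many finite pieces from a positive subset of $A$ keeps it positive. A diagonalization $D_\infty\subseteq A$ satisfies $D_\infty/m\in\mathcal{D}_m$ for each $m\in D_\infty$. Hence no two elements of $D_\infty$ lie in the same $f_k$.

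For $p^{w}$ with respect to $\subseteq$, take a diagonalization $D_\infty\subseteq A$ of $\{\mathcal{D}_n\}$. Then $D_\infty\subseteq^{*}D_\infty/m\in\mathcal{D}_m$ for all $m\in D_\infty$. For an arbitrary $n$, choose $m\in D_\infty$ with $m\ge n$ and apply the construction to the intersections $\mathcal{D}'_m=\bigcap_{n\le m}\mathcal{D}_n$. These are still dense-open, because a finite intersection of dense-open sets in $(\mathcal{I}^{+},\subseteq)$ is dense-open (refine successively). The $\subseteq^{*}$ version reduces to the $\subseteq$ version, since a $\subseteq^{*}$-dense-open set is also $\subseteq$-dense-open.

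\emph{(b)$\Rightarrow$(c).} Let $\{\mathcal{D}_n\}$ be $\subseteq^{*}$-dense-open and let $A\in\mathcal{I}^{+}$. Put $\mathcal{D}=\bigcap_n\mathcal{D}_n$; it is clearly open. For density, $p^{w}$ gives $B\subseteq A$ in $\mathcal{I}^{+}$ with $B\subseteq^{*}A_n\in\mathcal{D}_n$ for every $n$. By $\subseteq^{*}$-openness, $B\in\mathcal{D}_n$ for all $n$, so $B\in\mathcal{D}$.

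\emph{(c)$\Rightarrow$(a).} This is the main step. Fix $A\in\mathcal{I}^{+}$ and $\subseteq$-dense-open sets $\mathcal{D}_n$. First replace them by decreasing sets $\mathcal{D}'_n=\bigcap_{k\le n}\mathcal{D}_k$. Then pass to $\subseteq^{*}$-dense-open sets
\[
\mathcal{E}_n=\{B\in\mathcal{I}^{+}: B/k\in\mathcal{D}'_n \text{ for some } k\}.
\]
These are $\subseteq^{*}$-open and dense, because $\mathcal{D}'_n$ is $\subseteq$-dense. By $\omega$-distributivity we get $B\subseteq A$ with $B\in\bigcap_n\mathcal{E}_n$. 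For each $n$ choose $k_n$, which we may take increasing, with $B/k_n\in\mathcal{D}'_n$. Note that $k_n$ depends only on $B$, not on any further shrinking.

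Now partition $B$ into finite intervals $f_j=B\cap[k_{j'},k_{j'+1})$ for a suitably sparse subsequence $j'$. The aim is that any two elements lying in different, non-adjacent intervals behave well: if $m\in f_j$, then $B/m\subseteq B/k_{j-1}$, and this set lies in $\mathcal{D}'_{m}$ once the blocks are chosen so that $m<k_{\text{next index}}$ controls the index. Concretely, define an increasing sequence $\ell_0<\ell_1<\cdots$ with $\ell_{j+1}=k_{\ell_j}$. Every $m\in[\ell_j,\ell_{j+1})$ then satisfies $m<\ell_{j+1}$, so $B/\ell_{j+2}=B/k_{\ell_{j+1}}\in\mathcal{D}'_{\ell_{j+1}}\subseteq\mathcal{D}'_m$.

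To exploit this, split $B$ into the even blocks and the odd blocks. One of the two unions is positive, say $B'$, and in $B'$ consecutive chosen blocks are separated by a gap block. Apply $q^{+}$ to the partition of $B'$ into its blocks, obtaining a positive $D_\infty\subseteq B'$ meeting each block in at most one point. For $m\in D_\infty$ in block $[\ell_j,\ell_{j+1})$, the next element of $D_\infty$ lies at or above $\ell_{j+2}$. Hence $D_\infty/m\subseteq B/\ell_{j+2}\in\mathcal{D}'_m$, and $\subseteq$-openness gives $D_\infty/m\in\mathcal{D}_m$.

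The main obstacle is this bookkeeping: one must build the interval sequence from the $k_n$ so that a single $q^{+}$ thinning, combined with the even/odd split, gives the diagonal property.
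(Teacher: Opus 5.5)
The paper gives no proof of this proposition; it is quoted from Farah and Todor\v{c}evi\'{c}, so there is nothing to match line by line. Your cycle (a)$\Rightarrow$(b)$\Rightarrow$(c)$\Rightarrow$(a) is the standard argument. Its main step (c)$\Rightarrow$(a) goes as follows: pass to the $\subseteq^{*}$-closures $\mathcal{E}_n$ of the decreasing sets $\mathcal{D}'_n$, use distributivity to get one $B$ with thresholds $k_n$, cut $B$ into blocks along $\ell_{j+1}=k_{\ell_j}$, keep a positive parity class, and thin it with $q^{+}$. This is the same interval-and-parity device the paper uses for (a)$\Rightarrow$(b) of Proposition \ref{SD}. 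The structure is sound, but two local claims are false as written and need repair.

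\emph{Openness in the $q^{+}$ part of (a)$\Rightarrow$(b).} The clause ``together with all $B\in\mathcal{I}^{+}$ not contained in $A$ modulo finite'' destroys openness. Let $F_m$ be the union of the pieces meeting $[0,m]$. If $\omega\setminus A$ is infinite, then $\omega$ belongs to your $\mathcal{D}_m$. Its positive subset $A$ does not, as soon as $A$ meets $F_m$. So semiselectivity cannot be invoked. Just drop the clause: $\{B\in\mathcal{I}^{+}: B\cap F_m=\emptyset\}$ is already dense-open in $(\mathcal{I}^{+},\subseteq)$, because deleting the finite set $F_m$ from any positive set leaves a positive set. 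Your diagonal argument then goes through unchanged.

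\emph{Off-by-one in (c)$\Rightarrow$(a).} With blocks $[\ell_j,\ell_{j+1})$, the next element of $D_\infty$ after $m$ may be $\ell_{j+2}$ itself. That element is not in $B/\ell_{j+2}=\{x\in B:\ell_{j+2}<x\}$, so $D_\infty/m\subseteq B/\ell_{j+2}$ can fail. Choose $k_n\ge n$ increasing, and set $\ell_0=0$ and $\ell_{j+1}=k_{\ell_j}+1$. Then for $m\in[\ell_j,\ell_{j+1})$, every later element of $D_\infty$ is $\geq\ell_{j+2}>k_{\ell_{j+1}}$. Hence $D_\infty/m\subseteq B/k_{\ell_{j+1}}\in\mathcal{D}'_{\ell_{j+1}}\subseteq\mathcal{D}'_m$, and openness gives $D_\infty/m\in\mathcal{D}_m$.

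\emph{A smaller point.} Distributivity only yields $B\subseteq^{*}A$. Replace $B$ by $B\cap A$, which stays in every $\mathcal{E}_n$ by $\subseteq^{*}$-openness.

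With these repairs the proof is complete.
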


We point out that Ramsey ideals also admit a combinatorial description analogous to the two preceding characterizations of selective and semiselective ideals, but this result will be presented at the beginning of Section \ref{An ideal version of Ramsey's theorem}.

\subsection{Some classical examples of ideals}

We conclude this section by presenting several well-known examples of ideals that are relevant to the main topics covered and developed in this article, some of which will reappear throughout the paper.

\medskip

\setlist{nolistsep}
\begin{itemize}
\setlength{\itemsep}{0pt} 

    \item Note that the collection $\mathrm{FIN} = [\omega]^{<\omega}$ of all finite subsets of $\omega$ is an ideal on $\omega$, and it is contained in every other ideal on $\omega$.

    \medskip

    \item Recall that an infinite family  $\mathcal{A}$ of infinite subsets of $\omega$ is \textit{almost disjoint} if $|A\cap B|<\omega$ for every $A,B\in\mathcal{A}$ with $A \neq B$; moreover, an almost disjoint family $\mathcal{A}$ is \textit{maximal} if for every $X\in [\omega]^{\omega}$ there exists $A\in\mathcal{A}$ such that $|X\cap A| = \omega$. A known fact is that the ideal $\mathcal{I}_{\mathcal{A}}$ generated by an almost disjoint family $\mathcal{A}$ on $\omega$ is the quintessential example of a selective ideal on $\omega$, as Mathias showed in \cite[Proposition 0.7]{Mathias} (see also \cite[Example 7.1.2]{Todorcevic2010}). Furthermore, $\mathcal{I}_{\mathcal{A}}$ is tall if and only if the almost disjoint family $\mathcal{A}$ is maximal.

    \medskip
    
    \item Denote by $\emptyset \times \mathrm{FIN}$ the ideal of all subsets of $\omega \times \omega$ whose vertical sections are finite. Farah shows in \cite[Example 2.1]{Farah1998} that $\emptyset \times \mathrm{FIN}$ is semiselective but not selective. Moreover, it is easy to check that this ideal is also Fr\'{e}chet.

    \medskip

    \item Let $\mathcal{A}$ be a maximal almost disjoint family on $\omega$, and let $\emptyset \times \mathcal{I}_{\mathcal{A}}$ denote the ideal of all subsets of $\omega \times \omega$ whose vertical sections belong to $\mathcal{I}_{\mathcal{A}}$, that is 

     \kern-0.5em
    \begin{equation*}
        \emptyset \times \mathcal{I}_{\mathcal{A}} = \{ X \subseteq \omega \times \omega : (\forall\, n\in\omega) ( \{ m\in\omega : (n,m)\in X \} \in \mathcal{I}_{\mathcal{A}})\}.
    \end{equation*}

     \kern-0.5em
   Observe that $\emptyset \times \mathcal{I}_{\mathcal{A}}$ is a tall semiselective ideal which is neither Fr\'{e}chet nor selective. To the best of our knowledge, this simple example is new.
    
    \medskip
    
    \item Let $\mathbf{nwd} (\mathbb{Q})$ denote the ideal of all nowhere dense subsets of $\mathbb{Q}$ with respect to its usual topology. This ideal is weakly selective but not Ramsey (see, for example, \cite[Section 3, Item 1]{Hrusak2017}).

    \medskip
    
    \item Let $\pi: [\omega]^{2} \rightarrow 2$ be a coloring for which $\omega$ is not cover by finitely many homogeneous sets for $\pi$, and let $\mathrm{hom}(\pi)$ denote the family of all infinite homogeneous sets of $\pi$. The ideal $\mathcal{I}_{\mathrm{hom}(\pi)}$ generated by $\mathrm{hom}(\pi)$ is $F_{\sigma}$ and tall; in particular, this ideal is $\textrm{h-FinBW}$ but not Ramsey (see, for example, \cite[Proposition 3.4]{FMRS07}). 

    \medskip

    \item Let $u:[\omega]^2 \to 2$ denote the universal coloring associated with the \textit{random graph} (see \cite[Section 3]{Erdos-Renyi}); namely, $u$ satisfies the property that for every pair of disjoint finite sets $f_0,f_1 \subseteq \omega$, there exists $\ell \in \omega$ such that $u(\{m,\ell\}) = i$ for every $m\in f_{i}$ and every $i\in 2$ (see also \cite[Section 1]{Cameron}). The \textit{random graph ideal} $\mathcal{R}$ is the ideal generated by the infinite homogeneous sets of $u$, that is $\mathcal{R} = \mathcal{I}_{\textrm{hom}(u)}$ (see \cite[Subsection 3.5]{Hrusak2011}).
    
    \medskip

    \item The random graph ideal admits natural higher-dimensional extensions. Let $n,k\in \omega$ with $n,k\geq 2$, and let $u^{n}_{k} : [\omega]^{n} \rightarrow k$ be a coloring satisfying the property that for every collection of pairwise disjoint finite sets $F_{0}, \ldots, F_{k-1} \subseteq [\omega]^{n-1}$, there exists $\ell\in\omega$ such that $u^{n}_{k} (x\cup \{l\}) =i$ for every $x\in F_{i}$ and every $i\in k$. Then, the ideal $\mathcal{R}_k^n$, which generalizes the random graph ideal $\mathcal{R}$, is defined by $\mathcal{R}^n_k = \mathcal{I}_{\textrm{hom}(u^n_k)}$ (see \cite[Definition 3.4]{PG}).
    
    \medskip

    \item The ideal $\mathbf{conv}$ is defined as the ideal on $\mathbb{Q} \cap [0,1]$ generated by sequences in $\mathbb{Q}\cap [0,1]$ that converges to a point in $[0,1]$ (see, for example, \cite[Section 3, Item 5]{Hrusak2017}).

    \medskip

    \item The summable ideal $\mathcal{I}_{1/n}$ is defined by $\mathcal{I}_{1/n} = \{ X\subseteq \omega : \sum_{n\in X} \frac{1}{n} < \infty\}$ (see, for example, \cite[Section 3, Item 8]{Hrusak2017}).

    \medskip

    \item The ideal $\mathcal{WR}$ is defined as the ideal on $\omega \times \omega$ generated by vertical lines and sets $G$ such that for every $(i,j) , (k,l) \in G$ either $i>k+l$ or $k>i+j$ (see \cite[Definition 1.1]{Kwela}). 

\end{itemize}

\section{An ideal version of Ramsey's theorem}
\label{An ideal version of Ramsey's theorem}

In this section, we undertake a study of Ramsey ideals from several complementary perspectives, with particular emphasis on combinatorial characterizations of this concept involving partition and convergence properties.

\subsection{Ramsey ideals}

We begin this section by introducing the notion of a Ramsey ideal, which may be viewed as an ideal-theoretic version of the Ramsey theorem. Recall that, given a coloring $\pi:[A]^{n} \rightarrow k$, a set $H \subseteq A$ is said to be \textit{homogeneous} for $\pi$ if $\pi$ is constant on $[H]^{n}$. In this regard, the Ramsey theorem asserts the following.

\begin{proposition}[\cite{Ramsey}]
    For all integers $n,k \geq 2$, every $A\in [\omega]^{\omega}$, and every coloring $\pi:[A]^{n} \rightarrow k$, there exists $H\in [A]^{\omega}$ such that $H$ is homogeneous for $\pi$.
\end{proposition}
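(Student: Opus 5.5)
We prove the classical Ramsey theorem by induction on the dimension $n$, with $n=1$ as the base case. For $n=1$, a coloring $\pi:[A]^{1}\rightarrow k$ splits the infinite set $A$ into $k$ pieces, so by the pigeonhole principle one of them is infinite, and that piece is homogeneous. It also suffices to treat $A=\omega$ after transporting along the increasing enumeration of $A$, and the number of colors $k$ plays no essential role in the argument.

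For the inductive step, assume the statement holds for dimension $n$, and let $\pi:[A]^{n+1}\rightarrow k$ be given. We build by recursion a decreasing sequence of infinite sets $A=B_{0}\supseteq B_{1}\supseteq\cdots$ together with elements $a_{0}<a_{1}<\cdots$ and colors $i_{j}\in k$. At stage $j$, let $a_{j}=\min B_{j}$ and consider the induced coloring $\pi_{j}:[B_{j}\setminus\{a_{j}\}]^{n}\rightarrow k$ defined by $\pi_{j}(s)=\pi(\{a_{j}\}\cup s)$. The induction hypothesis gives an infinite set $B_{j+1}\subseteq B_{j}\setminus\{a_{j}\}$ that is homogeneous for $\pi_{j}$, say with color $i_{j}$.

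The key observation is that the color of any $(n+1)$-subset of $\{a_{j}:j\in\omega\}$ depends only on its minimum element. Indeed, if $a_{j}$ is that minimum, the remaining $n$ elements lie in $B_{j+1}$, so the set receives color $i_{j}$.

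Finally, we apply the pigeonhole principle to the map $j\mapsto i_{j}$ to obtain an infinite set $J\subseteq\omega$ and a single color $i$ with $i_{j}=i$ for all $j\in J$. Then $H=\{a_{j}:j\in J\}$ is infinite and homogeneous for $\pi$ with color $i$.

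The only delicate point is the bookkeeping in the recursion, namely ensuring that each $B_{j+1}$ is taken inside $B_{j}\setminus\{a_{j}\}$, so that all later $a$'s lie in every earlier $B$. There is no substantial obstacle beyond this.
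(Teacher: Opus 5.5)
Your proof is correct and complete. The recursion with $a_j=\min B_j$ and $B_{j+1}\subseteq B_j\setminus\{a_j\}$ makes the $a_j$ strictly increasing and puts every later $a_{j'}$ inside $B_{j+1}$. So the colour of an $(n+1)$-subset of $\{a_j : j\in\omega\}$ is $i_j$, where $a_j$ is its minimum, and the final pigeonhole step finishes the argument.

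The paper gives no proof of this statement; it is quoted from Ramsey. The natural comparison is therefore with the paper's proof of the localized version, Theorem \ref{Ramsey_ideal_theorem}, which organizes the induction differently:
\begin{itemize}
\item The authors build a finitely branching system $\{A_s\}$. The pieces at level $\ell$ record the whole pattern $t\mapsto\pi(t\cup\{k\})$ for $t\in[r_{n+\ell}(A)]^{n}$.
\item They take one diagonalization along a single branch (property SD, Proposition \ref{SD}). This gives a set $D$ on which $\pi(t\cup\{k\})$, for $k>\max t$, depends only on $t$.
\item They then apply the dimension-$n$ hypothesis once, to the induced colouring of $[D]^{n}$.
\end{itemize}

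Your route instead makes the colour depend on the minimum. It applies the induction hypothesis infinitely often along a decreasing sequence $B_0\supseteq B_1\supseteq\cdots$ and ends with a pigeonhole. For the classical statement this is shorter and cleaner.

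Your route does not localize directly, however. If you replace ``infinite'' by ``$\mathcal{I}$-positive'', the diagonal set $\{a_j : j\in\omega\}$ chosen from a decreasing sequence of positive sets would itself have to be positive. That is a selectivity-type diagonalization (compare Proposition \ref{selective=(p)+(q)}), and Ramsey ideals need not satisfy it; they need not even be $p^{+}$ (Proposition \ref{Galvin non-selective}). The paper's finitely branching construction only needs diagonalization along a branch of a finitely branching tree. That property is exactly SD, which is equivalent to being Ramsey, and this is why the paper takes that route.
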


Let $\mathcal{I}$ be an ideal on $\omega$, and fix $n,k\in \omega$ with $n,k \geq 2$. The ideal $\mathcal I$ satisfies

\kern-0.5em
\begin{equation*}
    \mathcal{I}^+ \longrightarrow (\mathcal{I}^+)^{n}_{k}
\end{equation*}

\kern-0.5em
if, for every $A\in \mathcal{I}^+$ and every coloring $\pi:[A]^{n} \rightarrow k$ of the collection $[A]^{n}$ into $k$ colors, there exists some $B\in \mathcal{I}^+ \!\restriction\! A$ such that $B$ is homogeneous for $\pi$.

\medskip

It is worth mentioning that for each fixed $n\in\omega$ with $n\geq 2$, the property $\mathcal{I}^+ \longrightarrow (\mathcal{I}^+)^{n}_{k}$ for ideals is independent of the number of colors $k\in \omega$ with $k\geq 2$.

\begin{fact} \label{fact_colors}
    Let $\mathcal{I}$ be an ideal on $\omega$ and let $n \geq 2$. Then, the following statements are equivalent:
    \setlist{nolistsep}
\begin{itemize}
\setlength{\itemsep}{0pt} 
\item[(a)] $\mathcal{I}^+ \longrightarrow (\mathcal{I}^+)^{n}_{2}$ holds.
\item[(b)] $\mathcal{I}^+ \longrightarrow (\mathcal{I}^+)^{n}_{k}$ holds for all $k\geq 2$.
\end{itemize}
\end{fact}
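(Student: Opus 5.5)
The implication (b)$\Rightarrow$(a) is trivial: take $k=2$. For (a)$\Rightarrow$(b) we argue by induction on $k\geq 2$, following the classical proof that the number of colors in Ramsey's theorem can be reduced to two. The case $k=2$ is exactly (a). Assume $\mathcal{I}^+ \longrightarrow (\mathcal{I}^+)^{n}_{k}$ holds, and let $A\in\mathcal{I}^+$ and $\pi:[A]^n\rightarrow k+1$ be given.

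First we define an auxiliary two-coloring $\sigma:[A]^n\rightarrow 2$ by $\sigma(x)=0$ if $\pi(x)<k$ and $\sigma(x)=1$ if $\pi(x)=k$. By (a) there is $B\in\mathcal{I}^+\!\restriction\! A$ homogeneous for $\sigma$. There are two cases.

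\begin{itemize}
\item If $\sigma''[B]^n=\{1\}$, then $\pi$ is constantly $k$ on $[B]^n$, and $B$ is the required homogeneous set.
\item If $\sigma''[B]^n=\{0\}$, then $\pi\restriction[B]^n$ takes values in $k$. Since $B\in\mathcal{I}^+$, the induction hypothesis applied to $B$ and $\pi\restriction[B]^n$ yields $H\in\mathcal{I}^+\!\restriction\! B\subseteq\mathcal{I}^+\!\restriction\! A$ homogeneous for $\pi$.
\end{itemize}

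In either case we obtain the required set, which completes the induction.

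The only point needing care is that the property is hereditary in the right sense. The hypothesis $\mathcal{I}^+ \longrightarrow (\mathcal{I}^+)^{n}_{k}$ quantifies over every $A\in\mathcal{I}^+$, so it can be applied to the positive set $B$. Moreover, $\mathcal{I}^+\!\restriction\! B\subseteq \mathcal{I}^+\!\restriction\! A$ because $B\subseteq A$. With these two observations there is no real obstacle; the argument is a direct transfer of the standard color-merging trick.
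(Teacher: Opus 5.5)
Your proposal is correct and matches the paper's proof: induction on $k$, a two-coloring that separates color $k$ from the remaining colors, and the induction hypothesis applied to the resulting positive set $B$ when $\pi$ avoids color $k$ on $[B]^n$. The only difference is that your auxiliary coloring swaps the labels $0$ and $1$ relative to the paper's $\phi$, which is immaterial.
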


\begin{proof}
    $[\text{(b)} \Longrightarrow \text{(a)}].$ Straightforward.

    $[\text{(a)} \Longrightarrow \text{(b)}].$ By induction on $k$, suppose that $\mathcal{I}^+ \longrightarrow (\mathcal{I}^+)^{n}_{k}$ holds for a fixed $k \geq 2$, and consider a coloring $\pi: [A]^{n} \rightarrow k+1$ with $A\in \mathcal{I}^{+}$. Define $\phi: [A]^{n} \rightarrow 2$ by $\phi(x) = 0$ if and only if $\pi(x) = k$. Then, there is $H\in \mathcal{I}^{+} \!\restriction\! A$ such that $\phi$ is constant on $[H]^{n}$. Clearly, if $\phi \,\text{''}\, [H]^{n} = \{0\}$ then $\pi \,\text{''}\, [H]^{n} = \{k\}$. Otherwise, if $\phi \,\text{''}\, [H]^{n} = \{1\}$ then $\pi(x) \neq k$ for each $x\in [H]^{n}$; so, take $\widetilde{\pi} : [H]^{n} \rightarrow k$ given by $\widetilde{\pi} = \pi \!\restriction\![H]^{n}$. The induction hypothesis $\mathcal{I}^+ \longrightarrow (\mathcal{I}^+)^{n}_{k}$, implies that there are $i<k$ and $\widetilde{H} \in \mathcal{I}^{+} \!\restriction\! H \subseteq \mathcal{I}^{+} \!\restriction\! A$ such that $\widetilde{\pi} \,\text{''}\, [\widetilde{H}]^{n} = \{i\}$, and hence $\pi \,\text{''}\, [\widetilde{H}]^{n} = \{i\}$. Thus $\mathcal{I}^+ \longrightarrow (\mathcal{I}^+)^{n}_{k+1}$ also holds.
\end{proof}

\smallskip

Ramsey ideals provide a localized version of the Ramsey theorem by ensuring the existence of a positive homogeneous subset for every coloring of pairs of a positive set, as formulated in Definition \ref{def_Ramsey_ideal*}. We emphasize that most of the results in this article concern the ideal Ramsey property, whose definition we recall below.

\begin{definition} \label{def_Ramsey_ideal}
An ideal $\mathcal I$ on $\omega$ is \textit{Ramsey} if it satisfies
$\mathcal{I}^+ \longrightarrow (\mathcal{I}^+)^{2}_{2}$.
\end{definition}

In \cite[Section 2]{Farah1998}, Farah claims that every semiselective ideal is Ramsey, and provides a counterexample in \cite[Example 2.2]{Farah1998} showing that the converse does not hold in general. Moreover, this result was later refined in \cite[Proposition 5.4]{HMTU2017}, where it is shown that every semiselective ideal
$\mathcal{I}$ satisfies $\mathcal{I}^+ \longrightarrow (\mathcal{I}^+)^{n}_{k}$ for all $n,k \geq 2$.

\medskip

Ramsey ideals can be characterized in various ways through different combinatorial properties that are interconnected. The following proposition corresponds to an important description of those ideals $\mathcal{I}$ satisfying the combinatorial property $\mathcal{I}^+ \longrightarrow (\mathcal{I}^+)^{2}_{2}$, as presented by Filip\'{o}w, Mro\.{z}ek, Rec{\l}aw, and Szuca in \cite[Theorem 3.16]{FMRS11}.

\begin{proposition} [\cite{FMRS11}]  \label{Ramsey = (q^{+})+(h-FinBW)}
    Let $\mathcal I$ be an ideal on $\omega$. Then, the following statements are equivalent:
\setlist{nolistsep}
\begin{itemize}
\setlength{\itemsep}{0pt} 
\item[(a)] $\mathcal I$ is Ramsey.
\item[(b)] $\mathcal I$ is $\textrm{h-Mon}$.
\item[(c)] $\mathcal I$ is $q^{+}$ and $\textrm{h-FinBW}$.
\item[(d)] For every $A\in \mathcal{I}^{+}$ along with every collection $\{ A_{s}  :  s\in 2^{<\omega} \} \subseteq \wp(A)$, with $A_{\emptyset} = A$, such that $A_{s} = A_{s^{\smallfrown}0} \cup A_{s^{\smallfrown}1}$ and $A_{s^{\smallfrown}0} \cap A_{s^{\smallfrown}1} = \emptyset$ for each $s\in 2^{<\omega}$, there exist $D \in \mathcal{I}^{+} \!\restriction\! A$ and $\alpha \in 2^{\omega}$ such that $D/m \subseteq A_{\alpha \restriction (m+1)}$ for all $m\in D$. 
\end{itemize}
\end{proposition}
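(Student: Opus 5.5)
The plan is to prove the cycle (a)$\Rightarrow$(b)$\Rightarrow$(c)$\Rightarrow$(d)$\Rightarrow$(a). I expect (c)$\Rightarrow$(d) to be the main obstacle.

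\emph{(a)$\Rightarrow$(b).} Given $A\in\mathcal I^+$ and reals $\{r_n\}_{n\in A}$, color $\{m<n\}\in[A]^2$ by $0$ if $r_m\le r_n$ and by $1$ otherwise. A homogeneous $B\in\mathcal I^+\restriction A$ makes $\{r_n\}_{n\in B}$ nondecreasing or strictly decreasing, so it is monotone.

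\emph{(b)$\Rightarrow$(c).} For h-FinBW, note that a monotone bounded subsequence converges. For $q^+$, let $A=\bigcup_k f_k$ with each $f_k$ finite, and put $r_n=k+\frac{1}{n+1}$ for $n\in f_k$.
\begin{itemize}
\item Inside a single block, larger indices carry smaller values. So along a nondecreasing subsequence, two indices $m<n$ from the same $f_k$ are impossible.
\item A nonincreasing infinite subsequence would have nonincreasing block index $k\ge 0$. It would therefore use only finitely many finite blocks, which is impossible.
\end{itemize}
Hence the positive monotone $B$ given by h-Mon meets each $f_k$ in at most one point.

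\emph{(c)$\Rightarrow$(d).} Each $n\in A$ lies in a unique $A_s$ with $|s|=\ell$ for every $\ell$, and these sets are nested, so $n$ determines a branch $\beta_n\in 2^\omega$. Set $x_n=\sum_i 2\beta_n(i)3^{-i-1}$, a point of the Cantor set. By h-FinBW there is $B\in\mathcal I^+\restriction A$ with $x_n\to x$ along $B$. The limit $x$ lies in the (closed) Cantor set, so $x=\sum_i 2\alpha(i)3^{-i-1}$ for some $\alpha\in 2^\omega$. Because the coding is a homeomorphism onto the Cantor set, for every $m$ the set $B\setminus A_{\alpha\restriction(m+1)}$ is finite.

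The remaining task is to diagonalize. Let $g(m)$ be nondecreasing with $B/g(m)\subseteq A_{\alpha\restriction(m+1)}$. Choose $0=h_0<h_1<\cdots$ with $h_{j+2}\ge g(h_{j+1})$. Apply $q^+$ to the partition of $B$ into the finite pieces $B\cap[h_j,h_{j+1})$, obtaining a positive selector $D'$. Split $D'$ into its points lying in even-indexed intervals and those in odd-indexed intervals; since $\mathcal I$ is an ideal, one part $D$ is positive.

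In $D$, consecutive points are separated by a full interval. So if $m\in D\cap[h_j,h_{j+1})$, every later point of $D$ is $\ge h_{j+2}\ge g(h_{j+1})\ge g(m)$. Hence $D/m\subseteq A_{\alpha\restriction(m+1)}$, as required. The delicate point is choosing the interval bounds so that this gap argument works; that is why the parity split is needed.

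\emph{(d)$\Rightarrow$(a).} Given $\pi:[A]^2\to 2$, build the tree level by level: $A_{s^\frown i}$ consists of those $n\in A_s$ with $n>|s|$ and $\pi(\{|s|,n\})=i$, when $|s|\in A$. All remaining points of $A_s$ (those $\le |s|$, or all of $A_s$ if $|s|\notin A$) go into $A_{s^\frown 0}$. Take $D$ and $\alpha$ from (d). Then for $m\in D$ and $n\in D/m$ we get $\pi(\{m,n\})=\alpha(m)$, so $D$ is end-homogeneous. Finally, $D=\{m\in D:\alpha(m)=0\}\cup\{m\in D:\alpha(m)=1\}$, so one of these two sets is $\mathcal I$-positive, and it is homogeneous for $\pi$.
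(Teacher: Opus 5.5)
Your proof is correct and follows essentially the route the paper takes. The paper cites this result from \cite{FMRS11} without proof, but its proofs of Lemma~\ref{SPI} and Proposition~\ref{SD} contain your (c)$\Rightarrow$(d) argument: convergence in a Cantor-type space to get a pseudo-intersection, then $q^{+}$ applied to interval blocks with an even/odd split. Proposition~\ref{SD} also contains your (d)$\Rightarrow$(a) tree-of-colorings argument, and your pair coloring for (a)$\Rightarrow$(b) is the $n=1$ case of Proposition~\ref{Ramsey=h-Mon_n}. One small fix: since $B/g(m)$ excludes $g(m)$ itself, you need $h_{j+2}>g(h_{j+1})$ strictly (or define $g$ via $\{k\in B: k\geq g(m)\}\subseteq A_{\alpha\restriction(m+1)}$) to conclude $D/m\subseteq A_{\alpha\restriction(m+1)}$.
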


Another valuable characterization of Ramsey ideals is analogous to the corresponding ones for selective and semiselective ideals presented in Propositions \ref{selective=(p)+(q)} and \ref{semiselective=(p^w)+(q)}, respectively, as established by Hru\v{s}\'{a}k, Meza-Alc\'{a}ntara, Th\"{u}mmel, and Uzc\'{a}tegui in \cite[Proposition 4.10]{HMTU2017}.

\begin{proposition} [\cite{HMTU2017}]  \label{Ramsey = (q^{+})+(w2-dist)}
    Let $\mathcal I$ be an ideal on $\omega$. Then, the following statements are equivalent:
\setlist{nolistsep}
\begin{itemize}
\setlength{\itemsep}{0pt} 
\item[(a)] $\mathcal I$ is Ramsey.
\item[(b)] $\mathcal I$ is $q^{+}$ and $(\mathcal{I}^{+}, \subseteq^{*})$ is $(\omega,2)$-distributive.
\end{itemize}
\end{proposition}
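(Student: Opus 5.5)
We prove the two implications separately. Throughout, a coloring $\pi:[A]^2\to 2$ and a point $n\in A$ determine the set $A^{n}_{0}=\{m\in A/n : \pi(\{n,m\})=0\}$ and its complement $A^{n}_{1}$ in $A/n$.

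$[\text{(a)} \Longrightarrow \text{(b)}]$. The $q^{+}$ part is immediate. Given $A\in\mathcal I^+$ and a partition of $A$ into finite pieces $f_k$, color a pair $\{m,n\}\subseteq A$ with $0$ if $m,n$ lie in the same piece and with $1$ otherwise. A $0$-homogeneous set lies inside a single finite $f_k$, so it is not positive. Hence the positive homogeneous set supplied by the Ramsey property is $1$-homogeneous, i.e.\ it meets each $f_k$ in at most one point.

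For $(\omega,2)$-distributivity, fix $B\in\mathcal I^+$ and a sequence $\{A_n\}_{n\in\omega}$. Define $\pi:[B]^2\to 2$ for $m<n$ by $\pi(\{m,n\})=1$ iff $n\in A_m$. Let $H\in\mathcal I^+\restriction B$ be homogeneous of color $i$.
\begin{itemize}
\item For every $m\in H$ we get $H/m\subseteq A_m$ (if $i=1$) or $H/m\subseteq \omega\setminus A_m$ (if $i=0$).
\item This decides $A_m$ only for indices $m\in H$, so it does not yet give (b).
\end{itemize}
To handle all indices, I would instead use characterization (d) of Proposition \ref{Ramsey = (q^{+})+(h-FinBW)}. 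Build the tree $A_\emptyset=B$, $A_{s^\frown 0}=A_s\cap A_{|s|}$, $A_{s^\frown 1}=A_s\setminus A_{|s|}$. This yields $D\in\mathcal I^+\restriction B$ and $\alpha\in 2^\omega$ with $D/m\subseteq A_{\alpha\restriction(m+1)}$ for $m\in D$. That only handles the levels $m\in D$, so I would reindex: spread the $n$-th decision over levels, e.g.\ split on $A_0,\dots,A_m$ at level $m$, using finite partitions coded into binary splitting. Then for each $n$ and each $m\in D$ with $m\ge n$, the tail $D/m$ lies inside $A_n$ or inside its complement, with the side given by $\alpha$ and independent of $m$. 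Hence $D\subseteq^* A_n$ or $D\subseteq^*\omega\setminus A_n$. The bookkeeping of this reindexing is routine.

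$[\text{(b)} \Longrightarrow \text{(a)}]$. Let $\pi:[A]^2\to 2$ with $A\in\mathcal I^+$.
\begin{enumerate}
\item Apply $(\omega,2)$-distributivity to $B=A$ and the sequence $\{A^{n}_{0}\}_{n\in A}$. This gives $C\in\mathcal I^+\restriction A$ such that for each $n$ we have $C\subseteq^* A^{n}_{0}$ or $C\subseteq^* A^{n}_{1}$. Let $\varepsilon(n)$ denote the side chosen.
\item Apply distributivity once more to the set $\{n\in C:\varepsilon(n)=0\}$. This lets us shrink to a positive $C'\subseteq C$ on which $\varepsilon$ is constant, say with value $i$.
\item Now each $n\in C'$ has a finite exceptional set $e_n=C'\setminus A^{n}_{i}$, with $e_n\subseteq C'/n$ finite. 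Choose a function $g$ on $C'$ with $g(n)\ge \max e_n$, and partition $C'$ into consecutive finite intervals $I_0<I_1<\cdots$ chosen so that for every $n\in I_k$ the set $e_n$ is contained in $I_k\cup I_{k+1}$.
\item Split $C'$ into its even-indexed and odd-indexed blocks. One of the two unions is positive; take it, say $E$.
\item Apply $q^{+}$ to $E$ with the partition into blocks $I_k$, obtaining $H\in\mathcal I^+\restriction E$ meeting each block in at most one point.
\end{enumerate}
Let $n<m$ be in $H$. Then $m$ lies in a block at least two beyond the block of $n$, so $m\notin e_n$. This gives $\pi(\{n,m\})=i$, so $H$ is homogeneous.

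The main obstacle I expect is the reindexing in (a)$\Rightarrow$(b), i.e.\ getting a decision for every $A_n$ rather than only for $n\in D$. If the tree coding gets messy, the alternative is to go through the Ramsey property for the coloring on $[B]^2$ defined by the binary sequences $(\chi_{A_k}(m))_{k\le\min}$ compared lexicographically. The point is that a monotone subsequence of the reals $\sum_k 2\chi_{A_k}(m)3^{-k}$ (h-Mon, Proposition \ref{Ramsey = (q^{+})+(h-FinBW)}) converges, and convergence of these ternary codes forces each coordinate to stabilize. This yields $D\subseteq^*A_n$ or $D\subseteq^*\omega\setminus A_n$ directly and is probably the cleanest route.
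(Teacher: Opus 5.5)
Your proof is correct. The obstacle you anticipate in (a)$\Rightarrow$(b) does not actually arise, so the reindexing is unnecessary. The tree $A_{s^{\smallfrown}0}=A_s\cap A_{|s|}$, $A_{s^{\smallfrown}1}=A_s\setminus A_{|s|}$ is decreasing along branches. Hence for every $n$ and every $m\in D$ with $m\ge n$ you already have $D/m\subseteq A_{\alpha\restriction(m+1)}\subseteq A_{\alpha\restriction(n+1)}$. The last set lies inside $A_n$ or inside $\omega\setminus A_n$ according to $\alpha(n)$. So $D\subseteq^* A_n$ or $D\subseteq^*\omega\setminus A_n$ holds for every $n$; your \textrm{h-Mon}/ternary-coding alternative also works.

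A few minor points in (b)$\Rightarrow$(a):
\begin{itemize}
\item Step 2 needs no second appeal to distributivity. $C$ is the union of the two sets $\{n\in C:\varepsilon(n)=0\}$ and $\{n\in C:\varepsilon(n)=1\}$, so one of them is positive.
\item $e_n$ should be $(C'/n)\setminus A^n_i$, since $C'\setminus A^n_i$ also contains the elements of $C'$ below $n$.
\item The paper's definition of $(\omega,2)$-distributivity only gives $C\subseteq^* A$, so intersect with $A$.
\item The definition is phrased for positive $A_n$, so when $A^n_0\in\mathcal I$ use $A^n_1$ instead. This is harmless by symmetry.
\end{itemize}

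As for the route: the paper does not prove this statement; it quotes \cite{HMTU2017}. Within the paper, the equivalence is just Proposition \ref{Ramsey = (q^{+})+(h-FinBW)} ((a)$\Leftrightarrow$(c)) combined with Proposition \ref{(h-FinBW)} ((a)$\Leftrightarrow$(e)), so it passes through \textrm{h-FinBW}.

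Your (b)$\Rightarrow$(a) avoids that detour. One application of distributivity to the countably many sets $A^n_0$ makes each vertex's color eventually constant on a positive set. Then interval-blocking, the even/odd split and $q^{+}$ thinning remove the finitely many exceptions; this is the same device the paper uses in Proposition \ref{SD}. Your argument is self-contained and shows exactly what each hypothesis contributes: distributivity gives vertex-by-vertex almost-homogeneity, and $q^{+}$ repairs the finite errors. The citation route is shorter and also shows that the distributivity clause is precisely \textrm{h-FinBW}.

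Your (a)$\Rightarrow$(b) still relies on the tree characterization (d) of Proposition \ref{Ramsey = (q^{+})+(h-FinBW)}, or on \textrm{h-Mon}. So in that direction you are essentially following the cited route.
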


One of the most notable characterizations of Ramsey ideals, due to Hru\v{s}\'ak in \cite[Proposition 3.11]{Hrusak2011} (see also \cite[Corollary 4.11]{HMTU2017}), is expressed in terms of the Kat\v{e}tov order $\leq_{K}$ and its interaction with the random graph ideal $\mathcal{R}$.

\begin{proposition} [\cite{Hrusak2011}]  \label{Ramsey_and_RandomGraph}
    Let $\mathcal I$ be an ideal on $\omega$. Then, the following statements are equivalent:
\setlist{nolistsep}
\begin{itemize}
\setlength{\itemsep}{0pt} 
\item[(a)] $\mathcal I$ is Ramsey.
\item[(b)] $\mathcal{R} \not\leq_{K} \mathcal{I} \!\restriction\! X$ for every $X\in\mathcal{I}^{+}$.
\end{itemize}
\end{proposition}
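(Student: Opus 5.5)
The plan is to prove the two implications separately, using the universality of the random graph for (b)$\Rightarrow$(a) and the weak selectivity of Ramsey ideals (from the diagram in Section~\ref{Ideals on countable sets}) for (a)$\Rightarrow$(b).

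\textbf{(b)$\Rightarrow$(a).} I argue by contraposition. Suppose $\mathcal I$ is not Ramsey. Then there are $X\in\mathcal I^+$ and a coloring $c:[X]^2\to 2$ such that no $H\in\mathcal I^+\restriction X$ is $c$-homogeneous. Hence every infinite $c$-homogeneous subset of $X$ lies in $\mathcal I$.

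Next I use the extension property of $u$: every countable graph embeds into the random graph. Concretely, enumerate $X=\{x_0<x_1<\cdots\}$ and build $f(x_j)$ by recursion. At stage $j$, let $f_i=\{f(x_l):l<j,\ c(\{x_l,x_j\})=i\}$ for $i\in 2$. Choose $f(x_j)=\ell$ new (not among previous values) with $u(\{m,\ell\})=i$ for all $m\in f_i$. Such a new $\ell$ exists because the extension property yields infinitely many witnesses: apply it with $f_i$ enlarged by the previous values. This gives an injection $f:X\to\omega$ with
\[
c(\{x,y\})=u(\{f(x),f(y)\}) \quad\text{for all } \{x,y\}\in[X]^2 .
\]

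Now check that $f$ witnesses $\mathcal R\le_K\mathcal I\restriction X$, where $f$ is regarded as a map on $X$, after identifying $X$ with $\omega$. Let $A\in\mathcal R$, so $A\subseteq^* B_1\cup\dots\cup B_r$ with each $B_i\in\hom(u)$. Since $f$ is injective, $f^{-1}[B_i]$ is $c$-homogeneous, so it is finite or lies in $\mathcal I$. Also $f^{-1}$ of a finite set is finite. Therefore $f^{-1}[A]\in\mathcal I\restriction X$, contradicting (b).

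\textbf{(a)$\Rightarrow$(b).} Again argue by contraposition. Suppose $X\in\mathcal I^+$ and $f:X\to\omega$ satisfies $f^{-1}[A]\in\mathcal I$ for all $A\in\mathcal R$; I show $\mathcal I$ is not Ramsey.

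Since singletons belong to $\mathcal R$, each fiber $f^{-1}(\{k\})$ is in $\mathcal I$, and the nonempty fibers partition $X$. A Ramsey ideal is weakly selective, by the diagram in Section~\ref{Ideals on countable sets} or via Proposition~\ref{Ramsey = (q^{+})+(h-FinBW)} together with the $p^-$ property. So there is $Y\in\mathcal I^+\restriction X$ on which $f$ is injective. (If there are only finitely many nonempty fibers, then $X\in\mathcal I$, which is impossible.)

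Define $c:[Y]^2\to2$ by $c(\{x,y\})=u(\{f(x),f(y)\})$. If $\mathcal I$ were Ramsey, there would be a $c$-homogeneous $H\in\mathcal I^+\restriction Y$. Then $f[H]$ is an infinite $u$-homogeneous set, so $f[H]\in\mathcal R$. But $H\subseteq f^{-1}[f[H]]\in\mathcal I$, a contradiction.

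The only step needing care is the injective embedding into the random graph, namely obtaining new values at each stage. I expect it to be routine once one notes that the extension property, applied to enlarged finite sets, yields infinitely many witnesses.
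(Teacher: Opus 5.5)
Your proof is correct. The paper gives no proof of this proposition: it quotes it from Hru\v{s}\'ak (see also \cite[Corollary 4.11]{HMTU2017}). Within the paper's framework, the statement is just the non-hereditary characterization in Proposition~\ref{Ramsey(omega)_and_RandomGraph} applied to each restriction $\mathcal{I}\!\restriction\! X$ with $X\in\mathcal{I}^{+}$. This works because $\mathcal{I}$ is Ramsey exactly when every such restriction satisfies $X\longrightarrow((\mathcal{I}\!\restriction\! X)^{+})^{2}_{2}$.

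Your (b)$\Rightarrow$(a) is precisely the embedding argument behind that characterization.

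Your (a)$\Rightarrow$(b) differs from it. You first invoke weak selectivity to make $f$ injective on a positive set. This is legitimate for Ramsey ideals, but it is not available in the non-hereditary setting, and it is in fact unnecessary. Instead, color $\{x,y\}$ by $u(\{f(x),f(y)\})$ when $f(x)\neq f(y)$ and by $0$ otherwise. Then for any homogeneous $H$, the set $f[H]$ is either finite or $u$-homogeneous. Either way $f[H]\in\mathcal{R}$, so $H\subseteq f^{-1}[f[H]]\in\mathcal{I}$. This version uses no hypothesis on $\mathcal{I}$. It therefore proves the non-hereditary equivalence too, and your statement follows from that by relativizing to $X$.

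A minor remark on the embedding step. Since $f$ is injective on $\{x_l : l<j\}$, the sets $f_0$ and $f_1$ are disjoint and together consist of all previously chosen values. Any witness $\ell$ of the extension property automatically lies outside $f_0\cup f_1$, because the pairs $\{m,\ell\}$ must have two elements. So the new value comes for free, and you do not need to appeal to infinitely many witnesses.
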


On the other hand, bringing together some results that were independently stated in \cite[Subsection 2.3]{FMRS07}, \cite[Subsection 2.2]{FMRS11}, \cite[Subsection 2.2]{FMRS12}, \cite[Subsection 3.4]{Hrusak2011}, and \cite[Subsection 5.1]{HMTU2017}, we highlight the following characterizations of the property $\textrm{h-FinBW}$ for ideals, which constitutes a combinatorial notion strictly weaker than the ideal Ramsey property. 

\begin{proposition} [\cite{FMRS11, HMTU2017}]  \label{(h-FinBW)}
    Let $\mathcal I$ be an ideal on $\omega$. Then, the following statements are equivalent:
\setlist{nolistsep}
\begin{itemize}
\setlength{\itemsep}{0pt}
\item[(a)] $\mathcal I$ is $\textrm{h-FinBW}$.
\item[(b)] For some uncountable compact metrizable space $\mathcal{X}$, and for every $A\in\mathcal{I}^{+}$ along with every sequence $\{ x_{n} \}_{n\in A} \subseteq \mathcal{X}$ indexed by $A$, there exists some $B\in \mathcal{I}^{+} \!\restriction\! A$ such that the subsequence $\{ x_{n} \}_{n\in B}$ is convergent.
\item[(c)] For each uncountable compact metrizable space $\mathcal{X}$, and for every $A\in\mathcal{I}^{+}$ along with every sequence $\{ x_{n} \}_{n\in A} \subseteq \mathcal{X}$ indexed by $A$, there exists some $B\in \mathcal{I}^{+} \!\restriction\! A$ such that the subsequence $\{ x_{n} \}_{n\in B}$ is convergent.
\item[(d)] For every $A\in \mathcal{I}^{+}$ along with every collection $\{ A_{s} : s\in 2^{<\omega} \} \subseteq \wp(A)$, with $A_{\emptyset} = A$, such that $A_{s} = A_{s^{\smallfrown}0} \cup A_{s^{\smallfrown}1}$ and $A_{s^{\smallfrown}0} \cap A_{s^{\smallfrown}1} = \emptyset$ for each $s\in 2^{<\omega}$, there exist $B \in \mathcal{I}^{+} \!\restriction\! A$ and $\alpha \in 2^{\omega}$ such that $B \subseteq^{*} A_{\alpha \restriction n}$ for all $n\in \omega$.
\item[(e)] $(\mathcal{I}^{+}, \subseteq^{*})$ is $(\omega,2)$-distributive.
\item[(f)] $\mathbf{conv} \not\leq_{K} \mathcal{I} \!\restriction\! X$ for every $X\in\mathcal{I}^{+}$.
\end{itemize}
\end{proposition}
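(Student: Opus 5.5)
We will prove the equivalences by a cycle through the combinatorial core (a)$\Leftrightarrow$(d)$\Leftrightarrow$(e), treating (b), (c) and (f) as topological reformulations of (d). Throughout, the basic tool is this: for an uncountable compact metrizable $\mathcal{X}$ we fix a continuous surjection $h:2^{\omega}\to\mathcal{X}$ (onto, since $\mathcal{X}$ is compact metrizable), and conversely a topological embedding $2^{\omega}\hookrightarrow\mathcal{X}$ (exists since $\mathcal{X}$ is uncountable Polish). Also, a bounded real sequence lives in a compact interval, which is an uncountable compact metrizable space.

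\textbf{(d)$\Rightarrow$(c).} Given $\{x_n\}_{n\in A}\subseteq\mathcal{X}$, choose $y_n\in 2^{\omega}$ with $h(y_n)=x_n$ and set $A_s=\{n\in A: s\sqsubseteq y_n\}$, a binary partition tree. By (d) we get $B\in\mathcal{I}^{+}\restriction A$ and $\alpha$ with $B\subseteq^{*}A_{\alpha\restriction n}$ for all $n$. Then $y_n\to\alpha$ along $B$, hence $x_n\to h(\alpha)$ along $B$.

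\textbf{(c)$\Rightarrow$(b), (b)$\Rightarrow$(d), (c)$\Rightarrow$(a), (a)$\Rightarrow$(d).} The step (c)$\Rightarrow$(b) is trivial. For (b)$\Rightarrow$(d), given the tree $\{A_s\}$, define $y_n\in 2^{\omega}$ as the unique branch with $n\in A_{y_n\restriction k}$ for all $k$, and push it into $\mathcal{X}$ by the embedding $e$. A convergent subsequence $e(y_n)\to z$ over $B\in\mathcal{I}^{+}$ forces $z\in e[2^{\omega}]$, since the image is closed. Hence $y_n\to\alpha=e^{-1}(z)$, which gives $B\subseteq^{*}A_{\alpha\restriction n}$. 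The step (c)$\Rightarrow$(a) holds because $[0,1]$ is uncountable compact metrizable. For (a)$\Rightarrow$(d), run the same coding with $2^{\omega}$ embedded in $[0,1]$ via the Cantor set, and use (a) to get convergence.

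\textbf{(d)$\Leftrightarrow$(e).} For (e)$\Rightarrow$(d), apply $(\omega,2)$-distributivity to $B=A$ and the countable family $\{A_s:s\in 2^{<\omega}\}$. This yields $C\in\mathcal{I}^{+}\restriction A$ deciding each $A_s$ modulo finite. By induction on levels, exactly one of $A_{s^\frown 0}, A_{s^\frown 1}$ almost contains $C$ whenever $A_s$ does, because $C\subseteq^{*} A_s$ is the disjoint union of the two pieces and $C$ is infinite. This defines $\alpha$. For (d)$\Rightarrow$(e), given $B\in\mathcal{I}^{+}$ and a sequence $\{A_n\}$, define the tree on $B$ by $B_s=B\cap\bigcap_{i<|s|}A_i^{s(i)}$, where $A^0=A$ and $A^1=\omega\setminus A$. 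The branch $\alpha$ given by (d) then decides every $A_n$. The quasi-order $\preceq$ here is $\subseteq^{*}$.

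\textbf{(f)$\Leftrightarrow$(c).} We expect this to be the main obstacle. It is Kat\v{e}tov-theoretic, so we will quote the standard argument of \cite{Hrusak2011, HMTU2017}. Failure of (c) at some $X\in\mathcal{I}^{+}$ means there is a sequence $f:X\to\mathbb{Q}\cap[0,1]$ with no convergent $\mathcal{I}$-positive subsequence. (We may perturb the sequence into the rationals, and make it injective on fibers where needed, without changing convergence.) In that case $f^{-1}$ of every convergent sequence, and hence of every generator of $\mathbf{conv}$, lies in $\mathcal{I}\restriction X$; finite unions are preserved, so $f$ witnesses $\mathbf{conv}\leq_K\mathcal{I}\restriction X$. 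Conversely, a Kat\v{e}tov map $f:X\to\mathbb{Q}\cap[0,1]$ gives a sequence with no positive convergent subsequence, because the range of a convergent subsequence is in $\mathbf{conv}$, so the subsequence lies in $f^{-1}$ of a $\mathbf{conv}$ set. The delicate point is showing that a sequence convergent to a point possibly in its range is still covered by finitely many generators. We will handle it by splitting off the constant part, which is a singleton and hence a finite union of generators.
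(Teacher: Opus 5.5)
Your proposal is correct. The paper gives no proof of this proposition; it quotes it as a compilation from \cite{FMRS07, FMRS11, FMRS12, Hrusak2011, HMTU2017}. For (a)--(d), however, your argument uses exactly the technique the paper applies in Lemma~\ref{SPI} and Fact~\ref{fact_(X,I)}. That technique is coding by the tree $A_s=\{n\in A: s\sqsubset x_n\}$, together with the facts that every uncountable compact metrizable space is a continuous image of $2^{\omega}$ and contains a closed copy of $2^{\omega}$. Your arguments for (e) and (f) are the standard ones from the cited sources.

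Two points should be tightened.

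\textbf{The step (e)$\Rightarrow$(d).} The paper's definition of $(\omega,2)$-distributivity takes the sets $A_n$ from $\mathcal{I}^{+}$ and only yields $C\subseteq^{*}A$. The conclusion is symmetric in $A_s$ and $\omega\setminus A_s$, so replace any $A_s\in\mathcal{I}$ by its complement, and replace $C$ by $C\cap A$.

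\textbf{The step (c)$\Rightarrow$(f).} The part that actually needs care is the one you put in parentheses: the perturbation must make $f$ injective. Choose pairwise distinct rationals $q_n\in[0,1]$ with $|q_n-x_n|<2^{-n}$; this changes no convergence behaviour. Only then is $f$ convergent along $f^{-1}[S]$ whenever $S$ is the range of a convergent sequence.

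For a general $f$, the set $f^{-1}[S]$ is a union of possibly infinitely many fibers. Each fiber lies in $\mathcal{I}$, but the union need not. For example, take $f(k,m)=1/(k+1)$ on $\omega\times\omega$ with the ideal $\mathrm{FIN}\times\mathrm{FIN}$. This $f$ has no positive convergent restriction, yet it is not a Kat\v{e}tov map.

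By contrast, the ``delicate point'' you single out at the end is harmless. If $f\restriction B$ converges to $p$, then $f[B]\setminus(p-\varepsilon,p+\varepsilon)$ is finite for every $\varepsilon>0$. So $f[B]$ is itself finite or the range of a convergent sequence, whether or not $p\in f[B]$, and no splitting is needed.
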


Finally, we present a diagram illustrating the role of Ramsey ideals within in a relevant portion of the hierarchy of combinatorial properties of ideals. The diagram specifies the implications among selective, semiselective, Ramsey, and weakly selective ideals; moreover, all these combinatorial notions coincide when the corresponding coideal is an ultrafilter (see, for example, \cite[Proposition 11.7]{Halbeisen}), in which case one obtains the classical result given by Kunen asserting that an ultrafilter is Ramsey if and only if it is both a q-point and a p-point (see, for example, \cite[Fact 11.11]{Halbeisen}).

\smallskip

\begin{equation*}
    \small
		\xymatrix@!0@C=3.7em@R=3.7em
		{ 
			\text{Selective} \ar@2{->}[d] & \ar@2{<->}[rr]  &  & \;\; q^{+} \ar@{}[d]|{\;\;\parallel} \ar@{}[r]|{\&} & p^{+} \ar@{->}[d]
			\\ 
			\text{Semiselective} \ar@2{->}[d] & \ar@2{<->}[rr]  &  & \;\; q^{+} \ar@{}[d]|{\;\;\parallel} \ar@{}[r]|{\&} & p^{w} \ar@{->}[d]
			\\
			\text{Ramsey} \ar@2{->}[d] & \ar@2{<->}[rr]  &  & \;\; q^{+} \ar@{}[d]|{\;\;\parallel} \ar@{}[r]|{\&} & \textrm{ \small \;\;\;\; h-FinBW} \ar@{->}[d]
			\\ 
			\text{Weakly selective} \;\;\;\; \ar@{-->}@/^{16mm}/[uuu]^{\textstyle\textit{\rotatebox{90}{Ultrafilter}}} & \ar@2{<->}[rr] &  & \;\; q^{+} \ar@{}[r]|{\&} & p^{-} \ar@{-->}@/^{-16mm}/[uuu]_{\textstyle\textit{\rotatebox{270}{Ultrafilter}}}
		}
	\end{equation*}

\smallskip

\subsection{Systems with diagonalizations}

In \cite[Last Remark of Section 4]{FMRS11} and \cite[Question 5.6]{HMTU2017}, the following problem about Ramsey ideals has been essentially asked: Is there a Ramsey ideal $\mathcal{I}$ satisfying $\mathcal{I}^+ \centernot \longrightarrow (\mathcal{I}^+)^{n}_{2}$ for some $n>2$? 

\medskip

The main goal of this section is to provide a negative answer to this question. Specifically, in Theorem \ref{Ramsey_ideal_theorem}, we aim to prove that an ideal $\mathcal{I}$ is Ramsey if and only if it satisfies $\mathcal{I}^+ \longrightarrow (\mathcal{I}^+)^{n}_{k}$ for all $n,k \geq 2$. 

\medskip

We begin by introducing the notions of systems with pseudo-intersections and diagonalizations relative to an ideal $\mathcal{I}$ on $\omega$. These combinatorial concepts for ideals are inspired by related properties previously considered in \cite[Definition 3.1]{Farah02}, \cite[Proposition 3.3]{FMRS07}, \cite[Theorem 3.16]{FMRS11}, and \cite[Proposition 2.10]{FMRS12}; however, in our case, we work with finitely branching trees without any bound on the branching.

\begin{definition} \label{def_system_1}
    Let $\mathcal{I}$ be an ideal on $\omega$. A system of subsets of a $\mathcal{I}$-positive set $A\in \mathcal{I}^{+}$ is a collection $\{ A_{s} : s\in \omega^{<\omega} \} \subseteq \wp(A)$, with $A_{\emptyset} = A$, for which there exists a sequence $\{ a_{n} \}_{n\in \omega} \subseteq \omega$, with each $a_{n} \geq 2$, such that for every $s\in \omega^{<\omega}$ both $A_s=\bigcup_{i<a_{|s|}} A_{s^{\smallfrown} i}$ and $A_{s^{\smallfrown} i} \cap A_{s^{\smallfrown} j}=\emptyset$ if $i< j < a_{|s|}$.
\end{definition}

In other words, a system of subsets of a $\mathcal{I}$-positive set $A\in \mathcal{I}^{+}$ can be viewed as a finitely branching tree of height $\omega$, formed by subsets of $A$, whose root is $A$ and in which each node is finitely partitioned by its immediate successors.

\begin{definition} \label{def_system_2}
    Given an ideal $\mathcal{I}$ on $\omega$, we say that:

    
    \setlist{nolistsep}
    \begin{itemize}
    \setlength{\itemsep}{0pt}

    \item $\mathcal{I}$ satisfies the property of systems with pseudo-intersections, or simply $\mathcal{I}$ is $\textrm{SPI}$, if for each $A\in\mathcal{I}^{+}$ and each system $\{ A_{s} : s\in \omega^{<\omega} \}$ of subsets of $A$, there exist $B\in \mathcal{I}^{+} \!\restriction\! A$ and $x \in \prod_{n\in\omega} a_{n}$ such that $B$ is a pseudo-intersection of $\{ A_{x\restriction n}\}_{n\in \omega}$, that is, $B \subseteq^{*} A_{x\restriction n}$ for all $n\in \omega$. 

    
    \item $\mathcal{I}$ satisfies the property of systems with diagonalizations, or simply $\mathcal{I}$ is $\textrm{SD}$, if for each $A\in\mathcal{I}^{+}$ and each system $\{ A_{s} : s\in \omega^{<\omega} \}$ of subsets of $A$, there exist $D\in \mathcal{I}^{+} \!\restriction\! A$ and $x \in \prod_{n\in\omega} a_{n}$ such that $D$ is a diagonalization of $\{ A_{x\restriction n}\}_{n\in \omega}$, that is, $D/m \subseteq A_{x\restriction (m+1)}$ for all $m\in D$.
 
    \end{itemize}  
\end{definition}

The following result is a slight generalization of \cite[Proposition 2.8]{FMRS11}, which characterizes the property $\textrm{h-FinBW}$ in terms of systems with pseudo-intersections. For the sake of completeness, we provide a sketch of its proof.

\begin{lemma}  \label{SPI}
    Let $\mathcal I$ be an ideal on $\omega$. Then, the following statements are equivalent:
\setlist{nolistsep}
\begin{itemize}
\setlength{\itemsep}{0pt}
\item[(a)] $\mathcal I$ is $\textrm{h-FinBW}$.
\item[(b)] $\mathcal I$ is $\textrm{SPI}$.
\end{itemize}
\end{lemma}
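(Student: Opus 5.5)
We prove the two implications separately. The direction (b) $\Rightarrow$ (a) is essentially immediate from Proposition \ref{(h-FinBW)}. Every binary system as in item (d) of that proposition is a system in the sense of Definition \ref{def_system_1}, with $a_n = 2$ for all $n$ and $A_s$ taken only for $s \in 2^{<\omega}$. Nodes indexed by sequences leaving $2^{<\omega}$ are never used, because each $A_s$ is partitioned only into $A_{s^\smallfrown 0}$ and $A_{s^\smallfrown 1}$; formally we may set $A_t=\emptyset$ for $t\notin 2^{<\omega}$. The SPI property then yields $B \in \mathcal{I}^{+}\!\restriction\! A$ and $x \in \prod_n 2 = 2^\omega$ with $B \subseteq^* A_{x\restriction n}$ for every $n$. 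This is exactly condition (d) of Proposition \ref{(h-FinBW)}, so $\mathcal{I}$ is h-FinBW.

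For (a) $\Rightarrow$ (b) we use the convergence formulation, item (c) of Proposition \ref{(h-FinBW)}, in the compact metrizable space $\mathcal{X} = \prod_{n\in\omega} a_n$ with the product of discrete topologies. This space is compact because each $a_n$ is finite. It is uncountable because each $a_n \ge 2$; in fact it is homeomorphic to the Cantor space. Fix $A \in \mathcal{I}^{+}$ and a system $\{A_s\}$. For each $m \in A$, the partition property shows inductively that for every $n$ there is a unique $s$ of length $n$ with $m \in A_s$, and these $s$ are coherent. Hence there is a unique branch $y_m \in \prod_n a_n$ with $m \in A_{y_m\restriction n}$ for all $n$.

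By (c) there is $B \in \mathcal{I}^{+}\!\restriction\! A$ such that $\{y_m\}_{m\in B}$ converges to some $x \in \prod_n a_n$. Convergence in the product topology means that for each $n$, all but finitely many $m \in B$ satisfy $y_m \restriction n = x\restriction n$, i.e.\ $m \in A_{x\restriction n}$. Thus $B \subseteq^* A_{x\restriction n}$ for all $n$, which is SPI.

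The main point to verify carefully is the coding step: well-definedness of $y_m$ (each point of $A$ lies in exactly one node at each level, using disjointness and $A_s=\bigcup_{i<a_{|s|}}A_{s^\smallfrown i}$), and the identification of convergence in $\prod_n a_n$ with the almost-inclusions. Beyond this, the equivalence (a) $\Leftrightarrow$ (c) of Proposition \ref{(h-FinBW)} carries the argument. Alternatively, one can embed $\prod_n a_n$ into $[0,1]$ via a continuous injection and use h-FinBW with real sequences directly: the image is compact, so convergence of $\{y_m\}_{m\in B}$ in $[0,1]$ forces convergence in $\prod_n a_n$.
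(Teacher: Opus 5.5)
Your proposal is correct and follows essentially the same proof as the paper. For (a)$\Rightarrow$(b), the paper also codes each $k\in A$ by its unique branch $x_k\in\prod_{n}a_n$ and uses convergence in this uncountable compact metrizable space to get $B\subseteq^{*}A_{x\restriction n}$. For (b)$\Rightarrow$(a), the paper builds the binary system $A_s=\{n\in A: s\sqsubset x_n\}$ from a sequence in $2^{\omega}$ and checks convergence to $\alpha$ directly; this has the same content as your appeal to item (d) of Proposition~\ref{(h-FinBW)}, and your padding $A_t=\emptyset$ for $t\notin 2^{<\omega}$ makes explicit a detail the paper leaves implicit.
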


\begin{proof}
    $[\text{(a}) \Longrightarrow \text{(b)}].$ Suppose that $\mathcal{I}$ is $\textrm{h-FinBW}$. Let $A\in \mathcal{I}^{+}$ and let $\{ A_{s} : s\in \omega^{<\omega} \} \subseteq \wp(A)$ be as in Definition \ref{def_system_1}; so, note that for every $k\in A$ there exists a unique $x_{k} \in \prod_{n\in\omega} a_{n}$ such that $k\in \bigcap_{n\in\omega} A_{x_{k} \restriction n}$. Consider the sequence $\{ x_{k}\}_{k\in A}$ in the uncountable compact metrizable space $\prod_{n\in\omega} a_{n}$, then there exist $B\in \mathcal{I}^{+} \!\restriction\! A$ and $x\in \prod_{n\in\omega} a_{n}$ such that the subsequence $\{ x_{k}\}_{k\in B}$ converges to $x$. Finally, conclude that $B \subseteq^{*} A_{x \restriction n}$ for all $n\in\omega$. Therefore $\mathcal{I}$ is $\textrm{SPI}$.

    \medskip
    
    $[\text{(b}) \Longrightarrow \text{(a)}].$ Suppose that $\mathcal I$ is $\textrm{SPI}$. Let $A\in\mathcal{I}^{+}$ and consider a sequence $\{x_{n}\}_{n\in A} \subseteq 2^{\omega}$. For each $s\in 2^{<\omega}$ let $A_{s}=\{ n\in A : s \sqsubset x_{n} \}$, so that $\{ A_{s} : s\in 2^{<\omega} \} \subseteq \wp(A)$ satisfies the conditions of Definition \ref{def_system_1}. Now, take $B \in \mathcal{I}^{+} \!\restriction\! A$ and $\alpha \in 2^{\omega}$ such that $B \subseteq^{*} A_{\alpha \restriction n}$ for all $n\in\omega$. Finally, conclude that the subsequence $\{ x_{n} \}_{n\in B}$ converges to $\alpha$. Therefore $\mathcal I$ is $\textrm{h-FinBW}$.
\end{proof}    

\smallskip

We present the following result as an interesting application of the previous lemma, where systems with pseudo-intersections are used as a tool to establish the existence of simultaneously ideal convergent subsequences for a countable family of sequences in the Cantor space $2^{\omega}$.

\begin{lemma} \label{h-FinBW Lemma}
    Let $\mathcal I$ be an ideal on $\omega$. Then, the following statements are equivalent:
\setlist{nolistsep}
\begin{itemize}
\setlength{\itemsep}{0pt} 
\item[(a)] $\mathcal I$ is $\textrm{h-FinBW}$.
\item[(b)] For every $A\in\mathcal{I}^{+}$ and every countable collection $\{ \{x^{m}_{k}\}_{k\in A} \}_{m\in\omega}$  of sequences in $2^{\omega}$ indexed by $A$, there exists $B\in \mathcal{I}^{+} \!\restriction\! A$ such that for each $m\in\omega$ the subsequence $\{x^{m}_{k}\}_{k\in B}$ is convergent.   
\end{itemize}
\end{lemma}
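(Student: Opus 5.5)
The direction (b) $\Rightarrow$ (a) is immediate. Given a single sequence $\{x_k\}_{k\in A}\subseteq 2^{\omega}$, take the constant family $x^{m}_{k}=x_k$ for every $m$. This gives h-FinBW through the Cantor space version, Proposition \ref{(h-FinBW)}(b) with $\mathcal{X}=2^{\omega}$. Alternatively, one can use only the first sequence of the family.

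For (a) $\Rightarrow$ (b), I plan to use Lemma \ref{SPI} and encode the countably many sequences into a single system. Fix $A\in\mathcal I^{+}$ and sequences $\{x^{m}_{k}\}_{k\in A}$ for $m\in\omega$. Fix a bijection $e:\omega\to\omega\times\omega$, $e(n)=(m_n,j_n)$, and arrange that $j_n$ is determined by the earlier coordinates; concretely, Cantor's pairing enumerates every pair $(m,j)$ exactly once. For $k\in A$ define $y_k\in 2^{\omega}$ by $y_k(n)=x^{m_n}_{k}(j_n)$. The map $(z^{m})_{m}\mapsto y$ is a homeomorphism $(2^{\omega})^{\omega}\cong 2^{\omega}$. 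Hence $\{y_k\}_{k\in B}$ converges if and only if $\{x^{m}_{k}\}_{k\in B}$ converges for every $m$, because convergence in the product is coordinatewise.

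In system form, set $A_s=\{k\in A: s\sqsubset y_k\}$ for $s\in 2^{<\omega}$, with all $a_n=2$. This is a system of subsets of $A$ in the sense of Definition \ref{def_system_1}. Since $\mathcal I$ is h-FinBW, Lemma \ref{SPI} gives $B\in\mathcal I^{+}\restriction A$ and $\alpha\in 2^{\omega}$ with $B\subseteq^{*}A_{\alpha\restriction n}$ for all $n$. Hence $y_k\to\alpha$ along $B$. Decoding, for each $m$ and $j$ we have $x^{m}_{k}(j)=\alpha(e^{-1}(m,j))$ for all but finitely many $k\in B$. So $\{x^{m}_k\}_{k\in B}$ converges to the point $j\mapsto\alpha(e^{-1}(m,j))$.

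The only point requiring care is to ensure that the single pseudo-intersection $B$ controls all coordinates of all sequences simultaneously. The bijection $e$ handles this, since every pair $(m,j)$ appears at some finite level $n$ and $B\subseteq^{*}A_{\alpha\restriction(n+1)}$ fixes that bit. One could instead apply Proposition \ref{(h-FinBW)}(c) directly with $\mathcal X=(2^{\omega})^{\omega}$, which is uncountable, compact and metrizable. I would present the system version to match the stated purpose of illustrating Lemma \ref{SPI}.
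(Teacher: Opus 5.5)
Your proof is correct, and its core is the same as the paper's: code every bit of every sequence along one branch of a system, apply Lemma \ref{SPI}, and read off the limits coordinatewise. The two proofs differ only in how the coding is packaged. The paper builds a system with branching $a_\ell=2^{\ell+1}$, in which level $\ell$ records at once the bits $x^{m}_{k}(\ell-m)$ for all $m\le\ell$, i.e.\ a whole anti-diagonal $m+j=\ell$; this is why it needs the unbounded-branching form of SPI it has just introduced. You instead first flatten $(2^{\omega})^{\omega}$ onto $2^{\omega}$, so a binary system suffices. That binary case is already Proposition \ref{(h-FinBW)}(d). In fact your argument shows the lemma follows from h-FinBW applied to the single Cantor-valued sequence $\{y_k\}_{k\in A}$ (via Proposition \ref{(h-FinBW)}(c)), with no system needed at all. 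With Cantor's pairing, your binary tree is exactly the paper's tree unrolled one bit per level, so the two proofs carry the same information. Yours is a bit more economical; the paper's is chosen to illustrate the new systems. Your alternative through the compact metrizable space $(2^{\omega})^{\omega}$ is precisely how the paper later proves the general Proposition \ref{h-FinBW_n Lemma}. One minor point: the requirement that ``$j_n$ is determined by the earlier coordinates'' plays no role, since any bijection $e:\omega\to\omega\times\omega$ works.
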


\begin{proof}
    $[\text{(b)} \Longrightarrow \text{(a)}].$ Straightforward.

    \medskip
    
    $[\text{(a)} \Longrightarrow \text{(b)}].$ Suppose that $\mathcal{I}$ is $\textrm{h-FinBW}$, so it is $\textrm{SPI}$ according to Lemma \ref{SPI}. Let $A\in\mathcal{I}^{+}$ and consider a countable collection $\{ \{x^{m}_{k}\}_{k\in A} \}_{m\in\omega}$  of sequences in $2^{\omega}$ indexed by $A$. We recursively construct a system $\{ A_{s} : s\in \omega^{<\omega} \} \subseteq \wp(A)$ of subsets of $A$ as follows:

    \medskip

     Let $\{a_{n}\}_{n\in\omega} \subseteq \omega$ be the sequence defined by $a_{n}=2^{n+1}$, where each $a_{n}$ represents the set of all sequences of length $n+1$ consisting of zeros and ones. Now, put $A_{\emptyset} = A$, and for each $\ell \in \omega$ and $s\in \prod_{n<\ell} 2^{n+1}$, where $s = \{ s_{n} \}_{n<\ell}$ is a sequence of length $\ell$ with each $s_{n}\in 2^{n+1}$, suppose inductively that the set $A_{s}$ has been constructed. Then, for every $\gamma \in 2^{\ell+1}$ we define the set $A_{s^{\smallfrown}\gamma}$ by
    
    \kern-0.5em
    \begin{equation*}
       A_{s^{\smallfrown}\gamma} = \{ k\in A_{s} : ( \forall\, m \leq \ell ) ( {\{s_{i}(m) \}_{m\leq i <\ell}}^{\smallfrown} \gamma(m) \sqsubset x^{m}_{k}) \}. 
    \end{equation*}

    \kern-0.5em
    In this way, we obtain that $A_{s} = \bigcup_{\gamma \in a_{|s|}} A_{s^{\smallfrown}\gamma}$ and $A_{s^{\smallfrown}\gamma} \cap A_{s^{\smallfrown}\beta} = \emptyset$ if $\gamma,\beta\in a_{|s|}$ with $\gamma \neq \beta$.

    \medskip
    
    Since the ideal $\mathcal{I}$ is $\textrm{SPI}$, there exist $B\in \mathcal{I}^{+} \!\restriction\! A$ and $\alpha \in \prod_{n\in\omega} 2^{n+1}$, where $\alpha = \{\alpha_{n}\}_{n\in\omega}$ and each $\alpha_{n} \in 2^{n+1}$, such that $B$ is a pseudo-intersection of $\{ A_{\alpha\restriction n}\}_{n\in \omega}$, then $B \subseteq^{*} A_{\alpha \restriction n}$ for all $n\in \omega$.

    \medskip

    Finally, we conclude that for each $m\in \omega$ the subsequence $\{ x^{m}_{k} \}_{k\in B}$ converges to $\alpha^{m} \in 2^{\omega}$, where $\alpha^{m} = \{ \alpha_{i}(m) \}_{m \leq i}$. Indeed, fix $m\in\omega$ and note that for each $n > m$ there is $j_{n}\in\omega$ such that $B/j_{m} \subseteq A_{\alpha \restriction n}$, thus for every $k\in B/j_{m}$ we have $k\in A_{\alpha \restriction n}$, then $\{ \alpha_{i} (m)\}_{m\leq i\leq n} \sqsubset x^{m}_{k}$ and hence $\alpha^{m} \!\restriction\!(n-m) = x^{m}_{k} \!\restriction\!(n-m)$; therefore, we deduce that $\alpha^{m}$ is the limit of the subsequence $\{ x^{m}_{k} \}_{k\in B}$.
\end{proof}

\smallskip

We remark that the previous fact remains valid when the sequences are taken in an arbitrary uncountable compact metrizable space $\mathcal{X}$, rather than only in $2^{\omega}$, as will be established in a more general setting in Proposition \ref{h-FinBW_n Lemma}.

\medskip

We continue by presenting a useful combinatorial characterization of Ramsey ideals in terms of systems with diagonalizations, thereby generalizing \cite[Theorem 3.16, (1) $\Leftrightarrow$ (4)]{FMRS11}. For the sake of completeness, we include a proof.

\begin{proposition} \label{SD}
    Let $\mathcal I$ be an ideal on $\omega$. Then, the following statements are equivalent:
\setlist{nolistsep}
\begin{itemize}
\setlength{\itemsep}{0pt}
\item[(a)] $\mathcal I$ is Ramsey.
\item[(b)] $\mathcal I$ is $\textrm{SD}$.
\end{itemize}
\end{proposition}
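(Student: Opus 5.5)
We prove the two implications separately. The direction (b)$\Rightarrow$(a) is almost immediate from Proposition \ref{Ramsey = (q^{+})+(h-FinBW)}. The direction (a)$\Rightarrow$(b) reduces to the binary case (d) of that proposition by coding each finite splitting as a finite sequence of binary splittings.

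For (b)$\Rightarrow$(a), suppose $\mathcal{I}$ is $\textrm{SD}$. Take $A\in\mathcal{I}^{+}$ and a binary collection $\{A_s : s\in 2^{<\omega}\}$ as in Proposition \ref{Ramsey = (q^{+})+(h-FinBW)}(d). Extending it to $\omega^{<\omega}$ by letting $A_s=\emptyset$ whenever $s$ has an entry $\geq 2$, and taking $a_n=2$ for all $n$, gives a system in the sense of Definition \ref{def_system_1}. Property $\textrm{SD}$ then yields $D\in\mathcal{I}^{+}\!\restriction\! A$ and $x\in 2^{\omega}$ with $D/m\subseteq A_{x\restriction(m+1)}$ for all $m\in D$. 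This is exactly condition (d), so $\mathcal{I}$ is Ramsey.

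For (a)$\Rightarrow$(b), let $\mathcal{I}$ be Ramsey, $A\in\mathcal{I}^{+}$, and $\{A_s\}$ a system with branching sequence $\{a_n\}$. Set $N_n=\lceil\log_2 a_n\rceil$ and $L_n=\sum_{j<n}N_j$.

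\begin{itemize}
\item Encode each digit $i<a_n$ by a binary string of length $N_n$ (allowing empty pieces for unused codes). This defines a binary tree $\{B_t : t\in 2^{<\omega}\}$ with $B_{t}=A_s$ whenever $t$ is the concatenated code of $s$. The intermediate levels partition $A_s$ according to the bits of the next digit, so $B_t=B_{t^\frown0}\cup B_{t^\frown1}$ disjointly.
\item Apply Proposition \ref{Ramsey = (q^{+})+(h-FinBW)}(d) to get $D'\in\mathcal{I}^{+}\!\restriction\! A$ and $\alpha\in 2^{\omega}$ with $D'/m\subseteq B_{\alpha\restriction(m+1)}$ for $m\in D'$.
\item Read $\alpha$ in blocks to obtain $x\in\prod a_n$. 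The block for level $n$ decodes to a legal digit: otherwise $B_{\alpha\restriction L_{n+1}}$ would be empty, contradicting that it contains a tail of the infinite set $D'$.
\end{itemize}

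The diagonalization $D'$ only gives $D'/m\subseteq A_{x\restriction k}$ with $L_k\le m+1$, which is slower than the required $D/m\subseteq A_{x\restriction(m+1)}$, since $L_{m+1}$ may greatly exceed $m+1$. This lag is the main obstacle.

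The plan for fixing it is to shrink $D'$ to a positive set that is sparse along a fast-growing function. Let $g(m)$ be the least $m'$ with $m'+1\ge L_{m+2}$; then for $k\in D'$ with $k>g(m)$ we get $k\in A_{x\restriction(m+1)}$. Partition $\omega$ into consecutive finite intervals $I_0<I_1<\cdots$ so that $\max I_{j+1}\ge g(\max I_j)$ (iterate $g$). By the $q^{+}$ property, which Ramsey ideals have, choose $E\in\mathcal{I}^{+}\!\restriction\! D'$ meeting each interval in at most one point. Since $\mathcal{I}^{+}$ is closed under removing every other chosen interval (split $E$ into its even-indexed and odd-indexed parts, one of which is positive), we may assume consecutive elements of $E$ are separated by a whole interval. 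Then for $m<k$ in $E$ we have $k>g(m)$, so $k\in A_{x\restriction(m+1)}$, and $D=E$ works.

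The delicate point to check is the gap requirement: it needs $k\ge g(m)$ whenever $m$ lies in some $I_j$ and $k$ lies in some $I_{j'}$ with $j'\ge j+2$. This follows by choosing each interval to end beyond $g$ of its own maximum, using that $g$ is monotone.
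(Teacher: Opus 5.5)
Your proof is correct in outline, but it follows a different route from the paper in both directions.

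\textbf{Direction (b)$\Rightarrow$(a).} The paper does not go through condition (d) of Proposition~\ref{Ramsey = (q^{+})+(h-FinBW)}. It builds a binary system directly from a coloring $\pi:[A]^2\to 2$, splitting $A_s$ at level $n\in A$ according to $\pi(\{n,\cdot\})$. It then applies SD and splits the resulting diagonalization $H$ according to the values $x(m)$. Your observation that SD restricted to systems with $a_n=2$ is literally condition (d) is shorter. However, it outsources the combinatorics to \cite{FMRS11}, whereas the paper's argument for this direction is self-contained.

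\textbf{Direction (a)$\Rightarrow$(b).} The paper first obtains a pseudo-intersection $B\subseteq^{*}A_{x\restriction n}$ from h-FinBW via Lemma~\ref{SPI}, using convergence in the compact space $\prod_n a_n$. It then upgrades $B$ to a diagonalization using $q^{+}$ on the blocks $B\cap[n_k,n_{k+1})$ and the even/odd splitting. You instead code the finitely branching system into a binary one and apply (d). The decoded set $D'$ is in particular a pseudo-intersection of $\{A_{x\restriction n}\}_{n}$, and your intervals/$q^{+}$/parity step is exactly the paper's upgrade. So your route shows that finite branching reduces to binary branching by coding. This avoids the topological lemma, at the price of using the cited equivalence (a)$\Leftrightarrow$(d) in both directions.

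\textbf{The step that needs fixing.} The real delicate point is not the gap requirement you flag but the definition of $g$. You need $g(m)$ to be the least element $m'$ \emph{of $D'$} with $m'+1\ge L_{m+2}$. Equivalently, any threshold with $D'/g(m)\subseteq A_{x\restriction(m+1)}$ works; one exists because $D'\subseteq^{*}A_{x\restriction(m+1)}$.

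If $m'$ ranges over all of $\omega$, so that $g(m)=L_{m+2}-1$, the implication ``$k\in D'$ and $k>g(m)$ imply $k\in A_{x\restriction(m+1)}$'' can fail. The diagonalization only gives $k\in B_{\alpha\restriction(d+1)}$, where $d$ is the $D'$-predecessor of $k$. Your intervals partition $\omega$ rather than $D'$, so the skipped interval $I_{j+1}$ may contain no point of $D'$. Then $d=m$, and you only get $k\in B_{\alpha\restriction(m+1)}$, which is weaker than $k\in B_{\alpha\restriction L_{m+1}}$ when $L_{m+1}>m+1$.

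With $g(m)\in D'$, the function $g$ is monotone and $k\in D'/g(m)\subseteq B_{\alpha\restriction L_{m+2}}\subseteq A_{x\restriction(m+1)}$. The rest of your argument then goes through.
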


\begin{proof}    
    $[\text{(a}) \Longrightarrow \text{(b)}].$ Suppose that $\mathcal{I}$ is Ramsey, so it is both $q^{+}$ and $\textrm{SPI}$, according to Proposition \ref{Ramsey = (q^{+})+(h-FinBW)} and Lemma \ref{SPI}. Let $\{ A_{s} : s\in \omega^{<\omega} \} \subseteq \wp(A)$ be a system of subsets of $A\in \mathcal{I}^{+}$, and let $\{ a_{n} \}_{n\in\omega} \subseteq \omega$ be a sequence such that for every $s\in \omega^{<\omega}$ both $A_s=\bigcup_{i<a_{|s|}} A_{s^{\smallfrown} i}$ and $A_{s^{\smallfrown} i} \cap A_{s^{\smallfrown} j}=\emptyset$ if $i< j < a_{|s|}$. 

    \medskip
    
    Since the ideal $\mathcal{I}$ is $\textrm{SPI}$, there exist $B\in \mathcal{I}^{+} \!\restriction\! A$ and $x \in \prod_{n\in\omega} a_{n}$ such that $B \subseteq^{*} A_{x\restriction n}$ for all $n\in \omega$. Taking this into account, consider a strictly increasing sequence $\{n_{k}\}_{k\in\omega} \subseteq \omega$ such that $n_{0}=0$ and $n_{k+1} > \max( (B\setminus A_{x \restriction n_{k}})\cup\{n_{k}\} )$ for each $n\in \omega$.
    
    \medskip
    
    Consider now the infinite partition $B= \bigcup_{k\in\omega} f_{k}$, where each finite piece $f_{k}$ is defined by $f_{k} = B \cap \{ n_{k}, \ldots, n_{k+1}-1 \}$. Since the ideal $\mathcal{I}$ is $q^{+}$, there exists $D \in \mathcal{I}^{+} \!\restriction\! B \subseteq \mathcal{I}^{+} \!\restriction\! A$ such that $|D\cap f_{k}|\leq 1$ for all $k\in\omega$. Finally, write $D= D_{0}\cup D_{1}$ as a disjoint union, where $D_{i} = \{d\in D : (\exists\, j\in\omega)(d\in f_{2j+i}) \}$ for $i\in 2$, then at least one of these sets belongs to $\mathcal{I}^{+}$; however, in either case $D_{i}$ is a diagonalization of $\{ A_{x \restriction n} \}_{n\in\omega}$.
        
    \medskip
    
    Indeed, for every $m \in D_{i}$ there is a unique $k\in\omega$ such that $m\in f_{k}$; thus, it holds that

    \kern-0.5em
    \begin{equation*}
        D_{i} /m = D_{i}/(n_{k+2}-1) \subseteq B/(n_{k+2}-1) \subseteq A_{x\restriction n_{k+1}} \subseteq A_{x\restriction(m+1)}.
    \end{equation*}

    \kern-0.5em    
    Consequently, we deduce that $D_{i} /m \subseteq A_{x \restriction (m+1)}$ for all $m\in D_{i}$. Therefore, we conclude that $\mathcal{I}$ is $\textrm{SD}$.

    \medskip
    
    $[\text{(b}) \Longrightarrow \text{(a)}].$ Suppose that $\mathcal I$ is $\textrm{SD}$. Let $A\in \mathcal{I}^{+}$ and consider any coloring $\pi: [A]^{2} \rightarrow 2$. Recursively construct a collection $\{ A_{s} : s\in 2^{<\omega} \} \subseteq \wp(A)$, with $A_{\emptyset}= A$, such that $A_{s} = A_{s^{\smallfrown}0} \cup A_{s^{\smallfrown}1}$ and $A_{s^{\smallfrown}0} \cap A_{s^{\smallfrown}1} = \emptyset$ for all $s\in 2^{<\omega}$, as follows: 

    \medskip

    For $n\in \omega$ and $s\in 2^{<\omega}$ such that $|s|=n$, assume that $A_{s}$ has already been constructed. Then, if $n\notin A$, set $A_{s^{\smallfrown}0}= A_{s}$ and $A_{s^{\smallfrown}1}= \emptyset$. Otherwise, if $n\in A$, define $A_{s^{\smallfrown}i} = \{ m\in A_{s} : \pi(\{n,m\})=i \}$ for each $i\in 2$; additionally, join $\{n\}$ with any $A_{s^{\smallfrown}i}$ as long as $n\in A_{s}$. 

    \medskip

    Since the ideal $\mathcal{I}$ is $\textrm{SD}$, there exist $H\in \mathcal{I}^{+} \!\restriction\! A$ and $x \in 2^{\omega}$ such that $H/m \subseteq A_{x \restriction (m+1)}$ for all $m\in H$. Next, split $H= H_{0}\cup H_{1}$ into two disjoint sets, where $H_{i} = \{m\in H : x(m)=i \}$ for $i\in 2$, then at least one of these sets belongs to $\mathcal{I}^{+}$; nevertheless, in either case $H_{i}$ is homogeneous for $\pi$. Indeed, for every $m,m^{\prime} \in H_{i}$ with $m<m^{\prime}$ we have $m^{\prime}\in H_{i}/m$ and hence $m^{\prime}\in A_{x\restriction(m+1)}$, thus $\pi(\{m,m^{\prime}\}) = x(m)=i$, which implies that $\pi \,\text{''}\, [H_{i}]^{2} = \{i\}$. Therefore, we conclude that $\mathcal{I}$ is Ramsey.
\end{proof}

\smallskip

Finally, we apply systems with diagonalizations to obtain the following result, which answers the question raised in \cite[Last Remark of Section 4]{FMRS11} and \cite[Question 5.6]{HMTU2017}. Specifically, we prove that an ideal $\mathcal{I}$ is Ramsey if and only if it satisfies $\mathcal{I}^+ \longrightarrow (\mathcal{I}^+)^{n}_{k}$ for all $n,k \geq 2$.

\begin{theorem} \label{Ramsey_ideal_theorem}
    Let $\mathcal I$ be an ideal on $\omega$. Then, the following statements are equivalent:
\setlist{nolistsep}
\begin{itemize}
\setlength{\itemsep}{0pt}
\item[(a)] $\mathcal I$ is Ramsey.
\item[(b)] $\mathcal{I}^+ \longrightarrow (\mathcal{I}^+)^{n}_{k}$ holds for all $n,k\geq 2$.
\end{itemize}
\end{theorem}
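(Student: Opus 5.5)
The direction (b)$\Rightarrow$(a) is immediate: take $n=k=2$. For (a)$\Rightarrow$(b), Fact \ref{fact_colors} lets us work with $k=2$ only. We then prove $\mathcal{I}^+ \longrightarrow (\mathcal{I}^+)^{n}_{2}$ by induction on $n\geq 2$. The case $n=2$ is the hypothesis. The main tool for the inductive step is Proposition \ref{SD}, i.e.\ the property $\textrm{SD}$, applied to systems with finite but unbounded branching $a_n$.

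For the inductive step, assume $\mathcal{I}^+ \longrightarrow (\mathcal{I}^+)^{n}_{k}$ for all $k$, and fix $A\in\mathcal{I}^+$ and $\pi:[A]^{n+1}\to 2$. Mimicking the classical proof of Ramsey's theorem, we build a system $\{A_s : s\in\omega^{<\omega}\}$ of subsets of $A$. At level $\ell$ we split according to the ``type'' of an element $m$ relative to the numbers below $\ell$.

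Concretely, let $a_\ell = 2^{\binom{\ell+1}{n}}$ (or any $a_\ell\geq 2$ at least this large). Enumerate the functions $\sigma:[\ell+1]^n\to 2$ as the $a_\ell$ possible children. For $m\in A_s$ with $m>\ell$, send $m$ to the child indexed by the function
\[
\sigma_m(x) = \pi(x\cup\{m\}) \quad \text{for } x\in[A\cap(\ell+1)]^n,
\]
extended arbitrarily, say by $0$, on sets not contained in $A$. Elements $m\leq \ell$ of $A_s$ are put into child $0$. This is a finite partition of $A_s$, so we obtain a system in the sense of Definition \ref{def_system_1}. The point is that the pieces record, by refinement along a branch, the full pattern $x\mapsto \pi(x\cup\{m\})$ for $x\subseteq m$.

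By $\textrm{SD}$ there are $D\in\mathcal{I}^+\!\restriction\! A$ and $y\in\prod_\ell a_\ell$ with $D/m\subseteq A_{y\restriction(m+1)}$ for all $m\in D$. Now take $x\in[D]^n$ with $\max x=m$. For any $m'\in D/m$ we have $m'\in A_{y\restriction(m+1)}$, and the label $y(m)$ determines $\pi(x\cup\{m'\})$. So $\pi(x\cup\{m'\})$ depends only on $x$, not on $m'$. Define $\psi:[D]^n\to 2$ by $\psi(x)=\pi(x\cup\{m'\})$ for any $m'\in D$ above $\max x$; this is well defined since $D$ is infinite.

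The induction hypothesis gives $H\in\mathcal{I}^+\!\restriction\! D$ homogeneous for $\psi$. Then $H$ is homogeneous for $\pi$, because every $z\in[H]^{n+1}$ is $x\cup\{\max z\}$ with $\max z>\max x$ and $\max z\in D$.

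The main obstacle is bookkeeping. We must check that the labelling at level $\ell$ is genuinely a partition of $A_s$ into at most $a_\ell$ disjoint pieces, including the awkward elements $m\leq\ell$ and the convention about the predecessor sequence. We must also check that the diagonalization condition $D/m\subseteq A_{y\restriction(m+1)}$ is exactly what fixes the colour of $x\cup\{m'\}$ for every $x$ with $\max x=m$. Indexing the split at level $\ell$ by the element $\ell$ itself, as in the proof of Proposition \ref{SD}, makes this alignment work. No further use of the Ramsey property beyond $\textrm{SD}$ and the induction hypothesis is needed.
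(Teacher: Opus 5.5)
Your proof is correct and is essentially the paper's argument: you build a finitely branching system that records the pattern $x\mapsto\pi(x\cup\{m\})$, diagonalize it using $\textrm{SD}$, and apply the induction hypothesis to the induced coloring of $[D]^{n}$. The one difference is that at level $\ell$ you split only the elements above $\ell$, according to $n$-subsets of $A\cap(\ell+1)$; this matches the condition $D/m\subseteq A_{y\restriction(m+1)}$ exactly, and so you avoid the paper's extra application of the Ramsey property, which it needs to shrink the diagonalization so that $D/m\cap r_{n+m}(A)=\emptyset$.
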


\begin{proof}
    $[\text{(b)} \Longrightarrow \text{(a)}].$ Straightforward.

    \medskip
    
    $[\text{(a)} \Longrightarrow \text{(b)}].$ Suppose that $\mathcal{I}$ is Ramsey, so it is $\textrm{SD}$ according to proposition \ref{SD}. We are going to prove that $\mathcal{I}^+ \longrightarrow (\mathcal{I}^+)^{n}_{2}$ holds for all $n\geq 2$ by induction on $n$, and obviously the result is true for the base case $n=2$.

    \medskip

    Inductively, suppose that $\mathcal{I}^+ \longrightarrow (\mathcal{I}^+)^{n}_{2}$ holds for a fixed $n \geq 2$, and consider a coloring $\pi: [A]^{n+1} \rightarrow 2$ with $A\in \mathcal{I}^{+}$. We recursively construct a system $\{ A_{s} : s\in \omega^{<\omega} \} \subseteq \wp(A)$ of subsets of $A$ as follows: 

    \medskip

    Let $\{a_{j}\}_{j\in\omega} \subseteq \omega$ be the sequence defined by $a_{j}=2^{\binom{n+j}{n}}$, where each $a_{j}$ is interpreted as the set of all functions from $[r_{n+j}(A)]^{n}$ to $2$, with $r_{n+j}(A)$ denoting the initial segment of $A$ of size $n+j$ in its increasing enumeration. Now, put $A_{\emptyset} = A$, and for each $i\in 2$, define the set $A_{i} = \{k \in A \setminus r_{n}(A) : \pi(r_{n}(A) \cup \{k\} ) =i \}$, and later include the elements of $r_{n}(A)$ in any of these sets.
    
    \medskip
    
    Continuing with the process, for each $\ell \in \omega$ and $s\in \prod_{j<\ell} 2^{\binom{n+j}{n}}$ with $|s|=\ell$, suppose inductively that the set $A_{s}$ has been constructed. We partition $A_{s} \setminus r_{n+\ell}(A)$ into finitely many pieces using the equivalence relation $\sim_{s}$ defined as follows: for $k,k^{\prime} \in A_{s} \setminus r_{n+\ell}(A)$, we have $k \sim_{s} k^{\prime}$ if and only if $\pi(t\cup\{k\}) = \pi(t\cup\{k^{\prime}\})$ for every $t\in [r_{n+\ell}(A)]^{n}$. Thus, there are at most $2^{\binom{n+\ell}{n}}$ equivalence classes in $A_{s} \setminus r_{n+\ell}(A)$ under $\sim_{s}$. In other words, if $\{ f^{s}_{i} : i< 2^{\binom{n+\ell}{n}} \}$ is a finite list of all functions from $[r_{n+\ell}(A)]^{n}$ to $2$, then each of the above equivalence classes is determined by exactly one of these functions. Taking this into account, for every $i< 2^{\binom{n+\ell}{n}}$ we define the set $A_{s^{\smallfrown}i}$ by
    
    \kern-0.5em
    \begin{equation*}
       A_{s^{\smallfrown}i} = \{ k\in A_{s} \setminus r_{n+\ell}(A) : ( \forall\, t\in [r_{n+\ell}(A)]^{n} ) ( \pi(t\cup \{k\}) = f^{s}_{i}(t) ) \}. 
    \end{equation*}

    \kern-0.5em
    In addition, for each $k\in r_{n+\ell}(A)$, we join $\{k\}$ with any $A_{s^{\smallfrown}i}$ whenever $k\in A_{s}$. In this way, we obtain that $A_{s} = \bigcup_{i<a_{|s|}} A_{s^{\smallfrown}i}$ and $A_{s^{\smallfrown}i} \cap A_{s^{\smallfrown}j} = \emptyset$ if $i<j<a_{|s|}$.

    \medskip

     By virtue of the fact that the ideal $\mathcal{I}$ is $\textrm{SD}$, there exist $E\in \mathcal{I}^{+} \!\restriction\! A$ and $x \in \prod_{j\in\omega} 2^{\binom{n+j}{n}}$ such that $E$ is a diagonalization of $\{ A_{x\restriction j}\}_{j\in \omega}$, meaning that $E/m \subseteq A_{x\restriction (m+1)}$ for all $m\in E$. Now, define the coloring $\psi: [E]^{2} \rightarrow 2$ as follows: for $m,m^{\prime}\in E$ with $m<m^{\prime}$, let $\psi(\{m,m^{\prime}\}) = 0$ if and only if $m^{\prime} \in r_{n+m}(A)$. Since the ideal $\mathcal{I}$ is Ramsey, there exists $D\in \mathcal{I}^{+} \!\restriction\! E \subseteq \mathcal{I}^{+} \!\restriction\! A$ such that $\psi$ is constant on $[D]^{2}$, and necessarily $\psi \,\text{''}\, [D]^{2} = \{1\}$. As a result, we have both $D/m \cap r_{n+m}(A) = \emptyset$ and $D/m \subseteq A_{x\restriction (m+1)}$ for all $m\in D$.  
    \medskip

    Next, given any $t\in [D]^{n}$, if $m_{t}=\max t$ then $t\in [r_{n+m_{t}}(A)]^{n}$, and for each $k\in D/m_{t}$ we have that $k \in A_{x\restriction (m_{t}+1)} \setminus r_{n+m_{t}}(A)$; thus, we deduce that $\pi (t\cup \{k\}) = f^{x \restriction m_{t}}_{x(m_{t})} (t)$ whenever $k\in D/m_{t}$, which means that the coloring $\pi$ is constant on the set $\{ t\cup \{k\} : k\in D/m_{t} \}$.

    \medskip
    
    Bearing this in mind, we define the coloring $\phi : [D]^{n} \rightarrow 2$ by $\phi(t) = f^{x \restriction m_{t}}_{x(m_{t})} (t)$ for every $t\in [D]^{n}$. By inductive hypothesis, $\mathcal{I}^+ \longrightarrow (\mathcal{I}^+)^{n}_{2}$ holds, so there exists $H \in \mathcal{I}^{+} \!\restriction\! D \subseteq \mathcal{I}^{+} \!\restriction\! A$ such that $\phi$ takes a constant value on $[H]^{n}$.
    
    \medskip

     Finally, we claim that $H$ is also a homogeneous set for $\pi$. Indeed, if $h\in [H]^{n+1}$ then $h=t\cup \{k\}$ for some $t\in [H]^{n}$ and $k\in H/ m_{t}$, consequently $\pi(h) = \pi (t\cup\{k\}) = f^{x \restriction m_{t}}_{x(m_{t})} (t) = \phi(t)$, and thus $\pi$ takes a constant value on $[H]^{n+1}$. Due to this fact, we deduce that $\mathcal{I}^+ \longrightarrow (\mathcal{I}^+)^{n+1}_{2}$ also holds. 
    
    \medskip
    
    Therefore, we conclude that $\mathcal{I}^+ \longrightarrow (\mathcal{I}^+)^{n}_{2}$ holds for all $n\geq 2$, and hence, by Fact \ref{fact_colors}, we infer that $\mathcal{I}^+ \longrightarrow (\mathcal{I}^+)^{n}_{k}$ holds for all $n,k\geq 2$.
\end{proof}

\subsection{Almost homogeneous sets for countably-many colorings}

Let $f\in\omega^{\omega}$ be such that $f\geq 2$, and for each $m\in \omega$, consider a coloring $\pi_{m} : [\omega]^{f(m)} \rightarrow 2$. A known fact is that, in general, this arbitrary family of colorings $\{ \pi_{m} : [\omega]^{f(m)} \rightarrow 2 \}_{m\in\omega}$ does not necessarily admit an infinite set that is simultaneously homogeneous for each $\pi_{m}$. For instance, for each $m\in\omega$, define $\pi_{m} : [\omega]^{f(m)} \rightarrow 2$ as follows: for $s\in [\omega]^{f(m)}$, let $\pi_{m}(s) = 0$ if and only if $m\in s$. It is easy to check that there is no infinite set that is simultaneously homogeneous for each $\pi_{m}$.

\medskip

Despite this fact, in \cite[Section 2]{GrebikHrusak}, an interesting characterization of $F_{\sigma}$ ideals is given in terms of families of sets that are simultaneously homogeneous for suitable countable collections of colorings. Furthermore, it should be noted that for any countable collection of colorings $\{ \pi_{m} : [A]^{f(m)} \rightarrow 2 \}_{m\in\omega}$, one can always find an infinite set that is almost homogeneous for each $\pi_{m}$.

\begin{definition} \label{def_almosthomog}
    Given $n,k\in \omega$ with $n,k \geq 2$, let $A$ be a countably infinite set, and let $\pi:[A]^n \rightarrow k$ be a finite coloring of $[A]^n$. An infinite subset $H$ of $A$ is almost homogeneous for $\pi$ if there is $m\in\omega$ such that $H/m$ is homogeneous for $\pi$.
\end{definition}

We now present a characterization of Ramsey ideals in terms of positive almost homogeneous sets associated with countable sequences of colorings of pairs from a given positive set.

\begin{proposition} \label{countable_colorings_of_pairs}
    Let $\mathcal I$ be an ideal on $\omega$. Then, the following statements are equivalent:
\setlist{nolistsep}
\begin{itemize}
\setlength{\itemsep}{0pt}
\item[(a)] $\mathcal I$ is Ramsey.
\item[(b)] For every $A\in \mathcal{I}^{+}$ together with every countable collection $\{ \pi_{m}: [A]^{2} \rightarrow 2 \}_{m\in\omega}$ of colorings of $[A]^{2}$, there exists $H \in \mathcal{I}^{+} \!\restriction\! A$ that is almost homogeneous for each $\pi_{m}$. 
\end{itemize}
\end{proposition}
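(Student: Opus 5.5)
The direction (b) $\Rightarrow$ (a) is immediate. Given $A\in\mathcal{I}^{+}$ and a coloring $\pi:[A]^{2}\rightarrow 2$, apply (b) to the constant sequence $\pi_{m}=\pi$. This yields $H\in\mathcal{I}^{+}\!\restriction\! A$ and $m\in\omega$ such that $H/m$ is homogeneous for $\pi$. Since $H\setminus (H/m)$ is finite, $H/m\in\mathcal{I}^{+}\!\restriction\! A$, so $\mathcal{I}$ is Ramsey.

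For (a) $\Rightarrow$ (b), I would run the same kind of argument as in the proof of Theorem \ref{Ramsey_ideal_theorem}, using Proposition \ref{SD}, so that $\mathcal{I}$ is $\textrm{SD}$. Fix $A\in\mathcal{I}^{+}$ and colorings $\{\pi_{m}\}_{m\in\omega}$, and enumerate $A=\{r_{0}<r_{1}<\cdots\}$.

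\textbf{Building the system.} Set $a_{j}=2^{j+1}$, where $i<a_{j}$ codes a function $g:\{0,\ldots,j\}\rightarrow 2$. Starting from $A_{\emptyset}=A$, I define the level-$(j+1)$ nodes below $A_{s}$ as follows. An element $k\in A_{s}$ with $k>r_{j}$ goes into $A_{s^{\smallfrown}g}$ exactly when $\pi_{m}(\{r_{j},k\})=g(m)$ for all $m\leq j$. Any element $k\leq r_{j}$ of $A_{s}$ is placed arbitrarily, as in the earlier proofs. This gives a system in the sense of Definition \ref{def_system_1}.

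\textbf{Diagonalizing.} By $\textrm{SD}$ there are $E\in\mathcal{I}^{+}\!\restriction\! A$ and $x\in\prod_{j}2^{j+1}$ with $E/m\subseteq A_{x\restriction(m+1)}$ for all $m\in E$. The diagonalization is indexed by the natural number $m$, while the colors were read off from $r_{j}$, so I need to align the two. I will first thin $E$, exactly as with the coloring $\psi$ in the proof of Theorem \ref{Ramsey_ideal_theorem}, to some $D\in\mathcal{I}^{+}\!\restriction\! E$ with $D/m\cap\{r_{0},\ldots,r_{m}\}=\emptyset$ for all $m\in D$. Since $r_{m}\geq m$, the condition $k\in D/m$ implies $k\in A_{x\restriction(m+1)}$ and $k>r_{m}$.

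\textbf{The main obstacle.} The node $A_{x\restriction (m+1)}$ encodes the colors of pairs $\{r_{m},k\}$, not of pairs $\{m,k\}$, so I must ensure the points of $D$ are themselves the vertices used in the construction. To handle this, I would redefine the system so that the split at level $j$ is made according to the pivot $j$ itself rather than $r_{j}$. When $j\in A$, an element $k>j$ of $A_{s}$ goes into $A_{s^{\smallfrown}g}$ exactly when $\pi_{m}(\{j,k\})=g(m)$ for all $m\leq j$. When $j\notin A$, the split is trivial. This mirrors the proof of Proposition \ref{SD} and makes the thinning step unnecessary.

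\textbf{Extracting the homogeneous set.} With this system, for $m'\in E$ and $k\in E/m'$ we have $\pi_{m}(\{m',k\})=x(m')(m)$ for every $m\leq m'$. Define $c:E\rightarrow 2^{<\omega}$ by $c(m')=x(m')$. To finish, I want $H\in\mathcal{I}^{+}\!\restriction\! E$ such that for each $m$ the bit $x(m')(m)$ is eventually constant for $m'\in H$. Consider the sequence $y_{m'}\in 2^{\omega}$ obtained by extending $x(m')$ with zeros, indexed by $m'\in E$. Since Ramsey implies $\textrm{h-FinBW}$, there is $H\in\mathcal{I}^{+}\!\restriction\! E$ along which $y_{m'}$ converges to some $y$. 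Then for each $m$ there is $N_{m}$ such that $x(m')(m)=y(m)$ for all $m'\in H$ with $m'>N_{m}$, where we may take $N_{m}\geq m$.

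\textbf{Conclusion.} For pairs $m'<k$ in $H/N_{m}$ we have $k\in E/m'$, so $\pi_{m}(\{m',k\})=x(m')(m)=y(m)$. Hence $H/N_{m}$ is homogeneous for $\pi_{m}$, so $H$ is almost homogeneous for each $\pi_{m}$, which proves (b).
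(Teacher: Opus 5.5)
Your argument is correct. Once you replace the $r_j$-pivot system by one that splits at the pivot $j$ itself, the first half is exactly the paper's construction: $a_j=2^{j+1}$, trivial splits when $j\notin A$, and one application of $\textrm{SD}$ giving $E$ and $x$ with $\pi_m(\{m',k\})=x(m')(m)$ whenever $m\le m'<k$ and $m',k\in E$.

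You diverge from the paper in the second half. The paper builds a second system $\{D_s : s\in 2^{<\omega}\}$, splitting $D_s/m$ according to the bit $\alpha_n(m)$, and applies $\textrm{SD}$ again to get a diagonalization $H$ and $\xi\in 2^{\omega}$. You instead pad each $x(m')$ to a point of $2^{\omega}$ and apply $\textrm{h-FinBW}$ (via Proposition \ref{(h-FinBW)}(c), or Lemma \ref{SPI}) to get a convergent subsequence.

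Your route shows that this step only needs coordinatewise eventual constancy, that is, a pseudo-intersection ($\textrm{SPI}$) rather than a diagonalization. So the diagonalization property $\textrm{SD}$ is used only once. The paper's second $\textrm{SD}$ application is stronger than necessary: its conclusion $H/h\subseteq D_{\xi\restriction(h+1)}$ is only ever used in the weaker form $H/h_m\subseteq D_{\xi\restriction(m+1)}$. The homogeneity of pairs comes entirely from the first diagonalization, since $H\subseteq D$.

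For a clean write-up, make these changes:
\begin{itemize}
\item Delete the abandoned $r_j$-pivot system and the thinning step. As you noticed, thinning does not fix the index mismatch.
\item State explicitly that elements $k\le j$ of $A_s$ are distributed arbitrarily among the children in the new system.
\item Drop the unused map $c$.
\end{itemize}
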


\begin{proof}
    $[\text{(b)} \Longrightarrow \text{(a)}].$ Straightforward.

    \medskip
    
    $[\text{(a)} \Longrightarrow \text{(b)}].$ Suppose that $\mathcal{I}$ is a Ramsey ideal. Let $A\in \mathcal{I}^{+}$, and for every $m\in \omega$, let $\pi_{m}: [A]^{2} \rightarrow 2$ be a coloring of $[A]^{2}$. We recursively construct a system $\{ A_{s} : s\in \omega^{<\omega} \} \subseteq \wp(A)$ of subsets of $A$ as follows:

    \medskip
    
    Let $\{a_{j}\}_{j\in\omega} \subseteq \omega$ be the sequence defined by $a_{j}=2^{j+1}$, where each $a_{j}$ represents the set of all sequences of length $j+1$ consisting of zeros and ones. Now, put $A_{\emptyset} = A$, and for each $\ell \in {\omega}$ and $s\in \prod_{j<\ell} 2^{j+1}$ with $|s|=\ell$, suppose inductively that the set $A_{s}$ has been constructed. 

    \medskip
    
    When $\ell \notin A$, set $A_{s^{\smallfrown}\gamma_{0}} = A_{s}$ for the null sequence $\gamma_{0}$ in $2^{\ell+1}$, and take $A_{s^{\smallfrown}\gamma} = \emptyset$ for all other sequences $\gamma \neq \gamma_{0}$ in $2^{\ell+1}$. Otherwise, if $\ell \in A$, then for every $\gamma \in 2^{\ell+1}$ we define the set $A_{s^{\smallfrown}\gamma}$ by

    \kern-0.5em
    \begin{equation*}
       A_{s^{\smallfrown}\gamma} = \{ k\in A_{s} / \ell : ( \forall\, m\leq\ell ) ( \pi_{m}(\{\ell,k\}) = \gamma(m) ) \}. 
    \end{equation*}

    \kern-0.5em
     In this last case, we include each $m\leq \ell$ that belongs to $A$ in any $A_{s^{\smallfrown}\gamma}$ whenever $m\in A_{s}$. In this way, we obtain that $A_{s} = \bigcup_{\gamma \in a_{|s|}} A_{s^{\smallfrown}\gamma}$ and $A_{s^{\smallfrown}\gamma} \cap A_{s^{\smallfrown}\beta} = \emptyset$ if $\gamma,\beta\in a_{|s|}$ with $\gamma \neq \beta$.

    \medskip

    Since the ideal $\mathcal{I}$ is Ramsey, Proposition \ref{SD} ensures the existence of $D \in \mathcal{I}^{+} \!\restriction\! A$ and $\alpha \in \prod_{j\in\omega} 2^{j+1}$, where $\alpha = \{ \alpha_{j} \}_{j\in \omega}$ and each $\alpha_{j} \in 2^{j+1}$, such that $D$ is a diagonalization of $\{ A_{\alpha \restriction n} \}_{n\in\omega}$, which means that $D/n \subseteq A_{\alpha \restriction (n+1)}$ for all $n\in D$. Thus, for any $n,k\in D$ such that $k \in D/n$, we have $k \in A_{\alpha \restriction (n+1)}$ and hence $\pi_{m} (\{n,k\}) = \alpha_{n} (m)$ for each $m\leq n$. Consequently, for every $n\in D$ and every $m\leq n$, the coloring $\pi_{m}$ takes the constant value $\alpha_{n}(m)$ on the set of pairs $\{ \{n,k\} : k\in D/n \}$.
        
    \medskip

    Next, we recursively construct a new system $\{ D_{s} : s \in 2^{<\omega} \} \subseteq \wp(D)$ of subsets of $D$, with $D_{\emptyset} = D$, as follows: for every $m \in \omega$ and $s \in 2^{<\omega}$ such that $|s| = m$, if the set $D_{s}$ has already been constructed, then for each $i \in 2$ we define the set $D_{s^{\smallfrown}i}$ by

    \kern-0.5em
    \begin{equation*}
       D_{s^{\smallfrown}i} = \{ n\in D_{s} / m : \alpha_{n}(m) = i \}. 
    \end{equation*}

    \kern-0.5em
    As expected, we include each $n\leq m$ that belongs to $D$ in any $D_{s^{\smallfrown}i}$ as long as $n\in D_{s}$. In this way, we get that $D_{s} = D_{s^{\smallfrown}0} \cup D_{s^{\smallfrown}1}$ and $D_{s^{\smallfrown}0} \cap D_{s^{\smallfrown}1} = \emptyset$.

    \medskip

    Since the ideal $\mathcal{I}$ is Ramsey, by Proposition \ref{SD} we deduce that there exist $H \in \mathcal{I}^{+} \!\restriction\! D \subseteq \mathcal{I}^{+} \!\restriction\! A$ and $\xi\in 2^{\omega}$ such that $H$ is a diagonalization of $\{ D_{\xi \restriction h} \}_{h\in\omega}$, so $H/h \subseteq D_{\xi \restriction (h+1)}$ for all $h\in H$.

    \medskip

    Finally, we claim that $H$ is an almost homogeneous set for each coloring $\pi_{m}$. Indeed, given $m\in \omega$, take any $h_{m}\in H$ such that $m\leq h_{m}$; then, for every $n,k \in H/ h_{m}$ with $n<k$, we have $n\in D_{\xi \restriction (h_{m}+1)} \subseteq D_{\xi \restriction (m+1)}$, and thus $\pi_{m} (\{n,k\}) = \alpha_{n}(m) =\xi(m)$. Therefore, we conclude that $\pi_{m} \,\text{''}\, [H/h_{m}]^{2} = \{\xi(m)\}$, showing that $H/h_{m}$ is homogeneous for $\pi_{m}$.
\end{proof}

\smallskip

The previous proposition admits a higher-dimensional extension obtained by considering sequences of finite colorings of $[\omega]^n$ for arbitrary $n \in \omega$ with $n\geq 2$.

\begin{theorem} \label{countable_colorings_of_n-sets}
    Let $\mathcal I$ be an ideal on $\omega$. Then, the following statements are equivalent:
\setlist{nolistsep}
\begin{itemize}
\setlength{\itemsep}{0pt}
\item[(a)] $\mathcal I$ is Ramsey.
\item[(b)] For all $n\in \omega$ with $n\geq 2$, and for every $A\in \mathcal{I}^{+}$ together with every countable collection $\{ \pi_{m}: [A]^{n} \rightarrow 2 \}_{m\in\omega}$ of colorings of $[A]^{n}$, there exists $H \in \mathcal{I}^{+} \!\restriction\! A$ that is almost homogeneous for each $\pi_{m}$. 
\item[(c)] For all $n\in \omega$ with $n\geq 2$, for any sequence $\{k_{m}\}_{m\in\omega} \subseteq \omega$ such that each $k_{m} \geq 2$, and for every $A\in \mathcal{I}^{+}$ together with every countable collection $\{ \pi_{m}: [A]^{n} \rightarrow k_{m} \}_{m\in\omega}$ of colorings of $[A]^{n}$, there exists $H \in \mathcal{I}^{+} \!\restriction\! A$ that is almost homogeneous for each $\pi_{m}$. 
\end{itemize}
\end{theorem}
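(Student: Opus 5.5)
The implications (c) $\Rightarrow$ (b) and (b) $\Rightarrow$ (a) are immediate. For the second, take $n=2$ and the constant sequence $\pi_m=\pi$. An almost homogeneous $H\in\mathcal{I}^{+}\!\restriction\! A$ then gives a homogeneous tail $H/m$, and $H/m\in\mathcal{I}^{+}$ because it differs from $H$ by a finite set. So the substance is (a) $\Rightarrow$ (c), and I would first reduce (c) to (b). Given colorings $\pi_m:[A]^n\to k_m$, encode each one in binary by $\lceil\log_2 k_m\rceil$ two-colorings $\pi_{m,j}(t)=$ the $j$-th binary digit of $\pi_m(t)$. Enumerate the countable family $\{\pi_{m,j}\}$ as a single sequence. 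A set that is almost homogeneous for every $\pi_{m,j}$ is almost homogeneous for $\pi_m$: take the maximum of the finitely many thresholds $h$, using that a tail of a homogeneous set is homogeneous.

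To prove (a) $\Rightarrow$ (b) I would argue by induction on $n$. The base case $n=2$ is Proposition \ref{countable_colorings_of_pairs}. For the step, fix colorings $\pi_m:[A]^{n+1}\to 2$. I would combine the system used in the proof of Theorem \ref{Ramsey_ideal_theorem} with the diagonal bookkeeping of Proposition \ref{countable_colorings_of_pairs}. At level $\ell$ the node $A_s$ is split according to the function
\[k\mapsto \big(\pi_m(t\cup\{k\})\big)_{m\le \ell,\ t\in[r_{n+\ell}(A)]^n}\in 2^{(\ell+1)\binom{n+\ell}{n}},\]
which gives finite branching $a_\ell=2^{(\ell+1)\binom{n+\ell}{n}}$. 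As before, the elements of $r_{n+\ell}(A)$ lying in $A_s$ are put into some child.

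By Proposition \ref{SD} there are $E\in\mathcal{I}^{+}\!\restriction\! A$ and a branch $x$ with $E/m\subseteq A_{x\restriction(m+1)}$ for all $m\in E$. Next I would apply the Ramsey property to the pair coloring $\psi$ from the proof of Theorem \ref{Ramsey_ideal_theorem}, obtaining $D\in\mathcal{I}^{+}\!\restriction\! E$ with $D/m\cap r_{n+m}(A)=\emptyset$ for all $m\in D$. Then for every $t\in[D]^n$ with $m_t=\max t$ and every $m\le m_t$, the value $\pi_m(t\cup\{k\})$ is the same for all $k\in D/m_t$. Call this common value $\phi_m(t)$; it is read off from $x(m_t)$. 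For $m>m_t$ the value $\phi_m(t)$ is undefined, and I would set it to $0$ arbitrarily.

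The induction hypothesis at level $n$, applied to $\{\phi_m:[D]^n\to2\}_{m}$, gives $H\in\mathcal{I}^{+}\!\restriction\! D$ and thresholds $h_m$ with $\phi_m$ constant on $[H/h_m]^n$. Now take $h'_m=\max(h_m,m)$. For $u\in[H/h'_m]^{n+1}$, write $u=t\cup\{k\}$ with $k=\max u$. Then $m_t>m$, so $\phi_m(t)$ is genuinely defined and $\pi_m(u)=\phi_m(t)$, which equals the constant value. Hence $H$ is almost homogeneous for every $\pi_m$.

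The main obstacle I expect is the one just handled: the one-point extension property for $\pi_m$ is available only for $t$ with $\max t\ge m$, because the system only records $\pi_0,\dots,\pi_\ell$ at level $\ell$. The fix is that almost homogeneity only requires control on tails, so this failure of small sets is absorbed by enlarging the threshold. The remaining check is that the level-$\ell$ partition is genuinely finite and disjoint, so that Proposition \ref{SD} applies, and this holds by construction.
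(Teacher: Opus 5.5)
Your proposal is correct and follows the paper's proof essentially step for step. It uses the same induction on $n$ starting from Proposition \ref{countable_colorings_of_pairs}, the same system with branching $2^{(\ell+1)\binom{n+\ell}{n}}$ recording $\pi_0,\dots,\pi_\ell$ at level $\ell$, the same use of Proposition \ref{SD} together with the pair coloring $\psi$ to keep $D/m$ disjoint from $r_{n+m}(A)$, and the same adjustment of thresholds to $h_m\ge m$. The only difference is cosmetic and occurs in deriving (c) from (b): you split each $\pi_m$ into its binary digits, while the paper uses the indicator colorings $\pi_m^i$ (with $\pi_m^i(a)=0$ iff $\pi_m(a)\neq i$), and both work equally well.
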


\begin{proof}
    $[\text{(c)} \Longrightarrow \text{(b)} \Longrightarrow \text{(a)}].$ Straightforward.

    \medskip
    
    $[\text{(a)} \Longrightarrow \text{(b)}].$ Suppose that $\mathcal{I}$ is a Ramsey ideal. To prove this result, we proceed by induction on $n$, noting that the base case $n=2$ was established in Proposition \ref{countable_colorings_of_pairs}.

    \medskip

    Inductively, fix $n\geq 2$ and suppose that for every $A\in\mathcal{I}^{+}$, along with every countable collection of colorings of $[A]^{n}$ into two colors, there exists some $\mathcal{I}$-positive subset of $A$ that is almost homogeneous for each of these colorings. 

    \medskip
    
    As a starting point, let $A\in \mathcal{I}^{+}$ and let $\{ \pi_{m}: [A]^{n+1} \rightarrow 2\}_{m\in \omega}$ be a countable collection of colorings of $[A]^{n+1}$. We recursively construct a system $\{ A_{s} : s\in\omega^{<\omega} \} \subseteq \wp(A)$ of subsets of $A$ as follows:

    \medskip

    Let $\{a_{j}\}_{j\in\omega} \subseteq \omega$ be the sequence defined by $a_{j} = {2^{\binom{n+j}{n}}}^{j+1}$, where each $a_{j}$ is interpreted as the set of all sequences of length $j+1$ consisting of functions from $[r_{n+j}(A)]^{n}$ to $2$, with $r_{n+j}(A)$ denoting the initial segment of $A$ of size $n+j$ in its increasing enumeration. 

    \medskip
    
    Now, put $A_{\emptyset} = A$, and for each $\ell \in {\omega}$ and $s\in \prod_{j<\ell} {2^{\binom{n+j}{n}}}^{j+1}$ with $|s|=\ell$, suppose inductively that the set $A_{s}$ has been constructed. We split $A_{s} \setminus r_{n+\ell}(A)$ into finitely many pieces through the equivalence relation $\sim_{s}$ defined as follows: for $k,k^{\prime} \in A_{s} \setminus r_{n+\ell}(A)$, we have $k \sim_{s} k^{\prime}$ if and only if $\pi_{m}(t\cup\{k\}) = \pi_{m}(t\cup\{k^{\prime}\})$ for every $m\leq \ell$ and every $t\in [r_{n+\ell}(A)]^{n}$. Thus, there are at most $2^{\binom{n+\ell}{n}(\ell +1)}$ equivalence classes in $A_{s} \setminus r_{n+\ell}(A)$ under $\sim_{s}$. In other words, if $\{ f^{s}_{i} : i< 2^{\binom{n+\ell}{n}} \}$ is a finite list of all functions from $[r_{n+\ell}(A)]^{n}$ to $2$, then each of the above equivalence classes is determined by exactly one sequence of length $\ell+1$ formed by some of these functions.

    \medskip

    Based on this observation, for every $\gamma \in  {2^{\binom{n+\ell}{n}}}^{\ell+1}$ we define the set $A_{s^{\smallfrown}\gamma}$ by

    \kern-0.5em
    \begin{equation*}
       A_{s^{\smallfrown}\gamma} = \{ k\in A_{s} \setminus r_{n+\ell}(A) : ( \forall\, m\leq \ell ) ( \forall\, t\in [r_{n+\ell}(A)]^{n} ) ( \pi_{m}(t\cup \{k\}) = f^{s}_{\gamma(m)}(t) ) \}. 
    \end{equation*}

    \kern-0.5em
    Naturally, for each $k\in r_{n+\ell}(A)$, we join $\{k\}$ with any $A_{s^{\smallfrown}\gamma}$ whenever $k\in A_{s}$. In this way, we obtain that $A_{s} = \bigcup_{\gamma \in a_{|s|}} A_{s^{\smallfrown}\gamma}$ and $A_{s^{\smallfrown}\gamma} \cap A_{s^{\smallfrown}\beta} = \emptyset$ if $\gamma, \beta \in a_{|s|}$ with $\gamma \neq \beta$.

    \medskip
    
    Since the ideal $\mathcal{I}$ is Ramsey, Proposition \ref{SD} implies that there are $D \in \mathcal{I}^{+} \!\restriction\! A$ and $\alpha \in \prod_{j\in\omega} {2^{\binom{n+j}{n}}}^{j+1}$, where $\alpha = \{ \alpha_{j} \}_{j\in \omega}$ and each $\alpha_{j}$ is a sequence of length $j+1$ whose components are functions from $[r_{n+j}(A)]^{n}$ to $2$, such that $D$ is a diagonalization of $\{ A_{\alpha \restriction j} \}_{j\in\omega}$, that is, $D/d \subseteq A_{\alpha \restriction (d+1)}$ for all $d\in D$. Furthermore, as in the proof of Theorem \ref{Ramsey_ideal_theorem}, we may obtain this set $D$ satisfying $D/d \cap r_{n+d}(A) = \emptyset$ for all $d\in D$.
    
    \medskip

    Given any $m\in \omega$ and $t\in [D]^{n}$ such that $m \leq m_{t}$, where $m_{t}=\max t$, we get $t\in [r_{n+m_{t}}(A)]^{n}$, and moreover, for each $k\in D/m_{t}$ it follows that $k \in A_{\alpha\restriction (m_{t}+1)} \setminus r_{n+m_{t}}(A)$; accordingly, we infer that $\pi_{m} (t\cup \{k\}) = f^{\alpha \restriction m_{t}}_{\alpha_{m_{t}} (m)} (t)$ whenever $k\in D/m_{t}$, which implies that the coloring $\pi_{m}$ is constant on the set $\{ t\cup \{k\} : k\in D/m_{t}\}$. 
    
    \medskip
    
    Taking this into account, for each $m \in\omega$ we define the coloring $\phi_{m} : [D]^{n} \rightarrow 2$ by setting $\phi_{m}(t) = f^{\alpha \restriction m_{t}}_{\alpha_{m_{t}} (m)} (t)$ for every $t\in [D]^{n}$ such that $m\leq m_{t}$, and assigning values arbitrarily otherwise. By inductive hypothesis, there exists $H \in \mathcal{I}^{+} \!\restriction\! D \subseteq \mathcal{I}^{+} \!\restriction\! A$ that is almost homogeneous for each $\phi_{m}$; in consequence, for every $m\in\omega$ there is $h_{m} \in \omega$ such that $H/h_{m}$ is homogeneous for $\phi_{m}$, and we can assume that $m\leq h_{m}$ for each $m\in\omega$. 

    \medskip    

    Finally, we assert that $H$ is also an almost homogeneous set for each $\pi_{m}$. Indeed, given any $m\in\omega$, if $h\in [H/h_{m}]^{n+1}$ then $h= t\cup\{k\}$ for some $t\in [H/h_{m}]^{n}$ and $k\in H/ m_{t}$, thus $\pi_{m}(h) = \pi_{m} (t\cup\{k\}) = f^{\alpha \restriction m_{t}}_{\alpha_{m_{t}} (m)} (t) = \phi_{m}(t)$, and hence $H/h_{m}$ is also homogeneous for $\pi_{m}$.

    \medskip
    
    $[\text{(b)} \Longrightarrow \text{(c)}].$ Assume that the ideal $\mathcal{I}$ satisfies the affirmation stated in $(b)$. Let $n\in\omega$ with $n \geq 2$, let $\{k_{m}\}_{m\in\omega} \subseteq \omega$ be a sequence such that each $k_{m} \geq 2$, and for every $m\in\omega$, let $\pi_{m} : [A]^{n} \rightarrow k_{m}$ be a coloring of $[A]^{n}$, where $A \in \mathcal{I}^{+}$. Next, for each $m\in\omega$ and each $i<k_{m}$, consider the coloring $\pi_{m}^{i} : [A]^{n} \rightarrow 2$ defined as follows: for $a\in [A]^{n}$, let $\pi_{m}^{i} (a) =0$ if and only if $\pi_{m} (a) \neq i$. 

    \medskip

    Let $H\in \mathcal{I}^{+} \!\restriction\! A$ be such that, for all $m\in\omega$ and $i<k_{m}$, the set $H$ is almost homogeneous for $\pi_{m}^{i}$. Therefore, for every $m\in \omega$, there exists $\ell_{m} \in\omega$ such that $H/\ell_{m}$ is homogeneous for each coloring that belongs to the finite collection $\{ \pi_{m}^{i}: [A]^{n} \rightarrow 2 \}_{i<k_{m}}$. Now, pick any $a \in [H/\ell_{m}]^{n}$, and let $j<k_{m}$ be such that $\pi_{m} (a) =j$. Then, we have $\pi_{m}^{j} (a) =1$, and thus $\pi_{m}^{j} \,\text{''}\, [H/\ell_{m}]^{n} = \{1\}$, which implies that $\pi_{m} \,\text{''}\, [H/\ell_{m}]^{n} = \{j\}$. Consequently, we conclude that $H$ is almost homogeneous for each $\pi_{m}$. 
\end{proof}

\smallskip

The previous theorem naturally leads to the following question, which is currently open, asking whether the simultaneous almost homogeneity property remains valid when the dimensions of the colorings are allowed to vary.

\begin{question} \label{question_almost_homogeneous}
Let $\mathcal{I}$ be a Ramsey ideal, and let $\{n_{m}\}_{m\in\omega} \subseteq \omega$ be any sequence such that each $n_{m} \geq 2$. If for every $m\in\omega$ we take any coloring $\pi_m : [\omega]^{n_{m}} \rightarrow 2$, is there an $H\in \mathcal{I}^{+}$ such that $H$ is an almost homogeneous set for each $\pi_m$?
\end{question}

\subsection{Some partition properties}

We will review now some high-dimensional partition properties related to ideals, as well as their connection with the ideal Ramsey property, complementing the study of the lowest-dimensional partition notions for ideals initiated in \cite[Subsection 4.5]{HMTU2017} and \cite[Section 2.5]{Meza}.

\begin{definition}
Let $\mathcal{I}$ be an ideal on $\omega$ and fix $n\in \omega$ with $n \geq 2$. The ideal $\mathcal I$ satisfies:

\setlist{nolistsep}
\begin{itemize}
\setlength{\itemsep}{0pt} 

\item $\mathcal{I}^+ \longrightarrow (\omega, \mathcal{I}^+)^{n}_{2}$ if for every $A\in \mathcal{I}^+$ and every coloring $\pi:[A]^{n} \rightarrow 2$, either there exists an infinite set $B \in [A]^{\omega}$ such that $\pi \,\text{''}\, [B]^{n} = \{0\}$, or there exists an $\mathcal{I}$-positive set $B \in \mathcal{I}^{+} \!\restriction\! A$ such that $\pi \,\text{''}\, [B]^{n} = \{1\}$. 

\item $\mathcal{I}^+ \longrightarrow (<\omega, \mathcal{I}^+)^{n}_{2}$ if for every $A\in \mathcal{I}^+$ and every coloring $\pi:[A]^{n} \rightarrow 2$, either for each $n<m<\omega$ there exists a finite set $B \in [A]^{m}$ of size $m$ such that $\pi \,\text{''}\, [B]^{n} = \{0\}$, or there exists an $\mathcal{I}$-positive set $B \in \mathcal{I}^{+} \!\restriction\! A$ such that $\pi \,\text{''}\, [B]^{n} = \{1\}$.

\item $\mathcal{I}^+ \longrightarrow (m, \mathcal{I}^+)^{n}_{2}$, where $n<m<\omega$, if for every $A\in \mathcal{I}^+$ and every coloring $\pi:[A]^{n} \rightarrow 2$, either there exists a finite set $B \in [A]^{m}$ of size $m$ such that $\pi \,\text{''}\, [B]^{n} = \{0\}$, or there exists an $\mathcal{I}$-positive set $B \in \mathcal{I}^{+} \!\restriction\! A$ such that $\pi \,\text{''}\, [B]^{n} = \{1\}$.

\end{itemize}
\end{definition}

Clearly, for any ideal $\mathcal{I}$ on $\omega$ and every $n,m\in \omega$ with $m>n\geq2$, the following implications are straightforward:

\kern-0.5em
\begin{equation*}
   \mathcal{I}^+ \longrightarrow (\mathcal{I}^+)^{n}_{2} \;\;\;\Longrightarrow\;\;\; 
    \mathcal{I}^+ \longrightarrow (\omega, \mathcal{I}^+)^{n}_{2}
    \;\;\;\Longrightarrow\;\;\;
    \mathcal{I}^+ \longrightarrow (<\omega, \mathcal{I}^+)^{n}_{2}
    \;\;\;\Longrightarrow\;\;\; 
    \mathcal{I}^+ \longrightarrow (m, \mathcal{I}^+)^{n}_{2}
\end{equation*}

\kern-0.5em
Nevertheless, in \cite[Subsection 4.5]{HMTU2017} it was shown that, for the case $n=2$, these implications do not reverse, so that:

\kern-0.5em
\begin{equation*}
    \mathcal{I}^+ \longrightarrow (m, \mathcal{I}^+)^{2}_{2}
    \;\;\;\centernot\Longrightarrow\;\;\;
    \mathcal{I}^+ \longrightarrow (<\omega, \mathcal{I}^+)^{2}_{2}
    \;\;\;\centernot\Longrightarrow\;\;\;
    \mathcal{I}^+ \longrightarrow (\omega, \mathcal{I}^+)^{2}_{2}
    \;\;\;\centernot\Longrightarrow\;\;\;
    \mathcal{I}^+ \longrightarrow (\mathcal{I}^+)^{2}_{2}  
\end{equation*}

In \cite[Subsection 5.2]{HMTU2017}, the authors suggest analyzing these partition properties for Ramsey-type ideals by considering colorings of higher-dimensional $n$-tuples for $n \geq 3$, and determining whether they exhibit analogous behavior to the known case $n = 2$. We will answer this question in Theorem \ref{equiv_partition_properties}, by proving that for every $n \geq 3$ all these partition properties are equivalent to the ideal Ramsey property $\mathcal{I}^+ \longrightarrow (\mathcal{I}^+)^{2}_{2}$. To do this, we apply Theorem \ref{Ramsey_ideal_theorem}, along with the following proposition, established by Baumgartner and Taylor in \cite[Theorem 2.1, (iii) $\Rightarrow$ (i)]{BT} in the context of ultrafilters, whose argument carries over naturally to the setting of ideals. For the sake of completeness, we include its proof here.

\begin{proposition}[\cite{BT}] \label{Theorem_BT}
    Let $\mathcal{I}$ be an ideal on $\omega$. If $\mathcal{I}^+ \longrightarrow (4, \mathcal{I}^+)^{3}_{2}$ holds, then $\mathcal{I}$ is Ramsey.
\end{proposition}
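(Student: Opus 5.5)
The plan is to follow the Baumgartner--Taylor argument. We turn an arbitrary coloring of pairs into a coloring of triples for which a $0$-homogeneous set of size $4$ is impossible. The hypothesis $\mathcal{I}^+ \longrightarrow (4, \mathcal{I}^+)^{3}_{2}$ then forces a positive $1$-homogeneous set, which we refine into a positive homogeneous set for the original coloring.

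Fix $A\in \mathcal{I}^{+}$ and a coloring $\pi:[A]^{2}\rightarrow 2$. Define $\psi:[A]^{3}\rightarrow 2$ for $a<b<c$ in $A$ by
\begin{equation*}
\psi(\{a,b,c\}) = 0 \iff \pi(\{a,b\}) \neq \pi(\{a,c\}).
\end{equation*}

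The key step is to show that there is no $B\in[A]^{4}$ with $\psi\,\text{''}\,[B]^{3}=\{0\}$. Suppose $B=\{a<b<c<d\}$ were such a set. The triples $\{a,b,c\}$, $\{a,b,d\}$ and $\{a,c,d\}$ would give
\begin{equation*}
\pi(\{a,b\})\neq\pi(\{a,c\}), \qquad \pi(\{a,b\})\neq\pi(\{a,d\}), \qquad \pi(\{a,c\})\neq\pi(\{a,d\}).
\end{equation*}
These are three pairwise distinct values in $2=\{0,1\}$, which is impossible. Hence the hypothesis $\mathcal{I}^+ \longrightarrow (4, \mathcal{I}^+)^{3}_{2}$ yields $B\in \mathcal{I}^{+}\!\restriction\! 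A$ with $\psi\,\text{''}\,[B]^{3}=\{1\}$.

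It follows that for each $a\in B$ the value $\pi(\{a,x\})$ is the same for all $x\in B/a$; call it $g(a)$. This uses that $B/a$ has at least two elements, and if $B/a$ has a single element the value is trivially defined. Now split $B=B_{0}\cup B_{1}$, where $B_{i}=\{a\in B: g(a)=i\}$. Since $\mathcal{I}$ is an ideal, some $B_{i}$ is $\mathcal{I}$-positive. For $a<x$ in $B_{i}$ we have $\pi(\{a,x\})=g(a)=i$, so $B_{i}\in\mathcal{I}^{+}\!\restriction\! A$ is homogeneous for $\pi$, and $\mathcal{I}$ is Ramsey.

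The only real obstacle is choosing the triple coloring so that a $0$-homogeneous $4$-set is impossible while $1$-homogeneity still gives the ``end-homogeneous'' structure. The choice above, which compares $\pi(\{a,b\})$ with $\pi(\{a,c\})$, achieves both at once. After that, the final pigeonhole step only uses that $\mathcal{I}^{+}$ is closed under taking one piece of a finite partition.
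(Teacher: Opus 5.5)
Your proof is correct. It has the same overall structure as the paper's: define an auxiliary $3$-coloring that admits no $0$-homogeneous $4$-set, apply the hypothesis, and extract a positive homogeneous set from the resulting $1$-homogeneous set. The difference is in the choice of auxiliary coloring, and therefore in the extraction step.

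\textbf{The paper's coloring.} The paper sets $\phi(\{a,b,c\})=0$ iff $\pi(\{a,b\})=0$ and $\pi(\{b,c\})=1$. A $0$-homogeneous $\{a<b<c<d\}$ is impossible, because the triples $\{a,b,c\}$ and $\{b,c,d\}$ would force $\pi(\{b,c\})=1$ and $\pi(\{b,c\})=0$ respectively. In a $\phi$-$1$-homogeneous set $B$, once some pair $\{a,b\}$ receives color $0$, every pair above it does too. So either $\pi\,\text{''}\,[B]^{2}=\{1\}$, or $\pi\,\text{''}\,[B/b]^{2}=\{0\}$. The positive homogeneous set is then $B$ itself or a tail of it, and no splitting is needed.

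\textbf{Your coloring.} Comparing $\pi(\{a,b\})$ with $\pi(\{a,c\})$ only yields an end-homogeneous $B$. You therefore need the extra pigeonhole step via $g$, which uses that a two-piece partition of an $\mathcal{I}$-positive set has a positive piece. The paper only uses that removing a finite initial segment preserves positivity. In exchange, your coloring is the standard one from the end-homogeneous proof of Ramsey's theorem. It also makes the impossibility of a $0$-homogeneous $4$-set immediate from the three triples through the minimum.

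\textbf{A minor point.} Your remark about $B/a$ possibly having a single element is unnecessary. Since $B\in\mathcal{I}^{+}$ is infinite, every $B/a$ is infinite.
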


\begin{proof}
    Suppose that the ideal $\mathcal{I}$ satisfies $\mathcal{I}^+ \longrightarrow (4, \mathcal{I}^+)^{3}_{2}$, and let $\pi: [A]^{2} \rightarrow 2$ be a coloring with $A \in\mathcal{I}^{+}$. Define $\phi: [A]^{3} \rightarrow 2$ as follows: for every $\{a,b,c\}\in [A]^{3}$ with $a<b<c$, let $\phi (\{a,b,c\}) = 0$ if and only if $\pi (\{a,b\})=0$ and $\pi (\{b,c\})=1$. Notice that $\phi \,\text{''}\, [F]^{3} \neq \{0\}$ for all finite set $F\in [A]^{4}$ of size $4$, thus there must exist an $\mathcal{I}$-positive set $B\in \mathcal{I}^{+} \!\restriction\! A$ such that $\phi \,\text{''}\, [B]^{3} = \{1\}$. Now, if $\pi(t)=1$ for each $t \in [B]^{2}$, then obviously $\pi \,\text{''}\, [B]^{2} = \{1\}$. Otherwise, if $\pi(t)=0$ for some $t\in [B]^{2}$, then it is easy to deduce that $\pi \,\text{''}\, [B/t]^{2} = \{0\}$. Therefore the ideal $\mathcal{I}$ is Ramsey.
\end{proof}

\begin{theorem} \label{equiv_partition_properties}
    Let $\mathcal I$ be an ideal on $\omega$. Then, the following statements are equivalent:
\setlist{nolistsep}
\begin{itemize}
\setlength{\itemsep}{0pt}
\item[(a)] $\mathcal I$ is Ramsey.
\item[(b)] $\mathcal{I}^+ \longrightarrow (\omega,\mathcal{I}^+)^{n}_{2}$ holds for all $n\geq 3$.
\item[(c)] $\mathcal{I}^+ \longrightarrow (<\omega,\mathcal{I}^+)^{n}_{2}$ holds for all $n\geq 3$.
\item[(d)] $\mathcal{I}^+ \longrightarrow (m,\mathcal{I}^+)^{n}_{2}$ holds for all $m>n\geq 3$.
\end{itemize}
\end{theorem}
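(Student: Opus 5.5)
The natural route is the cycle (a) $\Rightarrow$ (b) $\Rightarrow$ (c) $\Rightarrow$ (d) $\Rightarrow$ (a). Almost all of the combinatorial work is already available from Theorem \ref{Ramsey_ideal_theorem} and Proposition \ref{Theorem_BT}.

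For (a) $\Rightarrow$ (b), assume $\mathcal{I}$ is Ramsey. By Theorem \ref{Ramsey_ideal_theorem}, $\mathcal{I}^+ \longrightarrow (\mathcal{I}^+)^{n}_{2}$ holds for every $n\geq 2$, in particular for every $n\geq 3$. Now fix $A\in\mathcal{I}^+$ and $\pi:[A]^n\to 2$, and let $B\in\mathcal{I}^+\restriction A$ be homogeneous. If the constant colour is $0$, then $B$ is in particular an infinite $0$-homogeneous subset of $A$. If it is $1$, then $B$ is the required positive $1$-homogeneous set. Hence $\mathcal{I}^+ \longrightarrow (\omega,\mathcal{I}^+)^{n}_{2}$.

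The implications (b) $\Rightarrow$ (c) $\Rightarrow$ (d) are the straightforward chain displayed before Proposition \ref{Theorem_BT}, applied for each fixed $n\geq 3$ and each $m>n$. An infinite $0$-homogeneous set contains $0$-homogeneous subsets of every finite size $m>n$. Having finite $0$-homogeneous sets of every size $m>n$ gives in particular one of the prescribed size $m$. The alternative of a positive $1$-homogeneous set passes through each step unchanged.

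For (d) $\Rightarrow$ (a), specialise (d) to $n=3$ and $m=4$, which gives $\mathcal{I}^+ \longrightarrow (4,\mathcal{I}^+)^{3}_{2}$. Proposition \ref{Theorem_BT} then says that $\mathcal{I}$ is Ramsey, which closes the cycle.

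No step presents a real obstacle: the only nontrivial ingredients, the higher-dimensional Ramsey property and the Baumgartner--Taylor reduction, are exactly Theorem \ref{Ramsey_ideal_theorem} and Proposition \ref{Theorem_BT}. The one point to check is that (d) actually includes the instance $n=3$, $m=4$, which it does since $4>3\geq 3$.
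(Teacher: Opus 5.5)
Your proposal is correct and follows essentially the same route as the paper. The paper also closes the cycle by using Theorem~\ref{Ramsey_ideal_theorem} for (a)~$\Rightarrow$~(b), the trivial chain for (b)~$\Rightarrow$~(c)~$\Rightarrow$~(d), and Proposition~\ref{Theorem_BT} with $n=3$, $m=4$ for (d)~$\Rightarrow$~(a).
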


\begin{proof}
    $[\text{(b)} \Longrightarrow \text{(c)} \Longrightarrow \text{(d)}].$ Straightforward.

    \medskip
    
    $[\text{(d)} \Longrightarrow \text{(a)}].$ Suppose that $\mathcal{I}^+ \longrightarrow (m,\mathcal{I}^+)^{n}_{2}$ holds for all $m>n\geq 3$. So, in particular $\mathcal{I}^+ \longrightarrow (4,\mathcal{I}^+)^{3}_{2}$ holds, and therefore the ideal $\mathcal{I}$ is Ramsey according to Proposition \ref{Theorem_BT}.

    \medskip
    
    $[\text{(a)} \Longrightarrow \text{(b)}].$ Suppose that $\mathcal{I}$ is Ramsey. Then, by virtue of Theorem \ref{Ramsey_ideal_theorem} we deduce that $\mathcal{I}^+ \longrightarrow (\mathcal{I}^+)^{n}_{2}$ holds for all $n\geq 3$, and hence we conclude that $\mathcal{I}^+ \longrightarrow (\omega,\mathcal{I}^+)^{n}_{2}$ also holds for all $n\geq 3$.
\end{proof}

\subsection{Some convergence properties}

We now turn our attention to some convergence properties associated with ideals, which extend to higher dimensions the notions of $\textrm{h-FinBW}$ and $\textrm{h-Mon}$ for ideals introduced in \cite{FMRS07, FMRS11}. 

\medskip

First, we will work with the notion of convergence related to strengthenings of sequential compactness, as researched in \cite{KuSze}. Let $\mathcal{X}$ be a topological space, and fix $A\in [\omega]^{\omega}$ and $n\in\omega$ with $n\geq 1$. Any function $f: [A]^{n} \rightarrow \mathcal{X}$ will be called a sequence of elements of $\mathcal{X}$, and for each $B\in [A]^{\omega}$, the restriction $f \!\restriction\! [B]^{n}$ will be called a subsequence of $f$. Now, following \cite[Section 1]{KuSze}, we say that a sequence $f: [A]^{n} \rightarrow \mathcal{X}$ converges to $p\in \mathcal{X}$, if for every open neighborhood $U$ of $p$ there exists $m\in\omega$ such that $f\,\text{''}\, [A/m]^{n} \subseteq U$. Note that the conventional notion of convergence of sequences $f:A \rightarrow \mathcal{X}$ corresponds to the case $n=1$. 

\medskip

Furthermore, we will consider a strict partial order $\triangleleft^{*}$ on $[\omega]^{<\omega}$ that was studied in \cite[Section II.2]{Todorcevic2005}. Let us first recall that the shift relation $\triangleleft$ on $[\omega]^{<\omega}$ is defined as follows: given $s,t \in [\omega]^{<\omega}$, let $s\triangleleft t$ if and only if $(s\setminus \{\min s\}) \sqsubseteq t$. Now, let $\triangleleft^{*}$ be the transitive closure of the shift relation $\triangleleft$ on $[\omega]^{<\omega}$, that is, for $s,t\in [\omega]^{<\omega}$, we have $s \triangleleft^{*} t$ if for some $k\in \omega$ with $k\geq 1$, there are $r_0, r_1, \ldots, r_k \in [\omega]^{<\omega}$ such that $s = r_0 \triangleleft r_1 \triangleleft \dots \triangleleft r_k = t$. 

\medskip

Here, we will use the shift relation $\triangleleft$ and its transitive closure $\triangleleft^{*}$ restricted to sets of the form $[A]^n$, where $A\in [\omega]^{\omega}$. Thus, given $s,t\in [A]^{n}$, we have $s\triangleleft t$ whenever $t$ is obtained by removing the first element of $s$ and adding a new element on top of the maximum of $s$. Additionally, note that for the case $n=1$, the strict order relation $\triangleleft^{*}$ on $[\omega]^{1}$ coincides with the usual order of $\omega$. So, a sequence of real numbers $f: [A]^{n} \rightarrow \mathbb{R}$ is said to be monotone if, for every $s,t \in [A]^{n}$, one of the following always holds: either $s \triangleleft^{*} t$ implies $f(s) \leq f(t)$, or $s \triangleleft^{*} t$ implies $f(s) \geq f(t)$.

\begin{definition}
    Let $\mathcal{I}$ be an ideal on $\omega$ and fix $n\in \omega$ with $n \geq 1$. We say that:
    \setlist{nolistsep}
    \begin{itemize}
    \setlength{\itemsep}{0pt} 
    \item $\mathcal{I}$ is $\textrm{h-Mon}_{n}$ if for every $A\in\mathcal{I}^{+}$ and every sequence of real numbers $f:[A]^{n} \rightarrow \mathbb{R}$, there exists some $B\in \mathcal{I}^{+} \!\restriction\! A$ such that the subsequence $f \!\restriction\! [B]^{n}$ is monotone. 
    \item $\mathcal{I}$ is $\textrm{h-FinBW}_{n}$ if for every $A\in\mathcal{I}^{+}$ and every bounded sequence of reals numbers $f: [A]^{n} \rightarrow \mathbb{R}$, there exists some $B\in \mathcal{I}^{+} \!\restriction\! A$ such that the subsequence $f \!\restriction\! [B]^{n}$ is convergent.
    \end{itemize}
\end{definition}

Keeping in mind the previously introduced notions for ideals, it is clear that $\textrm{h-Mon}_{1}$ and $\textrm{h-Mon}$ coincide; similarly, $\textrm{h-FinBW}_{1}$ and $\textrm{h-FinBW}$ also coincide. Moreover, for every $m,n \in\omega$ with $m\geq n \geq 1$, we have that $\textrm{h-Mon}_{m}$ implies $\textrm{h-Mon}_{n}$, and $\textrm{h-FinBW}_{m}$ implies $\textrm{h-FinBW}_{n}$. On the other hand, note that $\textrm{h-Mon}_{n}$ implies $\textrm{h-FinBW}_{n}$, since for each $n\geq 1$ and each $A\in [\omega]^{\omega}$, every bounded monotone sequence of real numbers $f:[A]^{n} \rightarrow \mathbb{R}$ is convergent, as stated in the following result.

\begin{lemma} \label{h-Mon_n implies h-FinBW_n}
    Let $\mathcal{I}$ be an ideal on $\omega$ and let $n\in\omega$ with $n\geq 1$. If $\mathcal{I}$ is $\textrm{h-Mon}_{n}$, then it is $\textrm{h-FinBW}_{n}$.
\end{lemma}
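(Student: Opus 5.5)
The plan is to reduce the lemma to the purely real-analytic fact that every bounded monotone sequence $f:[B]^{n}\rightarrow\mathbb{R}$, with $B\in[\omega]^{\omega}$, converges in the sense of \cite{KuSze}. Once this fact is available the lemma is immediate: given $A\in\mathcal{I}^{+}$ and a bounded $f:[A]^{n}\rightarrow\mathbb{R}$, the hypothesis $\textrm{h-Mon}_{n}$ yields $B\in\mathcal{I}^{+}\!\restriction\! A$ with $f\!\restriction\![B]^{n}$ monotone. This restriction is still bounded, hence convergent, which witnesses $\textrm{h-FinBW}_{n}$.

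To prove the fact, assume without loss of generality that $f$ is nondecreasing along $\triangleleft^{*}$; otherwise replace $f$ by $-f$. Enumerate $B=\{b_{0}<b_{1}<\cdots\}$ and set $L=\sup f\,\text{''}\,[B]^{n}$, which is finite by boundedness. Fix $\varepsilon>0$ and choose $s_{0}\in[B]^{n}$ with $f(s_{0})>L-\varepsilon$. Let $m=\max s_{0}$. The goal is to show that every $t\in[B/m]^{n}$ satisfies $s_{0}\triangleleft^{*}t$. Monotonicity then gives $L-\varepsilon<f(s_{0})\leq f(t)\leq L$, so $f\,\text{''}\,[B/m]^{n}\subseteq(L-\varepsilon,L]$, and $f$ converges to $L$.

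The main point is the combinatorial claim: if $s,t\in[B]^{n}$ and $\max s<\min t$, then $s\triangleleft^{*}t$ inside $[B]^{n}$. For $n=1$ this is just $s<t$. For general $n$, write $t=\{t_{1}<\cdots<t_{n}\}$ and perform $n$ shifts. At each step, remove the current minimum and append the next element of $t$. This gives
\[
r_{0}=s,\quad r_{1}=(s\setminus\{\min s\})\cup\{t_{1}\},\quad \ldots,\quad r_{n}=t.
\]
Each $r_{i}$ is a member of $[B]^{n}$. Each step is a genuine shift $r_{i}\triangleleft r_{i+1}$, because $t_{i+1}$ exceeds every element of $r_{i}$; this uses $\max s<t_{1}<\cdots$. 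Therefore $s\triangleleft^{*}t$.

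The only care needed concerns the paper's definition of monotonicity, which is the disjunction "either $s\triangleleft^{*}t\Rightarrow f(s)\leq f(t)$ for all pairs, or the reverse for all pairs". The argument treats it as a single global direction in this sense, so it matches. I do not expect any serious obstacle beyond checking the shift chain.
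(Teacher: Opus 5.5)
Your proposal is correct and follows the paper's proof essentially verbatim. You take $L=\sup f\,\text{''}\,[B]^{n}$, pick $s_{0}$ with $f(s_{0})>L-\varepsilon$, and conclude from $s_{0}\triangleleft^{*}t$ for all $t\in[B/\max s_{0}]^{n}$. The differences are minor: you treat the decreasing case via $-f$ rather than the infimum, and you explicitly verify the shift chain $s_{0}=r_{0}\triangleleft r_{1}\triangleleft\cdots\triangleleft r_{n}=t$ inside $[B]^{n}$, which the paper only asserts.
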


\begin{proof}
    Fix $n\in\omega$ with $n\geq 1$ and suppose that the ideal $\mathcal{I}$ is $\textrm{h-Mon}_{n}$. Let $A\in\mathcal{I}^{+}$ and let $f:[A]^{n} \rightarrow \mathbb{R}$ be a bounded sequence of real numbers, then there exists $B\in\mathcal{I}^{+} \!\restriction\! A$ such that the bounded subsequence $f \!\restriction\! [B]^{n}$ is monotone.

    \medskip

    We claim that $f \!\restriction\! [B]^{n}$ is a convergent sequence. Indeed, suppose that for every $s,t \in [B]^{n}$, if $s \triangleleft^{*} t$ then $f(s) \leq f(t)$. Let $q\in\mathbb{R}$ be given by $q= \sup f \text{\,''\,} [B]^{n}$; then, for every $\varepsilon >0$ there is $s_{\varepsilon}\in [B]^{n}$ such that $q-\varepsilon <f(s_{\varepsilon}) \leq q$. Now, notice that for each $t\in [B/\max s_{\varepsilon}]^{n}$ we have $s_{\varepsilon} \triangleleft^{*} t$ and hence $f(s_{\varepsilon}) \leq f(t) \leq q$. Thus, we infer that $f \text{\,''\,} [B/\max s_{\varepsilon}]^{n} \subseteq (q-\varepsilon, q+\varepsilon)$, which shows that $q$ is the limit of $f \!\restriction\! [B]^{n}$. On the other hand,  suppose that for every $s,t \in [B]^{n}$, if $s \triangleleft^{*} t$ then $f(s) \geq f(t)$. In this case, let $p\in\mathbb{R}$ be given by $p= \inf f \text{\,''\,} [B]^{n}$; then, an analogous argument shows that $p$ is the limit of $f \!\restriction\! [B]^{n}$. Therefore, we conclude that the ideal $\mathcal{I}$ is $\textrm{h-FinBW}_{n}$.
\end{proof}

\smallskip

Regarding the property $\textrm{h-FinBW}_{n}$ for ideals, it is natural to consider sequences indexed by sets of the form $[A]^n$ in arbitrary Hausdorff spaces, not only in compact subspaces of the real line, as was established for the property $\textrm{h-FinBW}$ in \cite[Subsection 2.3]{FMRS07} and \cite[Subsection 2.2]{FMRS12}.

\medskip

More precisely, let $n\in \omega$ with $n\geq 1$, let $\mathcal{I}$ be an ideal on $\omega$, and let $\mathcal{X}$ be a Hausdorff space. We say that the pair $(\mathcal{X}, \mathcal{I})$ is $\textrm{h-FinBW}_{n}$ if for every $A\in\mathcal{I}^{+}$ and every sequence $f: [A]^{n} \rightarrow \mathcal{X}$, there exists some $B\in \mathcal{I}^{+} \!\restriction\! A$ such that the subsequence $f \!\restriction\! [B]^{n}$ is convergent.

\medskip

As in the case of $\textrm{h-FinBW}$, it is easy to check that if $(\mathcal{X},\mathcal{I})$ is $\textrm{h-FinBW}_{n}$ for any $n\geq 1$, then so is $(\mathcal{Y},\mathcal{I})$ when $\mathcal{Y}$ is a continuous image of $\mathcal{X}$ or a closed subspace of $\mathcal{X}$. 
With this in mind, and using the well-known fact that every uncountable compact metrizable space is a continuous image of $2^{\omega}$ and contains a closed subspace homeomorphic to $2^{\omega}$, we obtain the following result.

\begin{fact} \label{fact_(X,I)}
    Let $\mathcal{I}$ be an ideal on $\omega$. Then, for all $n\in \omega$ with $n \geq 1$, the following statements are equivalent:
\setlist{nolistsep}
\begin{itemize}
\setlength{\itemsep}{0pt} 
\item[(a)] $\mathcal I$ is $\textrm{h-FinBW}_{n}$.
\item[(b)] $(2^{\omega},\mathcal I)$ is $\textrm{h-FinBW}_{n}$.
\item[(c)] $(\mathcal{X},\mathcal I)$ is $\textrm{h-FinBW}_{n}$ for some uncountable compact metrizable space $\mathcal{X}$.
\item[(d)] $(\mathcal{X},\mathcal I)$ is $\textrm{h-FinBW}_{n}$ for every uncountable compact metrizable space $\mathcal{X}$.
\end{itemize}
\end{fact}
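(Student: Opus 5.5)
We will prove the cycle of implications (a) $\Rightarrow$ (b) $\Rightarrow$ (d) $\Rightarrow$ (c) $\Rightarrow$ (a), for a fixed $n\geq 1$.

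\emph{Two preliminary facts.} First, if $(\mathcal{X},\mathcal{I})$ is $\textrm{h-FinBW}_{n}$ and $g:\mathcal{X}\to\mathcal{Y}$ is a continuous surjection onto a Hausdorff space, then $(\mathcal{Y},\mathcal{I})$ is $\textrm{h-FinBW}_{n}$. Given $f:[A]^{n}\to\mathcal{Y}$, choose for each $s$ a preimage $\tilde f(s)\in g^{-1}(f(s))$. Find $B\in\mathcal{I}^{+}\!\restriction\! A$ with $\tilde f\!\restriction\![B]^{n}$ converging to some $p$. Continuity of $g$ shows that $f\!\restriction\![B]^{n}=g\circ\tilde f\!\restriction\![B]^{n}$ converges to $g(p)$, because $g\,\text{''}\,\tilde f\,\text{''}\,[B/m]^{n}\subseteq g\,\text{''}\,U$ for a neighborhood $U$ of $p$ with $g\,\text{''}\,U\subseteq V$.

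Second, if $\mathcal{Y}\subseteq\mathcal{X}$ is closed and $(\mathcal{X},\mathcal{I})$ is $\textrm{h-FinBW}_{n}$, then so is $(\mathcal{Y},\mathcal{I})$. A sequence in $\mathcal{Y}$ converging in $\mathcal{X}$ has its limit in $\mathcal{Y}$. To see this, suppose the limit $p$ is outside $\mathcal{Y}$. Then $\mathcal{X}\setminus\mathcal{Y}$ is a neighborhood of $p$ that contains no values of the sequence, which is impossible because $[B/m]^{n}\neq\emptyset$. So the sequence converges in the subspace topology as well.

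\emph{The implications.}

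(d) $\Rightarrow$ (c) is trivial, since uncountable compact metrizable spaces exist, for instance $2^{\omega}$.

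(b) $\Rightarrow$ (d): every uncountable compact metrizable space $\mathcal{X}$ is a continuous image of $2^{\omega}$, so the first preliminary fact applies.

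(c) $\Rightarrow$ (b): $\mathcal{X}$ contains a closed copy of $2^{\omega}$, so the second preliminary fact applies, transported along the homeomorphism.

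(a) $\Rightarrow$ (b): embed $2^{\omega}$ as the middle-thirds Cantor set $C\subseteq[0,1]$. A sequence into $C$ is a bounded real sequence. By (a) some $B$ makes it converge in $\mathbb{R}$, and since $C$ is closed in $\mathbb{R}$ the limit lies in $C$ by the argument of the second preliminary fact.

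(b) $\Rightarrow$ (a): given a bounded $f:[A]^{n}\to\mathbb{R}$, its values lie in some compact interval $[a,b]$. This interval is a continuous image of $2^{\omega}$ (via binary expansions), so the first preliminary fact gives convergence in $[a,b]$, hence in $\mathbb{R}$.

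\emph{Where the care is needed.} The only step that requires attention is checking that the higher-dimensional convergence notion of \cite{KuSze} is preserved under continuous maps and restriction to closed subspaces. This is exactly what the two preliminary facts verify. Both are routine because convergence is defined purely through neighborhoods and the tail sets $[B/m]^{n}$. In the lifting step of the first fact we only need an arbitrary choice of preimages; no continuity of a section is required.
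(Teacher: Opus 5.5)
Your proof is correct and follows essentially the route the paper intends. The paper's proof is just ``Straightforward,'' resting on the preceding remark that $\textrm{h-FinBW}_{n}$ for $(\mathcal{X},\mathcal{I})$ passes to continuous images and closed subspaces, and on the facts that every uncountable compact metrizable space is a continuous image of $2^{\omega}$ and contains a closed copy of $2^{\omega}$; your two preliminary facts and your handling of bounded real sequences via a compact interval and the Cantor set fill in exactly those routine details.
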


\begin{proof}
    Straightforward. 
\end{proof}

\smallskip

As an application of the previous result, we obtain the following characterization of the property $\textrm{h-FinBW}_{n}$ for ideals, which extends Lemma \ref{h-FinBW Lemma} to higher dimensions.

\begin{proposition} \label{h-FinBW_n Lemma}
Let $\mathcal{I}$ be an ideal on $\omega$. Then, for all $n\in \omega$ with $n \geq 1$, the following statements are equivalent:
\setlist{nolistsep}
\begin{itemize}
\setlength{\itemsep}{0pt} 
\item[(a)] $\mathcal I$ is $\textrm{h-FinBW}_{n}$.
\item[(b)] For every $A\in\mathcal{I}^{+}$ and every countable collection $\{ f_{m} : [A]^{n} \rightarrow  \mathcal{X} \}_{m\in\omega}$ of sequences in $\mathcal{X}$ indexed by $[A]^{n}$, where $\mathcal{X}$ is any uncountable compact metrizable space, there exists $B\in \mathcal{I}^{+} \!\restriction\! A$ such that each subsequence $f_{m} \!\restriction\! [B]^{n}$ is convergent.   
\end{itemize}
\end{proposition}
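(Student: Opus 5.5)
The implication (b) $\Rightarrow$ (a) is immediate: take a single bounded real sequence $f:[A]^{n}\to[0,1]$ (constant collection $f_m=f$) and $\mathcal{X}=[0,1]$. For (a) $\Rightarrow$ (b), I reduce to a single sequence in a product space. By Fact \ref{fact_(X,I)}, $\mathcal{I}$ being $\textrm{h-FinBW}_{n}$ gives that $(\mathcal{Y},\mathcal{I})$ is $\textrm{h-FinBW}_{n}$ for every uncountable compact metrizable $\mathcal{Y}$. Given $A\in\mathcal{I}^{+}$ and $\{f_m:[A]^{n}\to\mathcal{X}\}_{m\in\omega}$, I take $\mathcal{Y}=\mathcal{X}^{\omega}$ with the product topology. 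This space is again compact metrizable and uncountable.

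Define $F:[A]^{n}\to\mathcal{X}^{\omega}$ by $F(s)=(f_m(s))_{m\in\omega}$. Applying $\textrm{h-FinBW}_{n}$ for $(\mathcal{X}^{\omega},\mathcal{I})$ gives $B\in\mathcal{I}^{+}\restriction A$ and a point $p=(p_m)_{m\in\omega}$ such that $F\restriction[B]^{n}$ converges to $p$. That is, for every open neighbourhood $U$ of $p$ there is $j$ with $F''[B/j]^{n}\subseteq U$.

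Next, fix $m$ and an open neighbourhood $V$ of $p_m$ in $\mathcal{X}$. Then $U=\pi_m^{-1}[V]$ is open in the product, where $\pi_m$ is the $m$-th projection, and it contains $p$. Hence there is $j$ with $f_m''[B/j]^{n}=\pi_m''F''[B/j]^{n}\subseteq V$. So each $f_m\restriction[B]^{n}$ converges to $p_m$. Equivalently, this is the general remark preceding Fact \ref{fact_(X,I)}, that convergence is preserved under continuous images, applied to the projections.

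The only point needing care is the appeal to Fact \ref{fact_(X,I)}. It requires the target to be uncountable compact metrizable, which $\mathcal{X}^{\omega}$ is, being a countable product of compact metrizable spaces with $\mathcal{X}$ uncountable. If one prefers to avoid the product, an alternative is to replace $\mathcal{X}$ by $2^{\omega}$ via a continuous surjection $g:2^{\omega}\to\mathcal{X}$. Each $f_m$ is lifted through $g$ by choosing preimages, convergence is pushed forward by the continuity of $g$, and one then works with $(2^{\omega})^{\omega}\cong 2^{\omega}$. I do not expect any genuine obstacle; the argument is a direct diagonalization through the product topology, in contrast with the explicit system construction used for Lemma \ref{h-FinBW Lemma}.
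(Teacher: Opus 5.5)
Your proposal is correct and matches the paper's proof: you combine the countable family into one sequence $[A]^{n}\to\mathcal{X}^{\omega}$, apply $\textrm{h-FinBW}_{n}$ in the uncountable compact metrizable product, and recover convergence of each $f_{m}\restriction[B]^{n}$ through the coordinate projections. Your explicit appeal to Fact \ref{fact_(X,I)} to justify using the property in $\mathcal{X}^{\omega}$ makes precise a step the paper leaves implicit.
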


\begin{proof}
    $[\text{(b)} \Longrightarrow \text{(a)}].$ Straightforward.

    \medskip

    $[\text{(a)} \Longrightarrow \text{(b)}].$ Suppose that $\mathcal{I}$ is $\textrm{h-FinBW}_n$, and let $\mathcal{X}$ be an uncountable compact metrizable space, so that the uncountable space $\mathcal{X}^{\omega}$, endowed with the product topology, is also compact and metrizable. Given $A\in\mathcal{I}^{+}$ and any countable collection $\{ f_{m} : [A]^{n} \rightarrow  \mathcal{X} \}_{m\in\omega}$ of sequences in $\mathcal{X}$, define the sequence $\varphi: [A]^{n} \rightarrow  \mathcal{X}^{\omega}$ in the product space $\mathcal{X}^{\omega}$ by setting $\varphi(a) = \{f_{m}(a)\}_{m\in\omega}$ for each $a\in [A]^{n}$. As the ideal $\mathcal{I}$ is $\textrm{h-FinBW}_n$, then there exist $B\in \mathcal{I}^{+} \!\restriction\! A$ and $\alpha \in \mathcal{X}^{\omega}$ such that the subsequence $\varphi \!\restriction\! [B]^{n}$ converges to $\alpha$.

    \medskip

    We claim that for each $m\in\omega$ the subsequence $f_{m} \!\restriction\! [B]^{n}$ converges to $\alpha(m)$. Indeed, given any open subset $U$ of $\mathcal{X}$ such that $\alpha(m) \in U$, consider the open subset $V = \prod_{i\in\omega} V_{i}$ of $\mathcal{X}^{\omega}$ determined by $V_{m} = U$ and $V_{i} = \mathcal{X}$ for $i\neq m$, so that $\alpha\in V$. Since $\varphi \!\restriction\! [B]^{n}$ converges to $\alpha$, then there is $j\in\omega$ such that $\varphi \text{\,''\,} [B/j]^{n}\subseteq V$; thus, for every $a\in [B/j]^{n}$ we have $\varphi(a) \in V$ and hence $f_{m}(a)\in U$, which implies that $f_{m} \text{\,''\,} [B/j]^{n}\subseteq U$. Therefore, we conclude that $\alpha(m)$ is the limit of the subsequence $f_{m} \!\restriction\! [B]^{n}$.     
\end{proof}

\smallskip

In what follows, we will show that an ideal is $\textrm{h-Mon}$ if and only if it is $\textrm{h-Mon}_{n}$ for all $n \geq 1$; likewise, an ideal is $\textrm{h-FinBW}_{2}$ if and only if it is $\textrm{h-FinBW}_{n}$ for all $n \geq 2$.

\begin{proposition} \label{Ramsey=h-Mon_n}
    Let $\mathcal I$ be an ideal on $\omega$. Then, the following statements are equivalent:
\setlist{nolistsep}
\begin{itemize}
\setlength{\itemsep}{0pt} 
\item[(a)] $\mathcal I$ is Ramsey.
\item[(b)] $\mathcal I$ is $\textrm{h-Mon}_{n}$ for all $n\geq 1$.
\end{itemize}
\end{proposition}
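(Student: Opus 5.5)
The direction (b) $\Rightarrow$ (a) is immediate. Take $n=1$: $\textrm{h-Mon}_{1}$ is exactly $\textrm{h-Mon}$, and by Proposition \ref{Ramsey = (q^{+})+(h-FinBW)} (a) $\Leftrightarrow$ (b) this is equivalent to $\mathcal I$ being Ramsey.

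For (a) $\Rightarrow$ (b) I plan to use Theorem \ref{Ramsey_ideal_theorem}, in the form $\mathcal{I}^+ \longrightarrow (\mathcal{I}^+)^{m}_{k}$ for all $m,k\geq 2$. Fix $n\geq 1$, $A\in\mathcal I^+$ and $f:[A]^n\to\mathbb R$. The case $n=1$ is again Proposition \ref{Ramsey = (q^{+})+(h-FinBW)}, so assume $n\geq 2$. Define a coloring $\pi:[A]^{n+1}\to 3$ as follows. For $u=\{u_0<\dots<u_n\}$ put $s_u=\{u_0,\dots,u_{n-1}\}$ and $t_u=\{u_1,\dots,u_n\}$, so that $s_u\triangleleft t_u$. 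Let $\pi(u)=0$ if $f(s_u)<f(t_u)$, $\pi(u)=1$ if $f(s_u)=f(t_u)$, and $\pi(u)=2$ if $f(s_u)>f(t_u)$. Theorem \ref{Ramsey_ideal_theorem} gives $B\in\mathcal I^+\restriction A$ homogeneous for $\pi$. Say the constant color is $0$; the other colors are handled symmetrically, and color $1$ gives a constant, hence monotone, sequence.

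It remains to show that $f\restriction[B]^n$ is monotone in the sense of $\triangleleft^*$. For $s,t\in[B]^n$ with $s\triangleleft t$ we have $t=(s\setminus\{\min s\})\cup\{k\}$ with $k>\max s$, so $u=s\cup\{k\}\in[B]^{n+1}$ has $s_u=s$ and $t_u=t$. Hence $f(s)<f(t)$. The one point needing care is that when $s\triangleleft^* t$, every intermediate link $r_i\triangleleft r_{i+1}$ of the chain must also lie in $[B]^n$. Each shift keeps the set's size, but it could in principle use elements outside $B$.

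I expect this bookkeeping about chains to be the main obstacle. Since the paper restricts $\triangleleft$ and $\triangleleft^*$ to $[A]^n$ (and so, when discussing subsequences $f\restriction[B]^n$, to $[B]^n$), my first step is to argue that the chain is taken inside $[B]^n$. Each single link then yields a strict inequality, and transitivity along the chain gives $f(s)\leq f(t)$. If instead one insists on chains through arbitrary finite sets, I would avoid the issue as follows. Observe that $s\triangleleft^* t$ within $[B]^n$ holds iff the chain can be realized by successive one-step shifts using only elements of $s\cup t$. This works because elements of $t$ beyond $\max s$ can be added one at a time in increasing order while dropping minima. I would then verify directly that for $s,t\in[B]^n$ with $s\triangleleft^* t$ such a $B$-internal chain exists, and conclude as before.
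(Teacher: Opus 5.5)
Your proposal is correct and follows essentially the same route as the paper: apply Theorem \ref{Ramsey_ideal_theorem} in dimension $n+1$ to the coloring that compares $f(u\setminus\{\max u\})$ with $f(u\setminus\{\min u\})$, then chain the single-shift inequalities along a $\triangleleft$-chain inside $[B]^n$. The differences are cosmetic: you use three colors where the paper uses two ($\leq$ versus $>$), you treat $n=1$ separately, and you justify explicitly that the chain can be taken inside $[B]^n$, which the paper simply assumes by its convention of restricting $\triangleleft^{*}$ to $[B]^n$.
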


\begin{proof}
    $[\text{(b)} \Longrightarrow \text{(a)}].$ This fact follows directly from Proposition \ref{Ramsey = (q^{+})+(h-FinBW)}.

    \medskip

    $[\text{(a)} \Longrightarrow \text{(b)}].$ Suppose that $\mathcal{I}$ is Ramsey, then by Theorem \ref{Ramsey_ideal_theorem}, it follows that $\mathcal{I}^+ \longrightarrow (\mathcal{I}^+)^{n+1}_{2}$ holds for all $n\geq 1$. Now, fix $n\geq 1$ and consider a sequence of real numbers $f:[A]^{n} \rightarrow \mathbb{R}$, with $A\in\mathcal{I}^{+}$. 

    \medskip
    
    For each $s\in [A]^{n+1}$, define $\underline{s}, \overline{s} \in [A]^{n}$ by $\underline{s} = s \setminus \{\max s\}$ and $\overline{s} = s \setminus \{\min s\}$, thus we have that $\underline{s} \triangleleft \overline{s}$. Taking this into account, define the coloring $\pi: [A]^{n+1} \rightarrow 2$ by $\pi(s) = 0$ if and only if $f(\underline{s}) \leq f(\overline{s})$. Since $\mathcal{I}^+ \longrightarrow (\mathcal{I}^+)^{n+1}_{2}$ holds, there exists $B\in \mathcal{I}^{+} \!\restriction\! A$ such that $B$ is homogeneous for $\pi$; thus, either $\pi \text{\,''\,} [B]^{n+1} = \{0\}$ or $\pi \text{\,''\,} [B]^{n+1} = \{1\}$. Finally, conclude that $f$ is monotone on $[B]^{n}$.

    \medskip

    Indeed, let $s,t\in [B]^{n}$ be such that $s \triangleleft^{*} t$, then there exist $k\geq 1$ and $r_0, r_1, \ldots, r_k \in [B]^{n}$ such that $s = r_0 \triangleleft r_1 \triangleleft \dots \triangleleft r_k = t$. For each $1\leq i\leq k$, let $s_i \in [B]^{n+1}$ be given by $s_{i} = r_{i-1} \cup r_{i}$, and note that $\underline{s_{i}} = r_{i-1}$ and $\overline{s_{i}} = r_{i}$. If $\pi \text{\,''\,} [B]^{n+1} = \{0\}$, then $f(r_{i-1}) = f(\underline{s_{i}}) \leq f(\overline{s_{i}}) = f(r_{i})$ for each $1 \leq i \leq k$, which implies that $f(s) \leq f(t)$. Analogously, if $\pi \text{\,''\,} [B]^{n+1} = \{1\}$, then it follows that $f(s) \geq f(t)$. Therefore, we conclude that $\mathcal{I}$ is $\textrm{h-Mon}_{n}$.
\end{proof}

\begin{proposition} \label{Ramsey=h-FinBW_2}
    Let $\mathcal I$ be an ideal on $\omega$. Then, the following statements are equivalent:
\setlist{nolistsep}
\begin{itemize}
\setlength{\itemsep}{0pt} 
\item[(a)] $\mathcal I$ is Ramsey.
\item[(b)] $\mathcal I$ is $\textrm{h-FinBW}_{n}$ for all $n\geq 2$.
\end{itemize}
\end{proposition}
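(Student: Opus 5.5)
The plan is to prove (a)$\Rightarrow$(b) by passing through $\textrm{h-Mon}_{n}$, and (b)$\Rightarrow$(a) by showing that $\textrm{h-FinBW}_{2}$ yields both $q^{+}$ and $\textrm{h-FinBW}$, and then invoking Proposition \ref{Ramsey = (q^{+})+(h-FinBW)}.

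\textbf{From (a) to (b).} This direction is immediate. If $\mathcal I$ is Ramsey, then Proposition \ref{Ramsey=h-Mon_n} shows that $\mathcal I$ is $\textrm{h-Mon}_{n}$ for every $n\geq 1$. Lemma \ref{h-Mon_n implies h-FinBW_n} then gives that $\mathcal I$ is $\textrm{h-FinBW}_{n}$ for every $n\geq 1$, and in particular for every $n\geq 2$.

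\textbf{From (b) to (a).} We only use $\textrm{h-FinBW}_{2}$. Since $\textrm{h-FinBW}_{2}$ implies $\textrm{h-FinBW}_{1}=\textrm{h-FinBW}$, the ideal is $\textrm{h-FinBW}$. It remains to show that $\mathcal I$ is $q^{+}$.

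Let $A\in\mathcal I^{+}$ and let $A=\bigcup_{k}f_{k}$ be a partition into finite pieces. Write $k(a)$ for the index of the piece containing $a$. Define $g:[A]^{2}\to\{0,1\}\subseteq\mathbb R$ by $g(\{a,b\})=1$ if and only if $k(a)=k(b)$.

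By $\textrm{h-FinBW}_{2}$ there is $B\in\mathcal I^{+}\restriction A$ such that $g\restriction[B]^{2}$ converges to some $p\in\{0,1\}$. Since $g$ takes only the values $0$ and $1$, convergence means that $g$ is eventually constant. So there is $m$ with $g\,\text{''}\,[B/m]^{2}=\{p\}$.

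We rule out $p=1$. In that case all elements of $B/m$ would lie in a single finite piece. But $B/m$ is infinite, because $B$ is infinite. So $p=0$.

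With $p=0$, distinct elements of $B/m$ lie in distinct pieces, so $|f_{k}\cap B/m|\le 1$ for every $k$. Finally, $B/m\in\mathcal I^{+}$, since it differs from $B$ by a finite set. Hence $\mathcal I$ is $q^{+}$.

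Combining this with $\textrm{h-FinBW}$, Proposition \ref{Ramsey = (q^{+})+(h-FinBW)}(c) yields that $\mathcal I$ is Ramsey.

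\textbf{Where the work lies.} There is no real obstacle, since the heavy lifting is already done in Theorem \ref{Ramsey_ideal_theorem} via Proposition \ref{Ramsey=h-Mon_n}. The one point needing care is the observation that a convergent $\{0,1\}$-valued sequence on $[B]^{2}$ must be constant on some tail $[B/m]^{2}$. This follows by taking a neighbourhood of the limit $p$ that excludes the other value.
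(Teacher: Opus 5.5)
Your proof is correct. The direction (a)$\Rightarrow$(b) is exactly the paper's: Proposition~\ref{Ramsey=h-Mon_n} followed by Lemma~\ref{h-Mon_n implies h-FinBW_n}.

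For (b)$\Rightarrow$(a) you take a genuinely longer route than the paper does:
\begin{itemize}
\item you extract $q^{+}$ from $\textrm{h-FinBW}_{2}$ via the ``same piece'' coloring;
\item you get $\textrm{h-FinBW}$ from the monotonicity remark $\textrm{h-FinBW}_{2}\Rightarrow\textrm{h-FinBW}_{1}$;
\item you then invoke the characterization of \cite{FMRS11} recorded in Proposition~\ref{Ramsey = (q^{+})+(h-FinBW)}(c).
\end{itemize}

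The paper instead applies $\textrm{h-FinBW}_{2}$ directly to an arbitrary coloring $\pi:[A]^{2}\to 2$, regarded as a bounded real sequence. Convergence of $\pi\restriction[B]^{2}$ forces $\pi\,\text{''}\,[B/k]^{2}=\{p\}$ for some $k$. So the tail $B/k\in\mathcal I^{+}\restriction A$ is already the required homogeneous set.

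The observation you single out as the only delicate point (a convergent $\{0,1\}$-valued sequence on $[B]^{2}$ is eventually constant) finishes the whole proof in one step if you apply it to $\pi$ itself rather than to your auxiliary coloring. That makes the argument self-contained. It then needs neither the external $q^{+}$ plus $\textrm{h-FinBW}$ characterization, whose proof is nontrivial, nor the $\textrm{h-FinBW}_{2}\Rightarrow\textrm{h-FinBW}$ remark.

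What your route buys is the explicit side fact that $\textrm{h-FinBW}_{2}$ alone yields $q^{+}$. That is mildly informative but not needed for this equivalence.
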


\begin{proof}
$[\text{(a)} \Longrightarrow \text{(b)}].$ This fact is easily deduced applying Proposition \ref{Ramsey=h-Mon_n} and Lemma \ref{h-Mon_n implies h-FinBW_n}.

\medskip

$[\text{(b)} \Longrightarrow \text{(a)}].$ Suppose that $\mathcal{I}$ is $\textrm{h-FinBW}_{n}$ for all $n\geq 2$, in particular $\mathcal{I}$ is $\textrm{h-FinBW}_{2}$. Let $A\in\mathcal{I}^{+}$ and let $\pi : [A]^{2} \rightarrow 2$ be a coloring of $[A]^{2}$; then, $\pi$ can be viewed as a bounded sequence $\pi :[A]^{2} \rightarrow \mathbb{R}$ with image contained in the discrete set $\{0,1\}$. Since $\mathcal{I}$ is $\textrm{h-FinBW}_{2}$, there exist $p\in 2$ and $B\in \mathcal{I}^{+} \!\restriction\! A$ such that the subsequence $\pi \!\restriction\! [B]^{2}$ converges to $p$. Thus, for any $0<\varepsilon<1$, there is $k_{\varepsilon}\in\omega$ such that $\pi \text{\,''\,} [B/k_{\varepsilon}]^{2} \subseteq (p-\varepsilon, p+\varepsilon)$ and hence $\pi \text{\,''\,} [B/k_{\varepsilon}]^{2} = \{p\}$, which means that $B/k_{\varepsilon} \in \mathcal{I}^{+} \!\restriction\! A$ is homogeneous for $\pi$. Therefore, we conclude that $\mathcal{I}$ is Ramsey.
\end{proof}

\smallskip

Subtle modifications of the properties $\textrm{h-FinBW}_{n}$ and $\textrm{h-Mon}_{n}$ play an important role in the study of non-hereditary versions of the Ramsey property for ideals, as we will show in the final section.

\medskip

The following open question may be viewed as a topological counterpart of Question~\ref{question_almost_homogeneous}, formulated in terms of simultaneous ideal convergence of sequences of varying dimensions.

\begin{question}
Let $\mathcal{X}$ be an uncountable compact metrizable space, let $\mathcal{I}$ be a Ramsey ideal, and let $\{m_i\}_{i\in\omega}$ be any sequence of integers such that each $m_i \geq 1$. If for every $i\in\omega$ we take any sequence $f_i : [\omega]^{m_i} \rightarrow \mathcal{X}$, is there a set $B\in \mathcal{I}^{+}$ such that each subsequence $f_{i} \!\restriction\! [B]^{m_i}$ is convergent?
\end{question}

\subsection{Ideals related to canonical colorings}

Building on the combinatorial properties of Ramsey ideals developed earlier, we will now establish an ideal version of the Erd\H{o}s--Rado canonization theorem.

\medskip

Let $r\in \omega$ with $r\geq 1$, and for each $x\in [\omega]^{r}$, let $x = \{x_i : i<r\}$ be its increasing enumeration. So, for every $\Gamma \subseteq r$, define $x \!\restriction\! \Gamma =\{x_i : i\in \Gamma\}$; then, it is clear that $\wp(x)= \{ x\!\restriction\!\Gamma : \Gamma \subseteq r\}$.

\begin{definition}
Given $r\in\omega$ with $r\geq 1$ and $A\in [\omega]^{\omega}$, we say that a coloring $f: [A]^{r}\rightarrow \omega$ is canonical on $C$, where $C\in [A]^{\omega}$, if there exists $\Gamma\subseteq r$ such that for all $x,y\in [C]^{r}$, we have $f(x) = f(y)$ if and only if $x \!\restriction\! \Gamma = y \!\restriction\! \Gamma$. Moreover, in this case, the set $\Gamma$ is unique and we say that $\Gamma$ canonizes $f$ on $C$.
\end{definition}

With this terminology in place, the Erd\H{o}s--Rado canonization theorem extends the Ramsey theorem by asserting that every coloring of $[A]^r$ becomes canonical on some infinite subset of $A$.

\begin{proposition}[\cite{Erdos-Rado}] \label{ER}
For every $r\in\omega$ with $r\geq 1$ and every coloring $f: [A]^{r}\rightarrow \omega$, where $A\in [\omega]^{\omega}$, there exists $C\in [A]^{\omega}$ such that $f$ is canonical on $C$.
\end{proposition}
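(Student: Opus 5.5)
The statement is the classical Erd\H{o}s--Rado canonization theorem. I would prove it in the standard way: reduce it to the finite-dimensional Ramsey theorem applied to a coloring of $[A]^{2r}$ with finitely many colors, which compares the $f$-values of pairs of $r$-sets according to their relative order type.

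\emph{Step 1 (Ramsey reduction).} For $z\in[A]^{2r}$, list all pairs $(x,y)$ of $r$-subsets of $z$ with $x\cup y=z$ (and more generally all pairs of $r$-subsets of $z$). Color $z$ by the finite record of which pairs satisfy $f(x)=f(y)$. Since there are finitely many order-patterns, this is a finite coloring of $[A]^{2r}$. The same can be done on $[A]^{m}$ for each $m\le 2r$. By the Ramsey theorem applied repeatedly, I obtain $C_0\in[A]^\omega$ on which the following holds. For each order-pattern $\tau$ of a pair $(x,y)$ of $r$-sets, whether $f(x)=f(y)$ depends only on $\tau$.

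\emph{Step 2 (identify $\Gamma$).} Let $\Gamma=\{i<r : \text{there exist } x,y\in[C_0]^r \text{ with } x\restriction(r\setminus\{i\})=y\restriction(r\setminus\{i\}),\ x_i\ne y_i,\ f(x)=f(y)\}^{c}$. In words, $\Gamma$ is the set of positions $i$ where changing only the $i$-th coordinate changes the color. By homogeneity from Step 1, this does not depend on the witnesses chosen. I would then show the following two facts.

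(i) If $x\restriction\Gamma=y\restriction\Gamma$, then $f(x)=f(y)$. Pass from $x$ to $y$ by changing one non-$\Gamma$ coordinate at a time through intermediate $r$-sets. Every such step has colors equal by homogeneity. To make these intermediate sets exist, first thin $C_0$ to a set $C$ consisting of every other element, or use a spread-out subset, so that there is room to interpolate between coordinates.

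(ii) If $x\restriction\Gamma\ne y\restriction\Gamma$, then $f(x)\ne f(y)$. This is the main obstacle. Suppose, for a contradiction, that homogeneity gives $f(x)=f(y)$ for some pattern $\tau$ in which some $i\in\Gamma$ coordinate differs. Using the freedom to realize the same pattern $\tau$ with different sets, I would chain equalities to produce two sets $x',y'$ that differ only in coordinate $i$ and have $f(x')=f(y')$. That contradicts $i\in\Gamma$.

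The standard trick for (ii) is to use transitivity of equality. If a pattern $\tau$ forces equality, then build a sequence $x=w_0,w_1,\dots,w_k$ in which consecutive pairs realize $\tau$ or $\tau^{-1}$, and whose endpoints differ only at position $i$. Then all the colors along the chain are equal. A second tool is that if $f(x)=f(y)$ holds whenever the pattern is $\tau$, then infinitely many sets share one color, which yields a bounded-versus-unbounded contradiction in coordinate $i$.

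I expect the main effort to be the combinatorial bookkeeping of order patterns needed to construct such chains. That is where I would concentrate the verification. Uniqueness of $\Gamma$ is immediate: two different $\Gamma$'s would give pairs that one set forces to be equal and the other forces to be different.
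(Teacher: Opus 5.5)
Your Step 1 is the same as the paper's Claim 1 in the proof of Theorem~\ref{ideal_canonization_theorem}; that proof, specialized to $\mathcal{I}=\mathrm{FIN}$, is how the paper's argument yields this classical result. Your part (i) is also fine. For $i\notin\Gamma$ there is only one pattern, up to swapping, of two sets differing only at position $i$, so all such pairs get the same color. Any two $r$-sets agreeing on $\Gamma$ can then be joined by single-coordinate moves inside the blocks of non-$\Gamma$ positions.

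\textbf{The gap is (ii).} This is the whole content of canonization beyond Ramsey's theorem, and you leave it as an assertion. You never show that, from an arbitrary pattern $\tau$ with $f(x)=f(y)$, one can build a chain of $\tau$- or $\tau^{-1}$-pairs whose endpoints differ exactly at a prescribed $i\in\Gamma$. The ``bounded-versus-unbounded'' remark is not an argument: color classes of a canonical coloring are typically infinite, so nothing is contradicted. A coordinate $i$ with $x_i\neq y_i$ may also satisfy $x_i\in y$ and $y_i\in x$. For example, take $x=\{1,2,3\}$, $y=\{2,3,4\}$ and $i=1$: you cannot isolate position $1$ by moving a single element while keeping the pattern with the other set.

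\textbf{The missing idea is the paper's Claim 2.} Suppose $f(p)=f(q)$ and $q_k\notin p$. Write $p=\{b_{n_i}\}$ and $q=\{b_{m_i}\}$ in the increasing enumeration of the homogeneous set. Put $p'=\{b_{2n_i}\}$ and $q'=\{b_{2m_i}\}$, and let $q^*$ be $q'$ with $b_{2m_k}$ replaced by $b_{2m_k+1}$. Both $(p',q')$ and $(p',q^*)$ have the pattern of $(p,q)$. Hence $f(q')=f(p')=f(q^*)$, and $q',q^*$ differ only at position $k$, so $k\notin\Gamma$.

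The paper then avoids your (ii) by induction on $r$. It deletes the free coordinate $k$ and defines $g$ on $(r-1)$-subsets of the even-indexed part of $B$ by reinserting an element at position $k$; $g$ is well defined by Claim 2. It then canonizes $g$ by the inductive hypothesis and shifts the resulting $\Lambda$ to get $\Gamma$.

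If you want to keep your one-shot $\Gamma$, you can finish (ii) with the same lemma:
\begin{enumerate}
\item Take the least $i\in\Gamma$ with $x_i\neq y_i$, say $x_i<y_i$. If $x_i\notin y$, the lemma gives $i\notin\Gamma$, a contradiction.
\item Otherwise $x_i=y_j$ with $j<i$. Then $j\notin\Gamma$ by minimality, since $j\in\Gamma$ would force $x_j=y_j=x_i$.
\item So you may replace $y_j$ by an interleaved element outside $x$ without changing $f(y)$. This is where thinning to every other element is genuinely needed.
\item The lemma applied to $x$ and the new $y$ now gives $i\notin\Gamma$, a contradiction.
\end{enumerate}
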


As a further consequence of Theorem \ref{Ramsey_ideal_theorem}, we now present a canonization result for Ramsey ideals that localizes the Erd\H{o}s--Rado canonization theorem stated in Proposition \ref{ER}. To do this, we introduce some useful notation.

\medskip

Given $s,t\in [\omega]^{<\omega}$ with $s \subseteq t$, let $\Gamma_{s}^{t}$ be the unique subset of $|t|$ such that $s = t\!\restriction\!\Gamma_{s}^{t}$. Also, for $r\in \omega$ with $r \geq 1$ and $B \in [\omega]^{\omega}$, denote by $\equiv$ the equivalence relation on $[B]^{r} \times [B]^{r}$ defined by $(x,y) \equiv (u,v)$ if and only if $\Gamma_{x}^{x \cup y} = \Gamma_{u}^{u \cup v}$ and $\Gamma_{y}^{x \cup y} = \Gamma_{v}^{u \cup v}$. In other words, $(x,y) \equiv (u,v)$ means that the elements of $x$ and $y$ are interlaced in exactly the same way as the elements of $u$ and $v$. Moreover, note that if $(x,y) \equiv (u,v)$ then necessarily $|x\cup y| = |u\cup v|$.

\medskip

Let $\mathcal{I}$ be an ideal on $\omega$, and fix $r\in \omega$ with $r \geq 1$. The ideal $\mathcal I$ satisfies

\kern-0.5em
\begin{equation*}
    \mathcal{I}^+ \longrightarrow *(\mathcal{I}^+)^{r}
\end{equation*}

\kern-0.5em
if, for every $A\in \mathcal{I}^+$ and every coloring $f:[A]^{r} \rightarrow \omega$, there exists an $\mathcal{I}$-positive set $C\in \mathcal{I}^+ \!\restriction\! A$ such that $f$ is canonical on $C$.

\begin{theorem} \label{ideal_canonization_theorem}
    Let $\mathcal I$ be an ideal on $\omega$. Then, the following statements are equivalent:
\setlist{nolistsep}
\begin{itemize}
\setlength{\itemsep}{0pt}
\item[(a)] $\mathcal I$ is Ramsey.
\item[(b)] $\mathcal{I}^+ \longrightarrow *(\mathcal{I}^+)^{r}$ holds for all $r\geq 1$.
\end{itemize}
\end{theorem}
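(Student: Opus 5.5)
For (b) $\Rightarrow$ (a), we specialize to $r=2$. Given $A\in\mathcal{I}^{+}$ and $\pi:[A]^{2}\rightarrow 2$, we view $\pi$ as a coloring into $\omega$ and take $C\in\mathcal{I}^{+}\!\restriction\! A$ on which $\pi$ is canonical, say via $\Gamma\subseteq 2$. If $\Gamma\neq\emptyset$, then $\pi$ separates infinitely many pairs in $[C]^{2}$ (e.g.\ for $\Gamma=\{0\}$ the pairs $\{c,c'\}$ with distinct minima receive distinct colors). This is impossible with only two colors, so $\Gamma=\emptyset$ and $C$ is homogeneous. Hence $\mathcal{I}$ is Ramsey.

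For (a) $\Rightarrow$ (b), we follow the classical proof of the Erd\H{o}s--Rado theorem via Ramsey's theorem for larger exponents, with Theorem \ref{Ramsey_ideal_theorem} and Fact \ref{fact_colors} in place of the classical Ramsey theorem. Fix $r\geq1$, $A\in\mathcal{I}^{+}$ and $f:[A]^{r}\rightarrow\omega$. Let $E$ be the finite set of $\equiv$-classes of pairs $(x,y)\in[A]^{r}\times[A]^{r}$; each class is determined by an interlacing pattern of two $r$-sets in a set of size at most $2r$. Define $\Phi:[A]^{2r}\rightarrow 2^{E}$ as follows. For $z\in[A]^{2r}$ and a pattern $e\in E$ with $|x\cup y|=m\leq 2r$, realize $e$ inside $z$ canonically, say using the first $m$ elements of $z$, obtaining a pair $(x,y)$. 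Then put $\Phi(z)(e)=1$ if and only if $f(x)=f(y)$. By Theorem \ref{Ramsey_ideal_theorem} (with $n=2r$ and $k=2^{|E|}$), there is $B\in\mathcal{I}^{+}\!\restriction\! A$ homogeneous for $\Phi$.

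Every pair $(x,y)$ in $[B]^{r}$ extends to some $z\in[B]^{2r}$ in which it sits as the canonical realization of its pattern: add elements above $\max(x\cup y)$, which is possible since $B$ is infinite. Consequently, on $B$ the truth of ``$f(x)=f(y)$'' depends only on the $\equiv$-class of $(x,y)$. We then pass to a thinned set $C\subseteq B$, for instance taking every other element or the elements of $B$ in positions divisible by a suitable constant. This thinning preserves $\mathcal{I}$-positivity because $B$ is the union of finitely many such pieces, one of which is positive; it also leaves enough room between elements to run the standard combinatorial Erd\H{o}s--Rado argument.

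That argument takes $\Gamma=\{i<r:$ the pattern in which $x$ and $y$ differ only at coordinate $i$ (all other coordinates shared) forces $f(x)\neq f(y)\}$. One then shows that for $x,y\in[C]^{r}$, $f(x)=f(y)$ iff $x\!\restriction\!\Gamma=y\!\restriction\!\Gamma$. One direction connects $x$ and $y$ through intermediate $r$-sets, changing one non-$\Gamma$ coordinate at a time. The other direction uses the homogeneity of equality patterns: if $x$ and $y$ with $f(x)=f(y)$ differed at some $\Gamma$-coordinate, then pattern-invariance produces two sets differing only at that coordinate with equal colors. This transitivity and pattern-chasing step is the main obstacle. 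I would first import the finite lemma from the classical proof verbatim, which uses only pattern-homogeneity on an infinite set with gaps, and check that every auxiliary set it needs lies inside $B$. Since all the work happens within $B$ and $C$, positivity is never lost, and this yields $\mathcal{I}^{+}\longrightarrow *(\mathcal{I}^{+})^{r}$.
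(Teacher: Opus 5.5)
Your (b)$\Rightarrow$(a) is correct, and slightly sharper than the paper's. With only two colors, $\Gamma=\{0\}$, $\{1\}$ or $2$ would force infinitely many colors on $[C]^{2}$, so only $\Gamma=\emptyset$ is possible; the paper's separate treatment of $\Gamma=\{0\},\{1\}$ is in fact vacuous.

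For (a)$\Rightarrow$(b) you take a different route from the paper. You apply Theorem \ref{Ramsey_ideal_theorem} once, to $[A]^{2r}$, and then want to finish by pure combinatorics inside the positive homogeneous set $B$. The paper instead inducts on $r$ and uses the ideal again at each level: the inductive hypothesis is applied to a collapsed coloring $g$ on $[B_0]^{r-1}$. That induction is what lets the paper get by with a weak fact: it only needs \emph{some} free coordinate, namely any $k$ with $q_k\notin p$ for a pair $p\neq q$ of equal color, which always exists. Your route uses the ideal only once, but it needs the full classical analysis on $B$.

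Your pattern-invariance step is exactly the paper's Claim 1. Your backward direction is also fine: move coordinates top-down to $\max(x_t,y_t)$, then bottom-up to $y_t$; every intermediate set stays increasing and inside $B$.

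The gap is the forward direction, which you only assert, and which is the real content of the classical lemma. The natural move is to slide the $k$-th entry of one set to an unused nearby point without changing the interlacing type. This requires that entry to lie outside the other set, and that can fail at every differing $\Gamma$-coordinate. For example, in the configuration to be refuted with $\Gamma=\{1\}$, $x=\{a,b,c\}$, $y=\{b,c,d\}$ and $a<b<c<d$, we have $x_1=b\in y$ and $y_1=c\in x$.

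What is missing is the doubling trick of the paper's Claim 2. Suppose $f(p)=f(q)$ and the $k$-th element $b_{m_k}$ of $q$ is not in $p$. Write $B=\{b_j : j<\omega\}$ and set
\[
p'=\{b_{2n_i} : i<r\},\qquad q'=\{b_{2m_i} : i<r\},\qquad q^{*}=(q'\setminus\{b_{2m_k}\})\cup\{b_{2m_k+1}\}.
\]
Then $(p',q')\equiv(p,q)\equiv(p',q^{*})$, hence $f(q')=f(p')=f(q^{*})$, although $q'$ and $q^{*}$ differ only at coordinate $k$.

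To apply this with $k\in\Gamma$ you need a preliminary step. Suppose $f(x)=f(y)$ with $x\restriction\Gamma\neq y\restriction\Gamma$.
\begin{enumerate}
\item Pass to the doubled copies of $x$ and $y$; they still have equal colors by pattern invariance.
\item Using your backward direction, push each non-$\Gamma$ coordinate of the doubled $x$ to the next odd position. Its color is unchanged, and it now meets the doubled $y$ only in $\Gamma$-entries.
\item Since $x\restriction\Gamma$ and $y\restriction\Gamma$ are distinct sets of the same size, some $k\in\Gamma$ has $y_k\notin x\restriction\Gamma$. So the $k$-th entry of the doubled $y$ is not in the modified doubled $x$.
\item The trick now yields two sets differing only at $k\in\Gamma$ with equal colors, contradicting the definition of $\Gamma$.
\end{enumerate}
This closes the argument on $B$ itself, so your thinning step is unnecessary. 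Completed this way, your proof shows that canonicity already holds on the $\Phi$-homogeneous set.
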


\begin{proof}
$[\text{(a)} \Longrightarrow \text{(b)}].$ Suppose that $\mathcal{I}$ is Ramsey, so we will prove by induction on $r$ that it satisfies $\mathcal{I}^+ \longrightarrow *(\mathcal{I}^+)^{r}$ for all $r\geq 1$.

\medskip

For the base case $r=1$, let $A\in \mathcal{I}^{+}$ and let $f: [A]^{1} \rightarrow \omega$ be a coloring. Next, take the coloring $\pi : [A]^{2} \rightarrow 2$ given by $\pi (\{n,m\}) = 0$ if and only if $f(\{n\}) \neq f(\{m\})$. Since $\mathcal{I}$ is Ramsey, there exists $C\in \mathcal{I}^{+}\!\restriction\! A$ that is homogeneous for $\pi$. Notice that if $\pi \,\text{''}\, [C]^{2} = \{0\}$, then $\Gamma= \emptyset$ canonizes $f$ on $C$; otherwise, if $\pi \,\text{''}\, [C]^{2} = \{1\}$, then $\Gamma= 1$ canonizes $f$ on $C$. Thus, $\mathcal{I}^+ \longrightarrow *(\mathcal{I}^+)^{1}$ holds.

\medskip

Inductively, suppose that $\mathcal{I}^+ \longrightarrow *(\mathcal{I}^+)^{r-1}$ holds for a fixed $r>1$, and consider a coloring $f: [A]^{r} \rightarrow \omega$ with $A \in\mathcal{I}^{+}$. Furthermore, fix an enumeration $\{ e_{i} : i<\ell\}$, where $\ell\in\omega$, of all equivalence relations on the finite set $[2r]^{r}$.  

\medskip

Now, given any $s\in [A]^{2r}$, consider the equivalence relation $\sim_{s}$ on $[2r]^{r}$ defined as follows: for $\Gamma,\Upsilon \in [2r]^{r}$, put $\Gamma \sim_{s} \Upsilon$ if and only if $f (s \!\restriction\! \Gamma) = f (s \!\restriction\! \Upsilon)$. Hence, $\sim_{s}$ belongs to the set $\{ e_{i} : i<\ell\}$. 

\medskip

Keeping this in mind, define the coloring $\pi : [A]^{2r} \rightarrow \ell$ as follows: for each $s\in [A]^{2r}$, assign $\pi(s) = i$ if and only if $\sim_{s} = e_{i}$. Consequently, since the ideal $\mathcal{I}$ is Ramsey, by Theorem \ref{Ramsey_ideal_theorem} we conclude that there exists $B\in \mathcal{I}^{+} \!\restriction\! A$ that is homogeneous for $\pi$, so that $\pi \,\text{''}\, [B]^{2r} = \{i\}$ for some $i<\ell$. With this in place, denote by $\sim$ the equivalence relation $e_{i}$ on $[2r]^{r}$, for which $\sim_{s}$ coincides with $\sim$ for each $s\in [B]^{2r}$. 

\medskip

On the one hand, suppose that the equivalence relation $\sim$ is the equality relation $=$. Let $x,y\in [B]^{r}$ be such that $f(x)=f(y)$; then, by taking $s\in [B]^{2r}$ with $x\cup y \sqsubseteq s$, we conclude that $\Gamma_{x}^{s} = \Gamma_{y}^{s}$, and hence $x = y$. As a result, $f \!\restriction\! [B]^{r}$ is an injective function. Similarly, one can show that the converse implication also holds. Therefore, in this case we obtain that $\Gamma = r$ canonizes $f$ on $B$.

\medskip

On the other hand, suppose that the equivalence relation $\sim$ is different from the equality relation $=$, in which case there exist $p,q \in [B]^{r}$ such that $p\neq q$ and $f(p)=f(q)$. Enumerate $B$ in increasing order as $B = \{ b_{j} : j<\omega\}$, and assume that $p = \{ b_{n_{i}} : i<r \}$ and $q = \{ b_{m_{i}} : i<r \}$, both listed in increasing order. Since $p \neq q$, we can fix some $k<r$ such that $b_{m_{k}} \in q \setminus p$; in particular, $p$ and $q$ differ at their $k$-th element.

\medskip

\textit{Claim 1.} For every $x,y,u,v \in [B]^{r}$, if $f(x) = f(y)$ and $(x,y) \equiv (u,v)$, then $f(u) = f(v)$.

\medskip

Indeed, $(x,y) \equiv (u,v)$ implies that $\Gamma_{x}^{x \cup y} = \Gamma_{u}^{u \cup v}$ and $\Gamma_{y}^{x \cup y} = \Gamma_{v}^{u \cup v}$. Now, taking $s,t \in [B]^{2r}$ such that $x\cup y \sqsubseteq s$ and $u\cup v \sqsubseteq t$, we obtain $\Gamma_{x}^{s} = \Gamma_{u}^{t}$ and $\Gamma_{y}^{s} = \Gamma_{v}^{t}$. Thus, if $f(x) = f(y)$, then $\Gamma_{x}^{s} \sim \Gamma_{y}^{s}$, so $\Gamma_{u}^{t} \sim \Gamma_{v}^{t}$, and hence $f(u) = f(v)$.

\medskip

\textit{Claim 2.} Let $x,y \in [B]^{r}$ with increasing enumerations $x= \{ b_{\xi_{i}} : i<r\}$ and $y= \{ b_{\zeta_{i}} : i<r\}$. If $x$ and $y$ differ only at their $k$-th element, that is $x \setminus \{b_{\xi_{k}}\} = y \setminus \{b_{\zeta_{k}}\}$ and $b_{\xi_{k}} < b_{\zeta_{k}}$, then $f(x) = f(y)$. 

\medskip

Indeed, based on the increasing enumeration of $p$ and $q$, define $p^{\prime}, q^{\prime} \in [B]^{r}$ by $p^{\prime} = \{ b_{2n_{i}} : i<r \}$ and $q^{\prime} = \{ b_{2m_{i}} : i<r \}$, and then define $q^{*} \in [B]^{r}$ by $q^{*} = (q^{\prime} \setminus \{ b_{2m_{k}} \}) \cup \{ b_{2m_{k}+1} \}$. Therefore, $q^{\prime}$ and $q^{*}$ differ only at their $k$-th element, and hence $(x,y) \equiv (q^{\prime},q^{*})$; moreover, it is clear that $(p^{\prime}, q^{\prime}) \equiv(p,q) \equiv (p^{\prime},q^{*})$. So, since $f(p)=f(q)$, we obtain $f(q^{\prime}) = f(p^{\prime}) = f(q^{*})$, and thus we also conclude that $f(x) = f(y)$.

\medskip

To continue, write $B$ as the finite union $B=B_{0} \cup B_{1}$ with $B_{0} =\{b_{2j} : j<\omega\}$ and $B_{1} =\{b_{2j+1} : j<\omega\}$, so at least one of these sets belongs to $\mathcal{I}^{+}$, and without loss of generality we may assume that $B_{0} \in \mathcal{I}^{+}$. 

\medskip

Next, define the coloring $g: [B_{0}]^{r-1} \rightarrow \omega$ as follows: for each $z\in [B_{0}]^{r-1}$, let $g(z) = f(z \cup \{n\})$, where $n\in B \setminus z$ is the element that occupies the $k$-th position of $z \cup \{n\}$ in its increasing enumeration. It should be noted that the coloring $g$ is well-defined due to the previous claim. By inductive hypothesis, $\mathcal{I}^+ \longrightarrow *(\mathcal{I}^+)^{r-1}$ holds, so there exist $C\in \mathcal{I}^+ \!\restriction\! B_{0} \subseteq \mathcal{I}^+ \!\restriction\! A$ and $\Lambda \subseteq r-1$ such that $g(z) = g(w)$ if and only if $z \!\restriction\! \Lambda = w \!\restriction\! \Lambda$ for all $z,w \in [C]^{r-1}$.

\medskip

Now, define $\Gamma \subseteq r$ by $\Gamma = \{ i<r : (i<k \wedge i\in\Lambda ) \vee ( i\geq k \wedge i-1 \in\Lambda) \}$, thus $k\notin \Gamma$, and for every $z\in [C]^{r-1}$ we have $z \!\restriction\! \Lambda = (z\cup\{n\}) \!\restriction\! \Gamma$ whenever $n\in B \setminus z$ corresponds to the $k$-th element of $z\cup\{n\}$.

\medskip

Finally, we affirm that $\Gamma$ canonizes $f$ on $C$. Indeed, given $x,y\in [C]^{r}$, both listed in increasing order, let $n_{x} \in x$ and $n_{y} \in y$ denote their respective $k$-th elements. Then, we have:

 \kern-0.5em
\begin{equation*}
    f(x)=f(y) \iff g(x \setminus \{n_{x}\}) = g(y \setminus \{n_{y}\}) \iff (x \setminus \{n_{x}\}) \!\restriction\! \Lambda = (y \setminus \{n_{y}\}) \!\restriction\! \Lambda \iff x \!\restriction\! \Gamma = y \!\restriction\! \Gamma.
\end{equation*}

\kern-0.5em
Therefore, we infer that the coloring $f$ is canonical on $C$; as a result, we conclude that $\mathcal{I}^+ \longrightarrow *(\mathcal{I}^+)^{r}$ also holds.

\medskip

$[\text{(b)} \Longrightarrow \text{(a)}].$ Suppose that $\mathcal{I}^+ \longrightarrow *(\mathcal{I}^+)^{r}$ holds for all $r\geq 1$, and consider a coloring $\pi :[A]^{2} \rightarrow 2$ with $A\in\mathcal{I}^{+}$. Since $\mathcal{I}^+ \longrightarrow *(\mathcal{I}^+)^{2}$ holds, there exists $B \in \mathcal{I}^{+} \!\restriction\! A$ such that $\pi$ is canonical on $B$. Let $\Gamma \subseteq 2$ be the set that canonizes $\pi$ on $B$; then, for all $x,y \in [B]^{2}$, we have $\pi(x) = \pi(y)$ if and only if $x \!\restriction\! \Gamma = y \!\restriction\! \Gamma$.

\medskip

Notice that the case $\Gamma = 2$ is impossible because $\pi$ takes at most two values, so we must have $\Gamma \neq 2$. Moreover, if $\Gamma = \emptyset$, then $\pi(x)=\pi(y)$ for all $x,y\in [B]^{2}$, in which case $B$ is homogeneous for $\pi$. In the remaining cases, $\Gamma=\{0\}$ or $\Gamma=\{1\}$, we can find an $\mathcal{I}$-positive subset of $B$ that is homogeneous for $\pi$.

\medskip

Indeed, suppose that $\Gamma = \{0\}$, then $\pi(x) = \pi(y)$ if and only if $\min x = \min y$ for all $x,y\in [B]^{2}$. Therefore, for each $n\in B$, the coloring $\pi$ takes a constant value on the set of pairs $\{ \{n,m\} : m\in B/n \}$. So, for each $i\in 2$, define the set $B_{i} = \{ n\in B : (\forall\, m\in B/n) ( \pi( \{n,m\}) =i )\}$, then $\pi \,\text{''}\, [B_{i}]^{2} = \{i\}$. Since $B=B_{0} \cup B_{1}$, either $B_{0} \in \mathcal{I}^{+} \!\restriction\! A$ or $B_{1} \in \mathcal{I}^{+} \!\restriction\! A $, in any case $B_{i}$ is homogeneous for $\pi$. Similarly, the case $\Gamma=\{1\}$ is handled in an analogous way.
\end{proof}

\section{An ideal version of Galvin's lemma}
\label{An ideal version of Galvin's lemma}

In this section, inspired by the Galvin lemma on finite partitions of the family of all finite subsets of the natural numbers, we introduce the notion of a Galvin ideal and investigate its main combinatorial properties. In particular, we show that Galvin ideals constitute an intermediate notion between Ramsey and semiselective ideals by constructing both a Ramsey tall ideal that is not Galvin and a Galvin tall ideal that is not semiselective. We also study ideals associated with colorings of barriers, leading to an ideal version of the Nash-Williams theorem.

\subsection{Galvin ideals}

Given a family $\mathcal{F}\subseteq [\omega]^{<\omega}$, we denote by
$\textrm{hom}(\mathcal{F})$ the collection of all sets $B\in [\omega]^{\omega}$ satisfying
either one of the following two alternatives:
\setlist{nolistsep}
\begin{itemize}
\setlength{\itemsep}{0pt}
\item[(I)] For every $C\in [B]^{\omega}$ there is $s\in
\mathcal{F}$ such that $s\sqsubset C$.
\item[(II)] $[B]^{<\omega} \cap \mathcal{F}=\emptyset$.
\end{itemize}

\medskip

Note that for every family $\mathcal{F} \subseteq [\omega]^{<\omega}$, the collection of its homogeneous sets $\textrm{hom}(\mathcal{F}) \subseteq [\omega]^{\omega}$ is a coanalytic tall family that is closed under taking infinite subsets.

\medskip

Using the notation introduced above, the Galvin lemma, which generalizes the Ramsey theorem, can be stated as follows.

\begin{proposition}[\cite{Galvin}] \label{Galvin_Lemma}
For every $\mathcal{F}\subseteq
[\omega]^{<\omega}$ and every $A\in [\omega]^{\omega}$ there exists $B\in [A]^{\omega}$ such that $B\in \hom(\mathcal{F})$.
\end{proposition}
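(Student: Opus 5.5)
The plan is to run the classical Galvin--Prikry-style ``accept/reject'' combinatorial forcing argument. It uses only the pigeonhole principle and the fact that a decreasing sequence of infinite sets admits an infinite diagonalization.

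Fix $\mathcal{F}$ and $A$. For $s\in[\omega]^{<\omega}$ and $M\in[\omega]^{\omega}$ with $s<\min M$:
\begin{itemize}
\item $M$ \emph{accepts} $s$ if every $C\in[M]^{\omega}$ has some $t\in\mathcal{F}$ with $t\sqsubset s\cup C$;
\item $M$ \emph{rejects} $s$ if no $N\in[M]^{\omega}$ accepts $s$;
\item $M$ \emph{decides} $s$ if it accepts or rejects $s$.
\end{itemize}
Both properties pass to infinite subsets, and every $M$ has an infinite subset deciding a given $s$.

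\textbf{Step 1: decide everything.} Build a fusion sequence $A\supseteq M_0\supseteq M_1\supseteq\cdots$ and $m_0<m_1<\cdots$, with $m_k=\min M_k$. At stage $k$, shrink $M_k/m_k$ successively for each of the finitely many $s\subseteq\{m_0,\dots,m_k\}$ so that it decides $s$. The diagonal set $B_0=\{m_k:k\in\omega\}$ then decides every $s\in[B_0]^{<\omega}$ via tails: for every such $s$, $B_0/\max s$ decides $s$.

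\textbf{Step 2: the rejecting case.} If $B_0$ accepts $\emptyset$, then every $C\in[B_0]^{\omega}$ has an initial segment in $\mathcal{F}$, which is alternative (I), and we are done. Otherwise $B_0$ rejects $\emptyset$. The key claim: if $M$ rejects $s$ and $M$ decides every $s\cup\{n\}$ for $n\in M$, then only finitely many $n\in M$ have $M/n$ accepting $s\cup\{n\}$. Suppose instead that infinitely many $n$ do, and let $N$ be that infinite set of $n$. Take $C\in[N]^{\omega}$ and let $n=\min C$. Then $C/n\subseteq M/n$, so some $t\in\mathcal{F}$ satisfies $t\sqsubset s\cup\{n\}\cup C/n=s\cup C$. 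Hence $N$ accepts $s$, contradicting rejection.

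\textbf{Step 3: propagate rejection.} Using the claim, thin $B_0$ by a second fusion to a set $B$ in which every $s\in[B]^{<\omega}$ is rejected by $B/\max s$. At each stage we must discard only finitely many elements for each of the finitely many current $s$, so the diagonal remains infinite.

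\textbf{Step 4: conclude.} Rejection of $s$ implies $s\notin\mathcal{F}$. If $s\in\mathcal{F}$, then every $C$ has the initial segment $s\sqsubset s\cup C$ lying in $\mathcal{F}$, so $s$ is accepted by everything. Thus $[B]^{<\omega}\cap\mathcal{F}=\emptyset$, which is alternative (II), and $B\in\hom(\mathcal{F})$ with $B\subseteq A$.

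The main obstacle is the bookkeeping in the fusions of Steps 1 and 3. One must ensure that decisions made relative to a tail $M/n$ persist after further shrinking, which they do because acceptance and rejection are inherited by infinite subsets. One must also ensure the diagonal set inherits the decisions for all finite subsets, not just for initial segments.
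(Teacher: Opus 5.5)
Your proof is correct. It is the standard accept/reject (combinatorial forcing) argument for Galvin's lemma. The paper gives no proof of this statement and simply cites Galvin; the same method, localized to a coideal, underlies the semiselective version the paper quotes from Farah and Todor\v{c}evi\'c.

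Two small bookkeeping fixes are needed:
\begin{itemize}
\item Choose $M_0\subseteq A$ deciding $\emptyset$ before starting the fusion. Stage $0$ only guarantees that $M_1$, and hence $B_0\setminus\{m_0\}$, decides $\emptyset$, whereas Step 2 needs $B_0$ itself to decide it.
\item In the key claim, the hypothesis should read that $M/n$ decides $s\cup\{n\}$ for each $n\in M$. As written, $M$ cannot decide $s\cup\{n\}$, since $s\cup\{n\}\not<\min M$.
\end{itemize}
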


With this in mind, we now introduce the combinatorial notion of a Galvin ideal, which may be viewed as an ideal-theoretic counterpart of the Galvin lemma.

\begin{definition}
An ideal $\mathcal{I}$ on $\omega$ is \textit{Galvin} if for every $\mathcal{F} \subseteq
[\omega]^{<\omega}$ and every $A\in \mathcal{I}^{+}$ there exists $B\in \mathcal{I}^{+} \!\restriction\! A$
such that $B\in \textrm{hom} (\mathcal{F})$.
\end{definition}

Thus, given a Galvin ideal $\mathcal{I}$, for every family $\mathcal{F} \subseteq [\omega]^{<\omega}$ the collection of its homogeneous sets $\textrm{hom}(\mathcal{F})$ is a dense-open set on $(\mathcal{I}^{+}, \subseteq)$.

\medskip

The connection between Galvin ideals and semiselective ideals was established in \cite[Theorem 2.2]{Farah1998} and \cite[Lemma 7.12]{Todorcevic2010} through the semiselective Galvin lemma, which asserts the following.

\begin{proposition}[\cite{Farah1998, Todorcevic2010}] \label{Semiselective_Galvin_Lemma}
Let $\mathcal{I}$ be a semiselective ideal on $\omega$. Then, for every $\mathcal{F}\subseteq
[\omega]^{<\omega}$ and every $A\in \mathcal{I}^{+}$ there exists $B\in \mathcal{I}^{+} \!\restriction\! A$ such that $B\in \hom(\mathcal{F})$.
\end{proposition}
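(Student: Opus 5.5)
The plan is to mimic the classical combinatorial forcing proof of Galvin's lemma (the Galvin--Prikry/Nash-Williams argument via ``accept/reject''), using semiselectivity to carry out the diagonalizations inside the coideal. Fix $\mathcal{F}\subseteq[\omega]^{<\omega}$ and $A\in\mathcal{I}^{+}$. For $s\in[\omega]^{<\omega}$ and $B\in\mathcal{I}^{+}$ with $s<\min B$, say $B$ \emph{accepts} $s$ if every $C\in[B]^{\omega}$ has an initial segment $t$ with $s\cup t\in\mathcal{F}$ (allowing $t=\emptyset$). Say $B$ \emph{rejects} $s$ if no $B'\in\mathcal{I}^{+}\!\restriction\! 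B$ accepts $s$, and say $B$ \emph{decides} $s$ if it accepts or rejects it. Both notions are hereditary to positive subsets, so for each $s$ the set $\mathcal{D}_{s}=\{B\in\mathcal{I}^{+}: B/\max s \text{ decides } s\}$ is dense-open on $(\mathcal{I}^{+},\subseteq)$.

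First I diagonalize. Let $\mathcal{D}_{m}=\bigcap_{s\subseteq m+1}\mathcal{D}_{s}$, a finite intersection of dense-open sets and hence dense-open. By semiselectivity there is $D\in\mathcal{I}^{+}\!\restriction\! A$ with $D/m\in\mathcal{D}_{m}$ for all $m\in D$. It follows that $D$ decides $\emptyset$, and that for each $s\subseteq D$ the set $D/\max s$ decides $s$. Moreover this remains true for every positive subset of $D$, using the $\emptyset$ case by shrinking once more if needed. If $D$ accepts $\emptyset$, then alternative (I) holds for $D$ and we are done.

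Otherwise $D$ rejects $\emptyset$. The key step is the standard lemma: if $B$ (already diagonalizing as above) rejects $s$, then the set $\{n\in B/\max s : B/n \text{ accepts } s\cup\{n\}\}$ lies in $\mathcal{I}$. Indeed, if it were positive, call it $E$, then $E$ would accept $s$, because any $C\subseteq E$ starts with some $n\in E$ and $C/n\subseteq B/n$ yields the required initial segment; this contradicts rejection. So define the dense-open sets $\mathcal{E}_{m}$ consisting of positive $B$ such that, for every $s\subseteq m+1$ rejected by the current diagonalizing set, $B$ avoids the ideal-small set of $n$ with $s\cup\{n\}$ accepted. Since removing a set in $\mathcal{I}$ keeps positivity, each $\mathcal{E}_{m}$ is dense-open. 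Applying semiselectivity again below $D$ gives $B\in\mathcal{I}^{+}\!\restriction\! D$ such that, by induction on $|s|$, every $s\in[B]^{<\omega}$ is rejected by $B/\max s$.

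Finally, rejection of $s$ implies $s\notin\mathcal{F}$, since otherwise every set accepts $s$ via $t=\emptyset$. Hence $[B]^{<\omega}\cap\mathcal{F}=\emptyset$ and alternative (II) holds. I expect the main obstacle to be the bookkeeping in the second diagonalization. The dense-open sets $\mathcal{E}_{m}$ must be defined independently of the final set $B$, so I will define them relative to the fixed $D$ (and $D/\max s$), and then check that a diagonalization $B$ satisfies $B/m\in\mathcal{E}_{m}$. This in turn gives that every next element $n$ after $s$ with $\max s\le m<n$ is rejected. Verifying this induction cleanly, using hereditary acceptance and rejection to subsets, is the delicate step.
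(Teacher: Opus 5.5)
Your argument is correct and is essentially the standard combinatorial-forcing (accept/reject plus diagonalization) proof from the sources the paper cites for this statement (Farah; Todor\v{c}evi\'{c}, Lemma 7.12); the paper itself does not reprove it. One bookkeeping point to make explicit: in the second diagonalization, the constraint coming from $s=\emptyset$ (avoiding $Z_\emptyset$) is imposed only on the sets $B/m$ with $m\in B$, so $\min B$ itself is unconstrained and you must pass to $B/\min B$, just as you did for $D$ in the first stage.
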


It is straightforward to see that the notion of a Galvin ideal is a combinatorial property lying between semiselective ideals and Ramsey ideals. Indeed, every semiselective ideal is Galvin by Proposition \ref{Semiselective_Galvin_Lemma}. Moreover, every Galvin ideal is Ramsey, since the Galvin lemma implies the Ramsey theorem. For the sake of completeness, we present a sketch of the proof of this last fact.

\begin{proposition}
    Let $\mathcal{I}$ be a Galvin ideal on $\omega$. Then, for every $A\in\mathcal{I}^{+}$ and every coloring $\pi : [A]^{2} \rightarrow 2$ there exists $B\in\mathcal{I}^{+} \!\restriction\! A$ such that $B$ is homogeneous for $\pi$.
\end{proposition}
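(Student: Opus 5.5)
Given $A\in\mathcal{I}^{+}$ and $\pi:[A]^{2}\rightarrow 2$, we encode the color-$0$ pairs as a family of finite sets and apply the Galvin property. Let
\[
\mathcal{F}=\{\{a,b\}\in[A]^{2} : \pi(\{a,b\})=0\}\subseteq[\omega]^{<\omega}.
\]
Since $\mathcal{I}$ is Galvin, there is $B\in\mathcal{I}^{+}\!\restriction\! A$ with $B\in\hom(\mathcal{F})$. We then split according to which alternative in the definition of $\hom(\mathcal{F})$ holds for $B$.

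If alternative (II) holds, then $[B]^{<\omega}\cap\mathcal{F}=\emptyset$. In particular no pair from $B$ lies in $\mathcal{F}$, so $\pi\,\text{''}\,[B]^{2}=\{1\}$ and $B$ itself is the required homogeneous set.

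If alternative (I) holds, then every $C\in[B]^{\omega}$ has an initial segment in $\mathcal{F}$. Every member of $\mathcal{F}$ has exactly two elements, so that initial segment must be the first two elements of $C$. We claim that $\pi\,\text{''}\,[B]^{2}=\{0\}$. Take any $a<b$ in $B$ and set $C=\{a,b\}\cup B/b$, which is an infinite subset of $B$ whose first two elements are $a$ and $b$. By (I), $\{a,b\}\in\mathcal{F}$, that is, $\pi(\{a,b\})=0$. Hence $B$ is homogeneous in color $0$.

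In either case $B\in\mathcal{I}^{+}\!\restriction\! A$ is homogeneous for $\pi$. The only point that needs any care is alternative (I), where one must build an infinite $C\subseteq B$ that begins with a prescribed pair; taking $C=\{a,b\}\cup B/b$ does this. By Definition \ref{def_Ramsey_ideal}, every Galvin ideal is therefore Ramsey.
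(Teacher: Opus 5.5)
Your proof is correct and follows the paper's argument: the same family $\mathcal{F}=\pi^{-1}(\{0\})$, the Galvin property applied to $A$, and the same case split on the two alternatives defining $\hom(\mathcal{F})$. Your choice $C=\{a,b\}\cup B/b$ in alternative (I) spells out a step the paper leaves implicit, namely that every pair from $B$ occurs as the first two elements of some infinite subset of $B$.
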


\begin{proof}
    Given $A\in \mathcal{I}^{+}$ and a coloring $\pi : [A]^{2} \rightarrow 2$, define the family $\mathcal{F} \subseteq [\omega]^{<\omega}$ by $\mathcal{F} = \pi^{-1} (\{0\})$. Since the ideal $\mathcal{I}$ is Galvin, there exists $B\in \mathcal{I}^{+} \!\restriction\! A$ such that $B \in \textrm{hom} (\mathcal{F})$. If every infinite subset of $B$ has an initial segment in $\mathcal{F}$, then $\pi \,\text{''}\, [B]^{2} = \{0\}$; otherwise, if no finite subset of $B$ belongs to $\mathcal{F}$, then $\pi \,\text{''}\, [B]^{2} = \{1\}$. Consequently, $B$ is homogeneous for $\pi$.
\end{proof}

\smallskip

Given a family $\mathcal{F} \subseteq [\omega]^{<\omega}$, for each $n\in \omega$ define $\mathcal{F}_{(n)} \subseteq [\omega]^{<\omega}$ by

\kern-0.5em
\begin{equation*}
    \mathcal{F}_{(n)} = \{ t\in [\omega/n]^{<\omega} : \{n\}\cup t \in \mathcal{F} \}.
\end{equation*}
 
\kern-0.5em
The next proposition states that an ideal $\mathcal{I}$ on $\omega$ is Galvin exactly when, for every family $\mathcal{F} \subseteq [\omega]^{<\omega}$, the collection $\{\textrm{hom}(\mathcal F_{(n)}) \}_{n\in\omega}$ has a dense set of diagonalizations in $(\mathcal{I}^+, \subseteq)$. Thus, this combinatorial characterization of the Galvin property for an ideal $\mathcal{I}$ can be viewed as a weakening of semiselectivity, since it involves only certain countable collections of dense-open subsets of $(\mathcal{I}^{+}, \subseteq)$.

\begin{proposition} \label{Galvin = version with diagonalizations}
Let $\mathcal I$ be an ideal on $\omega$. Then, the following statements are equivalent:
\setlist{nolistsep}
\begin{itemize}
\setlength{\itemsep}{0pt}
\item[(a)] $\mathcal{I}$ is Galvin.
\item[(b)] For every $\mathcal{F} \subseteq [\omega]^{\omega}$ and every $A\in\mathcal{I}^{+}$ there exists $B\in \mathcal{I}^{+} \!\restriction\! A$ such that $B/n \in \textrm{hom}(\mathcal{F}_{(n)})$ for all $n\in B$. 
\end{itemize}
\end{proposition}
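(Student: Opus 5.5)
Both implications can be proved directly from the definitions, without the systems of Subsection 3.2. I read $\mathcal{F}\subseteq[\omega]^{\omega}$ in (b) as $\mathcal{F}\subseteq[\omega]^{<\omega}$, since the $\mathcal{F}_{(n)}$ are only defined for families of finite sets. The key observation is that membership of $B$ in $\textrm{hom}(\mathcal{F})$ already transfers to every tail, as follows. If $B\in\textrm{hom}(\mathcal{F})$ and $n\in B$, then $B/n\in\textrm{hom}(\mathcal{F}_{(n)})$.

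$[\text{(a)}\Longrightarrow\text{(b)}]$. Given $\mathcal{F}$ and $A\in\mathcal{I}^{+}$, the Galvin property gives $B\in\mathcal{I}^{+}\!\restriction\! A$ with $B\in\textrm{hom}(\mathcal{F})$. Fix $n\in B$.

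Suppose $B$ satisfies alternative (I), and let $D\in[B/n]^{\omega}$. Put $C=\{n\}\cup D\in[B]^{\omega}$. By (I) there is $s\in\mathcal{F}$ with $s\sqsubset C$. Since $n=\min C$, either $s=\emptyset$ or $s=\{n\}\cup t$ with $t\sqsubseteq D$ and $t\in\mathcal{F}_{(n)}$.
\begin{itemize}
\item If $s=\{n\}$, then $\emptyset\in\mathcal{F}_{(n)}$, so every infinite set has an initial segment in $\mathcal{F}_{(n)}$.
\item If $s=\emptyset\in\mathcal{F}$, then every infinite subset of $B/n$ should again have an initial segment in $\mathcal{F}_{(n)}$. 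This degenerate case needs a small check, because $\emptyset\in\mathcal{F}$ does not by itself put $\emptyset$ into $\mathcal{F}_{(n)}$. I will dispose of it at the start, for instance by treating $\emptyset\in\mathcal{F}$ separately or by replacing $\mathcal{F}$ with $\mathcal{F}\setminus\{\emptyset\}$ where that is harmless.
\end{itemize}
Hence $B/n$ satisfies (I) for $\mathcal{F}_{(n)}$.

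Suppose instead $B$ satisfies alternative (II). For any finite $t\subseteq B/n$, the set $\{n\}\cup t$ is a finite subset of $B$, so it is not in $\mathcal{F}$; thus $t\notin\mathcal{F}_{(n)}$. Hence $B/n$ satisfies (II) for $\mathcal{F}_{(n)}$.

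$[\text{(b)}\Longrightarrow\text{(a)}]$. Given $\mathcal{F}$ and $A\in\mathcal{I}^{+}$, first dispose of the trivial case $\emptyset\in\mathcal{F}$: then every infinite set satisfies (I), so $A$ itself works. So assume $\emptyset\notin\mathcal{F}$. Apply (b) to obtain $B\in\mathcal{I}^{+}\!\restriction\! A$ with $B/n\in\textrm{hom}(\mathcal{F}_{(n)})$ for all $n\in B$.

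Now split $B=B_{\mathrm{I}}\cup B_{\mathrm{II}}$, putting $n$ into $B_{\mathrm{I}}$ when $B/n$ satisfies (I) for $\mathcal{F}_{(n)}$, and into $B_{\mathrm{II}}$ otherwise. This is the same parity-type splitting used in the proofs of Proposition \ref{SD} and Theorem \ref{ideal_canonization_theorem}. One piece is $\mathcal{I}$-positive; call it $C$. I claim $C\in\textrm{hom}(\mathcal{F})$.

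If $C=B_{\mathrm{I}}$, take $D\in[C]^{\omega}$ and let $n=\min D$. Then $D\setminus\{n\}$ is an infinite subset of $B/n$, so some $t\in\mathcal{F}_{(n)}$ satisfies $t\sqsubset D\setminus\{n\}$. It follows that $\{n\}\cup t\sqsubset D$ and $\{n\}\cup t\in\mathcal{F}$, so $C$ satisfies (I).

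If $C=B_{\mathrm{II}}$, take any nonempty finite $s\subseteq C$ and let $n=\min s$. Then $s\setminus\{n\}$ is a finite subset of $B/n$, so it is not in $\mathcal{F}_{(n)}$, and hence $s\notin\mathcal{F}$. Together with $\emptyset\notin\mathcal{F}$, this gives $[C]^{<\omega}\cap\mathcal{F}=\emptyset$, so $C$ satisfies (II).

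The only delicate point is the bookkeeping around $\emptyset\in\mathcal{F}$, and the matching convention that $\emptyset\in\mathcal{F}_{(n)}$ exactly when $\{n\}\in\mathcal{F}$. Apart from that, both directions are routine once the tail observation is in place.
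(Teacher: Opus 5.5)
Your proof is correct and follows essentially the same route as the paper: for (a)$\Rightarrow$(b) you transfer $B\in\textrm{hom}(\mathcal{F})$ to each tail $B/n$, and for (b)$\Rightarrow$(a) you split $B$ according to which alternative $B/n$ satisfies for $\mathcal{F}_{(n)}$. Your extra bookkeeping for $\emptyset\in\mathcal{F}$, which the paper glosses over, is sound; in (a)$\Rightarrow$(b) the right fix is to apply the Galvin property to $\mathcal{F}\setminus\{\emptyset\}$, which leaves every $\mathcal{F}_{(n)}$ unchanged.
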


\begin{proof}
$[\text{(a)} \Longrightarrow \text{(b)}].$ Suppose that $\mathcal{I}$ is Galvin, then for every $\mathcal{F} \subseteq [\omega]^{\omega}$ and every $A\in\mathcal{I}^{+}$ there exists $B\in \mathcal{I}^{+} \!\restriction\! A$ such that $B\in \textrm{hom}(\mathcal{F})$. On the one hand, if every infinite subset of $B$ has an initial segment in $\mathcal{F}$, then for each $n\in B$ and each $C\in [B/n]^{\omega}$ there is $s\in \mathcal{F}$ such that $s \sqsubset \{n\} \cup C$, thus $s=\{n\} \cup t$ and $t \sqsubset C$ for some $t\in \mathcal{F}_{(n)}$, which implies that $B/n \in \textrm{hom}(\mathcal{F}_{(n)})$. On the other hand, if $B$ does not contain finite subsets in $\mathcal{F}$, then for each $n\in B$ and each $t\in [B/n]^{<\omega}$ we have $\{n\} \cup t \notin \mathcal{F}$, thus $t\notin \mathcal{F}_{(n)}$ and hence $[B/n]^{<\omega} \cap \mathcal{F}_{(n)} = \emptyset$, which implies that $B/n \in \textrm{hom}(\mathcal{F}_{(n)})$.

\medskip

$[\text{(b)} \Longrightarrow \text{(a)}].$ Given $\mathcal F \subseteq [\omega]^{<\omega}$ and $A\in \mathcal I^+$, there exists $B\in \mathcal{I}^{+} \!\restriction\! A$ such that $B/n \in \textrm{hom}(\mathcal{F}_{(n)})$ for all $n\in B$. Thus, for each $n\in B$ there are two possibilities: either every infinite subset of $B/n$ has an initial segment in 
$\mathcal F_{(n)}$, or $B/n$ does not contain finite subsets in $\mathcal F_{(n)}$. With this in mind, split $B = B_{0} \cup B_{1}$ into two disjoint sets, where $B_{0} = \{ n\in B : (\forall\, C \in [B/n]^{\omega}) (\exists\, t\in \mathcal{F}_{(n)}) ( t \sqsubset C) \}$ and $B_{1} = \{ n\in B : [B/n]^{<\omega} \cap \mathcal{F}_{(n)} = \emptyset \}$, then at least one of these sets belongs to $\mathcal{I}^{+}$. In either case, one readily verifies that $B_{i} \in \textrm{hom}(\mathcal{F})$ for each $i\in 2$. Therefore, we conclude that $\mathcal{I}$ is Galvin.
\end{proof}

\smallskip

We continue by presenting a topological characterization of Galvin ideals in terms of the local Ramsey property relative to a given ideal.

\begin{definition}
Given an ideal $\mathcal{I}$ on $\omega$, we say that a subset $\mathcal{X} \subseteq [\omega]^{\omega}$ is $\mathcal{I}^{+}$-Ramsey if for every $A\in \mathcal{I}^{+}$ there exists $B\in \mathcal{I}^{+} \!\restriction\! A$ such that either $[B]^{\omega} \subseteq \mathcal{X}$ or $[B]^{\omega} \cap \mathcal{X} = \emptyset$. 
\end{definition}

It is easy to verify that for every ideal $\mathcal{I}$ on $\omega$, the collection of all $\mathcal{I}^{+}$-Ramsey sets forms an algebra of subsets of $[\omega]^{\omega}$. Moreover, it follows from \cite[Lemma 3.1]{Farah1998} and \cite[Corollary 7.17]{Todorcevic2010} that the collection of all $\mathcal{I}^{+}$-Ramsey sets forms a $\sigma$-algebra whenever the ideal $\mathcal{I}$ is semiselective. 

\medskip 

For each $s\in [\omega]^{<\omega}$,  let $[s]= \{ A\in [\omega]^{\omega} : s\sqsubset A\}$. The collection $\{ [s]  : s\in [\omega]^{<\omega} \}$ forms a countable basis of closed-open sets for a Polish topology over $[\omega]^{\omega}$, called the \textit{metrizable topology}, which also is obtained by identifying $[\omega]^{\omega}$ as a subspace of the Cantor space $2^{\omega}$. Thus, a subset $\mathcal{O}$ of $[\omega]^{\omega}$ is open if and only if there exists a family $\mathcal{F} \subseteq [\omega]^{<\omega}$ such that $\mathcal{O} = \bigcup_{s\in \mathcal{F}} [s]$. Consequently, for any $X\in[\omega]^{\omega}$, we have $X\in \mathcal{O}$ if and only if $s\sqsubset X$ for some $s\in \mathcal{F}$.

\begin{proposition} \label{Galvin = closed sets are Ramsey}
Let $\mathcal I$ be an ideal on $\omega$. Then, the following statements are equivalent:
\setlist{nolistsep}
\begin{itemize}
\setlength{\itemsep}{0pt}
\item[(a)] $\mathcal I$ is Galvin.
\item[(b)] Every open subset of $[\omega]^{\omega}$, with respect to the metrizable topology, is $\mathcal{I}^{+}$-Ramsey.
\item[(c)] Every closed subset of $[\omega]^{\omega}$, with respect to the metrizable topology, is $\mathcal{I}^{+}$-Ramsey.
\end{itemize}
\end{proposition}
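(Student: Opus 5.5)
The plan is to establish (b)$\Leftrightarrow$(c) first, and then (a)$\Leftrightarrow$(b), using the identification of open sets with families $\mathcal{F}\subseteq[\omega]^{<\omega}$ recalled just before the statement.

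For (b)$\Leftrightarrow$(c), note that the definition of an $\mathcal{I}^{+}$-Ramsey set is symmetric under complementation in $[\omega]^{\omega}$. Indeed, $[B]^{\omega}\subseteq \mathcal{X}$ if and only if $[B]^{\omega}\cap([\omega]^{\omega}\setminus\mathcal{X})=\emptyset$. So $\mathcal{X}$ is $\mathcal{I}^{+}$-Ramsey exactly when its complement is, and since closed sets are precisely the complements of open sets, (b) and (c) are equivalent.

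For (a)$\Rightarrow$(b), let $\mathcal{O}=\bigcup_{s\in\mathcal{F}}[s]$ be open and let $A\in\mathcal{I}^{+}$. The Galvin property gives $B\in\mathcal{I}^{+}\!\restriction\! A$ with $B\in\textrm{hom}(\mathcal{F})$.
\begin{itemize}
\item In alternative (I), every $C\in[B]^{\omega}$ has an initial segment in $\mathcal{F}$, so $C\in\mathcal{O}$. Hence $[B]^{\omega}\subseteq\mathcal{O}$.
\item In alternative (II), no finite subset of $B$ lies in $\mathcal{F}$. Every initial segment of any $C\in[B]^{\omega}$ is a finite subset of $B$, so $C\notin\mathcal{O}$. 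Hence $[B]^{\omega}\cap\mathcal{O}=\emptyset$.
\end{itemize}

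For (b)$\Rightarrow$(a), take $\mathcal{F}\subseteq[\omega]^{<\omega}$ and $A\in\mathcal{I}^{+}$, and put $\mathcal{O}=\bigcup_{s\in\mathcal{F}}[s]$. Applying (b) gives $B\in\mathcal{I}^{+}\!\restriction\! A$ with either $[B]^{\omega}\subseteq\mathcal{O}$ or $[B]^{\omega}\cap\mathcal{O}=\emptyset$. The first case is literally alternative (I).

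The only real point is the second case. Here I must produce a positive set satisfying (II), and the obstacle is that $\mathcal{F}$ may contain finite subsets of $B$ that are not initial segments of any infinite subset of $B$ avoiding $\mathcal{O}$. I claim this cannot happen. Suppose $s\in\mathcal{F}$ with $s\subseteq B$. Choose $C=s\cup(B/\max s)$, which is infinite, contained in $B$, and has $s\sqsubset C$. Then $C\in\mathcal{O}$, contradicting $[B]^{\omega}\cap\mathcal{O}=\emptyset$. (If $s=\emptyset$, take $C=B$ itself.) Hence $[B]^{<\omega}\cap\mathcal{F}=\emptyset$, so alternative (II) holds and $B\in\textrm{hom}(\mathcal{F})$. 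This shows that $\mathcal{I}$ is Galvin.
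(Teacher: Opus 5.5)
Your proof is correct and follows the paper's argument essentially verbatim: (b)$\Leftrightarrow$(c) by complementation, and (a)$\Leftrightarrow$(b) by reading the two alternatives of $\textrm{hom}(\mathcal{F})$ as $[B]^{\omega}\subseteq\mathcal{O}$ and $[B]^{\omega}\cap\mathcal{O}=\emptyset$. Your one addition is in the second case of (b)$\Rightarrow$(a): the paper simply asserts $[B]^{<\omega}\cap\mathcal{F}=\emptyset$ there, and your witness $C=s\cup(B/\max s)$ supplies the justification it omits.
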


\begin{proof}
$[\text{(b)} \Longleftrightarrow \text{(c)}].$ Straightforward.

\medskip

$[\text{(a)} \Longrightarrow \text{(b)}].$ Suppose that the ideal $\mathcal{I}$ is Galvin. Let $\mathcal{O}$ be an open subset of $[\omega]^{\omega}$ and consider a family $\mathcal{F}\subseteq [\omega]^{<\omega}$ such that $\mathcal{O} = \bigcup_{s\in \mathcal{F}} [s]$. Since $\mathcal{I}$ is Galvin, for every $A\in \mathcal{I}^{+}$ there exists $B\in \mathcal{I}^{+} \!\restriction\! A$ such that $B\in \textrm{hom}(\mathcal{F})$. On the one hand, if for every $C\in [B]^{\omega}$ there is $s\in\mathcal{F}$ such that $s \sqsubset C$, then we deduce that $[B]^{\omega} \subseteq \mathcal{O}$. On the other hand, if $[B]^{<\omega} \cap \mathcal{F} =\emptyset$, then we conclude that $[B]^{\omega} \cap \mathcal{O}= \emptyset$. Therefore, the open set $\mathcal{O}$ is $\mathcal{I}^{+}$-Ramsey.

\medskip 

$[\text{(b)} \Longrightarrow \text{(a)}].$ Suppose that every open subset of $[\omega]^{\omega}$ is $\mathcal{I}^{+}$-Ramsey. Let $\mathcal{F} \subseteq [\omega]^{\omega}$ and consider the open set $\mathcal{O} = \bigcup_{s\in \mathcal{F}} [s]$. Then, for every $A\in\mathcal{I}^{+}$ there exists $B\in \mathcal{I}^{+} \!\restriction\! A$ such that either $[B]^{\omega} \subseteq \mathcal{O}$ or $[B]^{\omega} \cap \mathcal{O} = \emptyset$. In the first case, $[B]^{\omega} \subseteq \mathcal{O}$ implies that for every $C\in [B]^{\omega}$ there is $s\in\mathcal{F}$ such that $s \sqsubset C$. In the second case, $[B]^{\omega} \cap \mathcal{O} = \emptyset$ implies that $[B]^{<\omega} \cap \mathcal{F} =\emptyset$. Thus, in either case we infer that $B\in \textrm{hom}(\mathcal{F})$. Therefore, the ideal $\mathcal{I}$ is Galvin. 
\end{proof}

\smallskip

The previous proposition naturally leads to the following question, whose answer is presently unknown, asking whether the characterization of Galvin ideals in terms of the local Ramsey property of open and closed sets can be extended to the broader class of Borel sets.

\begin{question} \label{Question: Galvin = Borel sets are Ramsey}
    Let $\mathcal{I}$ be a Galvin ideal on $\omega$. Is every Borel subset of $[\omega]^{\omega}$, with respect to the metrizable topology, an $\mathcal{I}^{+}$-Ramsey set?
\end{question}

The relationships and implications among the combinatorial notions of selectivity and semiselectivity as well as the Ramsey and Galvin properties for ideals introduced previously are summarized in the following diagram:

\kern-0.5em
\begin{equation*}
\text{Selective} 
\;\;\;\Longrightarrow\;\;\; 
\text{Semiselective} 
\;\;\;\Longrightarrow\;\;\;
\text{Galvin}
\;\;\;\Longrightarrow\;\;\;
\text{Ramsey}
\end{equation*}

\kern-0.5em
It is known that the first arrow is strict (see \cite[Example 2.1]{Farah1998}). In the next two subsections, we will show that both the
second arrow and the last arrow are also
strict.

\medskip 

An important question that could be closely associated with our work was asked  by Hru\v{s}\'{a}k in \cite[Question 5.19]{Hrusak2011} and \cite[Question 6.4]{Hrusak2017}: Is there a Borel (or even analytic) Ramsey tall ideal?

\medskip

In the case of Galvin ideals, we claim that there are no analytic Galvin tall ideals. To prove this, we use a result of Greb\'ik and Vidny\'anszky concerning analytic tall ideals containing $F_{\sigma}$ tall ideals (see \cite[Theorem 0.1]{Gre-Vid}), as well as a result of Mazur concerning $F_{\sigma}$ ideals generated by closed hereditary families (see \cite[Lemma 1.2]{Mazur}).

\begin{proposition}[\cite{Gre-Vid}] \label{GV}
    Every analytic tall ideal contains an $F_{\sigma}$ tall ideal.
\end{proposition}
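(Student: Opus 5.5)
The plan is to produce a compact family $\mathcal{K}\subseteq\mathcal{I}$ that is hereditary (closed under subsets) and tall, and then to check that $\mathcal{I}_{\mathcal{K}}$ is an $F_{\sigma}$ tall ideal contained in $\mathcal{I}$.

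\emph{The easy final step.} Suppose such a $\mathcal{K}$ is found, and replace it by its closure under finite modifications.
\begin{itemize}
\item Finite unions of members of a compact hereditary family again form a compact hereditary family, being a continuous image of a finite power of $\mathcal{K}$.
\item Therefore $\mathcal{I}_{\mathcal{K}}$ is a countable union of compact sets, indexed by the number of pieces and the finite modification allowed, and so it is $F_{\sigma}$.
\item $\mathcal{I}_{\mathcal{K}}\subseteq\mathcal{I}$ because $\mathcal{K}\subseteq\mathcal{I}$ and $\mathcal{I}$ is an ideal.
\item $\mathcal{I}_{\mathcal{K}}$ is tall because $\mathcal{K}$ is.
\end{itemize}
So everything reduces to finding $\mathcal{K}$.

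\emph{Step 1: upgrade tallness to whole cones.} Since $\mathcal{I}$ is analytic, it is completely Ramsey by Silver's theorem. Fix $A\in[\omega]^{\omega}$ and apply the Ramsey property to $\mathcal{I}$ inside $[A]^{\omega}$. This yields $B\in[A]^{\omega}$ with $[B]^{\omega}\subseteq\mathcal{I}$ or $[B]^{\omega}\cap\mathcal{I}=\emptyset$. Tallness excludes the second alternative, so
\[
\mathcal{H}=\{B\in[\omega]^{\omega} : [B]^{\omega}\subseteq\mathcal{I}\}
\]
is dense in $([\omega]^{\omega},\subseteq)$. This family $\mathcal{H}$ is hereditary and coanalytic, but it need not be closed.

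\emph{Step 2: find a compact tall subfamily.} Write $\mathcal{I}=p[T]$ for a tree $T$ on $2\times\omega$. I would try to pick, uniformly, witnesses $y_{B}$ for the members of $\mathcal{I}$ coming from elements of $\mathcal{H}$. The aim is a Borel tall subfamily $\mathcal{H}'\subseteq\mathcal{H}$ on which the witnesses are bounded by a single function $g\in\omega^{\omega}$. Then the sets whose witness is $\leq g$ form the projection of a finitely branching subtree of $T$. That projection is compact, and closing it downward keeps it inside $\mathcal{I}$ and compact.

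The tool I would try first is a Hurewicz/Kechris--Louveau--Woodin type dichotomy, applied to the analytic set $\mathcal{I}$ against the dense family $\mathcal{H}$. Either $\mathcal{I}$ contains a compact hereditary tall family, or there is a Cantor-scheme embedding $\varphi:2^{\omega}\to[\omega]^{\omega}$ witnessing non-tallness. In the second case I would run a fusion argument below some $A$. Using Galvin's lemma (Proposition~\ref{Galvin_Lemma}) to thin out at each stage, this should produce $B\subseteq A$ with $[B]^{\omega}\cap\mathcal{I}=\emptyset$, contradicting Step 1.

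\emph{Main obstacle.} The hard part is Step 2: passing from the pointwise density of cones inside the analytic ideal to a single compact hereditary tall family. Density by itself gives no uniform bound on the analytic witnesses. The dichotomy and the fusion must be arranged so that the resulting bad set is homogeneous for a Galvin-type family. This is where the real content of the Greb\'ik--Vidny\'anszky theorem lies.
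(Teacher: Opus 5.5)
\textbf{Verdict: the proposal has a genuine gap.} Your final step is correct, and it is the reduction the paper relies on. The paper does not reprove this statement. It cites Greb\'ik--Vidny\'anszky and notes (Proposition~\ref{GV-thm}) that their argument gives a closed hereditary $\mathcal{K}\subseteq\mathcal{I}$ containing an infinite subset of every infinite member of $\mathcal{I}$; then $\mathcal{I}_{\mathcal{K}}$ works exactly as you describe.

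The gap is that you never produce $\mathcal{K}$.
\begin{itemize}
\item Step~1 contributes nothing. Since $\mathcal{I}$ is closed under subsets, $[B]^{\omega}\subseteq\mathcal{I}$ iff $B\in\mathcal{I}$, so $\mathcal{H}=\mathcal{I}\cap[\omega]^{\omega}$ and its density is just tallness. The dichotomy you extract from Silver's theorem holds trivially for any hereditary family.
\item Step~2 is a hope, not an argument. The dichotomy you invoke is not an available theorem. Hurewicz's dichotomy is trivial inside the compact space $2^{\omega}$. Kechris--Louveau--Woodin concerns $F_{\sigma}$ sets \emph{containing} an analytic set and avoiding another, whereas you need an $F_{\sigma}$ set \emph{inside} $\mathcal{I}$ that is cofinal under $\supseteq$.
\item The fusion that ``should produce'' an infinite set orthogonal to $\mathcal{I}$ is not described at all.
\item You also do not need a single bound $g$ on the witnesses; looking for one makes the problem harder than it is.
\end{itemize}

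The missing idea is to let each set bound its own witness.
\begin{itemize}
\item Fix a continuous surjection $f:\omega^{\omega}\to\mathcal{I}$. For each $y$ with $f(y)$ infinite, choose an infinite $B_{y}\subseteq f(y)$ whose $j$-th element exceeds $\max\{y(0),\dots,y(j)\}$.
\item Let $\mathcal{K}$ be the closure in $2^{\omega}$ of $\{X : (\exists y)(X\subseteq B_{y})\}$. It is closed, and it is hereditary because the closure of a hereditary family is hereditary.
\item Suppose $X=\{x_{0}<x_{1}<\cdots\}$ is infinite and $X=\lim_{k}X_{k}$ with $X_{k}\subseteq B_{y_{k}}$. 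For all large $k$ the points $x_{0},\dots,x_{j}$ lie in $B_{y_{k}}$, so $y_{k}(i)<x_{j}$ for $i\leq j$.
\item Thus every coordinate sequence $(y_{k}(i))_{k}$ is bounded, and some subsequence of $(y_{k})$ converges to a point $y\in\omega^{\omega}$.
\item Each set $\{z : x_{j}\in f(z)\}$ is clopen, so $X\subseteq f(y)\in\mathcal{I}$. Finite members of $\mathcal{K}$ are trivially in $\mathcal{I}$, hence $\mathcal{K}\subseteq\mathcal{I}$.
\item Every infinite $A\in\mathcal{I}$ equals some $f(y)$ and contains $B_{y}\in\mathcal{K}$. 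So tallness of $\mathcal{I}$ passes to $\mathcal{K}$, and your final step finishes the proof.
\end{itemize}

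This compactness argument uses only that $\mathcal{I}$ is analytic, hereditary and contains the finite sets. That is exactly the generality of Proposition~\ref{GV-thm}; no Ramsey theory, dichotomy or fusion is involved.
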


\begin{proposition}[\cite{Mazur}] \label{Mazur-lemma}
    Every $F_{\sigma}$ tall ideal is generated by a closed hereditary tall family.
\end{proposition}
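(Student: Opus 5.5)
We would obtain the closed hereditary tall family directly from a decomposition of $\mathcal{I}$ into closed sets, by letting the index of the closed piece depend on the minimum of the set.

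First, we normalize the decomposition. Write $\mathcal{I}=\bigcup_{n\in\omega}\mathcal{K}_n$ with each $\mathcal{K}_n$ closed in $2^{\omega}$. For each $n$, let $\mathcal{K}_n'=\{B\subseteq\omega : (\exists A\in\mathcal{K}_n)(B\subseteq A)\}$ be the hereditary closure of $\mathcal{K}_n$. It is closed for the following reason. The set $\{(B,A) : B\subseteq A\}$ is closed in $2^{\omega}\times 2^{\omega}$, and $\mathcal{K}_n'$ is its projection onto the first coordinate after intersecting with $2^{\omega}\times\mathcal{K}_n$. Since $\mathcal{K}_n$ is compact, this projection is closed. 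Next, we replace $\mathcal{K}_n'$ by $\mathcal{L}_n=\mathcal{K}_0'\cup\dots\cup\mathcal{K}_n'$. The family $\{\mathcal{L}_n\}_{n\in\omega}$ is then an increasing sequence of closed hereditary families. Because $\mathcal{I}$ is hereditary, each $\mathcal{L}_n\subseteq\mathcal{I}$, and we still have $\mathcal{I}=\bigcup_n\mathcal{L}_n$.

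Second, we define
\[
\mathcal{K}=\{\emptyset\}\cup\{A\subseteq\omega : A\neq\emptyset \text{ and } A\in\mathcal{L}_{\min A}\}.
\]
The following checks show that $\mathcal{K}$ is as required.
\begin{itemize}
\item \emph{$\mathcal{K}$ is hereditary.} Let $\emptyset\neq B\subseteq A\in\mathcal{K}$. Then $\min B\geq\min A$, so $B\in\mathcal{L}_{\min A}\subseteq\mathcal{L}_{\min B}$, using that $\mathcal{L}_{\min A}$ is hereditary and the sequence is increasing.
\item \emph{$\mathcal{K}$ is closed.} Let $A_i\to A$ with each $A_i\in\mathcal{K}$. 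If $A=\emptyset$, then $A\in\mathcal{K}$. Otherwise, convergence in $2^{\omega}$ gives $\min A_i=\min A=:m$ for all large $i$. So eventually $A_i\in\mathcal{L}_m$, and $A\in\mathcal{L}_m$ because $\mathcal{L}_m$ is closed. Hence $A\in\mathcal{K}$.
\item \emph{$\mathcal{K}$ generates $\mathcal{I}$.} Clearly $\mathcal{K}\subseteq\mathcal{I}$, so $\mathcal{I}_{\mathcal{K}}\subseteq\mathcal{I}$. Conversely, let $A\in\mathcal{I}$, say $A\in\mathcal{L}_m$. Then $A\setminus m\in\mathcal{L}_m\subseteq\mathcal{L}_{\min(A\setminus m)}$ whenever $A\setminus m\neq\emptyset$. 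Thus $A\setminus m\in\mathcal{K}$ and $A\subseteq^{*}A\setminus m$, so $A\in\mathcal{I}_{\mathcal{K}}$.
\item \emph{$\mathcal{K}$ is tall.} Let $X\in[\omega]^{\omega}$. Since $\mathcal{I}$ is tall, there is an infinite $Y\subseteq X$ with $Y\in\mathcal{I}$, say $Y\in\mathcal{L}_m$. By the previous item, $Y\setminus m$ is an infinite subset of $X$ lying in $\mathcal{K}$.
\end{itemize}

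The only step that needs real care is the closedness of $\mathcal{K}$. A plain union of the $\mathcal{L}_n$ would not be closed, and tying the index to $\min A$ is exactly what makes membership locally determined by a closed condition. The compactness argument showing that hereditary closures of closed sets are closed is the other nontrivial ingredient.
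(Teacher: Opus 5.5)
Your proof is correct, and all four verifications go through as written. The paper gives no proof of this statement: it is quoted from Mazur \cite{Mazur} and used as a black box in Lemma \ref{Galvin_non-analytic_tall_lemma} and Proposition \ref{analytic_tall_implies_non-Galvin(omega)}. So there is no in-paper argument to compare against, and your construction supplies a self-contained proof.

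Closedness can also be seen at a glance by writing
$\mathcal{K}=\bigcap_{m\in\omega}\bigl(\{A\subseteq\omega : A=\emptyset \text{ or } \min A\neq m\}\cup\mathcal{L}_m\bigr)$.
Each term is the union of a clopen set and a closed set.

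Your generation step genuinely uses the $\subseteq^{*}$ in the paper's definition of $\mathcal{I}_{\mathcal{K}}$. A singleton $\{n\}$ need not belong to $\mathcal{L}_n$, and hence need not belong to $\mathcal{K}$. You may want the stronger conclusion that $\mathcal{I}$ consists exactly of the subsets of finite unions of members of $\mathcal{K}$, with no finite error. In that case, replace $\mathcal{L}_n$ by $\mathcal{L}_n\cup\{\emptyset\}\cup\{\{j\}: j\leq n\}$. This family is still closed, hereditary, increasing and contained in $\mathcal{I}$. Every singleton then lies in $\mathcal{K}$, and for $A\in\mathcal{L}_m$ you have $A=(A\setminus m)\cup\bigcup_{j\in A\cap m}\{j\}$.
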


At this point, it is worth recalling that a family $\mathcal{K} \subseteq 2^{\omega}$ of subsets of $\omega$ is said to be \textit{hereditary} if, whenever $A \in \mathcal{K}$ and $B \subseteq A$, then $B \in \mathcal{K}$.

\begin{lemma} \label{Galvin_non-analytic_tall_lemma}
No analytic tall ideal can be extended to a Galvin tall ideal.
\end{lemma}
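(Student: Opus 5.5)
Suppose, aiming for a contradiction, that $\mathcal{J}$ is an analytic tall ideal and $\mathcal{I} \supseteq \mathcal{J}$ is a Galvin tall ideal. By Proposition \ref{GV}, $\mathcal{J}$ contains an $F_{\sigma}$ tall ideal $\mathcal{J}_{0}$. By Proposition \ref{Mazur-lemma}, $\mathcal{J}_{0}$ is generated by a closed hereditary tall family $\mathcal{K} \subseteq \wp(\omega)$. Since $\mathcal{K} \subseteq \mathcal{J}_0 \subseteq \mathcal{I}$, it suffices to show that a Galvin ideal cannot contain a closed hereditary tall family $\mathcal{K}$ of infinite-set witnesses in the way tallness requires.

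To use the Galvin property, I would take
\[
\mathcal{F} = \{ s\in [\omega]^{<\omega} : s \notin \mathcal{K} \}.
\]
Because $\mathcal{K}$ is closed and hereditary, an infinite set $C$ lies in $\mathcal{K}$ iff every finite subset of $C$ (equivalently, every initial segment) lies in $\mathcal{K}$. Apply the Galvin property with $A=\omega \in \mathcal{I}^{+}$ to obtain $B\in \mathcal{I}^{+}$ with $B\in \textrm{hom}(\mathcal{F})$. There are two cases.

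\emph{Alternative (II).} Here $[B]^{<\omega}\cap\mathcal{F}=\emptyset$, so every finite subset of $B$ is in $\mathcal{K}$. By closedness $B\in\mathcal{K}\subseteq\mathcal{I}$, contradicting $B\in\mathcal{I}^{+}$.

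\emph{Alternative (I).} Here every $C\in[B]^{\omega}$ has an initial segment outside $\mathcal{K}$, hence $C\notin\mathcal{K}$. So $[B]^{\omega}\cap\mathcal{K}=\emptyset$, contradicting tallness of $\mathcal{K}$.

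Both cases fail, so no such $\mathcal{I}$ exists.

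The main thing to check is the first reduction: tallness of the $F_\sigma$ ideal transfers to tallness of the generating family $\mathcal{K}$. This is asserted by Proposition \ref{Mazur-lemma}. Otherwise I would argue it directly: if $[B]^\omega\cap\mathcal{K}=\emptyset$ but $B$ contains an infinite $D \subseteq^{*} K_1\cup\dots\cup K_m$, then some $D\cap K_i$ is infinite and belongs to $\mathcal{K}$ by heredity. Note also that the argument never uses the tallness of $\mathcal{I}$; it only needs $\omega\in\mathcal{I}^{+}$.
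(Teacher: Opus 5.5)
Your proof is correct and takes essentially the paper's route: reduce via Propositions \ref{GV} and \ref{Mazur-lemma} to a closed hereditary tall family $\mathcal{K}$ inside the ideal, then show that $\mathcal{K}$ witnesses the failure of the Galvin property. The only difference is that the paper cites Proposition \ref{Galvin = closed sets are Ramsey} to conclude that $\mathcal{K}$ is not $\mathcal{I}^{+}$-Ramsey, whereas you inline that step by applying the Galvin property directly to $\mathcal{F}=[\omega]^{<\omega}\setminus\mathcal{K}$ with $A=\omega$.
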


\begin{proof}
Let $\mathcal{I}$ be an analytic tall ideal on $\omega$, and let $\mathcal{J}$ any tall ideal on $\omega$ such that $\mathcal{I} \subseteq \mathcal{J}$. By Proposition \ref{GV}, there exists an $F_\sigma$ tall ideal $\mathcal{G}$ on $\omega$ contained in the analytic tall ideal $\mathcal{I}$, so that $\mathcal{G} \subseteq \mathcal{I} \subseteq \mathcal{J}$. By Proposition \ref{Mazur-lemma}, the $F_\sigma$ tall ideal $\mathcal{G}$ is generated by a closed hereditary tall family $\mathcal{K} \subseteq 2^{\omega}$. Therefore, for every $A\in \mathcal{J}^{+}$ we have that $[A]^{\omega} \not \subseteq \mathcal{K}$ and $[A]^{\omega} \cap \mathcal{K} \neq \emptyset$, which means that the closed family $\mathcal{K}$ (restricted to $[\omega]^{\omega}$) is not a $\mathcal{J}^{+}$-Ramsey set, and hence the ideal $\mathcal{J}$ is not Galvin, according to Proposition \ref{Galvin = closed sets are Ramsey}.
\end{proof}

\begin{proposition} \label{no analytic Galvin tall}
There are no analytic Galvin tall ideals.
\end{proposition}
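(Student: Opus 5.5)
The statement follows directly from Lemma \ref{Galvin_non-analytic_tall_lemma} by taking the extension to be trivial. Suppose, towards a contradiction, that $\mathcal{I}$ is an analytic ideal on $\omega$ which is both tall and Galvin. Applying Lemma \ref{Galvin_non-analytic_tall_lemma} to the analytic tall ideal $\mathcal{I}$ and the tall ideal $\mathcal{J}=\mathcal{I}\supseteq\mathcal{I}$, we conclude that $\mathcal{J}=\mathcal{I}$ is not Galvin. This is the desired contradiction.

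For the reader's convenience, I would briefly recall why the lemma applies with $\mathcal{J}=\mathcal{I}$. By Proposition \ref{GV}, $\mathcal{I}$ contains an $F_\sigma$ tall ideal $\mathcal{G}$. By Proposition \ref{Mazur-lemma}, $\mathcal{G}$ is generated by a closed hereditary tall family $\mathcal{K}$. Now fix any $A\in\mathcal{I}^+$.
\begin{itemize}
\item Tallness of $\mathcal{K}$ gives $[A]^\omega\cap\mathcal{K}\neq\emptyset$.
\item Since $\mathcal{K}\subseteq\mathcal{G}\subseteq\mathcal{I}$ and $A\notin\mathcal{I}$, we have $A\notin\mathcal{K}$, so $[A]^\omega\not\subseteq\mathcal{K}$.
\end{itemize}
The same two facts hold for every positive $B\subseteq A$, so $\mathcal{K}\cap[\omega]^\omega$ is a closed set in the metrizable topology that is not $\mathcal{I}^+$-Ramsey. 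By Proposition \ref{Galvin = closed sets are Ramsey}, $\mathcal{I}$ is therefore not Galvin.

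There is essentially no obstacle here. The only point needing care is that $\mathcal{K}\cap[\omega]^\omega$ is indeed closed in $[\omega]^\omega$, being the trace of a closed subset of $2^\omega$, and the lemma's proof already uses this.
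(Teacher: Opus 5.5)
Your proof is correct and is the paper's argument: the paper also obtains this proposition directly from Lemma \ref{Galvin_non-analytic_tall_lemma}, in effect with $\mathcal{J}=\mathcal{I}$. Your recap of why the lemma applies matches that lemma's own proof, which goes through Propositions \ref{GV}, \ref{Mazur-lemma}, and \ref{Galvin = closed sets are Ramsey}.
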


\begin{proof}
    This result follows directly from Lemma \ref{Galvin_non-analytic_tall_lemma}.
\end{proof}

\smallskip

Regarding the descriptive complexity of Galvin ideals, it should be pointed out that \cite[Theorem 4.24]{GrebikUzca2019} provides a $\mathbf{\Pi}^{1}_{2}$ Galvin tall ideal on $\omega$. Nevertheless, it remains open whether a coanalytic Galvin tall ideal exists.

\subsection{A Ramsey non-Galvin tall ideal}

We recall the construction of a coanalytic Ramsey tall ideal from \cite[Theorem 4.7]{HMTU2017} in order to prove that it is not Galvin. To begin with, we present some preliminary results needed for the proof of Theorem \ref{ramsey-no-galvin}.

\medskip 

As noted in Proposition~\ref{GV}, it was shown in \cite[Theorem 0.1]{Gre-Vid} that every analytic tall ideal $\mathcal{I}$ on $\omega$ contains a closed hereditary tall family $\mathcal{K}$, and hence the ideal generated by $\mathcal{K}$ is an $F_{\sigma}$ tall ideal contained in $\mathcal{I}$. However, a closer analysis of the proof of \cite[Theorem 0.1]{Gre-Vid} reveals that the authors in fact establish the following more general result.

\begin{proposition}[\cite{Gre-Vid}] \label{GV-thm}
Let $\mathcal{A} \subseteq 2^{\omega}$ be an analytic hereditary family such that $[\omega]^{<\omega} \subseteq \mathcal{A}$. Then, there exists a closed hereditary family $\mathcal{K} \subseteq \mathcal{A}$ satisfying that for every infinite set $A\in\mathcal{A}$ there is an infinite subset $B \subseteq A$ such that $B\in \mathcal{K}$.
\end{proposition}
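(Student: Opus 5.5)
We need to prove the generalization of Grebík–Vidnyánszky: for an analytic hereditary family $\mathcal{A}\supseteq[\omega]^{<\omega}$ there is a closed hereditary $\mathcal{K}\subseteq\mathcal{A}$ meeting $[A]^{\omega}$ for every infinite $A\in\mathcal{A}$. The plan is to rerun the proof of \cite[Theorem 0.1]{Gre-Vid} and check that it uses only two things: that the family is analytic, and that it is hereditary and contains all finite sets. The ideal structure, i.e. closure under finite unions, is never needed. Tallness of $\mathcal{I}$ enters there only to say that every infinite set has an infinite subset in $\mathcal{I}$. Here we only ask for infinite subsets of infinite members of $\mathcal{A}$, so that hypothesis drops out.

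\emph{Step 1: a tree representation.} Since $\mathcal{A}$ is analytic, fix a continuous surjection $\varphi:\omega^{\omega}\to\mathcal{A}$; the degenerate case $\mathcal{A}=[\omega]^{<\omega}$ is trivial, since it has no infinite members and we may take $\mathcal{K}=\emptyset$. For $s\in\omega^{<\omega}$ let $\varphi(s)$ be the finite set determined by $\varphi[N_s]$, namely the set of $n$ that belong to every member of $\varphi[N_s]$ and whose membership is already decided by $s$. We aim to define $\mathcal{K}$ as the set of all $B$ such that every finite subset of $B$ is ``witnessed along a branch'' in a prescribed combinatorial way. Being defined by a condition on finite subsets, $\mathcal{K}$ is closed and hereditary automatically. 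It then remains to show $\mathcal{K}\subseteq\mathcal{A}$ and that $\mathcal{K}$ is dense below infinite members of $\mathcal{A}$.

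\emph{Step 2: the fusion and the compactness trick.} Given an infinite $A=\varphi(x)\in\mathcal{A}$, we build $B=\{b_0<b_1<\cdots\}\subseteq A$ by choosing $b_k$ so large that $b_k\in\varphi(x\restriction m_k)$ for suitable levels $m_k$. The witnessing condition defining $\mathcal{K}$ should encode that a finite set $F$ is covered by $\varphi(\sigma)$ for a node $\sigma$ of bounded complexity relative to $\max F$, for instance with all coordinates of $\sigma$ bounded by a fixed function $g(\max F)$. This keeps the relevant part of the tree finitely branching. For the converse, that $B\in\mathcal{K}$ implies $B\in\mathcal{A}$, we use König's lemma on this finitely branching subtree: for each $n$ the set $B\cap n$ has a witness node, and compactness yields a branch $y$ with $B\subseteq\varphi(y)$. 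Then $B\in\mathcal{A}$, because $\varphi(y)\in\mathcal{A}$ and $\mathcal{A}$ is hereditary. The hypothesis $[\omega]^{<\omega}\subseteq\mathcal{A}$ guarantees that finite sets are always witnessed, so $\mathcal{K}$ contains all finite sets and the definition is consistent.

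\emph{Main obstacle.} The difficulty is choosing a single bounding function $g$, independent of $A$, that works for all infinite $A\in\mathcal{A}$ at once. A given branch $x$ may have unbounded coordinates, so the construction cannot simply follow $x$. The way around this is the Grebík–Vidnyánszky idea: replace $x$ by thinning $B$, choosing $b_k$ above $g^{-1}$ of the relevant coordinates, so that $x\restriction m_k$ becomes bounded relative to $b_k$. This uses the freedom to take $B$ sparse inside $A$. I would first verify that this thinning argument from \cite{Gre-Vid} never invokes unions of members of $\mathcal{I}$. It should not, because each witness involves a single branch $x$. Once that is checked, the rest of their proof transfers verbatim.
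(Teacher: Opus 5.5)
Your overall plan (rerun \cite[Theorem 0.1]{Gre-Vid} and check that closure under unions is never used) is all the paper offers, so at that level you agree with it. The concrete mechanism you sketch for that proof, however, does not work.

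First, finite sets are dense in $2^{\omega}$, so a closed family containing $[\omega]^{<\omega}$ is all of $2^{\omega}$. Your claim that $\mathcal{K}$ ``contains all finite sets'' is therefore incompatible with $\mathcal{K}\subseteq\mathcal{A}$ unless $\mathcal{A}=\wp(\omega)$.

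Second, and more seriously, the König step fails. The witness $\sigma_n$ for $B\cap n$ has its entries bounded only by $g(\max(B\cap n))$. That bound grows with $n$, and the $\sigma_n$ for different $n$ are unrelated. So they do not lie in one finitely branching tree, and nothing produces a single $y$ with $B\subseteq\varphi(y)$.

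This is a real failure, not a presentational one. For an ideal $\mathcal{A}$ you may choose $\varphi$ so that nodes with all entries $\le c$ already cover every set of size $\le c$; just graft a code for a finite set onto any surjection. Then every $B=\{b_0<b_1<\cdots\}$ with $g(b_k)\ge k+1$ for all $k$ passes your test. Take $\mathcal{A}=\emptyset\times\mathrm{FIN}$ and any $g$ tending to infinity, which is what your thinning step uses. Your $\mathcal{K}$ then contains infinite subsets of a vertical line, and these are not in $\mathcal{A}$.

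The missing idea is to bound the witness \emph{coordinatewise by the increasing enumeration of $B$ itself}. This bound is uniform in $n$, and it only gets weaker when passing to subsets. Define the following.
\begin{itemize}
\item Let $\mathcal{K}_0$ be the family of infinite $B=\{b_0<b_1<\cdots\}$ for which some $y\in\omega^{\omega}$ satisfies $B\subseteq\varphi(y)$ and $y(i)\le b_i$ for all $i$.
\item Let $\mathcal{K}=\overline{\mathcal{K}_0}$.
\end{itemize}
The required properties then follow.
\begin{itemize}
\item $\mathcal{K}_0\subseteq\mathcal{A}$ by heredity of $\mathcal{A}$.
\item $\mathcal{K}_0$ is closed under infinite subsets, because the $i$-th element of any $B'\subseteq B$ is at least $b_i$. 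From this one checks that $\mathcal{K}$ is hereditary.
\item Suppose $B_n\in\mathcal{K}_0$ converge to an infinite $B$. For each $i$, the $i$-th element of $B_n$ is eventually $b_i$, so the witnesses $y_n$ are eventually bounded by $b_i$ at coordinate $i$. This is where compactness genuinely applies: a subsequence of the $y_n$ converges, and continuity of $\varphi$ makes the limit a witness for $B$.
\item Hence $\mathcal{K}\setminus\mathcal{K}_0$ consists only of finite sets. Each of these is contained in some $B_n$, so it lies in $\mathcal{A}$.
\item Density is your thinning idea made precise: for infinite $A\subseteq\varphi(x)$, choose $b_0<b_1<\cdots$ in $A$ with $b_i\ge x(i)$.
\end{itemize}
Nothing in this argument uses unions.
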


On the other hand, the following two lemmas are essentially drawn from \cite[Subsection 4.3]{HMTU2017}, which are indispensable for the construction of a Ramsey non-Galvin ideal, as will be seen in the proof of Theorem \ref{ramsey-no-galvin}.

\begin{lemma}[\cite{HMTU2017}]   \label{efective-Ramsey} 
There exists a Borel map $F: [\omega]^{\omega}\times 2^{[\omega]^2} \rightarrow
[\omega]^{\omega}$ such that, for every $A \in [\omega]^{\omega}$ and every coloring $\pi : [\omega]^{2} \rightarrow 2$, the infinite set $F(A,\pi)$ is a homogeneous subset of $A$ for $\pi$.
\end{lemma}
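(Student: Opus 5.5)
The plan is to make the classical proof of Ramsey's theorem effective. That proof builds a decreasing sequence of infinite sets together with a sequence of chosen points, and then thins out to one colour. We carry this out with explicit, definable choices, so that each output coordinate depends in a Borel way on $(A,\pi)$.

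Identify $[\omega]^{\omega}$ with a $G_\delta$ subspace of $2^{\omega}$, and $2^{[\omega]^2}$ with the Cantor space. Fix $A\in[\omega]^{\omega}$ and $\pi$. Recursively define infinite sets $A_0=A$, points $a_j=\min A_j$, and colours $i_j\in 2$ as follows. Put $i_j=0$ if the set $\{m\in A_j/a_j : \pi(\{a_j,m\})=0\}$ is infinite, and $i_j=1$ otherwise. Then let $A_{j+1}=\{m\in A_j/a_j:\pi(\{a_j,m\})=i_j\}$; this set is infinite in both cases. The resulting sequence $a_0<a_1<\cdots$ lies in $A$ and has the property that $\pi(\{a_j,a_l\})=i_j$ whenever $j<l$.

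Next, choose the colour $i^*$ by the rule: $i^*=0$ if $\{j: i_j=0\}$ is infinite, and $i^*=1$ otherwise. Set $F(A,\pi)=\{a_j : i_j=i^*\}$. This set is infinite, it is contained in $A$, and it is homogeneous for $\pi$ with colour $i^*$.

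The main work, though routine, is checking that $F$ is Borel. It suffices to show that for each $k$ the set $\{(A,\pi): k\in F(A,\pi)\}$ is Borel. We prove by induction on $j$ that the maps $(A,\pi)\mapsto A_j\in 2^{\omega}$, $(A,\pi)\mapsto a_j$ and $(A,\pi)\mapsto i_j$ are Borel.
\begin{itemize}
\item The map $a_j=\min A_j$ is Borel on $[\omega]^{\omega}$, since $\{a_j=n\}$ is determined by finitely many coordinates of $A_j$.
\item The condition ``$i_j=0$'' says that a set Borel-definable from $(A_j,a_j,\pi)$ is infinite. This is a $\Pi^0_2$ condition in those parameters, so $i_j$ is Borel.
\item Each coordinate $m\in A_{j+1}$ is a finite Boolean combination of Borel conditions on $A_j$, $a_j$, $\pi(\{a_j,m\})$ and $i_j$, so $A_{j+1}$ is Borel.
\end{itemize}
Finally, $i^*$ is defined by a $\Pi^0_2$ condition on the sequence $(i_j)_j$. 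The statement $k\in F(A,\pi)$ is equivalent to $\exists j\,(a_j=k \wedge i_j=i^*)$, and we may take $j\le k$ because $a_j\ge j$. Hence it is a finite Boolean combination of Borel sets. The one subtlety is to avoid non-constructive choices: every choice above is a minimum or a definable case split, which is what guarantees that the map is a genuine function.
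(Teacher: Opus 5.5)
Your proof is correct. It is the classical proof of Ramsey's theorem with every choice made canonical (minima and the ``is this set infinite'' case split), and your coordinatewise induction shows that each set $\{(A,\pi): k\in F(A,\pi)\}$ is Borel. The paper gives no proof of its own here; it cites \cite{HMTU2017}, where, as far as I recall, the map is likewise obtained by effectivizing the standard proof of Ramsey's theorem, so your route is essentially the intended one.
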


\begin{lemma}[\cite{HMTU2017}]  \label{psi-rho} 
There exists a continuous function
$\psi:[\omega]^{\omega}\times 2^\omega \rightarrow [\omega]^{\omega}$ such that,
for every $A\in [\omega]^{\omega}$, the collection $\{ \psi(A,x) : x\in 2^{\omega} \}$ is an almost disjoint family of infinite subsets of $A$. Moreover, there exists a continuous function $\eta:[\omega]^{\omega}\rightarrow [\omega]^{\omega}$ such that $\eta(A)\subseteq A$ and $\eta(A)\cap \psi(A,x)=\emptyset$ for all $x\in 2^\omega$ and all $A\in [\omega]^{\omega}$.
\end{lemma}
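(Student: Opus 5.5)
The plan is to build both maps explicitly from the increasing enumeration of $A$, using the even positions for $\eta$ and the odd positions to code the complete binary tree $2^{<\omega}$, so that $\psi(A,x)$ becomes the set of points sitting on nodes along the branch $x$. Throughout, $[\omega]^{\omega}$ carries the metrizable topology, i.e.\ the subspace topology from $2^{\omega}$. Continuity then means that, for each $n\in\omega$, whether $n\in\psi(A,x)$ (respectively $n\in\eta(A)$) is decided by finitely much information about $A$ and $x$.

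Fix a bijection $e:\omega\rightarrow 2^{<\omega}$ with $|e(k)|\leq k$ for all $k$; for instance, enumerate $2^{<\omega}$ by length and then lexicographically. For $A\in[\omega]^{\omega}$ with increasing enumeration $A=\{a_{j}:j<\omega\}$, put
\begin{equation*}
\eta(A)=\{a_{2k}:k\in\omega\}, \qquad \psi(A,x)=\{a_{2k+1}: k\in\omega,\ e(k)\sqsubseteq x\}.
\end{equation*}
Clearly $\eta(A)$ and every $\psi(A,x)$ are subsets of $A$, and $\eta(A)\cap\psi(A,x)=\emptyset$ because even and odd positions are disjoint. Each $\psi(A,x)$ is infinite, since $x$ has infinitely many initial segments $x\restriction m$ and these correspond to infinitely many indices $k=e^{-1}(x\restriction m)$, hence to distinct points $a_{2k+1}$.

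For almost disjointness, take $x\neq y$ in $2^{\omega}$ and let $m$ be least with $x(m)\neq y(m)$. Any common element $a_{2k+1}$ of $\psi(A,x)$ and $\psi(A,y)$ requires $e(k)\sqsubseteq x$ and $e(k)\sqsubseteq y$, hence $|e(k)|\leq m$. Only finitely many nodes satisfy this, so the intersection is finite. The same argument shows $x\mapsto\psi(A,x)$ is injective, so the family is a genuine (uncountable) almost disjoint family.

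For continuity, observe that $n\in\psi(A,x)$ holds iff $n\in A$, $|A\cap n|=2k+1$ for some $k$, and $e(k)\sqsubseteq x$. This is decided by $A\cap(n+1)$ together with $x\restriction|e(k)|$, and $|e(k)|\leq k\leq n$. Likewise, $n\in\eta(A)$ is decided by $A\cap(n+1)$ alone. Hence each coordinate of $\psi$ and of $\eta$ depends only on a basic clopen neighborhood of the input, which gives continuity into $2^{\omega}$ and therefore into $[\omega]^{\omega}$. The only point needing care is this bookkeeping: the bound $|e(k)|\leq k$ is exactly what ensures that the dependence on $x$ is finite and uniform in $n$. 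I expect no real obstacle beyond that.
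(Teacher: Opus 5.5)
The paper only quotes this lemma from \cite{HMTU2017} without proof; your construction is correct and is the standard branch-coding argument for it: index the nodes of $2^{<\omega}$ by the odd-position elements of $A$, let $\psi(A,x)$ collect the nodes along the branch $x$, and reserve the even-position elements for $\eta(A)$. One small correction: the bound $|e(k)|\leq k$ is not actually needed for continuity. For any bijection $e$, whether $n\in\psi(A,x)$ is already decided by $A\cap(n+1)$ together with finitely many bits of $x$; the bound only makes the dependence on $x$ uniform in $n$.
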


We now present the Ramsey ideal constructed in \cite[Theorem 4.7]{HMTU2017} with the aim of showing that it is not Galvin. It is worth mentioning that the construction of this ideal is based on the key idea used in \cite[Example 2.2]{Farah1998}, in which a Ramsey non-semiselective ideal is exhibited.

\begin{theorem} \label{ramsey-no-galvin} 
There exists a Ramsey tall ideal which is not Galvin.
\end{theorem}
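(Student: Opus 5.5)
The plan is to take the coanalytic Ramsey tall ideal $\mathcal{I}$ built in \cite[Theorem 4.7]{HMTU2017}, and to recall its construction only in the detail needed here. Its generators are obtained from the maps of Lemmas \ref{efective-Ramsey} and \ref{psi-rho}: roughly, sets of the form $\psi(F(A,\pi),x)$, together with the auxiliary sets produced by $\eta$. These are arranged, following Farah's idea from \cite[Example 2.2]{Farah1998}, so that every positive set still contains a positive homogeneous set for each coloring. That $\mathcal{I}$ is Ramsey and tall is proved in \cite{HMTU2017}, and I take both facts for granted. The new content is only the failure of the Galvin property.

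The point is that $\mathcal{I}$ is coanalytic, not analytic, so Lemma \ref{Galvin_non-analytic_tall_lemma} cannot be applied directly. However, the argument of that lemma only needs a \emph{closed} hereditary family $\mathcal{K}\subseteq\mathcal{I}$ such that every $A\in\mathcal{I}^{+}$ has an infinite subset in $\mathcal{K}$. Given such a $\mathcal{K}$, for every $A\in\mathcal{I}^{+}$ we have $[A]^{\omega}\not\subseteq\mathcal{K}$, because $A\notin\mathcal{I}\supseteq\mathcal{K}$, and we also have $[A]^{\omega}\cap\mathcal{K}\neq\emptyset$. Hence the closed set $\mathcal{K}\cap[\omega]^{\omega}$ is not $\mathcal{I}^{+}$-Ramsey, and Proposition \ref{Galvin = closed sets are Ramsey} shows that $\mathcal{I}$ is not Galvin.

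To produce $\mathcal{K}$, I first form the family $\mathcal{A}$ consisting of all subsets of finite unions of generators of $\mathcal{I}$, together with $[\omega]^{<\omega}$. I will check the following properties.
\begin{itemize}
\item[(i)] $\mathcal{A}$ is analytic. The generating family is the image of a Borel set under the Borel/continuous maps $F$, $\psi$, $\eta$, hence analytic. Taking finite unions and then the hereditary closure (a projection of $\{(B,C):B\subseteq C\}$) preserves analyticity.
\item[(ii)] $\mathcal{A}$ is hereditary, contains $[\omega]^{<\omega}$, and satisfies $\mathcal{A}\subseteq\mathcal{I}$.
\item[(iii)] $\mathcal{A}$ is tall: every infinite set contains an infinite member of $\mathcal{A}$. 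This should follow from the tallness argument in \cite{HMTU2017}. That argument shows every $A\in[\omega]^{\omega}$ contains some generator, for instance $\psi(F(A,\pi),x)\subseteq A$ for a suitable coloring.
\end{itemize}
Applying Proposition \ref{GV-thm} to $\mathcal{A}$ yields a closed hereditary $\mathcal{K}\subseteq\mathcal{A}\subseteq\mathcal{I}$ with the following property: each infinite $C\in\mathcal{A}$ has an infinite subset in $\mathcal{K}$. Combining this with (iii), every $A\in\mathcal{I}^{+}$ contains an infinite $C\in\mathcal{A}$, and hence an infinite $B\subseteq C$ with $B\in\mathcal{K}$, which is what is needed above.

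I expect the main obstacle to be the precise verification of (i) and (iii) against the actual definition of the ideal in \cite{HMTU2017}. The difficulty is that the ideal there is defined so that the generating family is only implicitly analytic: typically it is the sets $\psi(F(A,\pi),x)$ ranging over pairs satisfying some condition, and that condition may be coanalytic. If so, I would drop the condition and use the larger, clearly analytic family of all such images intersected with what is provably in $\mathcal{I}$. Alternatively, I would isolate an analytic subfamily of generators that is still tall, for instance by allowing all $A$ and all $\pi$. This works because only tallness, not generation of $\mathcal{I}$, is needed for $\mathcal{A}$.
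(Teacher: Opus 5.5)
Your overall architecture is the paper's. You find an analytic hereditary family inside $\mathcal{I}$ such that every $\mathcal{I}$-positive set contains an infinite member of it. Then you apply Proposition \ref{GV-thm} to get a closed hereditary $\mathcal{K}\subseteq\mathcal{I}$, and conclude with Proposition \ref{Galvin = closed sets are Ramsey}. The gap is in the one step that carries the content: producing that analytic family. There your proposal misidentifies the family, and the suggested fallbacks do not work.

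In \cite{HMTU2017} the ideal is not presented by generators. It is the complement of the coideal $\mathcal{I}^{+}=\{X : (\exists n)(\exists C\in\mathcal{C}_n)(C\subseteq^{*}X)\}$, where the nodes are $A_{u^{\smallfrown}x}=F(\psi(A_u,x),\varphi(x))$. Sets such as $\psi(A_u,x)$ contain $A_{u^{\smallfrown}x}$ and are therefore \emph{positive}, so they cannot be your generators.

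Both fallbacks fail:
\begin{itemize}
\item Intersecting an analytic family with $\mathcal{I}$ need not give an analytic family, since $\mathcal{I}$ is only coanalytic.
\item Letting $A$ and $\pi$ range over all values gives no reason for the family to stay inside $\mathcal{I}$.
\end{itemize}
Your remark that ``only tallness, not generation of $\mathcal{I}$, is needed'' is true but beside the point. What you cannot give up is $\mathcal{A}\subseteq\mathcal{I}$, because that alone yields $[X]^{\omega}\not\subseteq\mathcal{K}$ for $X\in\mathcal{I}^{+}$. Full tallness of $\mathcal{A}$ on $[\omega]^{\omega}$, as in your (iii), is neither needed nor to be expected; you only use density below $\mathcal{I}^{+}$.

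The missing idea is to apply $\eta$ to the nodes: take $\mathcal{A}=\{\eta(C) : C\in\bigcup_n\mathcal{C}_n\}$.
\begin{itemize}
\item \emph{Analytic.} Each $\mathcal{C}_{n+1}$ is the image of $\mathcal{C}_n\times 2^{\omega}$ under a Borel map, so $\bigcup_n\mathcal{C}_n$ is analytic, and hence so is its continuous image $\mathcal{A}$.
\item \emph{Contained in $\mathcal{I}$.} Fix a node $A_u$; no node $A_v$ satisfies $A_v\subseteq^{*}\eta(A_u)$.
  \begin{itemize}
  \item If $v$ properly extends $u$, then $A_v\subseteq\psi(A_u,x)$ for some $x$, and $\psi(A_u,x)$ is disjoint from $\eta(A_u)$.
  \item If $v\sqsubseteq u$, then $A_v\supseteq A_u\supseteq\psi(A_u,x)$, an infinite set missing $\eta(A_u)$.
  \item If $v$ is incomparable with $u$, then $A_v$ is almost disjoint from $A_u\supseteq\eta(A_u)$. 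This holds because the sets $\psi(A_w,\cdot)$ are almost disjoint at the splitting node $w$.
  \end{itemize}
\item \emph{Dense below $\mathcal{I}^{+}$.} If $X\in\mathcal{I}^{+}$, then $A_u\subseteq^{*}X$ for some $u$. Hence $X\cap\eta(A_u)$ is a cofinite, and therefore infinite, subset of $\eta(A_u)$.
\end{itemize}
Close this family downward and under finite unions, as you planned, or pass to the generated ideal $\mathcal{I}_{\mathcal{A}}$ as the paper does. With that family your argument goes through verbatim.
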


\begin{proof} 
We begin by setting a continuous surjection $\varphi : 2^{\omega} \to 2^{[\omega]^2}$, so that its range is the collection of all colorings of $[\omega]^2$ into two colors. Moreover, let $F : [\omega]^{\omega} \times 2^{[\omega]^2} \rightarrow [\omega]^{\omega}$ be the Borel map given by Lemma \ref{efective-Ramsey}, and let also $\psi : [\omega]^{\omega} \times 2^{\omega} \rightarrow [\omega]^{\omega}$ and $\eta : [\omega]^{\omega} \rightarrow [\omega]^{\omega}$ be the continuous functions provided by Lemma \ref{psi-rho}.

\medskip 

Recursively construct a family $\{ A_{u} : u\in (2^{\omega})^{<\omega} \} \subseteq \wp([\omega]^{\omega})$ of infinite subsets of $\omega$, with $A_{\emptyset} = \omega$, as follows: Let $u\in (2^{\omega})^{<\omega}$ and suppose that $A_{u}$ has been defined; then, for each $x\in 2^{\omega}$, define $A_{u^{\smallfrown}x}$ by  

\kern-0.5em
\begin{equation*}
  A_{u^{\smallfrown}x} = F(\psi(A_s,x),\varphi(x)). 
\end{equation*}

\kern-0.5em
Thus, for every $u\in (2^{\omega})^{<\omega}$, the collection $\{ A_{u^{\smallfrown}x} : x\in 2^\omega \}$ is an almost disjoint family of subsets of $A_u$ with the property that, for each $x\in 2^{\omega}$, the infinite set $A_{u^{\smallfrown}x}$ is a homogeneous subset of $A_{s}$ for the coloring $\varphi(x)$.

\medskip

Next, consider the family $\{ \mathcal{C}_{n} : n\in\omega \}$ of analytic subsets of $[\omega]^{\omega}$, defined recursively by $\mathcal{C}_{0} = \{\omega\}$ and 

\kern-0.5em
\begin{equation*}
 \mathcal{C}_{n+1} = \{F(\psi(A,x),\varphi(x)):
(A,x)\in \mathcal{C}_n \times 2^\omega\}  =
\{A_{u^{\smallfrown} x}: |u|=n \,\wedge\, x\in 2^\omega\}.
\end{equation*}

\kern-0.5em
In view of the above, let $\mathcal{I}^{+}$ be the coideal on $\omega$ defined by

\kern-0.5em
\begin{equation*}
\mathcal{I}^{+} = \{ X \in [\omega]^{\omega} : (\exists\, n\in \omega) (\exists\, C\in
\mathcal{C}_n) (C \subseteq^{*} X) \}. 
\end{equation*} 

\kern-0.5em
Finally, conclude that $\mathcal{I} = \mathcal{P}(\omega)\setminus\mathcal{I}^{+}$ is
a coanalytic Ramsey tall ideal on $\omega$ (see \cite[Theorem 4.7]{HMTU2017}).  

\medskip 

To continue, consider the collection $\mathcal{A} = \{ \eta(A) : ( \exists\, n\in\omega )(A\in \mathcal{C}_{n}) \}$, which is an analytic subset of $[\omega]^{\omega}$. Moreover, notice that $\mathcal{A} \subseteq \mathcal{I}$ and every set from $\mathcal{I}^{+}$ contains a subset from $\mathcal{A}$. 

\medskip 

Now, let $\mathcal{I}_{\mathcal{A}}$ be the ideal generated by $\mathcal{A}$, then $\mathcal{I}_{\mathcal{A}}$ is an analytic ideal on $\omega$ such that $\mathcal{I}_{\mathcal{A}} \subseteq \mathcal{I}$. By Proposition \ref{GV-thm}, there exists a closed hereditary family $\mathcal{K} \subseteq \mathcal{I}_{\mathcal{A}} \subseteq \mathcal{I}$ such that every infinite set from $\mathcal{A}$ contains an infinite subset from $\mathcal{K}$. 

\medskip 

We claim that the closed family $\mathcal{K}$ (restricted to $[\omega]^{\omega}$) is not a $\mathcal{I}^{+}$-Ramsey set. Indeed, for every $X\in\mathcal{I}^{+}$ we have that $[X]^{\omega} \not\subseteq \mathcal{K}$ due to $\mathcal{K} \subseteq \mathcal{I}$; nevertheless, for every $X\in\mathcal{I}^{+}$ there is $Y\in [X]^{\omega}$ such that $Y\in \mathcal{A}$, then there is also $Z\in [Y]^{\omega} \subseteq [X]^{\omega}$ such that $Z \in \mathcal{K}$, which implies that $[X]^{\omega} \cap \mathcal{K} \neq \emptyset$. 

\medskip 

Therefore, by virtue of Proposition \ref{Galvin = closed sets are Ramsey}, we finally conclude that the coanalytic Ramsey tall ideal $\mathcal{I}$ is not Galvin. 
\end{proof}

\smallskip 

In the previous theorem, Proposition \ref{GV-thm} was used to obtain an example of a Ramsey ideal that is not Galvin. Next, we provide another application of Proposition \ref{GV-thm}.

\medskip

A simple way to obtain tall families is as follows: Given a hereditary family $\mathcal{A} \subseteq 2^{\omega}$, the orthogonal of $\mathcal{A}$ is the family $\mathcal{A}^{\perp} \subseteq 2^{\omega}$ defined by $\mathcal{A}^{\perp} = \{B\subseteq \omega: (\forall\, A\in \mathcal{A} ) (|A\cap B| <\omega) \}$. With this in mind, observe that for any hereditary family $\mathcal{A}$, the family $\mathcal{A} \cup \mathcal{A}^\perp$ is tall.

\medskip 

In \cite[Question 10.13 and Example 10.14(b)]{Uzc}, it is asked whether, for a simply definable hereditary family $\mathcal{A}$, the tall family $\mathcal{A} \cup \mathcal{A}^{\perp}$ necessarily contains a set of the form $\textrm{hom}(\mathcal{F})$ for some $\mathcal{F} \subseteq [\omega]^{<\omega}$. The following result provides a partial answer of this question.

\begin{proposition}
 For every analytic ideal $\mathcal{I}$ on $\omega$, there exists $\mathcal{F}\subseteq [\omega]^{<\omega}$ such that $\textrm{hom}(\mathcal{F})\subseteq \mathcal{I} \cup \mathcal{I}^{\perp}$. In particular, the ideal generated by 
 $\mathcal{I}\cup \mathcal{I}^{\perp}$ is tall but not Galvin.
\end{proposition}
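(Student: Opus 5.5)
The plan is to apply Proposition \ref{GV-thm} to $\mathcal{I}$ itself and then let $\mathcal{F}$ be the family of finite sets that are \emph{not} in the resulting closed hereditary family. The point is that for a closed hereditary family, membership of an infinite set is decided by its finite subsets.

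\textbf{Step 1 (the family $\mathcal{K}$).} An analytic ideal $\mathcal{I}$ is an analytic hereditary family with $[\omega]^{<\omega} \subseteq \mathcal{I}$. So Proposition \ref{GV-thm} gives a closed hereditary family $\mathcal{K} \subseteq \mathcal{I}$ such that every infinite $A \in \mathcal{I}$ has an infinite subset belonging to $\mathcal{K}$.

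\textbf{Step 2 (the family $\mathcal{F}$).} Put $\mathcal{F} = [\omega]^{<\omega} \setminus \mathcal{K}$. Since $\mathcal{K}$ is closed and hereditary, a set $X \subseteq \omega$ lies in $\mathcal{K}$ if and only if every finite subset of $X$ lies in $\mathcal{K}$; equivalently, every finite initial segment of $X$ lies in $\mathcal{K}$.

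\textbf{Step 3 (checking $\textrm{hom}(\mathcal{F}) \subseteq \mathcal{I} \cup \mathcal{I}^{\perp}$).} Take $B \in \textrm{hom}(\mathcal{F})$ and split into the two alternatives.
\begin{itemize}
\item In case (II), $[B]^{<\omega} \cap \mathcal{F} = \emptyset$. Then every finite subset of $B$ is in $\mathcal{K}$, so $B \in \mathcal{K} \subseteq \mathcal{I}$ by closedness.
\item In case (I), every $C \in [B]^{\omega}$ has an initial segment $s \sqsubset C$ with $s \notin \mathcal{K}$. By heredity $C \notin \mathcal{K}$, so $[B]^{\omega} \cap \mathcal{K} = \emptyset$. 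Now suppose $A \in \mathcal{I}$ with $A \cap B$ infinite. Then $A \cap B$ is an infinite member of $\mathcal{I}$, so by Step 1 it contains an infinite subset lying in $\mathcal{K}$. That subset belongs to $[B]^{\omega} \cap \mathcal{K}$, a contradiction. Hence $B \in \mathcal{I}^{\perp}$.
\end{itemize}
This establishes the first assertion.

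\textbf{Step 4 (the ideal $\mathcal{J}$ generated by $\mathcal{I} \cup \mathcal{I}^{\perp}$).} Both $\mathcal{I}$ and $\mathcal{I}^{\perp}$ are hereditary, so $\mathcal{I} \cup \mathcal{I}^{\perp}$ is tall, as remarked before the statement, and hence $\mathcal{J}$ is tall. I assume $\mathcal{J}$ is a proper ideal, which is implicit in the statement. Suppose $\mathcal{J}$ were Galvin. Applying the definition to $\mathcal{F}$ and $A = \omega \in \mathcal{J}^{+}$ would produce $B \in \mathcal{J}^{+} \cap \textrm{hom}(\mathcal{F})$. By Step 3, $B \in \mathcal{I} \cup \mathcal{I}^{\perp} \subseteq \mathcal{J}$, contradicting $B \in \mathcal{J}^{+}$. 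So $\mathcal{J}$ is not Galvin.

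\textbf{Main obstacle.} The only delicate point is case (I) of Step 3, which must pass from "no infinite subset of $B$ lies in $\mathcal{K}$" to almost-disjointness from all of $\mathcal{I}$. This is exactly where the strong form of Proposition \ref{GV-thm} is needed: every infinite member of $\mathcal{I}$, not just every positive set, must contain an infinite member of $\mathcal{K}$.
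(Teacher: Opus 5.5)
Your proposal is correct and follows essentially the same route as the paper. It uses the same closed hereditary $\mathcal{K}$ from Proposition \ref{GV-thm} and the same $\mathcal{F}$ (the paper's ``finite sets contained in no member of $\mathcal{K}$'' equals $[\omega]^{<\omega}\setminus\mathcal{K}$ by heredity), and the same two-case analysis. Your direct use of closedness in case (II) replaces the paper's compactness argument, your case (I) inlines the paper's observation $\mathcal{K}^{\perp}=\mathcal{I}^{\perp}$, and you also spell out the ``in particular'' clause, which the paper leaves implicit; your properness caveat there is genuinely needed (e.g.\ for $\mathcal{I}=\mathrm{FIN}$ one has $\mathcal{I}^{\perp}=\wp(\omega)$).
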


\begin{proof}
Let $\mathcal{I}$ be an analytic ideal on $\omega$. By Proposition \ref{GV-thm}, there exists a closed hereditary family $\mathcal{K}\subseteq \mathcal{I}$ such that every infinite set from $\mathcal{I}$ contains an infinite subset from $\mathcal{K}$, in which case $\mathcal{K}^\perp=\mathcal{I}^\perp$ and hence $\mathcal{K}\cup \mathcal{K}^\perp \subseteq \mathcal{I} \cup \mathcal{I}^\perp$. With this in mind, let $\mathcal{F} \subseteq [\omega]^{<\omega}$ be defined by 

\kern-0.5em
\begin{equation*}
\mathcal{F} = \{ s\in [\omega]^{<\omega} : (\forall\, A\in \mathcal{K}) (s\not\subseteq A) \}. 
\end{equation*} 

On the one hand, let us check that $\{ H\in [\omega]^{\omega} : [H]^{<\omega} \cap \mathcal{F} = \emptyset\} \subseteq \mathcal{K}$. Indeed, let $H\in [\omega]^{\omega}$ be such that it contains no finite subsets in $\mathcal{F}$. For each $n\in\omega$, define $s_{n}\in [H]^{<\omega}$ by $s_{n} = \{m\in H : m<n \}$. As $s_{n} \notin \mathcal{F}$, there is $A_{n}\in \mathcal{K}$ such that $s_{n} \subseteq A_{n}$. By compactness of $\mathcal{K}$, there are $A\in \mathcal{K}$ and a strictly increasing sequence $\{ n_{k} \}_{k\in\omega} \subseteq \omega$ such that $\lim_{k\to\infty} A_{n_{k}} = A$. As $s_{n} \subseteq s_{n+1}$ for each $n\in\omega$, it follows that $s_{n_{k}} \subseteq A$ for all $k\in\omega$, and hence $H \subseteq A$. Finally, since $\mathcal{K}$ is hereditary, we deduce that $H\in\mathcal{K}$.

\medskip

On the other hand, let us verify that $\{ H\in [\omega]^{\omega} : (\forall\, X\in [H]^{\omega}) (\exists\, s\in\f) (s \sqsubset X) \} \subseteq \mathcal{K}^\perp$. Indeed, let $H\in [\omega]^{\omega}$ be such that $H \notin \mathcal{K}^\perp$. Then, there exists $B\in\mathcal{K}$ such that $B\cap H$ is infinite. Thus, for each $s \sqsubset B\cap H$ we have that $s\subseteq B$ and hence $s\notin \mathcal{F}$. Consequently, $B\cap H$ has no initial segments in $\mathcal{F}$. 

\medskip

Therefore, we conclude that the family $\mathcal{F}$ satisfies $\textrm{hom}(\mathcal{F}) \subseteq \mathcal{K}\cup \mathcal{K}^\perp$, and thus $\textrm{hom}(\mathcal{F}) \subseteq \mathcal{I}\cup \mathcal{I}^\perp$.
\end{proof}

\subsection{A Galvin non-semiselective tall ideal}

We review the construction of a Ramsey tall ideal that does not satisfy property $p^{+}$, as presented in \cite[Claim 5.9]{Hrusak2011} (see also \cite[Example 3.1]{HMTU2017}), in order to show that it is in fact a Galvin ideal.

\medskip

Let $\mathcal{M}$ and $\mathcal{N}$ be maximal almost disjoint families on $\omega$. We say that $\mathcal{N}$ refines $\mathcal{M}$ if each $N\in \mathcal{N}$ is almost contained in some $M\in\mathcal{M}$. Then, for every $M\in\mathcal{M}$ it holds that $\mathcal{N}\cap [M]^{\omega}$ is a maximal almost disjoint family of subsets of $M$.

\begin{proposition} \label{Galvin non-selective}
    There exists a Galvin tall ideal which does not satisfy property $p^{+}$.
\end{proposition}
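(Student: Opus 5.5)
The plan is to take the ideal $\mathcal{I}$ from \cite[Claim 5.9]{Hrusak2011} (see also \cite[Example 3.1]{HMTU2017}) and prove only the new part, namely that it is Galvin. Tallness and the failure of $p^{+}$ are already established there, and I will only sketch why they hold.

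\textbf{The ideal.} Fix a sequence $\{\mathcal{M}_{n}\}_{n\in\omega}$ of maximal almost disjoint families on $\omega$, each $\mathcal{M}_{n+1}$ refining $\mathcal{M}_{n}$, with $\mathcal{M}_{0}=\{\omega\}$. Each member of $\mathcal{M}_{n+1}$ lies in $\mathcal{I}_{\mathcal{M}_{n}}$, so the sequence $\{\mathcal{I}_{\mathcal{M}_{n}}\}_{n\in\omega}$ is decreasing. Put
\begin{equation*}
\mathcal{I}=\bigcap_{n\in\omega}\mathcal{I}_{\mathcal{M}_{n}}, \qquad \mathcal{I}^{+}=\bigcup_{n\in\omega}\mathcal{I}_{\mathcal{M}_{n}}^{+}.
\end{equation*}
As in \cite{Hrusak2011}, the families are built recursively so that every infinite set has an infinite subset lying in all the $\mathcal{I}_{\mathcal{M}_{n}}$; this gives tallness.

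\textbf{Galvin property.} This step is short, because Galvinness only asks for homogeneous sets inside positive sets. Let $\mathcal{F}\subseteq[\omega]^{<\omega}$ and $A\in\mathcal{I}^{+}$ be given.
\begin{enumerate}
\item Choose $n$ with $A\in\mathcal{I}_{\mathcal{M}_{n}}^{+}$.
\item The ideal $\mathcal{I}_{\mathcal{M}_{n}}$ is generated by an almost disjoint family. By Mathias' result recalled in Section 2 it is selective, hence semiselective.
\item By Proposition \ref{Semiselective_Galvin_Lemma} there is $B\in\mathcal{I}_{\mathcal{M}_{n}}^{+}\!\restriction\! A$ with $B\in\textrm{hom}(\mathcal{F})$.
\item Since $\mathcal{I}_{\mathcal{M}_{n}}^{+}\subseteq\mathcal{I}^{+}$, the set $B$ lies in $\mathcal{I}^{+}\!\restriction\! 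A$.
\end{enumerate}
Hence $\mathcal{I}$ is Galvin. The same argument shows more generally that any ideal whose coideal is a union of coideals of semiselective ideals is Galvin.

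\textbf{Failure of $p^{+}$.} Define $A_{k}$ so that $A_{k}\in\mathcal{I}_{\mathcal{M}_{k}}^{+}$ and $A_{k+1}\subseteq^{*}A_{k}$. Then choose the $A_{k}$ so that any pseudo-intersection $B$ of all of them is almost contained, for every $k$, in finitely many members of $\mathcal{M}_{k}$. Such a $B$ lies in every $\mathcal{I}_{\mathcal{M}_{k}}$, hence in $\mathcal{I}$. A natural choice is to let $A_{k}$ be the union of infinitely many elements of $\mathcal{M}_{k+1}$ contained in a single element of $\mathcal{M}_{k}$, nested along a branch, following \cite{Hrusak2011}.

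\textbf{Main obstacle.} The real difficulty is the recursive construction of the $\mathcal{M}_{n}$ that makes $\bigcap_{n}\mathcal{I}_{\mathcal{M}_{n}}$ tall while the nesting kills $p^{+}$. I would cite this from \cite[Claim 5.9]{Hrusak2011} rather than redo it, since the Galvin part does not depend on the details.
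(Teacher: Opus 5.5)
Your Galvin step is exactly the paper's argument: choose $n$ with $A\in\mathcal{I}_{\mathcal{M}_n}^{+}$, use that $\mathcal{I}_{\mathcal{M}_n}$ is selective (hence Galvin) to get $B\in\mathcal{I}_{\mathcal{M}_n}^{+}\restriction A\subseteq\mathcal{I}^{+}\restriction A$ with $B\in\textrm{hom}(\mathcal{F})$. Where you go astray is in calling tallness and the failure of $p^{+}$ the ``main obstacle'' that needs a delicate recursive construction imported from \cite{Hrusak2011}. There is no such obstacle, and the paper proves both in a few lines.

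\emph{Tallness} holds for any sequence of MAD families. Given $X\in[\omega]^{\omega}$, choose infinite sets $X\supseteq X_0\supseteq X_1\supseteq\cdots$ with $X_n\in\mathcal{I}_{\mathcal{M}_n}$; this is possible because each $\mathcal{I}_{\mathcal{M}_n}$ is tall. Any infinite pseudo-intersection $Y\subseteq X_0$ then satisfies $Y\subseteq^{*}X_n$, so $Y\in\mathcal{I}_{\mathcal{M}_n}$ for every $n$, i.e.\ $Y\in\mathcal{I}$.

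\emph{Failure of $p^{+}$.} The only requirement on the construction is that each $M\in\mathcal{M}_n$ almost contains infinitely many members of $\mathcal{M}_{n+1}$; otherwise, e.g.\ with $\mathcal{M}_{n+1}=\mathcal{M}_n$, you just get a selective ideal. Such sequences exist trivially: refine each $M\in\mathcal{M}_n$ by an infinite MAD family on $M$. Under this condition $\mathcal{M}_n\subseteq\mathcal{I}^{+}_{\mathcal{M}_{n+1}}\subseteq\mathcal{I}^{+}$. So a single branch $M_0\supseteq^{*}M_1\supseteq^{*}\cdots$ with $M_n\in\mathcal{M}_n$ already witnesses the failure of $p^{+}$: every pseudo-intersection is almost contained in $M_n\in\mathcal{M}_n$ for each $n$, hence lies in $\mathcal{I}$.

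Your unions of infinitely many $\mathcal{M}_{k+1}$-sets are therefore unnecessary. They also contain an index slip. A set inside a single member of $\mathcal{M}_k$ belongs to $\mathcal{I}_{\mathcal{M}_k}$, so it cannot lie in $\mathcal{I}^{+}_{\mathcal{M}_k}$ as you first require. What is actually needed, and what your choice provides, is positivity with respect to $\mathcal{I}_{\mathcal{M}_{k+1}}$.

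Finally, $\mathcal{M}_0=\{\omega\}$ is not a MAD family in the paper's sense, and $\mathcal{I}_{\{\omega\}}=\wp(\omega)$ is not an ideal. This is harmless in the intersection but should be dropped.
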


\begin{proof}
    Let $\{ \mathcal{M}_{n} : n\in\omega \}$ be a countable collection of maximal almost disjoint families on $\omega$ such that $\mathcal{M}_{n+1}$ refines $\mathcal{M}_{n}$ for each $n\in\omega$. Moreover, let $\mathcal{I}_{\mathcal{M}_{n}}$ be the selective tall ideal generated by $\mathcal{M}_{n}$, so that $\mathcal{I}_{\mathcal{M}_{n}} = \{ A \subseteq \omega : | \{ M\in\mathcal{A}_{n} : |A\cap M| =\omega \} | < \omega \}$.

    \medskip
        
    In view of the above, let $\mathcal{I}$ be the ideal on $\omega$ defined by
    
    \kern-0.5em
    \begin{equation*}
    \textstyle \mathcal{I} = \bigcap_{n\in\omega} \mathcal{I}_{\mathcal{M}_{n}}. 
    \end{equation*}
    
    \kern-0.5em
    We claim that the ideal $\mathcal{I}$ is Galvin and tall. On the one hand, it is clear that $\mathcal{I}$ is tall, as it is a countable intersection of tall ideals. On the other hand, to see that $\mathcal{I}$ is Galvin, we use the fact that each selective ideal $\mathcal{I}_{\mathcal{M}_{n}}$ is itself Galvin. Indeed, given $\mathcal{F} \subseteq [\omega]^{<\omega}$ and $X\in\mathcal{I}^{+}$, we have that $X\in \bigcup_{n\in\omega} \mathcal{I}_{\mathcal{M}_{n}}^{+}$, then there is $n\in\omega$ such that $X\in \mathcal{I}_{\mathcal{M}_{n}}^{+}$. Since $\mathcal{I}_{\mathcal{M}_{n}}^{+}$ is Galvin, there exists $Y\in \mathcal{I}_{\mathcal{M}_{n}}^{+} \!\restriction\! X \subseteq \mathcal{I}^{+} \!\restriction\! X$ such that $Y\in\textrm{hom}(\mathcal{F})$. Thus, we infer that $\mathcal{I}$ is Galvin.

    \medskip

    Finally, we assert that $\mathcal{I}$ fails property $p^{+}$. Indeed, since each $\mathcal{M}_{n+1}$ refines $\mathcal{M}_{n}$, it follows that $\mathcal{M}_{n} \subseteq \mathcal{I}_{\mathcal{M}_{n+1}}^{+} \subseteq \mathcal{I}^{+}$ for every $n\in\omega$. Due to this, we choose a sequence $\{M_{n}\}_{n\in\omega} \subseteq \mathcal{I}^{+}$ such that $M_{n+1} \subseteq M_{n}$ and $M_{n} \in \mathcal{M}_{n} \subseteq \mathcal{I}_{\mathcal{M}_{n}}$ for each $n\in\omega$. Now, let $A\in [\omega]^{\omega}$ be such that $A \subseteq^{*} M_{n}$ for all $n\in\omega$, then $A\in \mathcal{I}_{\mathcal{M}_{n}}$ for all $n\in\omega$, and hence $A\in\mathcal{I}$. Therefore, we conclude that the Galvin tall ideal $\mathcal{I}$ does not satisfy property $p^{+}$. 
\end{proof}

\smallskip

Let us recall the following special kind of maximal almost disjoint family (see, for instance, \cite{Guzman}). A maximal almost disjoint family $\mathcal{M}$ on $\omega$ is completely separable if each $X \in \mathcal{I}_{\mathcal{M}}^{+}$ contains some $M \in \mathcal{M}$. It is known that, consistently, a completely separable maximal almost disjoint family exists; in particular, this holds under the assumption that $\mathfrak{c} < \aleph_{\omega}$. However, whether such maximal almost disjoint families exist in $\mathrm{ZFC}$ remains an open problem.

\medskip

In the setting of Proposition \ref{Galvin non-selective}, we claim that if $\{ \mathcal{M}_{n} : n\in\omega \}$ is a countable collection of completely separable maximal almost disjoint families on $\omega$ with each $\mathcal{M}_{n+1}$ refining $\mathcal{M}_{n}$, then the Galvin tall ideal $\mathcal{I}= \bigcap_{n\in\omega} \mathcal{I}_{\mathcal{M}_{n}}$ is not semiselective. 

\medskip

Indeed, for each $n\in\omega$ let $\mathcal{M}_{n}^{*} = \{ Y\in \mathcal{I}^{+} : (\exists\, M\in\mathcal{M}_{n}) (Y \subseteq^{*} M) \}$, and observe that each $\mathcal{M}_{n}^{*}$ is a dense-open set on $(\mathcal{I}^{+}, \subseteq^{*})$ such that $X \not\subseteq^{*} Y$ for every $X\in \mathcal{I}_{\mathcal{M}_{n}}^{+}$ and every $Y\in\mathcal{M}_{n}^{*}$. Thus, the ideal $\mathcal{I}= \bigcap_{n\in\omega} \mathcal{I}_{\mathcal{M}_{n}}$ does not satisfy property $p^{w}$.

\medskip

Next, we construct in $\mathrm{ZFC}$ a Galvin ideal that is not semiselective, following the main method from the constructions of Ramsey ideals in \cite[Example 2.2]{Farah1998} and \cite[Theorem 4.7]{HMTU2017}, together with the key idea underlying the construction of a $\mathbf{\Pi}^{1}_{2}$ Galvin ideal in \cite[Theorem 4.24]{GrebikUzca2019}.

\begin{theorem} \label{galvin-no-semiselective} 
There exists a Galvin tall ideal which is not semiselective.
\end{theorem}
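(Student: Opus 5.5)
We will imitate the tree construction of Theorem \ref{ramsey-no-galvin}, replacing the Ramsey-homogeneity step by a Galvin-homogeneity step. Fix a continuous surjection $\varphi : 2^{\omega} \to 2^{[\omega]^{<\omega}}$ onto the space of all families $\mathcal{F}\subseteq[\omega]^{<\omega}$, and the continuous maps $\psi$ and $\eta$ of Lemma \ref{psi-rho}. We will also need a map $G(A,\mathcal{F})$ that picks a set $B\in[A]^{\omega}\cap\hom(\mathcal{F})$; Proposition \ref{Galvin_Lemma} guarantees that one exists. We do not need $G$ to be Borel here, since we only aim at a ZFC example and not at a definable one. We then build $\{A_u : u\in(2^\omega)^{<\omega}\}$ with $A_\emptyset=\omega$ and $A_{u^\smallfrown x}=G(\psi(A_u,x),\varphi(x))$, and let $\mathcal{C}_n=\{A_u:|u|=n\}$. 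The coideal is
$$\mathcal{I}^+=\{X\in[\omega]^\omega : (\exists n)(\exists C\in\mathcal{C}_n)(C\subseteq^* X)\},$$
and $\mathcal{I}=\wp(\omega)\setminus\mathcal{I}^+$.

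The first step is to check that $\mathcal{I}$ is an ideal, which amounts to showing that $\mathcal{I}^+$ is a coideal: if $X\cup Y\in\mathcal{I}^+$ then $X\in\mathcal{I}^+$ or $Y\in\mathcal{I}^+$. Given $C=A_u\subseteq^* X\cup Y$, we take the family $\mathcal{F}$ coding the pairs with one element in $X$ and one in $Y$, and pick $x$ with $\varphi(x)=\mathcal{F}$. Then $A_{u^\smallfrown x}$ is homogeneous for $\mathcal{F}$, so modulo finite it lies inside $X$ or inside $Y$. The argument is as in \cite[Theorem 4.7]{HMTU2017}.

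Next come tallness and the Galvin property. \emph{Tallness:} given $X\in\mathcal{I}^+$ with $A_u\subseteq^*X$, the set $\eta(A_u)\subseteq A_u$ is disjoint from every $\psi(A_u,x)$. The key point is that no $A_v$ is almost contained in $\eta(A_u)$. This follows from almost disjointness of the $\psi(A_u,x)$: every $A_v$ with $v$ extending $u$ lies inside some $\psi(A_u,x)$, and sets $A_v$ on other branches meet $A_u$ only finitely. To secure the latter, we refine the construction so that distinct nodes of the same level are almost disjoint and each level is almost contained in the previous one. Hence $\eta(A_u)\cap X$ is infinite and lies in $\mathcal{I}$. \emph{Galvin:} given $\mathcal{F}$ and $X\supseteq^* A_u$, choose $x$ with $\varphi(x)=\mathcal{F}\cap[X]^{<\omega}$. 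Then $A_{u^\smallfrown x}\in\hom(\mathcal{F}\cap[X]^{<\omega})$, and trimming a finite set gives a positive subset of $X$ in $\hom(\mathcal{F})$. Some care is needed because removing finitely many points preserves membership in $\hom$; we will check this directly.

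The main obstacle is showing that $\mathcal{I}$ is not semiselective. We plan to use Proposition \ref{semiselective=(p^w)+(q)}, so it suffices to refute $p^w$ or $q^+$. Following Farah's Example 2.2, we will take the dense-open sets
$$\mathcal{D}_n=\{Y\in\mathcal{I}^+ : (\exists C\in\mathcal{C}_{n})(Y\subseteq^* C)\}.$$
These are dense, because every positive set almost contains some $A_u$, and by extending $u$ we reach level $n$. Now suppose $B\in\mathcal{I}^+$ satisfies $B\subseteq^* A_{u_n}$ with $u_n$ of length $n$ for every $n$. By almost disjointness within levels, the $u_n$ must form a single branch. But $B$ almost contains some $A_v$ with $|v|=k$, and $A_v\subseteq^* A_{u_{k+1}}$ forces $A_v$ to be almost contained in a child of $v$. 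That is impossible, since each child is homogeneous inside $\psi(A_v,x)$, and $\eta(A_v)\cap A_v$ is infinite and disjoint from all of those. This contradiction refutes $p^w$. The delicate part, which we expect to be the hardest, is arranging the construction so that these almost-disjointness relations hold across levels and branches while Galvin homogeneity is preserved.
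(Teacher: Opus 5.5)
Your proposal is correct and follows essentially the same route as the paper. Both build a tree $\{A_u : u\in(2^\omega)^{<\omega}\}$ of Galvin-homogeneous sets whose children are pairwise almost disjoint and avoid $\eta(A_u)$, take the coideal of sets almost containing some $A_u$, get tallness from the sets $\eta(A_u)$, and refute $p^w$ with the level-wise dense-open sets $\mathcal{D}_n$. The paper only differs cosmetically: it picks the almost disjoint families and an enumeration $\{\mathcal{F}_\xi\}$ by hand rather than using $\psi$ and $\varphi$, and it uses $[X]^2$ in the coideal check. The ``delicate part'' you anticipate needs no refinement of the construction: $A_{u^\smallfrown x}\subseteq\psi(A_u,x)\subseteq A_u$ already makes every node a subset of its parent. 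It also makes distinct nodes of the same length almost disjoint, since they lie below almost disjoint siblings at their splitting node.
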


\begin{proof} 
Fix an enumeration $\wp([\omega]^{<\omega}) = \{ \mathcal{F}_{\xi} : \xi \in 2^{\omega} \}$ of all subsets of $[\omega]^{<\omega}$, and take a function $\eta: [\omega]^{\omega} \rightarrow [\omega]^{\omega}$ such that $\eta(A) \subseteq A$ and $|A \setminus\eta(A)|=\omega$ for all $A\in [\omega]^{\omega}$. 

\medskip

We recursively construct a family $\{ A_{u} : u\in (2^{\omega})^{<\omega} \} \subseteq [\omega]^{\omega}$, with $A_{\emptyset} =\omega$, such that for every $u\in (2^{\omega})^{<\omega}$, the collection $\{ A_{u^{\smallfrown}\xi} : \xi\in2^{\omega} \} \subseteq [A_{u} \setminus \eta(A_{u})]^{\omega}$ is an almost disjoint family of subsets of $A_{u} \setminus \eta(A_{u})$ such that $A_{u^{\smallfrown}\xi} \in \textrm{hom}(\mathcal{F}_{\xi})$ for each $\xi \in 2^{\omega}$.

\medskip

Indeed, let $u\in (2^{\omega})^{<\omega}$ and suppose that $A_{u}$ has been defined. Fix an almost disjoint family $\{ B_{\xi}^{u} : \xi\in2^{\omega} \} \subseteq [A_{u} \setminus \eta(A_{u})]^{\omega}$ of subsets of $A_{u}$ that are disjoint from $\eta(A_{u})$. By the Galvin lemma (see Proposition \ref{Galvin_Lemma}), for each $\xi\in 2^{\omega}$ there exists $A_{u^{\smallfrown}\xi} \in [B_{\xi}^{u}]^{\omega} \subseteq [A_{u} \setminus \eta(A_{u})]^{\omega}$ such that $A_{u^{\smallfrown}\xi} \in \textrm{hom} (\mathcal{F}_{\xi})$. Moreover, the family $\{ A_{u^{\smallfrown}\xi} : \xi\in2^{\omega} \} \subseteq [A_{u} \setminus \eta(A_{u})]^{\omega}$ is almost disjoint. 

\medskip

In view of the above, we now consider the coideal $\mathcal{I}^{+}$ on $\omega$ defined by

\kern-0.5em
\begin{equation*}
\mathcal{I}^{+} = \{ X\in [\omega]^{\omega} : (\exists\, u\in (2^{\omega})^{<\omega}) (A_{u}\subseteq^{*} X)\}. 
\end{equation*} 

\kern-0.5em
To see that $\mathcal{I}^{+}$ is in fact a coideal, note first that $\{ A_{u} : u\in (2^{\omega})^{<\omega} \} \subseteq \mathcal{I}^{+}$ and that $(\mathcal{I}^{+}, \subseteq^{*})$ is closed upward. Now, let $X,Y \in [\omega]^{\omega}$ be such that $X \cup Y \in \mathcal{I}^{+}$, then $A_{u} \subseteq^{*} X\cup Y$ for some $u \in (2^{\omega})^{<\omega}$. Let $\xi\in 2^{\omega}$ be such that $\mathcal{F}_{\xi} = [X]^{2}$, then $A_{u^{\smallfrown}\xi} \in \textrm{hom}(\mathcal{F}_{\xi})$. If every infinite subset of $A_{u^{\smallfrown}\xi}$ has an initial segment in $[X]^{2}$, then $A_{u^{\smallfrown}\xi} \subseteq^{*} X$, but if $A_{u^{\smallfrown}\xi}$ does not contain finite subsets in $[X]^{2}$, then $A_{u^{\smallfrown}\xi} \subseteq^{*} Y$. Thus, either $X\in \mathcal{I}^{+}$ or $Y\in \mathcal{I}^{+}$.

\medskip

By construction, the ideal $\mathcal{I} = \mathcal{P}(\omega)\setminus\mathcal{I}^{+}$ is Galvin and tall. On the one hand, note that $\mathcal{I}$ is tall, since $\{ \eta(A_{u}) : u\in (2^{\omega})^{<\omega} \} \subseteq \mathcal{I}$, and for every $X\in\mathcal{I}^{+}$ there exists $u\in (2^{\omega})^{<\omega}$ such that $\eta(A_{u}) \subseteq A_{u} \subseteq^{*} X$. On the other hand, observe that $\mathcal{I}$ is Galvin, since for every $\mathcal{F} \subseteq [\omega]^{<\omega}$ and every $X\in\mathcal{I}^{+}$ there exist $\xi\in2^{\omega}$ and $u\in(2^{\omega})^{<\omega}$ such that $\mathcal{F} = \mathcal{F}_{\xi}$ and $A_{u} \subseteq^{*} X$, which implies that $A_{u^{\smallfrown}\xi} \in \textrm{hom}(\mathcal{F})$.

\medskip

Finally, we verify that the ideal $\mathcal{I} = \mathcal{P}(\omega)\setminus\mathcal{I}^{+}$ is not semiselective. Indeed, consider the sequence of dense-open sets $\{\mathcal{D}_{n}\}_{n\in\omega}$ on $(\mathcal{I}^{+}, \subseteq^{*})$ defined by 

\kern-0.5em
\begin{equation*}
    \mathcal{D}_{n} = \{ Y\in\mathcal{I}^{+} : (\exists\, u\in (2^{\omega})^{n}) (Y \subseteq^{*} A_{u}) \}.
\end{equation*}

\kern-0.5em
Given any $X \in \mathcal{I}^{+}$, there exists $u \in (2^{\omega})^{<\omega}$ such that $A_u \subseteq^{*} X$, and we may assume that $|X \setminus A_u| = \omega$. Now, let $n=|u|$ and let $Y\in \mathcal{D}_{n}$; then, there exists $v\in (2^{\omega})^{n}$ such that $Y \subseteq^{*} A_{v}$. Since $|u|=|v|$, it follows that $|X \setminus Y|= \omega$ and hence $X \not\subseteq^{*} Y$. As a result, we deduce that $\mathcal{I}$ does not satisfy property $p^{w}$. Therefore, by virtue of Proposition \ref{semiselective=(p^w)+(q)}, we finally conclude that the Galvin tall ideal $\mathcal{I}$ is not semiselective.   
\end{proof}

\smallskip

At present, we do not know whether the Galvin ideal $\mathcal{I}$ constructed in the previous theorem has the property that every Borel or analytic subset of $[\omega]^{\omega}$ is $\mathcal{I}^{+}$-Ramsey. However, it is worth noting that the example of a Ramsey non-semiselective ideal $\mathcal{J}$ given in \cite[Example 2.2]{Farah1998} satisfies the additional property that every analytic subset of $[\omega]^{\omega}$ is $\mathcal{J}^{+}$-Ramsey. By Proposition \ref{Galvin = closed sets are Ramsey}, it follows that such an ideal $\mathcal{J}$ is also Galvin. This observation motivates the following question, which is closely related to Question \ref{Question: Galvin = Borel sets are Ramsey}.

\begin{question} \label{Question: Galvin = analytic sets are Ramsey}
    Let $\mathcal{I}$ be a Galvin ideal on $\omega$. Is every analytic subset of $[\omega]^{\omega}$, with respect to the metrizable topology, an $\mathcal{I}^{+}$-Ramsey set?
\end{question}

The following result establishes that every analytic set $\mathcal{A} \subseteq [\omega]^{\omega}$, for which $(\mathcal{A}, \subseteq)$ is closed downward, is $\mathcal{I}^{+}$-Ramsey for any Galvin ideal $\mathcal{I}$.

\begin{proposition}
    Let $\mathcal{I}$ be a Galvin ideal on $\omega$, and let $\mathcal{A}$ be an analytic subset of $[\omega]^{\omega}$ with respect to the metrizable topology. If $\mathcal{A}$ is closed under taking infinite subsets, then $\mathcal{A}$ is $\mathcal{I}^{+}$-Ramsey.
\end{proposition}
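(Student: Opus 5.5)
The plan is to reduce to the closed case, which is handled by Proposition \ref{Galvin = closed sets are Ramsey}, by means of the Greb\'ik--Vidny\'anszky result in the form of Proposition \ref{GV-thm}.

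\emph{Step 1: make $\mathcal{A}$ fit Proposition \ref{GV-thm}.} Put $\mathcal{A}' = \mathcal{A} \cup [\omega]^{<\omega} \subseteq 2^{\omega}$. It is analytic, being the union of an analytic set and a countable set. It is hereditary: a subset of a member of $\mathcal{A}$ is either infinite, and then lies in $\mathcal{A}$ by the hypothesis that $\mathcal{A}$ is closed under infinite subsets, or finite, and then lies in $[\omega]^{<\omega}$. It also contains $[\omega]^{<\omega}$.

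\emph{Step 2: extract a closed family.} Proposition \ref{GV-thm} gives a closed hereditary family $\mathcal{K} \subseteq \mathcal{A}'$ with the following property: every infinite $C \in \mathcal{A}'$, that is every $C \in \mathcal{A}$, has an infinite subset belonging to $\mathcal{K}$. Since the infinite members of $\mathcal{A}'$ are exactly the members of $\mathcal{A}$, we get $\mathcal{K} \cap [\omega]^{\omega} \subseteq \mathcal{A}$. Moreover, $\mathcal{K} \cap [\omega]^{\omega}$ is closed in $[\omega]^{\omega}$ with the metrizable topology, because that topology is the subspace topology inherited from $2^{\omega}$.

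\emph{Step 3: apply the Galvin property and transfer back to $\mathcal{A}$.} Fix $X \in \mathcal{I}^{+}$. Since $\mathcal{I}$ is Galvin, Proposition \ref{Galvin = closed sets are Ramsey} gives $B \in \mathcal{I}^{+} \!\restriction\! X$ with either $[B]^{\omega} \subseteq \mathcal{K}$ or $[B]^{\omega} \cap \mathcal{K} = \emptyset$.
\begin{itemize}
\item In the first case, $[B]^{\omega} \subseteq \mathcal{K} \cap [\omega]^{\omega} \subseteq \mathcal{A}$.
\item In the second case, suppose some $C \in [B]^{\omega} \cap \mathcal{A}$ existed. By Step 2, $C$ would have an infinite subset $D \in \mathcal{K}$. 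But $D \in [B]^{\omega}$, which contradicts $[B]^{\omega} \cap \mathcal{K} = \emptyset$. Hence $[B]^{\omega} \cap \mathcal{A} = \emptyset$.
\end{itemize}
In either case $B$ witnesses the $\mathcal{I}^{+}$-Ramsey property for $\mathcal{A}$ below $X$, so $\mathcal{A}$ is $\mathcal{I}^{+}$-Ramsey.

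The only delicate point is Step 1: checking that adjoining the finite sets keeps the family analytic and hereditary, so that Proposition \ref{GV-thm} applies. After that the argument is routine, and it mirrors the proof of Theorem \ref{ramsey-no-galvin}, here used in the positive direction.
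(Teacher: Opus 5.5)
Your proof is correct and follows the paper's argument: Proposition~\ref{GV-thm} applied to $\mathcal{A}\cup[\omega]^{<\omega}$ gives a closed hereditary $\mathcal{K}$, Proposition~\ref{Galvin = closed sets are Ramsey} makes $\mathcal{K}$ $\mathcal{I}^{+}$-Ramsey, and both alternatives transfer back to $\mathcal{A}$. Your Step~1, which checks that the enlarged family is analytic and hereditary, makes explicit what the paper leaves implicit.
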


\begin{proof}
    Let $\mathcal{I}$ be a Galvin ideal on $\omega$, and let $\mathcal{A} \subseteq [\omega]^{\omega}$ be an analytic set that is closed under taking infinite subsets. By Proposition \ref{GV-thm}, there exists a closed hereditary family $\mathcal{K} \subseteq \mathcal{A} \cup [\omega]^{<\omega}$ satisfying that for every $A\in\mathcal{A}$ there is $B\in[A]^{\omega}$ such that $B\in\mathcal{K}$.

    \medskip

    Since $\mathcal{I}$ is Galvin, the closed family $\mathcal{K}$ (restricted to $[\omega]^{\omega}$) is $\mathcal{I}^{+}$-Ramsey, according to Proposition \ref{Galvin = closed sets are Ramsey}. Thus, for every $X\in \mathcal{I}^{+}$ there exists $Y\in\mathcal{I}^{+} \!\restriction\! X$ such that either $[Y]^{\omega} \subseteq \mathcal{K}$ or $[Y]^{\omega} \cap \mathcal{K} = \emptyset$. On the one hand, $[Y]^{\omega} \subseteq \mathcal{K}$ implies that $[Y]^{\omega} \subseteq \mathcal{A}$, since $\mathcal{K} \subseteq \mathcal{A} \cup [\omega]^{<\omega}$. On the other hand, $[Y]^{\omega} \cap \mathcal{K} = \emptyset$ implies that $[Y]^{\omega} \cap \mathcal{A} = \emptyset$, since each member of $\mathcal{A}$ contains an infinite subset belonging to $\mathcal{K}$. As a result, we conclude that $\mathcal{A}$ is $\mathcal{I}^{+}$-Ramsey.
\end{proof}

\subsection{Ideals related to colorings of barriers}

One can weaken the notion of a Galvin ideal in a natural way via an ideal version of the Nash-Williams theorem for barriers. First of all, we present the concepts of barriers and fronts introduced in \cite{NashWilliams} (see \cite[Definition II.2.1]{Todorcevic2005} and \cite[Definition 1.20]{Todorcevic2010}).

\begin{definition}
    Let $\mathcal{B} \subseteq [M]^{<\omega}$ be a family of nonempty finite subsets of an infinite set $M \in [\omega]^{\omega}$. The family $\mathcal{B}$ is called a \textit{barrier} on $M$ if it satisfies the following conditions:
\setlist{nolistsep}
\begin{enumerate}
\setlength{\itemsep}{0pt} 
        \item[i.] $\mathcal{B}$ is an antichain with respect to $\subseteq$, that is,  for all $s,t \in \mathcal{B}$ with $s \neq t$, we have $s \not\subseteq t$.
        \item[ii.] For every $A\in [M]^{\omega}$ there exists $s\in \mathcal{B}$ such that $s \sqsubset A$.
\end{enumerate}
Moreover, if the relation $\subseteq$ is replaced by $\sqsubseteq$ in the first item, then the family $\mathcal{B}$ is called a \textit{front} on $M$.
\end{definition}

Clearly, every barrier is itself a front. Moreover, every front on an infinite set $M \subseteq\omega$ contains a barrier on some infinite subset $N \subseteq M$. Furthermore, every barrier $\mathcal{B} \subseteq [M]^{<\omega}$ is a maximal antichain with respect to both $\sqsubseteq$ and $\subseteq$. Finally, observe that for each $M \in [\omega]^{\omega}$ and each $n \in \omega$, the family $[M]^{n}$ is a barrier on $M$.

\medskip

In what follows, we work with barriers rather than fronts, following the treatment in \cite[Sections II.2 and II.3]{Todorcevic2005} and \cite[Section 1.3]{Todorcevic2010}. 

\medskip

Given $M\in[\omega]^{\omega}$ and a barrier $\mathcal{B}$ on $M$, for each $n\in M$ let $\mathcal{B}_{(n)}$ be the barrier on $M/n$ defined by

 \kern-0.5em
\begin{equation*}
    \mathcal{B}_{(n)} = \{ t\in [M/n]^{<\omega} : \{n\}\cup t \in \mathcal{B} \}.
\end{equation*}

\kern-0.5em
For instance, the \textit{Schreier barrier} $\mathcal{S}$ is the barrier on $\omega$ defined by $\mathcal{S} = \{ s\in [\omega]^{<\omega} : |s|= 1 +\min s \}$, and for each $n\in\omega$ we have $\mathcal{S}_{(n)} = [\omega/n]^{n}$. 

\medskip

On the other hand, given a barrier $\mathcal{B}$ on $M \in [\omega]^{\omega}$ and $N \in [M]^{\omega}$, the restriction of $\mathcal{B}$ to $N$ is the barrier on $N$ defined by $\mathcal{B} \!\restriction\! N = \mathcal{B} \cap [N]^{<\omega}$. 

\medskip

We now present the notion of the rank of a barrier (see \cite[Definition 1.24 and Lemma 1.25]{Todorcevic2010}). Let $\mathcal{B}$ be a barrier on $M \in [\omega]^{\omega}$, and define $T(\mathcal{B}) \subseteq [M]^{<\omega}$ by

\kern-0.5em
\begin{equation*}
    T(\mathcal{B}) = \{ s\in [M]^{<\omega} : (\exists\, t\in \mathcal{B}) (s \sqsubseteq t) \} = \{ s\in [M]^{<\omega} : (\exists\, t\in \mathcal{B}) (s \subseteq t) \}.
\end{equation*}

\kern-0.5em
The family $T(\mathcal{B})$, ordered by the relation $\sqsubseteq$, is a well-founded tree on $M$, so it has no infinite branches. Let $\rho_{T(\mathcal{B})} : T(\mathcal{B}) \rightarrow \omega_{1}$ be the strictly decreasing map defined recursively by 

\kern-0.5em
\begin{equation*}
    \rho_{T(\mathcal{B})} (s) = \sup \{ \rho_{T(\mathcal{B})} (r) +1 : r\in T(\mathcal{B}) \wedge s\sqsubset r \}.
\end{equation*}

\kern-0.5em
The rank of the barrier $\mathcal{B}$ on $M$, denoted by $\textrm{rk}_{M}(\mathcal{B})$, is defined as the countable ordinal 

\kern-0.5em
\begin{equation*}
    \textrm{rk}_{M}(\mathcal{B}) = \rho_{T(\mathcal{B})} (\emptyset).
\end{equation*}

\kern-0.5em
Furthermore, the rank of the barrier $\mathcal{B}$ on $M$ admits a recursive characterization in terms of the ranks of the barriers $\mathcal{B}_{(n)}$ on $M/n$, as follows:

\kern-0.5em
\begin{equation*}
    \textrm{rk}_{M}(\mathcal{B}) = \sup \{ \textrm{rk}_{M/n}(\mathcal{B}_{(n)}) +1 : n\in M \}.
\end{equation*}

\kern-0.5em
We shall suppress the index $M$ in the notation for the rank of a barrier $\mathcal{B}$ on $M$, since it will be clear from the context; thus, we write $\textrm{rk}(\mathcal{B})$ instead of $\textrm{rk}_{M}(\mathcal{B})$.

\medskip

Observe that for any barrier $\mathcal{B}$ on $M\in [\omega]^{\omega}$, we have $\textrm{rk}(\mathcal{B}) \geq \textrm{rk}(\mathcal{B} \!\restriction\! N)$ for every $N \in [M]^{\omega}$. However, it is often convenient to work with barriers whose rank is preserved under taking restrictions. This motivates the notion of a uniform barrier (see \cite[Definition II.3.1 and Lemma II.3.3]{Todorcevic2005}), defined by induction on the rank as follows.

\begin{definition}
    Let $M\in [\omega]^{\omega}$ and let $\mathcal{B}$ be a barrier on $M$ with $\textrm{rk} (\mathcal{B}) = \alpha$. We say that $\mathcal{B}$ is \textit{uniform} provided that:
    \setlist{nolistsep}
    \begin{itemize}
    \setlength{\itemsep}{0pt} 
    \item If $\alpha = 1$, then $\mathcal{B} = [M]^{1}$.
    \item If $\alpha$ is a successor ordinal with $\alpha = \beta +1$ and $\beta \geq 1$, then for each $n\in M$ the barrier $\mathcal{B}_{(n)}$ on $M/n$ is uniform with $\textrm{rk} (\mathcal{B}_{(n)}) = \beta$.
    \item If $\alpha$ is a limit ordinal, then there exists an increasing sequence $\{\beta_{n}\}_{n\in M}$ with $\sup_{n\in M} \beta_{n} = \alpha$ such that for each $n\in M$ the barrier $\mathcal{B}_{(n)}$ on $M/n$ is uniform with $\textrm{rk} (\mathcal{B}_{(n)}) = \beta_{n}$. 
\end{itemize}    
\end{definition}

The invariance of the rank under restriction for a uniform barrier is the key feature of uniformity, meaning that for every uniform barrier $\mathcal{B}$ on $M\in [\omega]^{\omega}$ and every $N\in [M]^{\omega}$ we have $\textrm{rk}(\mathcal{B}) = \textrm{rk}(\mathcal{B} \!\restriction\! N)$.

\medskip

It is worth noting that the uniform barriers on $\omega$ of finite rank are essentially the families $[\omega]^n$ for $n \in \omega$ with $n \geq 1$. Likewise, observe that the Schreier barrier is a uniform barrier of rank $\omega$. In fact, it is well-known that for every countable ordinal $1 \leq \alpha < \omega_1$ there exists a uniform barrier $\mathcal{B}$ on $\omega$ such that $\textrm{rk}(\mathcal{B}) = \alpha$.

\medskip

As an additional remark, the barrier $[M]^{1}$ is referred to as the trivial barrier on $M \in [\omega]^{\omega}$. In what follows, we will implicitly assume that all barriers $\mathcal{B}$ are nontrivial, and thus $\textrm{rk}(\mathcal{B}) \geq 2$.

\medskip

Having briefly reviewed some notions related to barriers, we now state the Nash-Williams theorem
concerning finite colorings of barriers.

\begin{proposition}[\cite{NashWilliams}] \label{NashWilliams_Theorem}
   Let $\mathcal{B}$ be a barrier on $\omega$. For every $A\in [\omega]^{\omega}$, every $k\in\omega$ with $k \geq 2$, and every coloring $\pi: \mathcal{B}\!\restriction\!A \rightarrow k$, there is some $H\in [A]^{\omega}$ such that $\pi$ is constant on $\mathcal{B} \!\restriction\! H$.
\end{proposition}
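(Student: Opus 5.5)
The plan is to derive the Nash-Williams theorem from the Galvin lemma (Proposition \ref{Galvin_Lemma}), handling $k$ colors by induction and using the antichain property of barriers to turn ``initial segment in $\mathcal{F}$'' into a statement about $\mathcal{B}\!\restriction\!H$. By the same induction as in Fact \ref{fact_colors}, it suffices to treat $k=2$. Indeed, given $\pi:\mathcal{B}\!\restriction\!A\to k+1$, first merge colors $0,\dots,k-1$ into one and find $H\in[A]^\omega$ on which the merged coloring is constant. If that constant is the color $k$, we are done. Otherwise $\pi$ takes only $k$ values on $\mathcal{B}\!\restriction\!H$, and $\mathcal{B}\!\restriction\!H$ is a barrier on $H$, so the inductive hypothesis applies (after identifying $H$ with $\omega$ via its increasing enumeration, or simply noting that the two-color argument below works for any infinite $A$ and any barrier restricted to it).

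For $k=2$, let $\pi:\mathcal{B}\!\restriction\!A\to 2$ and put $\mathcal{F}=\pi^{-1}(\{0\})\subseteq[\omega]^{<\omega}$. Apply the Galvin lemma inside $A$: it yields $H\in[A]^\omega$ with $H\in\hom(\mathcal{F})$. (The statement gives $B\in[A]^\omega$ directly, since it is formulated relative to an arbitrary $A$.) There are then two cases.

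\emph{Case (II):} $[H]^{<\omega}\cap\mathcal{F}=\emptyset$. Every $s\in\mathcal{B}\!\restriction\!H$ satisfies $\pi(s)\neq 0$, so $\pi$ is constantly $1$ on $\mathcal{B}\!\restriction\!H$.

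\emph{Case (I):} every $C\in[H]^\omega$ has an initial segment in $\mathcal{F}$. Take any $s\in\mathcal{B}\!\restriction\!H$ and choose $C\in[H]^\omega$ with $s\sqsubset C$, for instance $C=s\cup(H/\max s)$. Since $\mathcal{B}$ is a front (barrier) on $\omega$, $C$ has exactly one initial segment in $\mathcal{B}$, namely $s$. The reason is that two distinct initial segments of $C$ are $\sqsubseteq$-comparable, which would contradict the antichain condition. By Case (I) some $t\in\mathcal{F}$ satisfies $t\sqsubset C$. Now $\mathcal{F}\subseteq\mathcal{B}$, so $t\in\mathcal{B}$ is an initial segment of $C$, and uniqueness forces $t=s$. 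Hence $\pi(s)=0$, and $\pi$ is constantly $0$ on $\mathcal{B}\!\restriction\!H$.

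The only delicate point is this uniqueness-of-initial-segment argument in Case (I). It needs only that $\mathcal{B}$ is a $\sqsubseteq$-antichain and that $\mathcal{F}\subseteq\mathcal{B}$; both are immediate from the definitions. Everything else is routine.
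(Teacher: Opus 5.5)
Your proof is correct. The paper cites this result without proving it, but your argument is the derivation from Galvin's lemma that the paper sketches in the proof of Proposition~\ref{Galvin implies B-Ramsey}: take $\mathcal{F}=\pi^{-1}(\{0\})$ and handle $k$ colors by the induction of Fact~\ref{fact_colors}. Your uniqueness-of-initial-segment argument in Case (I), which uses only that $\mathcal{B}$ is a $\sqsubseteq$-antichain containing $\mathcal{F}$, fills in the step the paper leaves implicit.
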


Given a barrier $\mathcal{B}$ on $\omega$ and a coloring $\pi : \mathcal{B} \rightarrow k$, a set $H \in [\omega]^{\omega}$ is said to be homogeneous for $\pi$ if $\pi$ is constant on $\mathcal{B}\!\restriction\! H$. We denote by $\textrm{hom}(\pi)$ the collection of all infinite homogeneous sets for $\pi$, and observe that $\textrm{hom}(\pi) \subseteq [\omega]^{\omega}$ is a closed tall family that is closed under taking infinite subsets.

\medskip

Let $\mathcal{I}$ be an ideal on $\omega$, let $\mathcal{B}$ a barrier on $\omega$, and fix $k\in \omega$ with $k \geq 2$. The ideal $\mathcal I$ satisfies

 \kern-0.5em
\begin{equation*}
    \mathcal{I}^+ \longrightarrow (\mathcal{I}^+)^{\mathcal{B}}_{k}
\end{equation*}

 \kern-0.5em
if, for every $A\in \mathcal{I}^+$ and every coloring $\pi: \mathcal{B}\!\restriction\! A \rightarrow k$ of the barrier $\mathcal{B}\!\restriction\! A$ into $k$ colors, there exists some $H\in \mathcal{I}^+ \!\restriction\! A$ such that $H$ is homogeneous for $\pi$.

\medskip

The combinatorial notion of a Ramsey ideal can be naturally extended by requiring the existence of positive homogeneous sets for finite colorings of an arbitrary barrier $\mathcal{B}$ on $\omega$.

\begin{definition}
Let $\mathcal{B}$ be a barrier on $\omega$. An ideal $\mathcal{I}$ on $\omega$ is \textit{$\mathcal{B}$-Ramsey} if it satisfies $\mathcal{I}^+ \longrightarrow (\mathcal{I}^+)^{\mathcal{B}}_{2}$.
\end{definition}

By adapting the argument from the proof of Fact \ref{fact_colors} to the context of barriers, it follows that, given a barrier $\mathcal{B}$ on $\omega$, an ideal $\mathcal{I}$ on $\omega$ is $\mathcal{B}$-Ramsey if and only if $\mathcal{I}^+ \longrightarrow (\mathcal{I}^+)^{\mathcal{B}}_{k}$ holds for all $k \geq 2$.

\medskip

Clearly, an ideal is Ramsey if it is $[\omega]^{2}$-Ramsey; moreover, by Theorem \ref{Ramsey_ideal_theorem}, an ideal is Ramsey if it is $[\omega]^{n}$-Ramsey for each $n \in \omega$ with $n\geq 2$. Furthermore, every Galvin ideal is $\mathcal{B}$-Ramsey for all barriers $\mathcal{B}$ on $\omega$, since the Galvin lemma implies the Nash-Williams theorem. For the sake of completeness, we present a sketch of the proof of this last fact.

\begin{proposition} \label{Galvin implies B-Ramsey}
    Let $\mathcal{I}$ be a Galvin ideal on $\omega$. Then, $\mathcal{I}^+ \longrightarrow (\mathcal{I}^+)^{\mathcal{B}}_{2}$ holds for all barriers $\mathcal{B}$ on $\omega$.
\end{proposition}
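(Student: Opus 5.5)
Given a barrier $\mathcal{B}$ on $\omega$, a set $A\in\mathcal{I}^{+}$ and a coloring $\pi:\mathcal{B}\!\restriction\! A\rightarrow 2$, I would mimic the proof that Galvin ideals are Ramsey. Take the family of finite sets
\begin{equation*}
\mathcal{F}=\pi^{-1}(\{0\})\subseteq \mathcal{B}\!\restriction\! A\subseteq[\omega]^{<\omega}.
\end{equation*}
Since $\mathcal{I}$ is Galvin, there is $H\in\mathcal{I}^{+}\!\restriction\! A$ with $H\in\textrm{hom}(\mathcal{F})$. The plan is to show that in each of the two alternatives (I) and (II) the set $H$ is homogeneous for $\pi$, that is, $\pi$ is constant on $\mathcal{B}\!\restriction\! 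H$.

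\emph{Alternative (II).} Here $[H]^{<\omega}\cap\mathcal{F}=\emptyset$. Every $s\in\mathcal{B}\!\restriction\! H$ lies in $[H]^{<\omega}$, so $s\notin\mathcal{F}$ and hence $\pi(s)=1$. Thus $\pi''(\mathcal{B}\!\restriction\! H)=\{1\}$.

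\emph{Alternative (I).} Here every $C\in[H]^{\omega}$ has an initial segment in $\mathcal{F}$. Fix $s\in\mathcal{B}\!\restriction\! H$ and put $C=s\cup(H/\max s)$, which lies in $[H]^{\omega}$ and satisfies $s\sqsubset C$. By (I) there is $t\in\mathcal{F}$ with $t\sqsubset C$. Then $s$ and $t$ are both initial segments of $C$, so they are $\sqsubseteq$-comparable. Both belong to $\mathcal{B}$, and a barrier is an antichain for $\subseteq$, hence also for $\sqsubseteq$. Therefore $s=t\in\mathcal{F}$, so $\pi(s)=0$, and $\pi''(\mathcal{B}\!\restriction\! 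H)=\{0\}$.

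The main point needing care is this last step: we must use that $\mathcal{B}$ is an antichain, together with the fact that the initial segment supplied by (I) is comparable with $s$. This is exactly why we choose $C$ as $s$ followed by the tail of $H$.

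Putting the two alternatives together, $H\in\mathcal{I}^{+}\!\restriction\! A$ is homogeneous for $\pi$, so $\mathcal{I}^+\longrightarrow(\mathcal{I}^+)^{\mathcal{B}}_{2}$ holds for every barrier $\mathcal{B}$ on $\omega$.
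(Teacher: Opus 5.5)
Your proof is correct and follows the same route as the paper: take $\mathcal{F}=\pi^{-1}(\{0\})$, use the Galvin property to get $H\in\mathcal{I}^{+}\!\restriction\! A$ with $H\in\textrm{hom}(\mathcal{F})$, and check that each alternative forces $\pi$ to be constant on $\mathcal{B}\!\restriction\! H$. The paper leaves alternative (I) implicit, whereas you spell it out via $C=s\cup(H/\max s)$ and the antichain property; this only uses that $\mathcal{B}$ is a $\sqsubseteq$-antichain, so the same argument works for fronts as well.
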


\begin{proof}
    Let $\mathcal{B}$ be a barrier on $\omega$. Given $A\in \mathcal{I}^{+}$ and a coloring $\pi : \mathcal{B} \!\restriction\! A \rightarrow 2$, consider the family $\mathcal{F} \subseteq [\omega]^{<\omega}$ defined by $\mathcal{F} = \{ b \in \mathcal{B} \!\restriction\! A : \pi(b)=0  \}$. As the ideal $\mathcal{I}$ is Galvin, there exists $H\in \mathcal{I}^{+} \!\restriction\! A$ such that $H \in \textrm{hom} (\mathcal{F})$; in fact, we have that $H \in \textrm{hom} (\mathcal{\pi})$. Indeed, if every infinite subset of $H$ has an initial segment in $\mathcal{F}$, then $\pi \,\text{''}\, \mathcal{B} \!\restriction\! H = \{0\}$; on the other hand, if no finite subset of $B$ belongs to $\mathcal{F}$, then $\pi \,\text{''}\, \mathcal{B} \!\restriction\! H = \{1\}$. Therefore, $\mathcal{I}^+ \longrightarrow (\mathcal{I}^+)^{\mathcal{B}}_{2}$ holds for all barriers $\mathcal{B}$ on $\omega$.
\end{proof}

\smallskip

Given a barrier $\mathcal{B}$ on $M \in [\omega]^{\omega}$, any coloring $\pi : \mathcal{B} \rightarrow 2$ naturally induces a countable collection of colorings $\{ \pi_{(n)} : \mathcal{B}_{(n)} \rightarrow 2 \}_{n\in M}$ defined by $\pi_{(n)} (t) = \pi ( \{n\}\cup t )$ for each $n\in M$ and each $t\in \mathcal{B}_{(n)}$.

\begin{proposition} \label{B-Ramsey = version with diagonalizations}
Let $\mathcal I$ be an ideal on $\omega$, and let $\mathcal{B}$ be a barrier on $\omega$. Then, the following statements are equivalent:
\setlist{nolistsep}
\begin{itemize}
\setlength{\itemsep}{0pt}
\item[(a)] $\mathcal{I}$ is $\mathcal{B}$-Ramsey.
\item[(b)] For every coloring $\pi : \mathcal{B} \rightarrow 2$ and every $M\in\mathcal{I}^{+}$ there exists $H\in \mathcal{I}^{+} \!\restriction\! M$ such that $H/n \in \textrm{hom}(\pi_{(n)})$ for all $n\in H$. 
\end{itemize}
\end{proposition}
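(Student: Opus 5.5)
We follow the pattern of Proposition \ref{Galvin = version with diagonalizations}, with the barrier $\mathcal{B}$ and the coloring $\pi$ playing the roles of $\mathcal{F}$ and $\hom(\mathcal{F})$.

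\textbf{(a) $\Rightarrow$ (b).} Fix $\pi:\mathcal{B}\rightarrow 2$ and $M\in\mathcal{I}^{+}$. Apply (a) to $M$ and the restricted coloring $\pi\!\restriction\!(\mathcal{B}\!\restriction\! M)$. This gives $H\in\mathcal{I}^{+}\!\restriction\! M$ and $i\in 2$ with $\pi''(\mathcal{B}\!\restriction\! H)=\{i\}$. Now take $n\in H$ and $t\in\mathcal{B}_{(n)}\!\restriction\!(H/n)$. Then $\{n\}\cup t\in\mathcal{B}\!\restriction\! H$, so $\pi_{(n)}(t)=\pi(\{n\}\cup t)=i$. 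Hence $\pi_{(n)}$ is constant on $\mathcal{B}_{(n)}\!\restriction\!(H/n)$, which means $H/n\in\hom(\pi_{(n)})$ for every $n\in H$. Here we use that $\mathcal{B}_{(n)}$ is a barrier on $\omega/n$, so restricting it to $H/n$ makes sense.

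\textbf{(b) $\Rightarrow$ (a).} Fix $A\in\mathcal{I}^{+}$ and a coloring $\pi:\mathcal{B}\!\restriction\! A\rightarrow 2$.

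1. Extend $\pi$ arbitrarily to all of $\mathcal{B}$, say by colour $0$ outside $[A]^{<\omega}$.
2. Apply (b) with $M=A$. This gives $H\in\mathcal{I}^{+}\!\restriction\! A$ such that, for each $n\in H$, $\pi_{(n)}$ takes a constant value $c(n)\in 2$ on $\mathcal{B}_{(n)}\!\restriction\!(H/n)$.
3. Split $H=H_0\cup H_1$ with $H_i=\{n\in H: c(n)=i\}$. Since $\mathcal{I}^{+}$ is a coideal, one of them, say $H_i$, is $\mathcal{I}$-positive.
4. Show $H_i$ is homogeneous. Take $b\in\mathcal{B}\!\restriction\! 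H_i$ and let $n=\min b$. Then $b=\{n\}\cup t$ with $t\subseteq H_i/n\subseteq H/n$, so $t\in\mathcal{B}_{(n)}\!\restriction\!(H/n)$. (If $b=\{n\}$ then $t=\emptyset$, but $\emptyset\in\mathcal{B}_{(n)}$ happens only if $\{n\}\in\mathcal{B}$, and the same argument still applies.) Therefore $\pi(b)=\pi_{(n)}(t)=c(n)=i$.

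So $\pi''(\mathcal{B}\!\restriction\! H_i)=\{i\}$ with $H_i\in\mathcal{I}^{+}\!\restriction\! A$, and $\mathcal{I}$ is $\mathcal{B}$-Ramsey.

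The only point that needs care is bookkeeping. The colorings $\pi_{(n)}$ live on $\mathcal{B}_{(n)}$, which is a barrier on $\omega/n$, and "$H/n\in\hom(\pi_{(n)})$" must be read as homogeneity of $\pi_{(n)}$ on $\mathcal{B}_{(n)}\!\restriction\!(H/n)$. The step where this matters is step 4: for $b\subseteq H_i$ with $\min b=n$, the remainder $b\setminus\{n\}$ must lie in $\mathcal{B}_{(n)}\!\restriction\!(H/n)$. This holds because $b\setminus\{n\}\subseteq H_i/n\subseteq H/n$, and $H_i\subseteq H$ means passing to $H_i$ loses nothing. Beyond that, the argument is routine.
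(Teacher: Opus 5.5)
Your proof is correct and follows essentially the same route as the paper. In (a)$\Rightarrow$(b) you read off homogeneity of each $\pi_{(n)}$ on $\mathcal{B}_{(n)}\!\restriction\!(H/n)$ from homogeneity of $\pi$ on $\mathcal{B}\!\restriction\! H$, and in (b)$\Rightarrow$(a) you split $H$ by the constant values $c(n)$ and keep an $\mathcal{I}$-positive piece; your extra bookkeeping (extending a coloring of $\mathcal{B}\!\restriction\! A$ to all of $\mathcal{B}$, and the case $\{n\}\in\mathcal{B}$) makes explicit details the paper leaves implicit.
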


\begin{proof}
    $[\text{(a)} \Longrightarrow \text{(b)}].$ Suppose that $\mathcal{I}$ is $\mathcal{B}$-Ramsey, then for every coloring $\pi : \mathcal{B} \rightarrow 2$ and every $M\in\mathcal{I}^{+}$ there exists $H\in \mathcal{I}^{+} \!\restriction\! M$ such that $H \in \textrm{hom}(\pi)$, so that $\pi \,\text{''}\, \mathcal{B}\!\restriction\!H =\{i\}$ for some $i\in 2$. Consequently, for each $n\in H$ and each $t\in \mathcal{B}_{(n)} \!\restriction\! (H/n)$ we have that $\{n\}\cup t \in \mathcal{B} \!\restriction\! H$ and $\pi_{(n)} (t) = \pi (\{n\}\cup t) = i$, which implies that $\pi_{(n)} \,\text{''}\, \mathcal{B}_{(n)}\!\restriction\!(H/n) =\{i\}$. Therefore, $H/n \in \textrm{hom}(\pi_{(n)})$ for all $n\in H$. 

    \medskip

    $[\text{(b)} \Longrightarrow \text{(a)}].$  Let $\pi : \mathcal{B} \rightarrow 2$ be a coloring and let $M\in\mathcal{I}^{+}$, then there exists $H\in \mathcal{I}^{+} \!\restriction\! M$ such that $H/n \in \textrm{hom}(\pi_{(n)})$ for all $n\in H$. Thus, for each $n\in H$ there is $j_{n}\in 2$ such that  $\pi_{(n)} \,\text{''}\, \mathcal{B}_{(n)}\!\restriction\!(H/n) =\{j_{n}\}$. Partition $H=H_{0} \cup H_{1}$ into two disjoint sets, where $H_{i} = \{ n\in H : j_{n} = i\}$, then $H_{i} \in \mathcal{I}^{+} \!\restriction\! M$ for some $i\in 2$. Finally, observe that $H_{i} \in \textrm{hom} (\pi)$, since for each $s\in \mathcal{B} \!\restriction\! H_{i}$ there are $n\in H_{i}$ and $t\in \mathcal{B}_{(n)} \!\restriction\! (H_{i}/n)$ such that $s=\{n\}\cup t$, thus $\pi(s) = \pi (\{n\}\cup t) = \pi_{(n)}(t)=i$ and hence $\pi \,\text{''}\, \mathcal{B}\!\restriction\!H_{i} =\{i\}$. Therefore,  we conclude that $\mathcal{I}$ is $\mathcal{B}$-Ramsey.   
\end{proof}

\smallskip

Next, it should be noted that a slight modification of the arguments used in the construction of a Ramsey non-Galvin ideal in Theorem \ref{ramsey-no-galvin} also works to construct a $\mathcal{B}$-Ramsey non-Galvin ideal for an arbitrary barrier $\mathcal{B}$. This is made possible by the following extension of Lemma \ref{efective-Ramsey}, proposed in \cite[Corollary 3.8]{GrebikUzca2019}, which states that there is a Borel way to choose a homogeneous set for every coloring of a barrier.

\begin{lemma}[\cite{GrebikUzca2019}]
\label{efective-NW} 
Let $\mathcal{B}$ be a barrier on $\omega$. There
exists a Borel map $G: [\omega]^{\omega} \times 2^{\mathcal{B}} \rightarrow
[\omega]^{\omega}$ such that, for every $A \in [\omega]^{\omega}$ and every coloring $\pi : \mathcal{B} \rightarrow 2$, the infinite set $G(A,\pi)$ is a homogeneous subset of $A$ for $\pi$.
\end{lemma}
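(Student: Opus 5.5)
We prove Lemma \ref{efective-NW} by transfinite induction on the rank of the barrier, mimicking the proof of the Nash-Williams theorem while checking at each stage that every choice is Borel. Two simplifications come first. Every barrier contains, on some infinite set, a uniform barrier, and rank is preserved under restriction. A Borel homogeneity selector for $\mathcal{B}\!\restriction\! A$ can also be obtained from one for a suitable smaller family, since homogeneity is hereditary to infinite subsets. It is still cleaner to run the induction directly on $\textrm{rk}(\mathcal{B})$ for arbitrary barriers, using the recursion $\mathcal{B}_{(n)}$. We therefore prove the stronger statement: for every barrier $\mathcal{B}$ on $\omega$ there is a Borel $G_{\mathcal{B}}:[\omega]^{\omega}\times 2^{\mathcal{B}}\to[\omega]^{\omega}$ with $G_{\mathcal{B}}(A,\pi)\subseteq A$ homogeneous for $\pi$. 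Here $\pi\in 2^{\mathcal{B}}$ is treated as a point of a Polish space. Restricting to $A$ is handled by relabeling: let $e_A:\omega\to A$ be the increasing enumeration, which depends continuously on $A$, and pull back $\mathcal{B}\!\restriction\! A$ to a barrier on $\omega$. To avoid changing barriers, we instead let each $G_{\mathcal{B}'}$ act uniformly on arguments $(A,\pi)$ with $A\subseteq$ the domain of $\mathcal{B}'$.

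The base case $\textrm{rk}=1$, $\mathcal{B}=[M]^1$, is direct. Given $(A,\pi)$, let $G(A,\pi)$ be the set of $n\in A$ with $\pi(\{n\})=i$, where $i$ is the least color whose preimage in $A$ is infinite. This is Borel because ``infinite'' is a $\Pi^0_2$ condition.

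For the inductive step, fix $(A,\pi)$ and use the diagonalization characterization of Proposition \ref{B-Ramsey = version with diagonalizations}. Define recursively $A_0=A$ and $n_0=\min A_0$. Given $A_k$ and $n_k=\min A_k$, put $A_{k+1}=G_{\mathcal{B}_{(n_k)}}(A_k\setminus\{n_k\},\pi_{(n_k)})$, noting that $\textrm{rk}(\mathcal{B}_{(n_k)})<\textrm{rk}(\mathcal{B})$. Then set $D=\{n_k:k\in\omega\}$. Each $D/n_k\subseteq A_{k+1}$ is homogeneous for $\pi_{(n_k)}$, with color $j_k$ say. Finally let $G(A,\pi)=\{n_k: j_k=i\}$, where $i$ is the least color occurring infinitely often. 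Since every $s\in\mathcal{B}\!\restriction\! G(A,\pi)$ has the form $\{n_k\}\cup t$ with $t\in\mathcal{B}_{(n_k)}\!\restriction\!(D/n_k)$, this set is homogeneous for $\pi$.

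The main obstacle is Borelness, in two forms. Measurability is routine: each step is a composition of Borel maps with $\pi\mapsto\pi_{(n)}$, which is continuous for fixed $n$. The real issue is \emph{uniformity} in the parameter $n$. The map $(n,A,\pi)\mapsto G_{\mathcal{B}_{(n)}}(A,\pi)$ is a countable case distinction on $n$, hence Borel, and therefore so is each finite stage $(A,\pi)\mapsto(A_k,n_k)$. The map $D$ is then Borel as a countable limit: ``$m\in D$'' holds iff $\exists k\,(n_k=m)$. The colors $j_k$ are read off Borel-ly by evaluating $\pi_{(n_k)}$ at the first element of $\mathcal{B}_{(n_k)}\!\restriction\!(D/n_k)$, which is determined by a Borel search. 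Because the induction runs through all countable ordinals, we must avoid non-uniform choices of the maps $G_{\mathcal{B}_{(n)}}$. This causes no trouble, since for a fixed $\mathcal{B}$ only the countably many barriers $\mathcal{B}_{(s)}$, for $s\in T(\mathcal{B})$, ever occur. We therefore define all $G_{\mathcal{B}_{(s)}}$ simultaneously by well-founded recursion on the tree $T(\mathcal{B})$. This gives a countable family of Borel maps, and the recursion never requires a choice over uncountably many barriers.
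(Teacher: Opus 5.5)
Your proposal is correct. The paper gives no proof of this lemma; it quotes it from \cite{GrebikUzca2019} (Corollary 3.8). So your argument supplies a self-contained proof rather than an alternative to one in the text.

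What you give is the effective form of the standard Nash-Williams induction. The fusion $A_{k+1}=G_{\mathcal{B}_{(n_k)}}(A_k\setminus\{n_k\},\pi_{(n_k)})$ produces $D$ with $D/n_k\subseteq A_{k+1}$, so $D/n_k\in\textrm{hom}(\pi_{(n_k)})$ for every $k$, which is the situation of Proposition~\ref{B-Ramsey = version with diagonalizations}. Keeping only the $n_k$ whose color $j_k$ is the least one recurring infinitely often then yields an infinite homogeneous subset of $A$.

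Your Borel bookkeeping is sound:
\begin{itemize}
\item each finite stage is Borel by a countable case split on the value of $n_k$;
\item $D$, the colors $j_k$ and the final selection come from countable operations;
\item running everything as a well-founded recursion over the countably many nodes of $T(\mathcal{B})$, using $(\mathcal{B}_{(s)})_{(n)}=\mathcal{B}_{(s\cup\{n\})}$ for $n>\max s$, shows that no non-uniform choice over uncountably many barriers is involved.
\end{itemize}

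There are two small things to repair. First, $\mathcal{B}_{(s)}=\{\emptyset\}$, which is not a barrier, whenever $s\in\mathcal{B}$. This already happens at the first level for nontrivial barriers: for instance $\mathcal{B}=\{\{0\}\}\cup[\omega\setminus\{0\}]^{2}$ has rank $2$ and $\mathcal{B}_{(0)}=\{\emptyset\}$. Your inductive step then calls $G_{\{\emptyset\}}$. That map is covered neither by your base case $[M]^{1}$ nor by an induction hypothesis stated for barriers. The fix is to take the leaves of $T(\mathcal{B})$ (rank $0$) as the base of the recursion, with $G_{\{\emptyset\}}(A,\pi)=A$ and associated color $\pi(\{n_k\})$. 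This also makes your separate rank-$1$ case redundant, since the inductive step reproduces it.

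Second, the opening remarks about reducing to uniform barriers are not needed and are partly inaccurate. For a general barrier one only has $\textrm{rk}(\mathcal{B}\!\restriction\! N)\le\textrm{rk}(\mathcal{B})$; equality is guaranteed only for uniform barriers. Since you never use these remarks, they can simply be dropped.
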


\begin{theorem} \label{B-ramsey-no-galvin}
   Let $\mathcal{B}$ be a barrier on $\omega$. There exists a $\mathcal{B}$-Ramsey tall ideal which is not Galvin. 
\end{theorem}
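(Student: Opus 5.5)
We follow the proof of Theorem \ref{ramsey-no-galvin} verbatim, replacing the Borel homogeneous-set selector $F$ of Lemma \ref{efective-Ramsey} by the selector $G$ of Lemma \ref{efective-NW}. Fix a continuous surjection $\varphi:2^{\omega}\to 2^{\mathcal{B}}$, which exists since $\mathcal{B}$ is countable and so $2^{\mathcal{B}}$ is homeomorphic to $2^{\omega}$. Take $\psi$ and $\eta$ from Lemma \ref{psi-rho}. Recursively set $A_{\emptyset}=\omega$ and $A_{u^{\smallfrown}x}=G(\psi(A_u,x),\varphi(x))$ for $u\in(2^{\omega})^{<\omega}$ and $x\in 2^{\omega}$. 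Let $\mathcal{C}_0=\{\omega\}$ and $\mathcal{C}_{n+1}=\{G(\psi(A,x),\varphi(x)):(A,x)\in\mathcal{C}_n\times 2^{\omega}\}$; each $\mathcal{C}_n$ is analytic, being a Borel image of an analytic set. Define
\[
\mathcal{I}^{+}=\{X\in[\omega]^{\omega}:(\exists n)(\exists C\in\mathcal{C}_n)(C\subseteq^{*}X)\},
\]
and let $\mathcal{I}=\wp(\omega)\setminus\mathcal{I}^{+}$.

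The first step is to check that $\mathcal{I}$ is a tall ideal which is $\mathcal{B}$-Ramsey, copying the argument of \cite[Theorem 4.7]{HMTU2017}. For the coideal property, let $X\cup Y\in\mathcal{I}^{+}$, witnessed by $A_u\subseteq^{*}X\cup Y$. Choose $x$ such that $\varphi(x)$ is the coloring of $\mathcal{B}$ given by $\pi(b)=0$ iff $\min b\in X$. A homogeneous set $A_{u^{\smallfrown}x}\subseteq A_u$ must then be almost contained in $X$ or in $Y$: homogeneity for this coloring forces all elements of $A_{u^{\smallfrown}x}$ to lie on one side, except finitely many coming from the difference $A_u\setminus(X\cup Y)$. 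Here we use that $\min$ ranges over the whole set, because every $n\in H$ is the minimum of some $b\in\mathcal{B}\restriction H$; this holds since $\mathcal{B}\restriction(H/n\cup\{n\})$ is a barrier.

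Tallness holds because $\eta(C)\subseteq C$ and $\eta(C)\in\mathcal{I}$. Indeed, $\eta(C)$ is disjoint from every $\psi(C,x)$, and the sets $\psi(C,x)$ are almost disjoint, so $\eta(C)$ almost contains no member of $\mathcal{C}_{m}$ for $m\ge1$ lying below $C$; the bookkeeping here is exactly as in \cite{HMTU2017}. For $\mathcal{B}$-Ramseyness, let $X\in\mathcal{I}^{+}$ with $C=A_u\subseteq^{*}X$ and let $\pi:\mathcal{B}\restriction X\to 2$. Extend $\pi$ arbitrarily to all of $\mathcal{B}$ and pick $x$ with $\varphi(x)=\pi$. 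Then $A_{u^{\smallfrown}x}\in\mathcal{I}^{+}$ is homogeneous for $\pi$ and is almost contained in $X$. Removing finitely many points keeps the set positive and homogeneous, because $\mathcal{B}\restriction H'\subseteq\mathcal{B}\restriction H$ whenever $H'\subseteq H$. The result is a positive homogeneous subset of $X$.

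The second step, showing that $\mathcal{I}$ is not Galvin, is identical to the corresponding step of Theorem \ref{ramsey-no-galvin}. Let $\mathcal{A}=\{\eta(A):(\exists n)(A\in\mathcal{C}_n)\}$; it is analytic, it is contained in $\mathcal{I}$, and every positive set contains a member of $\mathcal{A}$. The ideal $\mathcal{I}_{\mathcal{A}}$ is then analytic and contained in $\mathcal{I}$. Proposition \ref{GV-thm} yields a closed hereditary $\mathcal{K}\subseteq\mathcal{I}_{\mathcal{A}}$ such that every member of $\mathcal{A}$ has an infinite subset in $\mathcal{K}$. For each $X\in\mathcal{I}^{+}$ we get $[X]^{\omega}\not\subseteq\mathcal{K}$, since $\mathcal{K}\subseteq\mathcal{I}$, while $[X]^{\omega}\cap\mathcal{K}\neq\emptyset$. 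So $\mathcal{K}$ is not $\mathcal{I}^{+}$-Ramsey, and Proposition \ref{Galvin = closed sets are Ramsey} shows that $\mathcal{I}$ is not Galvin.

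The main obstacle is the coideal and tallness verification. It relies on the fine properties of $\psi$ and $\eta$ from \cite{HMTU2017}, which were designed for colorings of pairs. The plan is to check that that argument uses only two facts: that the selected sets are homogeneous subsets of $\psi(A,x)$, and that every coloring appears as some $\varphi(x)$. Both facts transfer directly to barriers via Lemma \ref{efective-NW}.
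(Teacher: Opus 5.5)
Your proposal is correct and takes the same approach as the paper, which simply reruns the construction of Theorem \ref{ramsey-no-galvin} with a continuous surjection onto $2^{\mathcal{B}}$ and the Borel selector $G$ of Lemma \ref{efective-NW} in place of $F$. The details you supply go through as you describe; the paper itself leaves these to \cite{HMTU2017}. They are the coideal check via the coloring $\pi(b)=0$ iff $\min b\in X$ (using that every point of $H$ is the minimum of some $b\in\mathcal{B}\restriction H$), and the case analysis over $v$ showing that $\eta(A_u)$ almost contains no $A_v$.
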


\begin{proof}
    The construction of this ideal closely parallels that of the Ramsey non-Galvin tall ideal presented in Theorem \ref{ramsey-no-galvin}, with only minor modifications. Indeed, fix a continuous surjection $\phi : 2^{\omega} \to 2^{\mathcal{B}}$, whose range is the collection of all colorings of $\mathcal{B}$ into two colors. By following exactly the same construction and arguments as in Theorem \ref{ramsey-no-galvin}, but using $\phi$ in place of $\varphi$ and replacing $F$ with the Borel map $G : [\omega]^{\omega} \times 2^{\mathcal{B}} \rightarrow [\omega]^{\omega}$ given by Lemma \ref{efective-NW}, we obtain a coanalytic $\mathcal{B}$-Ramsey tall ideal $\mathcal{I}$ on $\omega$ that is not Galvin.
\end{proof}

\smallskip

We continue by showing in Proposition \ref{rk(c)<rk(B)} that for any countable ordinal $2 \leq \alpha < \omega_1$, if an ideal is $\mathcal{B}$-Ramsey for some uniform barrier $\mathcal{B}$ of rank $\alpha$, then it is also $\mathcal{C}$-Ramsey for every barrier $\mathcal{C}$ of rank less than $\alpha$. To achieve this, we consider the following useful quasi-order relation on the class of all barriers, implicitly introduced and applied in \cite[Lemma II.3.12 and Corollary II.3.16]{Todorcevic2005}.

\begin{definition}
    Let $M\in [\omega]^{\omega}$, and let $\mathcal{B}$ and $\mathcal{C}$ be barriers on $M$. We write $\mathcal{C} \sqsubseteq \mathcal{B}$ if for every $b\in \mathcal{B}$ there exists $c\in \mathcal{C}$ such that $c\sqsubseteq b$.
\end{definition}

\begin{fact}
    Let $M\in [\omega]^{\omega}$, and let $\mathcal{B}$ and $\mathcal{C}$ be barriers on $M$ such that $\mathcal{C} \sqsubseteq \mathcal{B}$. Then, the following holds:
    \setlist{nolistsep}
    \begin{itemize}
    \setlength{\itemsep}{0pt}
    \item[(1)] For every $b\in \mathcal{B}$ there exists a unique $c_{b}\in \mathcal{C}$ such that $c_{b}\sqsubseteq b$.
    \item[(2)] For every $c\in \mathcal{C}$ there exists $b\in\mathcal{B}$ such that $c\sqsubseteq b$.
    \item[(3)] $\mathcal{C}\!\restriction\!N \sqsubseteq \mathcal{B}\!\restriction\!N$ for all $N\in [M]^{\omega}$.
    \item[(4)] $\textrm{rk}(\mathcal{C}) \leq \textrm{rk}(\mathcal{B})$.
\end{itemize}
\end{fact}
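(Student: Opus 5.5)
We prove the four items in order, using that barriers are antichains for $\sqsubseteq$ and that every infinite subset of $M$ has an initial segment in each barrier on $M$.

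For (1), existence is the definition of $\mathcal{C} \sqsubseteq \mathcal{B}$. For uniqueness, suppose $c, c' \in \mathcal{C}$ with $c \sqsubseteq b$ and $c' \sqsubseteq b$. Two initial segments of the same finite set are comparable under $\sqsubseteq$. Since $\mathcal{C}$ is an antichain with respect to $\subseteq$, and hence with respect to $\sqsubseteq$, we get $c = c'$.

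For (2), fix $c \in \mathcal{C}$ and choose $A \in [M]^{\omega}$ with $c \sqsubset A$; for instance, take $A = c \cup (M/\max c)$, or $A=M$ if $c=\emptyset$, which cannot happen since barrier elements are nonempty. Since $\mathcal{B}$ is a barrier, there is $b \in \mathcal{B}$ with $b \sqsubset A$. By hypothesis there is $c' \in \mathcal{C}$ with $c' \sqsubseteq b \sqsubset A$. Now $c$ and $c'$ are both initial segments of $A$, so they are comparable, and the antichain property forces $c = c'$. Hence $c \sqsubseteq b$.

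For (3), let $N \in [M]^{\omega}$ and $b \in \mathcal{B}\restriction N$. Take $c \in \mathcal{C}$ with $c \sqsubseteq b$. Then $c \subseteq b \subseteq N$, so $c \in \mathcal{C}\restriction N$. This gives $\mathcal{C}\restriction N \sqsubseteq \mathcal{B}\restriction N$, using that both restrictions are barriers on $N$.

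For (4), which I expect to be the only step needing care, I use the trees. By (2), every $s \in T(\mathcal{C})$ satisfies $s \sqsubseteq c \sqsubseteq b$ for some $c \in \mathcal{C}$ and some $b \in \mathcal{B}$, so $T(\mathcal{C}) \subseteq T(\mathcal{B})$. I then show by induction on $\rho_{T(\mathcal{C})}$ that $\rho_{T(\mathcal{C})}(s) \leq \rho_{T(\mathcal{B})}(s)$ for all $s \in T(\mathcal{C})$. Any $r$ with $s \sqsubset r$ and $r \in T(\mathcal{C})$ also lies in $T(\mathcal{B})$, and the induction hypothesis gives $\rho_{T(\mathcal{C})}(r) \leq \rho_{T(\mathcal{B})}(r)$. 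Taking suprema of $\rho(r)+1$ over these $r$ yields the inequality at $s$. Evaluating at $s = \emptyset$ gives $\mathrm{rk}(\mathcal{C}) \leq \mathrm{rk}(\mathcal{B})$.

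The induction in (4) is well founded because $T(\mathcal{C})$ has no infinite branches. The only subtlety is the monotonicity of the rank function under inclusion of well-founded trees, and the argument above handles it directly.
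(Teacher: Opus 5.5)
Your proof is correct, and it is essentially the routine verification that the paper leaves implicit (its proof is simply ``Straightforward''). In particular, your derivation of (4), obtaining $T(\mathcal{C}) \subseteq T(\mathcal{B})$ from (2) and then using that $\rho$ is monotone under inclusion of well-founded trees, is the natural way to fill in that step.
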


\begin{proof}
    Straightforward.
\end{proof}

\smallskip

The ideal Ramsey property associated with barriers is preserved downward under the relation $\sqsubseteq$. More precisely, the $\mathcal{B}$-Ramsey property of an ideal transfers to every barrier below $\mathcal{B}$ with respect to $\sqsubseteq$.

\begin{lemma} \label{Lemma_Barriers_1}
    Let $\mathcal{I}$ be an ideal on $\omega$, and let $\mathcal{B}$ and $\mathcal{C}$ be barriers on $\omega$ such that $\mathcal{C} \sqsubseteq \mathcal{B}$. If $\mathcal{I}$ is $\mathcal{B}$-Ramsey, then $\mathcal{I}$ is $\mathcal{C}$-Ramsey.
\end{lemma}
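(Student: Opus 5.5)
The plan is to transfer a coloring of $\mathcal{C}$ to a coloring of $\mathcal{B}$ through the unique $\sqsubseteq$-initial segment map given by item (1) of the preceding Fact. Fix $A\in\mathcal{I}^{+}$ and a coloring $\pi:\mathcal{C}\!\restriction\!A\rightarrow 2$. By item (3) of the Fact, $\mathcal{C}\!\restriction\!A \sqsubseteq \mathcal{B}\!\restriction\!A$. Hence, by item (1), for every $b\in\mathcal{B}\!\restriction\!A$ there is a unique $c_{b}\in\mathcal{C}$ with $c_{b}\sqsubseteq b$. Since $c_b\subseteq b\subseteq A$, this $c_b$ lies in $\mathcal{C}\!\restriction\!A$. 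Define $\widetilde{\pi}:\mathcal{B}\!\restriction\!A\rightarrow 2$ by $\widetilde{\pi}(b)=\pi(c_{b})$. Because $\mathcal{I}$ is $\mathcal{B}$-Ramsey, there is $H\in\mathcal{I}^{+}\!\restriction\!A$ such that $\widetilde{\pi}$ is constant, say equal to $i$, on $\mathcal{B}\!\restriction\!H$.

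Next I check that $H$ is homogeneous for $\pi$ on $\mathcal{C}\!\restriction\!H$. Given $c\in\mathcal{C}\!\restriction\!H$, I need some $b\in\mathcal{B}\!\restriction\!H$ with $c\sqsubseteq b$. Item (2) of the Fact, applied to the restricted barriers $\mathcal{C}\!\restriction\!H\sqsubseteq\mathcal{B}\!\restriction\!H$ (which holds by item (3)), provides such a $b$. To see that this works directly, let $X=c\cup(H/\max c)$. This is an infinite subset of $H$ with $c\sqsubset X$. Since $\mathcal{B}$ is a barrier, there is $b\in\mathcal{B}$ with $b\sqsubset X$, so $b\in\mathcal{B}\!\restriction\!H$. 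By hypothesis some $c'\in\mathcal{C}$ satisfies $c'\sqsubseteq b\sqsubset X$. Then $c$ and $c'$ are both initial segments of $X$, hence comparable under $\sqsubseteq$. Since $\mathcal{C}$ is a front, and thus a $\sqsubseteq$-antichain, $c=c'$, and so $c\sqsubseteq b$. By the uniqueness in item (1), $c_b=c$, and therefore $\pi(c)=\widetilde{\pi}(b)=i$.

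Consequently $\pi$ is constant on $\mathcal{C}\!\restriction\!H$, and $H\in\mathcal{I}^{+}\!\restriction\!A$, so $\mathcal{I}$ is $\mathcal{C}$-Ramsey. The only delicate point is the extension step just described: producing a $b\in\mathcal{B}$ lying inside $H$ and extending $c$. It is handled by choosing the infinite set $X$ to begin with $c$ and continue inside $H$, and then using the antichain property of $\mathcal{C}$ to identify $c$ as the unique $\mathcal{C}$-initial segment of $b$. Everything else is routine bookkeeping.
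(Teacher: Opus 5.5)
Your proof is correct and follows the paper's argument essentially verbatim: you pull the coloring back along $b\mapsto c_b$, apply the $\mathcal{B}$-Ramsey property to get $H$, and then use that every $c\in\mathcal{C}\!\restriction\!H$ extends to some $b\in\mathcal{B}\!\restriction\!H$, which forces $c=c_b$. Your explicit check of that extension step, via $X=c\cup(H/\max c)$ and the antichain property of $\mathcal{C}$, just fills in what the paper leaves to the ``straightforward'' Fact.
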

    
\begin{proof}
Suppose that the ideal $\mathcal{I}$ is $\mathcal{B}$-Ramsey, and let $\pi: \mathcal{C}\!\restriction\! A \rightarrow 2$ be a coloring, where $A\in \mathcal{I}^{+}$. Since $\mathcal{C}\!\restriction\! A \sqsubseteq \mathcal{B}\!\restriction\! A$, for every $b\in \mathcal{B}\!\restriction\! A$ there exists a unique $c_{b}\in \mathcal{C}\!\restriction\! A$ such that $c_{b}\sqsubseteq b$. Now, define the coloring $\hat{\pi}: \mathcal{B}\!\restriction\! A \rightarrow 2$ by $\hat{\pi} (b) = \pi(c_{b})$. As $\mathcal{I}$ is $\mathcal{B}$-Ramsey, there exists $H\in\mathcal{I}^{+} \!\restriction\! A$ that is homogeneous for $\hat{\pi}$, then $\hat{\pi} \,\text{''}\, \mathcal{B}\!\restriction\!H =\{i\}$ for some $i\in 2$. 

\smallskip

We claim that $H$ is also homogeneous for $\pi$. Indeed, for each $c\in \mathcal{C} \!\restriction\! H$ there exists $b\in \mathcal{B} \!\restriction\! H$ such that $c \sqsubseteq b$, then $c=c_{b}$ and hence $\pi(c) = \pi (c_{b}) = \hat{\pi} (b) =i$, which implies that $\pi \,\text{''}\, \mathcal{C}\!\restriction\!H =\{i\}$. Therefore, we conclude that the ideal $\mathcal{I}$ is $\mathcal{C}$-Ramsey.
\end{proof}

\smallskip

The next result describes how the ideal Ramsey property related to a uniform barrier interacts with the quasi-order $\sqsubseteq$ and the associated rank structure of barriers.

\begin{lemma}  \label{Lemma_Barriers_2}
    Let $\mathcal{I}$ be an ideal on $\omega$, and let $\mathcal{B}$ and $\mathcal{C}$ be barriers on $\omega$ such that $\mathcal{B}$ is uniform and $\textrm{rk}(\mathcal{C}) < \textrm{rk}(\mathcal{B})$. If $\mathcal{I}$ is $\mathcal{B}$-Ramsey, then for every $M\in\mathcal{I}^{+}$ there exists $N\in \mathcal{I}^{+} \!\restriction\! M$ such that $\mathcal{C} \!\restriction\! N \sqsubseteq \mathcal{B} \!\restriction\! N$.
\end{lemma}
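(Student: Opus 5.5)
The plan is to mimic the classical Nash-Williams argument behind \cite[Lemma II.3.12]{Todorcevic2005}, replacing the Nash-Williams theorem by the hypothesis that $\mathcal{I}$ is $\mathcal{B}$-Ramsey. Fix $M\in\mathcal{I}^{+}$. I would define the coloring $\pi:\mathcal{B}\!\restriction\!M\rightarrow 2$ by setting $\pi(b)=0$ if and only if some $c\in\mathcal{C}$ satisfies $c\sqsubseteq b$. Since $\mathcal{I}$ is $\mathcal{B}$-Ramsey, there is $N\in\mathcal{I}^{+}\!\restriction\!M$ with $\pi\,\text{''}\,\mathcal{B}\!\restriction\!N=\{i\}$ for some $i\in 2$. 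The goal is to show that $i=0$ and that this $N$ works.

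\textbf{Case $i=0$.} Take $b\in\mathcal{B}\!\restriction\!N$ and $c\in\mathcal{C}$ with $c\sqsubseteq b$. Then $c\subseteq b\subseteq N$, so $c\in\mathcal{C}\!\restriction\!N$. This is exactly the statement $\mathcal{C}\!\restriction\!N\sqsubseteq\mathcal{B}\!\restriction\!N$, and we are done.

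\textbf{Case $i=1$.} We aim for a contradiction. First, every $b\in\mathcal{B}\!\restriction\!N$ is a proper initial segment of some $c\in\mathcal{C}\!\restriction\!N$. To see this, pick any infinite $X\subseteq N$ with $b\sqsubset X$. Because $\mathcal{C}$ is a barrier, there is $c\in\mathcal{C}$ with $c\sqsubset X$. Any two initial segments of $X$ are $\sqsubseteq$-comparable, and $c\sqsubseteq b$ is excluded by the color $1$, so $b\sqsubset c$; moreover $c\subseteq X\subseteq N$. Consequently $T(\mathcal{B}\!\restriction\!N)\subseteq T(\mathcal{C}\!\restriction\!N)$.

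Next I would prove, by induction on $\rho_{T(\mathcal{B}\restriction N)}(s)$, that
\[
\rho_{T(\mathcal{C}\restriction N)}(s)\geq\rho_{T(\mathcal{B}\restriction N)}(s)\quad\text{for every } s\in T(\mathcal{B}\!\restriction\!N).
\]
For the maximal nodes (the elements of $\mathcal{B}\!\restriction\!N$) the right-hand side is $0$. For the inductive step, every immediate extension $r$ of $s$ in $T(\mathcal{B}\!\restriction\!N)$ also lies in $T(\mathcal{C}\!\restriction\!N)$. Hence the supremum defining $\rho_{T(\mathcal{C}\restriction N)}(s)$ ranges over a superset, with termwise larger values by the inductive hypothesis.

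Evaluating at $s=\emptyset$ gives $\textrm{rk}(\mathcal{C}\!\restriction\!N)\geq\textrm{rk}(\mathcal{B}\!\restriction\!N)$. Uniformity of $\mathcal{B}$ gives $\textrm{rk}(\mathcal{B}\!\restriction\!N)=\textrm{rk}(\mathcal{B})$. Restriction cannot raise the rank, so $\textrm{rk}(\mathcal{C})\geq\textrm{rk}(\mathcal{C}\!\restriction\!N)$. Chaining these yields $\textrm{rk}(\mathcal{C})\geq\textrm{rk}(\mathcal{B})$, contradicting the hypothesis $\textrm{rk}(\mathcal{C})<\textrm{rk}(\mathcal{B})$. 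So $i=0$, as required.

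The main obstacle I expect is this rank comparison in the case $i=1$. One needs to get the direction of the inequality right; a non-strict inequality suffices here, because the strictness comes from the hypothesis. One also has to make sure every node used is genuinely in $T(\mathcal{C}\!\restriction\!N)$, rather than merely in $T(\mathcal{C})$. Finally, this is precisely the point where uniformity of $\mathcal{B}$ is indispensable, since without it $\textrm{rk}(\mathcal{B}\!\restriction\!N)$ could drop below $\textrm{rk}(\mathcal{C})$.
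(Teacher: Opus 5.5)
Your proof is correct and is essentially the paper's argument. The paper colors $b\in\mathcal{B}$ by whether $b\subseteq c$ for some $c\in\mathcal{C}$, which (apart from $b\in\mathcal{C}$) is the complement of your coloring, so the two cases are mirrored. Its contradictory case yields $\mathcal{B}\!\restriction\!N\sqsubseteq\mathcal{C}\!\restriction\!N$ and then uses rank monotonicity under $\sqsubseteq$ together with uniformity of $\mathcal{B}$, which is exactly what your tree-inclusion induction does.
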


\begin{proof}
    We begin by considering the coloring $\pi: \mathcal{B} \rightarrow 2$ defined as follows: for $b\in\mathcal{B}$, let $\pi(b)=0$ if and only if $b\subseteq c$ for some $c\in\mathcal{C}$. Since the ideal $\mathcal{I}$ is $\mathcal{B}$-Ramsey, it follows that for every $M\in\mathcal{I}^{+}$ there exists $N\in \mathcal{I}^{+} \!\restriction\! M$ such that $\pi$ is constant on $\mathcal{B}\!\restriction\!N$.

    \smallskip

    We claim that $\pi \,\text{''}\, \mathcal{B}\!\restriction\!N =\{1\}$. Suppose, toward a contradiction, that $\pi \,\text{''}\, \mathcal{B}\!\restriction\!N =\{0\}$. Since $\mathcal{B}\!\restriction\!N$ is a barrier, for every $c\in \mathcal{C}\!\restriction\!N$ there exists $b\in \mathcal{B}\!\restriction\!N$ such that either $c\sqsubseteq b$ or $b\sqsubseteq c$. As $\pi(b)=0$, there exists $c^{\prime}\in \mathcal{C}$ such that $b \subseteq c^{\prime}$. Since $\mathcal{C}$ is a barrier, it follows that $c \not\sqsubset b$, and therefore $b \sqsubseteq c$. As a result, we obtain that $\mathcal{B} \!\restriction\! N \sqsubseteq \mathcal{C} \!\restriction\! N$, which implies that $\textrm{rk}(\mathcal{B}\!\restriction\!N) \leq \textrm{rk}(\mathcal{C}\!\restriction\!N) \leq \textrm{rk}(\mathcal{C})$. By uniformity of the barrier $\mathcal{B}$, we have $\textrm{rk}(\mathcal{B}) = \textrm{rk}(\mathcal{B}\!\restriction\!N)$, and hence $\textrm{rk}(\mathcal{B}) \leq \textrm{rk}(\mathcal{C})$, a contradiction.

    \smallskip

    Finally, since $\mathcal{C}\!\restriction\!N$ is a barrier, for every $b\in \mathcal{B}\!\restriction\!N$ there exists $c\in \mathcal{C}\!\restriction\!N$ such that either $b\sqsubseteq c$ or $c\sqsubseteq b$. As $\pi(b)=1$, it follows that $b \not\subseteq c^{\prime}$ for every $c^{\prime}\in\mathcal{C}$. In particular, we have $b \not\sqsubseteq c$, and therefore $c \sqsubset b$. Consequently, we conclude that $\mathcal{C} \!\restriction\! N \sqsubseteq \mathcal{B} \!\restriction\! N$.  
\end{proof}

\smallskip

As a consequence of the previous lemmas, the ideal Ramsey property associated with a uniform barrier is preserved under passage to barriers of smaller rank, as made precise in the following proposition.

\begin{proposition} \label{rk(c)<rk(B)}
     Let $\mathcal{I}$ be an ideal on $\omega$, and let $\mathcal{B}$ be a uniform barrier on $\omega$. If $\mathcal{I}$ is $\mathcal{B}$-Ramsey, then $\mathcal{I}$ is $\mathcal{C}$-Ramsey for every barrier $\mathcal{C}$ on $\omega$ satisfying $\textrm{rk}(\mathcal{C}) < \textrm{rk}(\mathcal{B})$. 
\end{proposition}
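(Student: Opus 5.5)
The plan is to combine Lemma \ref{Lemma_Barriers_2} with Lemma \ref{Lemma_Barriers_1}, but localized to a positive set rather than to all of $\omega$. Fix a barrier $\mathcal{C}$ on $\omega$ with $\textrm{rk}(\mathcal{C}) < \textrm{rk}(\mathcal{B})$, a set $A\in\mathcal{I}^{+}$, and a coloring $\pi:\mathcal{C}\!\restriction\! A\rightarrow 2$. We must produce $H\in\mathcal{I}^{+}\!\restriction\! A$ on which $\pi$ is constant on $\mathcal{C}\!\restriction\! H$.

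\textbf{Step 1: comparability on a positive set.} Since $\mathcal{I}$ is $\mathcal{B}$-Ramsey and $\mathcal{B}$ is uniform with $\textrm{rk}(\mathcal{C})<\textrm{rk}(\mathcal{B})$, Lemma \ref{Lemma_Barriers_2} applied to $M=A$ gives $N\in\mathcal{I}^{+}\!\restriction\! A$ such that $\mathcal{C}\!\restriction\! N \sqsubseteq \mathcal{B}\!\restriction\! N$.

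\textbf{Step 2: lift the coloring.} We cannot quote Lemma \ref{Lemma_Barriers_1} verbatim, since it assumes $\mathcal{C}\sqsubseteq\mathcal{B}$ on all of $\omega$. Instead we rerun its proof inside $N$. By item (1) of the Fact preceding Lemma \ref{Lemma_Barriers_1}, applied to the barriers $\mathcal{B}\!\restriction\! N$ and $\mathcal{C}\!\restriction\! N$ on $N$, each $b\in\mathcal{B}\!\restriction\! N$ has a unique $c_b\in\mathcal{C}\!\restriction\! N$ with $c_b\sqsubseteq b$. Define $\hat\pi:\mathcal{B}\!\restriction\! N\rightarrow 2$ by $\hat\pi(b)=\pi(c_b)$, and extend it arbitrarily to $\mathcal{B}\!\restriction\! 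A$. The $\mathcal{B}$-Ramsey property then gives $H\in\mathcal{I}^{+}\!\restriction\! N\subseteq\mathcal{I}^{+}\!\restriction\! A$ that is homogeneous for $\hat\pi$ with some color $i$.

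\textbf{Step 3: transfer homogeneity back to $\pi$.} By item (3) of the same Fact, $\mathcal{C}\!\restriction\! H\sqsubseteq\mathcal{B}\!\restriction\! H$, and by item (2) every $c\in\mathcal{C}\!\restriction\! H$ satisfies $c\sqsubseteq b$ for some $b\in\mathcal{B}\!\restriction\! H$. Uniqueness forces $c=c_b$, so $\pi(c)=\hat\pi(b)=i$. Hence $\pi$ is constant on $\mathcal{C}\!\restriction\! H$, as required.

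The only delicate point is Step 2. Lemma \ref{Lemma_Barriers_1} must be applied relative to $N$, which requires that the Fact holds for barriers on an arbitrary $M\in[\omega]^{\omega}$; it is stated in exactly that generality. One should also check that the restriction of $\hat\pi$ to subsets of $N$ is well defined, but this is immediate because $H\subseteq N$.
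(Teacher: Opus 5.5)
Your proof is correct and follows the paper's own argument. The paper also obtains $N\in\mathcal{I}^{+}\!\restriction\! M$ with $\mathcal{C}\!\restriction\! N\sqsubseteq\mathcal{B}\!\restriction\! N$ from Lemma \ref{Lemma_Barriers_2}, and then invokes Lemma \ref{Lemma_Barriers_1} relativized to $N$; your Steps 2--3 simply write out that relativized argument. (The arbitrary extension of $\hat\pi$ to $\mathcal{B}\!\restriction\! A$ is unnecessary, since you apply the $\mathcal{B}$-Ramsey property directly to $N\in\mathcal{I}^{+}$ and the coloring $\hat\pi$ of $\mathcal{B}\!\restriction\! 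N$.)
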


\begin{proof}
    Suppose that $\mathcal{I}$ is a $\mathcal{B}$-Ramsey ideal on $\omega$ for some uniform barrier $\mathcal{B}$ on $\omega$. Let $\mathcal{C}$ be a barrier on $\omega$ with $\textrm{rk}(\mathcal{C}) < \textrm{rk}(\mathcal{B})$, and let $\pi : \mathcal{C} \!\restriction\! M \rightarrow 2$ be a coloring, where $M\in\mathcal{I}^{+}$.

    \smallskip

    By Lemma \ref{Lemma_Barriers_2}, there exists $N\in \mathcal{I}^{+} \!\restriction\! M$ such that $\mathcal{C} \!\restriction\! N \sqsubseteq \mathcal{B} \!\restriction\! N$. Since $\mathcal{I} \!\restriction\! N$ is $(\mathcal{B}\!\restriction\!N)$-Ramsey, it follows from Lemma \ref{Lemma_Barriers_1} that $\mathcal{I} \!\restriction\! N$ is also $(\mathcal{C}\!\restriction\!N)$-Ramsey. Thus, there exists $H\in \mathcal{I}^{+} \!\restriction\! N \subseteq \mathcal{I}^{+} \!\restriction\! M$ such that $H$ is homogeneous for $\pi \!\restriction\! (\mathcal{C}\!\restriction\!N)$, which implies that $\pi$ is constant on $\mathcal{C}\!\restriction\!H$. Therefore, we conclude that the ideal $\mathcal{I}$ is $\mathcal{C}$-Ramsey.
\end{proof}

\smallskip

As expected, the ideal Ramsey property relative to an arbitrary barrier strengthens the classical ideal Ramsey property, as shown in the following proposition.

\begin{proposition} \label{B-Ramsey implies Ramsey}
    Let $\mathcal{I}$ be an ideal on $\omega$, and let $\mathcal{B}$ be a uniform barrier on $\omega$. If $\mathcal{I}$ is $\mathcal{B}$-Ramsey, then $\mathcal{I}$ is Ramsey.
\end{proposition}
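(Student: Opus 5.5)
The plan is to reduce the statement to Proposition~\ref{rk(c)<rk(B)} combined with Theorem~\ref{Ramsey_ideal_theorem}. Since all barriers are assumed nontrivial, we have $\textrm{rk}(\mathcal{B}) \geq 2$. Recall that $[\omega]^{n}$ is a uniform barrier of rank $n$. We split according to the value of $\alpha = \textrm{rk}(\mathcal{B})$.

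\emph{Case 1: $\alpha > 2$.} Then $\textrm{rk}([\omega]^{2}) = 2 < \alpha$. By Proposition~\ref{rk(c)<rk(B)}, applied with $\mathcal{C} = [\omega]^{2}$, the ideal $\mathcal{I}$ is $[\omega]^{2}$-Ramsey. By Definition~\ref{def_Ramsey_ideal}, this says exactly that $\mathcal{I}^{+} \longrightarrow (\mathcal{I}^{+})^{2}_{2}$, i.e., that $\mathcal{I}$ is Ramsey.

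\emph{Case 2: $\alpha = 2$.} Here Proposition~\ref{rk(c)<rk(B)} cannot be used, since no nontrivial barrier has rank less than $2$. Instead we argue directly that $\mathcal{B} = [\omega]^{2}$. By the definition of uniformity in the successor case $2 = 1+1$, each $\mathcal{B}_{(n)}$ is a uniform barrier of rank $1$ on $\omega/n$, hence $\mathcal{B}_{(n)} = [\omega/n]^{1}$. Consequently $\mathcal{B} = \{\{n,m\} : n<m\} = [\omega]^{2}$, and $\mathcal{B}$-Ramsey is literally Ramsey.

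Alternatively, Case 2 can be handled with Lemma~\ref{Lemma_Barriers_1}, using that $[\omega]^{2} \sqsubseteq \mathcal{B}$. The intended route is the direct identification above.

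The only step needing care is this low-rank identification, i.e., making sure that the rank-$2$ uniform barrier on $\omega$ is precisely $[\omega]^{2}$. This follows from unwinding the recursive definition of uniformity together with the fact that a rank-$1$ uniform barrier on $M$ equals $[M]^{1}$. Everything else is a direct invocation of earlier results.
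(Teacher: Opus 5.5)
Your proof is correct and is essentially the paper's argument: when $\textrm{rk}(\mathcal{B})=2$, uniformity forces $\mathcal{B}=[\omega]^{2}$, and when $\textrm{rk}(\mathcal{B})>2$ you apply Proposition~\ref{rk(c)<rk(B)} with $\mathcal{C}=[\omega]^{2}$. The mention of Theorem~\ref{Ramsey_ideal_theorem} in your plan is unnecessary, since that theorem is never actually used.
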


\begin{proof}
    Let $\mathcal{I}$ be a $\mathcal{B}$-Ramsey ideal for some uniform barrier $\mathcal{B}$ on $\omega$. If $\textrm{rk}(\mathcal{B}) = 2$ then $\mathcal{B} = [\omega]^{2}$ and hence $\mathcal{I}$ is Ramsey. If $\textrm{rk}(\mathcal{B}) > 2$, then $\mathcal{I}$ is Ramsey by Proposition \ref{rk(c)<rk(B)}.
\end{proof}

\smallskip

We conclude by emphasizing that the study of ideals associated with colorings of barriers deserves further investigation. In this framework, we pose several interconnected open questions concerning $\mathcal{B}$-Ramsey ideals for barriers $\mathcal{B}$.

\medskip

On the one hand, Proposition \ref{rk(c)<rk(B)} states that if an ideal is $\mathcal{B}$-Ramsey for a uniform barrier $\mathcal{B}$, then it is $\mathcal{C}$-Ramsey for every barrier $\mathcal{C}$ whose rank is strictly less than the rank of $\mathcal{B}$. However, it is unclear what happens when $\mathcal{B}$ and $\mathcal{C}$ have the same rank. Likewise, it is not known whether the converse of Proposition \ref{rk(c)<rk(B)} holds for uniform barriers. This motivates the following two open questions.

\begin{question} \label{Q1-barriers}
    Let $\mathcal{B}$ be a uniform barrier on $\omega$ with $\textrm{rk}(\mathcal{B}) \geq \omega$. Does every $\mathcal{B}$-Ramsey ideal $\mathcal{I}$ on $\omega$ have to be $\mathcal{C}$-Ramsey for all barriers $\mathcal{C}$ on $\omega$ satisfying $\mathrm{rk}(\mathcal{C}) = \mathrm{rk}(\mathcal{B})$?
\end{question}

\begin{question} \label{Q2-barriers}
    Do there exist uniform barriers $\mathcal{B}$ and $\mathcal{C}$ on $\omega$, with $\textrm{rk}(\mathcal{C}) < \textrm{rk}(\mathcal{B})$, and an ideal $\mathcal{I}$ on $\omega$ such that $\mathcal{I}$ is $\mathcal{C}$-Ramsey but fails to be $\mathcal{B}$-Ramsey?
\end{question}

Let $\mathcal{B}$ be a uniform barrier on $\omega$, and let $\mathcal{E}^{\mathcal{B}} \subseteq [\omega]^{<\omega}$ be defined by

\kern-0.5em
\begin{equation*}
     \mathcal{E}^{\mathcal{B}} = \{ \{n\} \cup t : n\in\omega \,\wedge\, t\in \mathcal{B}\!\restriction\!(\omega/n) \}.
\end{equation*}

\kern-0.5em
The family $\mathcal{E}^{\mathcal{B}}$ is a uniform barrier on $\omega$ with $\textrm{rk}(\mathcal{E}^{\mathcal{B}}) = \textrm{rk} (\mathcal{B})+1$; moreover, for each $n\in\omega$ we have $\mathcal{E}^{\mathcal{B}}_{(n)} = \mathcal{B}\!\restriction\!(\omega/n)$. In particular, note that if $\mathcal{B} = [\omega]^{n}$ for some $n\in\omega$ with $n\geq 2$, then $\mathcal{E}^{\mathcal{B}} = [\omega]^{n+1}$.

\medskip

With this in mind, we now restrict our attention to the canonical successor barrier $\mathcal{E}^{\mathcal{B}}$ of a uniform barrier $\mathcal{B}$, leading naturally to the following special case of Question \ref{Q2-barriers}.

\begin{question} \label{Q3-barriers}
    Let $\mathcal{B}$ be a uniform barrier on $\omega$ with $\textrm{rk}(\mathcal{B}) \geq \omega$. Does every $\mathcal{B}$-Ramsey ideal $\mathcal{I}$ on $\omega$ have to be $\mathcal{E}^{\mathcal{B}}$-Ramsey? 
\end{question}

On the other hand, Proposition~\ref{B-Ramsey implies Ramsey} shows that whenever a barrier $\mathcal{B}$ has infinite rank, any $\mathcal{B}$-Ramsey ideal is itself Ramsey. However, the converse implication is not known in general, which leads to the following question in particular.

\begin{question} \label{Q4-barriers}
    Does there exist a Ramsey tall ideal that is not $\mathcal{S}$-Ramsey, where $\mathcal{S}$ is the Schreier barrier?
\end{question} 

Observe that the previous question is closely related to Question \ref{question_almost_homogeneous}, since every coloring of the Schreier barrier induces a sequence of colorings of barriers of finite rank of the type considered there. Naturally, the notion of an almost homogeneous set for colorings of barriers of finite rank, introduced in Definition \ref{def_almosthomog}, can be extended to arbitrary barriers.

\begin{definition}
    Let $\mathcal{B}$ be a barrier on $\omega$, let $A\in [\omega]^{\omega}$, and let $\pi: \mathcal{B} \!\restriction\! A \rightarrow k$ be a finite coloring of $\mathcal{B} \!\restriction\! A$. An infinite subset $H$ of $A$ is almost homogeneous for $\pi$ if there is $m\in\omega$ such that $\mathcal{B} \!\restriction\! (H/m)$ is homogeneous for $\pi$.
\end{definition}

Furthermore, in the same spirit as Questions \ref{question_almost_homogeneous} and \ref{Q4-barriers}, one may ask whether an analogue of Theorem \ref{countable_colorings_of_n-sets} holds for uniform barriers of infinite rank.

\begin{question}
Let $\mathcal{B}$ be a uniform barrier on $\omega$ with $\textrm{rk}(\mathcal{B}) \geq \omega$. Given $A\in\mathcal{I}^{+}$ together with a countable collection $\{ \pi_{m}: \mathcal{B} \!\restriction\!A \rightarrow 2 \}_{m\in\omega}$ of colorings of $\mathcal{B} \!\restriction\! A$, does there exist $H \in \mathcal{I}^{+}\!\restriction\! A$ that is almost homogeneous for each $\pi_m$?
\end{question}

We close this section by introducing the combinatorial notion of a Nash-Williams ideal, which naturally corresponds to the ideal-theoretic version of the Nash-Williams theorem on colorings of barriers.

\begin{definition}
    Let $\mathcal{I}$ be an ideal on $\omega$. We say that $\mathcal{I}$ is Nash-Williams if it is $\mathcal{B}$-Ramsey for every barrier $\mathcal{B}$ on $\omega$.
\end{definition}

In view of the previous definition, it is useful to consider the following characterization of Nash-Williams ideals in terms of uniform barriers of all possible ranks.

\begin{proposition} \label{omega_1 barriers}
    Let $\mathcal{I}$ be an ideal on $\omega$, and for each countable ordinal $2\leq \alpha <\omega_{1}$, let $\mathcal{B}_{\alpha}$ be a uniform barrier of rank $\alpha$. Then, the following statements are equivalent:
\setlist{nolistsep}
\begin{itemize}
\setlength{\itemsep}{0pt}
\item[(a)] $\mathcal{I}$ is Nash-Williams.
\item[(b)] $\mathcal{I}$ is $\mathcal{B}_{\alpha}$-Ramsey for every $2\leq\alpha<\omega_{1}$.
\end{itemize}
\end{proposition}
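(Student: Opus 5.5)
The direction (a) $\Rightarrow$ (b) is immediate from the definition of Nash-Williams ideal, since each $\mathcal{B}_{\alpha}$ is in particular a barrier on $\omega$. So the whole plan concerns (b) $\Rightarrow$ (a).

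Fix an arbitrary nontrivial barrier $\mathcal{C}$ on $\omega$. Its rank $\textrm{rk}(\mathcal{C})$ is a countable ordinal, because $T(\mathcal{C})$ is a well-founded tree on a countable set. Also $\textrm{rk}(\mathcal{C}) \geq 2$ under the standing convention that barriers are nontrivial. Set $\alpha = \textrm{rk}(\mathcal{C}) + 1$, so that $2 < \alpha < \omega_{1}$. By hypothesis (b), $\mathcal{I}$ is $\mathcal{B}_{\alpha}$-Ramsey, where $\mathcal{B}_{\alpha}$ is uniform of rank $\alpha > \textrm{rk}(\mathcal{C})$. Proposition \ref{rk(c)<rk(B)} then yields directly that $\mathcal{I}$ is $\mathcal{C}$-Ramsey. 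Since $\mathcal{C}$ was arbitrary, $\mathcal{I}$ is Nash-Williams.

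The point of passing to rank $\alpha = \textrm{rk}(\mathcal{C})+1$ is that Proposition \ref{rk(c)<rk(B)} requires a strict rank inequality. We therefore do not need the open Question \ref{Q1-barriers} about barriers of equal rank; the successor ordinal sidesteps it.

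The only points needing care are bookkeeping. First, $\textrm{rk}(\mathcal{C})$ is countable, so $\alpha$ lies in the indexed range. Second, the trivial barrier $[\omega]^{1}$, if it were allowed, would be handled trivially: a coloring of singletons is a 2-partition of $A$, and one piece is $\mathcal{I}$-positive. I expect no real obstacle, since all the work was already done in Lemmas \ref{Lemma_Barriers_1} and \ref{Lemma_Barriers_2}.
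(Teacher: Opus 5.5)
Your proof is correct and is essentially the paper's argument. The paper just says that (b) $\Rightarrow$ (a) follows directly from Proposition~\ref{rk(c)<rk(B)}, and you make this explicit by choosing $\alpha=\textrm{rk}(\mathcal{C})+1$, so that the strict rank inequality required there holds; your side remarks on the countability of $\textrm{rk}(\mathcal{C})$ and on the trivial barrier are also sound.
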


\begin{proof}
    This fact follows directly from Proposition \ref{rk(c)<rk(B)}. 
\end{proof}

\smallskip

By virtue of Propositions \ref{Galvin implies B-Ramsey} and \ref{B-Ramsey implies Ramsey}, it follows that the notion of a Nash-Williams ideal determines an intermediate combinatorial property between Galvin ideals and Ramsey ideals. Nevertheless, at present we do not know whether Nash-Williams ideals coincide with either of these two classes of ideals.

\medskip

Since every Galvin ideal is Nash-Williams, the Galvin ideal constructed in Theorem \ref{galvin-no-semiselective} provides, in particular, an example of a Nash-Williams ideal that is not semiselective. We refine this construction to obtain an optimal example of a Nash-Williams non-semiselective ideal, directly using the Nash-Williams theorem on colorings of barriers stated in Proposition \ref{NashWilliams_Theorem}. However, it is not known whether the resulting ideal fails to be Galvin.

\begin{theorem} \label{NW-no-semiselective} 
There exists a Nash-Williams tall ideal which is not semiselective.
\end{theorem}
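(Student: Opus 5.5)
I will mimic the construction in the proof of Theorem \ref{galvin-no-semiselective}, replacing the Galvin lemma by the Nash-Williams theorem (Proposition \ref{NashWilliams_Theorem}). By Proposition \ref{omega_1 barriers} it suffices to handle the colorings of the countably many fixed uniform barriers $\mathcal{B}_{\alpha}$, $2\leq\alpha<\omega_1$. The set of all pairs $(\alpha,\pi)$ with $\pi:\mathcal{B}_{\alpha}\rightarrow 2$ has size $\mathfrak{c}$, so I fix an enumeration $\{(\alpha_{\xi},\pi_{\xi}) : \xi\in 2^{\omega}\}$ of it. I also fix $\eta:[\omega]^{\omega}\rightarrow[\omega]^{\omega}$ with $\eta(A)\subseteq A$ and $|A\setminus\eta(A)|=\omega$.

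First I build $\{A_u : u\in(2^{\omega})^{<\omega}\}$ recursively, with $A_{\emptyset}=\omega$. Given $A_u$, I take an almost disjoint family $\{B^{u}_{\xi}:\xi\in 2^{\omega}\}$ of infinite subsets of $A_u\setminus\eta(A_u)$. By Proposition \ref{NashWilliams_Theorem} I then choose $A_{u^{\smallfrown}\xi}\in[B^u_\xi]^{\omega}$ homogeneous for $\pi_{\xi}$, meaning that $\pi_\xi$ is constant on $\mathcal{B}_{\alpha_\xi}\!\restriction\! A_{u^{\smallfrown}\xi}$. Next I define $\mathcal{I}^{+}=\{X\in[\omega]^{\omega} : (\exists u)(A_u\subseteq^{*}X)\}$.

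The main obstacle is checking that this is a coideal, i.e.\ that $X\cup Y\in\mathcal{I}^+$ implies $X\in\mathcal{I}^+$ or $Y\in\mathcal{I}^+$. The Galvin proof used $\mathcal{F}_\xi=[X]^{2}$; here I instead use the barrier $\mathcal{B}_2=[\omega]^2$ and the coloring $\pi(\{a,b\})=0$ iff $\{a,b\}\subseteq X$. This coloring is $\pi_\xi$ for some $\xi$ with $\alpha_\xi=2$. Suppose $A_u\subseteq^{*}X\cup Y$. If $\pi_\xi$ is constantly $0$ on $[A_{u^{\smallfrown}\xi}]^2$, then $A_{u^{\smallfrown}\xi}\subseteq X$. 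If it is constantly $1$, then at most one point of $A_{u^{\smallfrown}\xi}$ lies in $X$, so $A_{u^{\smallfrown}\xi}\subseteq^{*}(X\cup Y)\setminus X\subseteq Y$. Upward closure under $\subseteq^{*}$ is clear, and $\omega\in\mathcal{I}^+$.

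Tallness follows as before: $\eta(A_u)\in\mathcal{I}$, because $\eta(A_u)$ is disjoint from every $A_{u^{\smallfrown}\xi}$ and almost disjoint from every $A_v$ with $v$ not extending $u$ (using almost disjointness at the first level where $v$ and $u$ split). I will need a short lemma here: if $A_v\subseteq^{*}Z$ with $Z\subseteq A_u$ infinite, then $v$ is comparable with $u$, or more precisely $A_v\subseteq^{*}A_u$ forces $v\supseteq u$. This lemma is also the key fact for the last step, and verifying it carefully is where most of the care goes.

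For the Nash-Williams property, take $\mathcal{B}_\alpha$, a coloring $\pi:\mathcal{B}_\alpha\!\restriction\! X\rightarrow2$ with $X\in\mathcal{I}^+$, and $A_u\subseteq^{*}X$. I extend $\pi$ arbitrarily to all of $\mathcal{B}_\alpha$; this extension is $\pi_\xi$ for some $\xi$. Then $A_{u^{\smallfrown}\xi}\setminus n\subseteq X$ for some $n$, this set lies in $\mathcal{I}^+$, and it is homogeneous for $\pi$. Finally, for non-semiselectivity I take $\mathcal{D}_n=\{Y\in\mathcal{I}^+ : (\exists v\in(2^\omega)^n)(Y\subseteq^{*}A_v)\}$ and repeat the argument of Theorem \ref{galvin-no-semiselective}. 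This shows $p^{w}$ fails, so Proposition \ref{semiselective=(p^w)+(q)} rules out semiselectivity.
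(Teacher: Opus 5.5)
Your proposal is correct and is essentially the paper's proof: it uses the same tree $\{A_u : u\in(2^{\omega})^{<\omega}\}$ as in Theorem \ref{galvin-no-semiselective}, built with the Nash-Williams theorem instead of Galvin's lemma, together with the same coideal, the same sets $\mathcal{D}_n$ witnessing the failure of $p^{w}$, and Proposition \ref{omega_1 barriers} to pass from the $\mathcal{B}_{\alpha}$ to all barriers; you also supply details the paper leaves implicit, namely the $[\omega]^{2}$-coloring for the coideal check and shrinking $A_{u^{\smallfrown}\xi}$ to a tail inside $X$. There are two harmless slips: there are $\aleph_1$ many barriers $\mathcal{B}_{\alpha}$, not countably many (your count $\mathfrak{c}$ for the pairs is still right), and when $v$ is a proper initial segment of $u$ the set $\eta(A_u)$ is contained in $A_v$ rather than almost disjoint from it, but your lemma ($A_v\subseteq^{*}A_u$ forces $u$ to be an initial segment of $v$) together with $|A_u\setminus\eta(A_u)|=\omega$ still gives $\eta(A_u)\in\mathcal{I}$.
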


\begin{proof}
    The construction of this ideal follows a closely similar line of argument to that of the Galvin non-semiselective tall ideal presented in Theorem \ref{galvin-no-semiselective}, with appropriate modifications. Indeed, for each countable ordinal $2\leq \alpha <\omega_{1}$, let $\mathcal{B}_{\alpha}$ be a uniform barrier with $\textrm{rk}(\mathcal{B}_{\alpha}) = \alpha$. Fix an enumeration 
    
    \kern-0.5em
    \begin{equation*}
       \textstyle \bigcup_{\alpha} 2^{\mathcal{B}_{\alpha}} = \{ \pi_{\xi} : \xi \in 2^{\omega} \}
    \end{equation*}

    \kern-0.5em
    of all colorings from each of the barriers $\mathcal{B}_{\alpha}$ into two colors. Moreover, let $\eta: [\omega]^{\omega} \rightarrow [\omega]^{\omega}$ be a function such that $\eta(A) \subseteq A$ and $|A \setminus \eta(A)|=\omega$ for all $A\in [\omega]^{\omega}$. 

    \medskip

    Reasoning analogously to the proof of Theorem \ref{galvin-no-semiselective}, but applying the Nash-Williams theorem (see Proposition \ref{NashWilliams_Theorem}) in the argument, it is possible to recursively construct a family $\{ A_{u} : u\in (2^{\omega})^{<\omega} \} \subseteq [\omega]^{\omega}$, with $A_{\emptyset} =\omega$, such that for every $u\in (2^{\omega})^{<\omega}$, the collection $\{ A_{u^{\smallfrown}\xi} : \xi\in2^{\omega} \} \subseteq [A_{u} \setminus \eta(A_{u})]^{\omega}$ is an almost disjoint family of subsets of $A_{u} \setminus \eta(A_{u})$ such that $A_{u^{\smallfrown}\xi} \in \textrm{hom}(\pi_{\xi})$ for each $\xi \in 2^{\omega}$.

\medskip

From this construction, consider the coideal $\mathcal{I}^{+}$ on $\omega$ defined by

\kern-0.5em
\begin{equation*}
\mathcal{I}^{+} = \{ X\in [\omega]^{\omega} : (\exists\, u\in (2^{\omega})^{<\omega}) (A_{u}\subseteq^{*} X)\}, 
\end{equation*} 

\kern-0.5em
and conclude that $\mathcal{I} = \mathcal{P}(\omega)\setminus\mathcal{I}^{+}$ is a $\mathcal{B}_{\alpha}$-Ramsey non-semiselective tall ideal for each $2\leq \alpha <\omega_{1}$. Therefore, by Proposition \ref{omega_1 barriers}, it follows that $\mathcal{I}$ is a Nash-Williams non-semiselective tall ideal.
\end{proof}

\smallskip

It is worth noting that the construction of the Nash-Williams non-semiselective tall ideal given in the previous theorem cannot be adapted to produce a coanalytic ideal. Consequently, unlike in Theorem \ref{ramsey-no-galvin}, we cannot conclude that the resulting ideal is non-Galvin. The main obstruction lies in the fact that there is no Borel collection of uniform barriers $\{ \mathcal{B}_{\alpha} : 2 \leq \alpha < \omega{_1} \} \subseteq 2^{[\omega]^{<\omega}}$ such that $\textrm{rk}(\mathcal{B}_{\alpha}) = \alpha$, as will be shown in Proposition \ref{no-Borel-barriers}.

\medskip

Let $\mathcal{T}$ be the family of all trees on $\omega$, that is, all $T \subseteq [\omega]^{<\omega}$ such that for every $s, t \in [\omega]^{<\omega}$, if $s \sqsubseteq t$ and $t \in T$ then $s \in T$. Let $\mathcal{WFT}$ be the family of all well-founded trees on $\omega$, that is, trees $T \in \mathcal{T}$ such that there is no $A \in [\omega]^\omega$ with all of its initial segments belonging to $T$. It is well-known that $\mathcal{WFT}$ is a ${\mathbf\Pi}^{1}_{1}$-complete subset of $2^{[\omega]^{<\omega}}$ (see, for example, \cite[Theorem 27.1 and Subsection 32.B]{Kechris94}). 

\medskip

Furthermore, let $\mathfrak{F}$ be the collection of all fronts on $\omega$, namely, all $\mathcal{B} \subseteq [\omega]^{<\omega}$ that are antichains with respect to $\sqsubseteq$ and satisfy the property that every infinite subset of $\omega$ has an initial segment in $\mathcal{B}$. Note that $\mathfrak{F}$ is a coanalytic subset of $2^{[\omega]^{<\omega}}$.

\medskip

Although we have defined the notion of rank here only for barriers, it extends naturally to all fronts $\mathcal{B}$ on $\omega$ via the well-founded tree $T(\mathcal{B})$ obtained by taking the downward closure of $\mathcal{B}$ with respect to $\sqsubseteq$.

\begin{proposition} \label{complexity_fronts}
The collection $\mathfrak{F}$ of all fronts on $\omega$ is a ${\mathbf\Pi}^{1}_{1}$-complete subset of $2^{[\omega]^{<\omega}}$.
\end{proposition}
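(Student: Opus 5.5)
To prove that $\mathfrak{F}$ is ${\mathbf\Pi}^{1}_{1}$-complete, I will first confirm that it is coanalytic and then reduce $\mathcal{WFT}$ to it by a continuous map. Since $\mathcal{WFT}$ is ${\mathbf\Pi}^{1}_{1}$-complete, this suffices.

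\emph{Upper bound.} The condition that $\mathcal{B}$ is a $\sqsubseteq$-antichain is closed in $2^{[\omega]^{<\omega}}$, because it only constrains pairs $s\sqsubset t$. The covering condition reads $(\forall A\in[\omega]^{\omega})(\exists s\sqsubset A)(s\in\mathcal{B})$. The matrix is open in $(A,\mathcal{B})$, so this condition is ${\mathbf\Pi}^{1}_{1}$.

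\emph{Hardness.} I want a continuous $\Phi:\mathcal{T}\to 2^{[\omega]^{<\omega}}$ with $T\in\mathcal{WFT}$ if and only if $\Phi(T)\in\mathfrak{F}$. Restricting to the closed set $\mathcal{T}$ is harmless, since $\mathcal{WFT}$ is already complete as a subset of $\mathcal{T}$. The natural candidate is the set of minimal non-members:
\[
\Phi(T)=\{t\in[\omega]^{<\omega}: t\notin T \text{ and } (\forall s\sqsubset t)(s\in T)\}.
\]
For $T\neq\emptyset$ this is the family of immediate exits from $T$. Membership of $t$ in $\Phi(T)$ depends only on the finitely many values of $T$ at the initial segments of $t$, so $\Phi$ is continuous. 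The set $\Phi(T)$ is always a $\sqsubseteq$-antichain: if $t\sqsubset t'$ were both in $\Phi(T)$, then $t\in T$ by minimality of $t'$, which contradicts $t\in\Phi(T)$.

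It remains to check the covering condition. If $T$ is well-founded, then every $A\in[\omega]^{\omega}$ has some initial segment outside $T$. Taking the $\sqsubseteq$-least such segment gives an element of $\Phi(T)$ that is an initial segment of $A$. Conversely, if $T$ is ill-founded, pick $A$ with all its initial segments in $T$. Then no initial segment of $A$ lies in $\Phi(T)$, so $\Phi(T)$ is not a front. In this setting every element of $[\omega]^{<\omega}$ is $\sqsubseteq$-comparable with $A$ exactly when it is an initial segment of $A$, so nothing further needs to be ruled out.

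\emph{Main obstacle.} The definition of front in the paper consists of sets of finite subsets. If one insists that elements be nonempty, the edge cases $T=\emptyset$ and $T=\{\emptyset\}$ need care. For $T=\emptyset$ we get $\Phi(T)=\{\emptyset\}$, which is a legitimate front, matching that $\emptyset$ is well-founded. If emptiness must be excluded, I will instead reduce from the trees containing $\emptyset$, which form a clopen subset of $\mathcal{T}$ on which $\mathcal{WFT}$ is still complete. On that subset $\Phi(T)$ consists only of nonempty sets.

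The only real subtlety is that "tree" here is with respect to $\sqsubseteq$ on $[\omega]^{<\omega}$, that is, on increasing sequences. The paper already uses $\mathcal{WFT}$ in exactly this form, so its ${\mathbf\Pi}^{1}_{1}$-completeness (via the standard identification of $\omega^{<\omega}$ with increasing sequences, as cited from Kechris) can be invoked directly.
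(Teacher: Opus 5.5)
Your proof is correct and follows the paper's argument. Your $\Phi(T)$ coincides with the paper's reduction $F(T)=\{a\cup\{n\} : a<n,\ a\in T,\ a\cup\{n\}\notin T\}$ on every nonempty tree, and your antichain, covering and ill-founded-branch checks are the paper's. Your treatment of $T=\emptyset$ is actually more careful than the paper's: its map sends the well-founded empty tree to the empty family, which is not a front, and either your convention $\Phi(\emptyset)=\{\emptyset\}$ or your restriction to trees containing $\emptyset$ repairs this.
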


\begin{proof}
Let $\mathfrak{F}$ be the collection of all fronts on $\omega$, which is a coanalytic subset of $2^{[\omega]^{<\omega}}$.  To prove that $\mathfrak{F}$ is ${\mathbf\Pi}^{1}_{1}$-complete, it suffices to show  that $\mathcal{WFT}$ is Wadge reducible to $\mathfrak{F}$ (see, for example, \cite[Definition 21.13 and Remark following Theorem 27.4]{Kechris94}). 

\medskip

Consider the function $F: \mathcal{T} \rightarrow 2^{[\omega]^{<\omega}}$ defined by

\kern-0.5em
\begin{equation*}
F(T) =  \{a\cup \{n\} \in [\omega]^{<\omega} : a<n \;\wedge\; a\in T \;\wedge\; a\cup \{n\}\not\in T \},
\end{equation*}

\kern-0.5em
and observe that $F$ is a continuous map such that $F(T)$ is an antichain with respect to $\sqsubseteq$ for every tree $T \in \mathcal{T}$. Next, we show that $F$ is the required reduction.

\medskip

On the one hand, let $T \in \mathcal{WFT}$. Since $T$ is a well-founded tree on $\omega$, for every $A\in [\omega]^{\omega}$ there exists a maximal initial segment $a\sqsubset A$ such that $a\in T$. Thus, if $n\in\omega$ satisfies $a<n$ and $a\cup\{n\} \sqsubset A$, then necessarily $a\cup\{n\} \notin T$, and hence $a\cup\{n\}\in F(T)$. Consequently, $F(T)$ is a front on $\omega$.

\medskip

On the other hand, let $T \in \mathcal{T} \setminus \mathcal{WFT}$. Since $T$ is a tree on $\omega$ that is not well-founded, there exists an infinite branch $A\in[\omega]^{\omega}$ through $T$, then $a\in T$ for every initial segment $a\sqsubset A$, and thus $a\notin F(T)$ for every $a \sqsubset A$. Consequently, $F(T)$ is not a front on $\omega$.

\medskip

Therefore, we conclude that $T\in \mathcal{WTF}$ if and only if $F(T) \in \mathfrak{F}$, which implies that the collection $\mathfrak{F}$ of all fronts on $\omega$ is a ${\mathbf\Pi}^{1}_{1}$-complete set.
\end{proof}

\begin{proposition} \label{no-Borel-barriers}
    There is no Borel collection of uniform barriers $\{ \mathcal{B}_{\alpha} : 2 \leq \alpha < \omega{_1} \} \subseteq 2^{[\omega]^{<\omega}}$ such that $\textrm{rk}(\mathcal{B}_{\alpha}) = \alpha$.
\end{proposition}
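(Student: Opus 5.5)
We argue by contradiction, using the boundedness theorem for coanalytic ranks. Suppose $\mathcal{X} = \{ \mathcal{B}_{\alpha} : 2 \leq \alpha < \omega_1 \} \subseteq 2^{[\omega]^{<\omega}}$ is a Borel collection of uniform barriers with $\textrm{rk}(\mathcal{B}_{\alpha}) = \alpha$ for each $\alpha$.

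The first step is to pass from barriers to well-founded trees. Consider the map $\mathcal{B} \mapsto T(\mathcal{B})$ sending a front to the downward closure of $\mathcal{B}$ under $\sqsubseteq$. This map is Borel on $2^{[\omega]^{<\omega}}$: the statement $s \in T(\mathcal{B})$ is the countable disjunction $\exists\, t \in [\omega]^{<\omega}\,(s \sqsubseteq t \wedge t \in \mathcal{B})$, so each coordinate is a Borel (indeed open) condition. Hence the image $\mathcal{Y} = \{ T(\mathcal{B}_{\alpha}) : 2 \leq \alpha < \omega_1\}$ is an analytic subset of $\mathcal{T}$. Since each $\mathcal{B}_\alpha$ is a barrier, every $T(\mathcal{B}_\alpha)$ is well-founded, so $\mathcal{Y} \subseteq \mathcal{WFT}$. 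By definition, the rank of the tree $T(\mathcal{B}_\alpha)$, namely $\rho_{T(\mathcal{B}_\alpha)}(\emptyset)$, equals $\textrm{rk}(\mathcal{B}_\alpha) = \alpha$.

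The second step applies the boundedness theorem for $\mathcal{WFT}$ (see, e.g., \cite[Theorem 31.1 / 35.23]{Kechris94}): every analytic subset of $\mathcal{WFT}$ has ranks bounded by some countable ordinal. Then there is $\beta < \omega_1$ with $\rho_T(\emptyset) < \beta$ for every $T \in \mathcal{Y}$. Taking $\alpha = \beta + 2$ gives the contradiction $\textrm{rk}(\mathcal{B}_{\beta+2}) = \beta+2 < \beta$. Uniformity is not even needed; the argument uses only that all $\omega_1$ many ranks are realised.

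The only point requiring care is matching the paper's rank $\rho_{T(\mathcal{B})}$ with the standard rank on well-founded trees for which boundedness is stated. These agree up to the convention that leaves have rank $0$, and in any case differ at most by a finite shift. So boundedness of one implies boundedness of the other, and the contradiction remains. As an alternative route, one could use Proposition \ref{complexity_fronts}: if the ranks were unbounded in an analytic family, one could define $\mathcal{WFT}$ (or $\mathfrak{F}$) in a $\mathbf{\Sigma}^1_1$ way by comparing ranks with members of $\mathcal{Y}$. That would contradict $\mathbf{\Pi}^1_1$-completeness, but it amounts to reproving boundedness, so I would cite boundedness directly.
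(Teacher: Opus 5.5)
Your proof is correct and is essentially the paper's argument: both rest on the Borel map $\mathcal{B}\mapsto T(\mathcal{B})$ into $\mathcal{WFT}$ combined with the boundedness theorem, and neither uses uniformity. The only difference is bookkeeping. The paper pulls the tree rank back to a coanalytic rank on $\mathfrak{F}$ and applies boundedness there, citing Proposition~\ref{complexity_fronts}; you instead push the Borel family forward to an analytic subset of $\mathcal{WFT}$ and apply boundedness there directly, which shows that the $\mathbf{\Pi}^{1}_{1}$-completeness of $\mathfrak{F}$ is not actually needed.
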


\begin{proof}
It is a classical fact that the usual rank $\rho$ on $\mathcal{WFT}$ is a coanalytic rank (see, for example, \cite[Exercise 34.6(i)]{Kechris94}). We have defined the rank of a front $\mathcal{B}$ on $\omega$ by $\textrm{rk}(\mathcal{B}) = \rho_{T(\mathcal{B})} (\emptyset)$, where $T(\mathcal{B})$ is the well-founded tree on $\omega$ associated with $\mathcal{B}$. Since the function $\mathcal{B}\mapsto T(\mathcal{B})$ is a Borel map from $\mathfrak{F}$ to $\mathcal{WFT}$, it follows that $\textrm{rk}$ is also a coanalytic rank on $\mathfrak{F}$. 

\medskip
    
As a consequence of Proposition \ref{complexity_fronts}, and by applying the boundedness theorem for coanalytic ranks (see, for example, \cite[Theorem 35.23]{Kechris94}), we conclude that any Borel collection of fronts or barriers is necessarily bounded in rank.    
\end{proof}

\smallskip

Finally, the following diagram summarizes the relationships and implications among the combinatorial notions of ideals discussed throughout this section:

\kern-0.5em
\begin{equation*}
\mbox{Semiselective}
\;\;\;\Longrightarrow\;\;\;
\mbox{Galvin}
\;\;\;\Longrightarrow\;\;\;
\mbox{Nash-Williams}
\;\;\;\Longrightarrow\;\;\;
\mbox{Ramsey}
\end{equation*}

\kern-0.5em
However, some natural reverse implications remain open and lead to the following two questions, which conclude this section.

\begin{question}
   Does there exist a Ramsey tall ideal which fails to be Nash-Williams?
\end{question}

\begin{question}
    Does there exist a Nash-Williams tall ideal which fails to be Galvin?
\end{question}

\section{Remarks on non-hereditary Ramsey ideals}
\label{Remarks on non-hereditary Ramsey ideals}

In this section, we discuss several remarks concerning non-hereditary Ramsey-type properties of tall ideals, in analogy with those considered in the preceding sections for the hereditary setting.

\subsection{Ideals satisfying a non-hereditary Ramsey property}

Let $\mathcal{I}$ be an ideal on $\omega$ and fix $n,k\in \omega$ with $n,k \geq 2$. The ideal $\mathcal I$ satisfies

\kern-0.5em
\begin{equation*}
    \omega \longrightarrow (\mathcal{I}^+)^{n}_{k}
\end{equation*}

\kern-0.5em
if for every coloring $\pi:[\omega]^{n} \rightarrow k$ there exists some $H\in \mathcal{I}^{+}$ such that $H$ is homogeneous for $\pi$.

\begin{fact} \label{direct_implications}
Let $\mathcal{I}$ be an ideal on $\omega$. Then, the following holds:
\begin{itemize}
\setlength{\itemsep}{0pt} 
\item[(1)] $\omega \longrightarrow (\mathcal{I}^+)^{n}_{k^{\prime}}$ implies $\omega \longrightarrow (\mathcal{I}^+)^{n}_{k}$ for all $n\geq 2$ and all $k^{\prime}\geq k \geq 2$.
\item[(2)] $\omega \longrightarrow (\mathcal{I}^+)^{n^{\prime}}_{k}$ implies $\omega \longrightarrow (\mathcal{I}^+)^{n}_{k}$ for all $k\geq 2$ and all $n^{\prime} \geq n \geq 2$.
\end{itemize}    
\end{fact}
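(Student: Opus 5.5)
Both items follow by transferring a coloring of the weaker kind to a coloring of the stronger kind whose homogeneous sets are still homogeneous for the original coloring. No ideal-theoretic tools are needed, since the positive set $H\in\mathcal{I}^{+}$ is used unchanged. The only care is that in (2) one must arrange that homogeneity for the higher-dimensional coloring really forces homogeneity for the lower-dimensional one.

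For (1), fix $n\geq 2$ and $k'\geq k\geq 2$, and let $\pi:[\omega]^{n}\rightarrow k$ be given. Since $k\subseteq k'$, I regard $\pi$ as a coloring $\pi':[\omega]^{n}\rightarrow k'$ that simply never uses the colors $k,\ldots,k'-1$. The hypothesis $\omega \longrightarrow (\mathcal{I}^+)^{n}_{k'}$ yields $H\in\mathcal{I}^{+}$ with $\pi'$ constant on $[H]^{n}$. Then $\pi$ is constant on $[H]^{n}$ as well, with the same value, which lies in $k$. This settles (1).

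For (2), fix $k\geq 2$ and $n'\geq n\geq 2$, and let $\pi:[\omega]^{n}\rightarrow k$ be given. I define $\pi':[\omega]^{n'}\rightarrow k$ by $\pi'(s)=\pi(r_{n}(s))$, where $r_{n}(s)$ denotes the set of the first $n$ elements of $s$ in its increasing enumeration, matching the notation $r_{n}(\cdot)$ used in the proof of Theorem \ref{Ramsey_ideal_theorem}. Applying $\omega \longrightarrow (\mathcal{I}^+)^{n'}_{k}$ gives $H\in\mathcal{I}^{+}$ with $\pi'\,\text{''}\,[H]^{n'}=\{i\}$. To check that $\pi$ is constant on $[H]^{n}$, take any $t\in[H]^{n}$. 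Since $H$ is infinite, I choose $n'-n$ elements of $H$ above $\max t$ and let $s$ be $t$ together with these elements. Then $s\in[H]^{n'}$ and $r_{n}(s)=t$, so $\pi(t)=\pi'(s)=i$. Hence $\pi\,\text{''}\,[H]^{n}=\{i\}$.

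The only step needing a moment's thought is the extension argument in (2): every $n$-subset of $H$ must occur as the initial $n$-segment of some $n'$-subset of $H$. This holds precisely because $H$ is infinite, so there is always room above $\max t$. Everything else is routine.
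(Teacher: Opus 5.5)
Your proof is correct: embedding $k$ into $k'$ handles (1), and for (2) the coloring $\pi'(s)=\pi(r_{n}(s))$ works because every $t\in[H]^{n}$ extends upward inside the infinite set $H$ to some $s\in[H]^{n'}$. The paper simply marks this fact as straightforward, and your argument is the routine verification it leaves implicit.
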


\begin{proof}
    Straightforward.
\end{proof}

\smallskip

Recall that, given a coloring $\pi:[\omega]^{n} \rightarrow k$, we denote by $\textrm{hom}(\pi) \subseteq [\omega]^{\omega}$ the collection of all infinite homogeneous sets for $\pi$, which is a closed tall family that is closed under taking infinite subsets.

\medskip

In view of the previous observation, note that the class of non-tall ideals satisfies the combinatorial property $\omega \longrightarrow (\mathcal{I}^+)^{n}_{k}$ for all $n,k\geq 2$. Indeed, if an ideal $\mathcal{I}$ on $\omega$ is not tall, then $\textrm{hom}(\pi) \not\subseteq \mathcal{I}$ for every coloring $\pi:[\omega]^{n} \rightarrow k$, and hence there exists $H\in \mathcal{I}^{+}$ such that $H \in \textrm{hom}(\pi)$.

\medskip

It should be emphasized that one of the fundamental characterizations of ideals satisfying the partition property $\omega \longrightarrow (\mathcal{I}^+)^{2}_{2}$ is formulated in terms of the Kat\v{e}tov order and its interaction with the random graph ideal $\mathcal{R}$, as shown by Hru\v{s}\'{a}k in \cite[Proposition 3.11]{Hrusak2011} (see also \cite[Section 3, Item 6]{Hrusak2017}, \cite[Theorem 4.1]{HMTU2017}, and \cite[Theorem 2.2.2]{Meza}).

\begin{proposition} [\cite{Hrusak2011}]  \label{Ramsey(omega)_and_RandomGraph}
    Let $\mathcal I$ be an ideal on $\omega$. Then, the following statements are equivalent:
\setlist{nolistsep}
\begin{itemize}
\setlength{\itemsep}{0pt} 
\item[(a)] $\omega \longrightarrow (\mathcal{I}^+)^{2}_{2}$ holds.
\item[(b)] $\mathcal{R} \not\leq_{K} \mathcal{I}$.
\end{itemize}
\end{proposition}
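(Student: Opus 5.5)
For (b)$\Rightarrow$(a) I would argue by contraposition. Suppose some coloring $\pi:[\omega]^2\to 2$ has no homogeneous set in $\mathcal{I}^+$, so $\textrm{hom}(\pi)\subseteq\mathcal{I}$. The aim is a function $f\in\omega^\omega$ with $f^{-1}[A]\in\mathcal{I}$ for every $A\in\mathcal{R}$. Since $\mathcal{R}$ is generated by $\textrm{hom}(u)$, it is enough to find $f$ such that $f^{-1}[H]$ is a finite union of $\pi$-homogeneous sets, up to a finite set, whenever $H\in\textrm{hom}(u)$.

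I would build $f$ by recursion on $n$. At stage $n$, given $f(0),\dots,f(n-1)$, split the set $\{f(m):m<n\}$ into $F_0=\{f(m): m<n,\ \pi(\{m,n\})=0\}$ and $F_1=\{f(m):m<n,\ \pi(\{m,n\})=1\}$. These may fail to be disjoint, and that is the obstacle. I would handle it by making $f$ injective, which forces $F_0\cap F_1=\emptyset$. Then the extension property of $u$ gives some $\ell$ larger than every value used so far with $u(\{f(m),\ell\})=\pi(\{m,n\})$ for all $m<n$, and we set $f(n)=\ell$.

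With this choice, $u(\{f(m),f(n)\})=\pi(\{m,n\})$ for all $m<n$, so $f$ is an injective color-preserving embedding of $(\omega,\pi)$ into $(\omega,u)$. Consequently, if $H\in\textrm{hom}(u)$, then $f^{-1}[H]$ is $\pi$-homogeneous, provided it is infinite. If it is finite, it lies in $\mathcal{I}$ trivially; if it is infinite, it lies in $\textrm{hom}(\pi)\subseteq\mathcal{I}$. Finite unions and almost-subsets are respected by preimages, so $f^{-1}[A]\in\mathcal{I}$ for all $A\in\mathcal{R}$, which gives $\mathcal{R}\le_K\mathcal{I}$.

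For (a)$\Rightarrow$(b), again argue by contraposition. Suppose $f$ witnesses $\mathcal{R}\le_K\mathcal{I}$, and define $\pi(\{m,n\})=u(\{f(m),f(n)\})$ when $f(m)\neq f(n)$, and $\pi(\{m,n\})=0$ otherwise. Let $H\in\mathcal{I}^+$ be $\pi$-homogeneous. Each fiber $f^{-1}[\{k\}]$ lies in $\mathcal{I}$, since singletons are in $\mathcal{R}$.

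If $f[H]$ is finite, then $H$ is covered by finitely many fibers and so lies in $\mathcal{I}$, a contradiction. So $f[H]$ is infinite. Choose $H'\subseteq H$ on which $f$ is injective with $f[H']=f[H]$. Then $f[H]$ is $u$-homogeneous, hence in $\mathcal{R}$, so $H\subseteq f^{-1}[f[H]]\in\mathcal{I}$, a contradiction. The delicate point in this direction is the handling of fibers and the fact that $\pi$-homogeneity transfers only to $f[H']$. This is nonetheless valid, since every pair in $f[H]$ has the form $\{f(m),f(n)\}$ with $m,n\in H'$.
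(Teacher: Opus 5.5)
Your proof is correct. The paper gives no proof of this statement (it is quoted from Hru\v{s}\'{a}k), and your argument is the standard proof of the cited result: for (b)$\Rightarrow$(a) you embed $(\omega,\pi)$ into the random graph via the extension property of $u$, and for (a)$\Rightarrow$(b) you pull $u$ back along a Kat\v{e}tov witness. One point deserves to be stated explicitly: preimages respect $\subseteq^{*}$ only because your $f$ is injective, so that preimages of finite sets are finite. The auxiliary set $H'$ in the second direction is harmless but unnecessary.
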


We now review the non-hereditary versions of the properties $\textrm{h-Mon}$ and \textrm{h-FinBW} for ideals, originally introduced in \cite{FMRS07, FMRS11}. Given an ideal $\mathcal{I}$ on $\omega$, we say that:
\setlist{nolistsep}
\begin{itemize}
\setlength{\itemsep}{0pt} 
\item $\mathcal{I}$ is $\textrm{Mon}$ if for every sequence of real numbers $\{ x_{n} \}_{n\in \omega} \subseteq \mathbb{R}$, there exists some $B\in \mathcal{I}^{+}$ such that the subsequence $\{ x_{n} \}_{n\in B}$ is monotone.
\item $\mathcal{I}$ is $\textrm{FinBW}$ if for every bounded sequence of reals numbers $\{ x_{n} \}_{n\in\omega} \subseteq \mathbb{R}$, there exists some $B\in \mathcal{I}^{+}$ such that the subsequence $\{ x_{n} \}_{n\in B}$ is convergent.
\end{itemize}

\medskip

The properties $\textrm{Mon}$ and $\textrm{FinBW}$ for ideals, and their interaction with the property $\omega \longrightarrow (\mathcal{I}^+)^{2}_{2}$, were studied in \cite{FMRS11}. In particular, the following implications were established in \cite[Fact 3.1]{FMRS11}:

\kern-0.5em
\begin{equation*}
\omega \longrightarrow (\mathcal{I}^+)^{2}_{2} \;\;\;\Longrightarrow\;\;\; 
\textrm{Mon} 
\;\;\;\Longrightarrow\;\;\; 
\textrm{FinBW}
\end{equation*}

\kern-0.5em
However, the above implications are not reversible. In \cite[1st Remark of Section 3]{FMRS11}, it is claimed that the $F_\sigma$ tall ideal $\mathcal{I}_{1/n}$ is $\textrm{FinBW}$ but not $\textrm{Mon}$. Moreover, \cite[Theorem 1.4]{Kwela} provides an $F_{\sigma}$ tall ideal $\mathcal{WR}$ that is $\textrm{Mon}$ but $\omega \centernot\longrightarrow (\mathcal{WR}^+)^{2}_{2}$ holds.

\medskip

Naturally, we may also consider non-hereditary versions of the properties $\textrm{h-Mon}_{n}$ and $\textrm{h-FinBW}_{n}$ for each $n\geq 1$, thereby extending the properties $\textrm{Mon}$ and $\textrm{FinBW}$ to higher dimensions.

\begin{definition}
    Let $\mathcal{I}$ be an ideal on $\omega$ and fix $n\in \omega$ with $n \geq 1$. We say that:
    \setlist{nolistsep}
    \begin{itemize}
    \setlength{\itemsep}{0pt} 
    \item $\mathcal{I}$ is $\textrm{Mon}_{n}$ if for every sequence of real numbers $f:[\omega]^{n} \rightarrow \mathbb{R}$, there exists some $B\in \mathcal{I}^{+}$ such that the subsequence $f \!\restriction\! [B]^{n}$ is monotone. 
    \item $\mathcal{I}$ is $\textrm{FinBW}_{n}$ if for every bounded sequence of reals numbers $f: [\omega]^{n} \rightarrow \mathbb{R}$, there exists some $B\in \mathcal{I}^{+}$ such that the subsequence $f \!\restriction\! [B]^{n}$ is convergent.
    \end{itemize}
\end{definition}

Certainly, as in the hereditary setting, we have that the property $\textrm{FinBW}_n$ holds in an analogous form for every uncountable compact metrizable space, in the same spirit as Fact \ref{fact_(X,I)}.

\medskip

Furthermore, we also extend to higher dimensions the combinatorial notions of systems of subsets with pseudo-intersections and diagonalizations introduced in Definitions \ref{def_system_1} and \ref{def_system_2}.

\begin{definition} \label{def_systems_dim_n}
    Let $\mathcal{I}$ be an ideal on $\omega$ and fix $n\in\omega$ with $n\geq 1$. A system of subsets of $[\omega]^{n}$ is a collection $\{ \mathcal{A}_{s} : s\in \omega^{<\omega} \} \subseteq \wp(\omega)$, with $\mathcal{A}_{\emptyset} = [\omega]^{n}$, for which there exists a sequence $\{ a_{k} \}_{k\in \omega} \subseteq \omega$, with each $a_{k} \geq 2$, such that for every $s\in \omega^{<\omega}$ both $\mathcal{A}_s=\bigcup_{i<a_{|s|}} \mathcal{A}_{s^{\smallfrown} i}$ and $\mathcal{A}_{s^{\smallfrown} i} \cap \mathcal{A}_{s^{\smallfrown} j}=\emptyset$ if $i< j < a_{|s|}$.
\end{definition}

\begin{definition} \label{Def SPI_n & SD_n}
    Given an ideal $\mathcal{I}$ on $\omega$ and $n\in\omega$ with $n\geq 1$, we say that:
    
    \setlist{nolistsep}
    \begin{itemize}
    \setlength{\itemsep}{0pt} 
    \item $\mathcal{I}$ is $\textrm{SPI}_{n}(\omega)$ if for each system $\{ \mathcal{A}_{s} : s\in \omega^{<\omega} \}$ of subsets of $[\omega]^{n}$, there exist $B\in \mathcal{I}^{+}$ and $x \in \prod_{k\in\omega} a_{k}$ such that for every $k\in \omega$ there is $m_k \in \omega$ for which $[B/m_k]^{n} \subseteq \mathcal{A}_{x \restriction k}$.
    
    \item $\mathcal{I}$ is $\textrm{SD}_{n}(\omega)$ if for each system $\{ \mathcal{A}_{s} : s\in \omega^{<\omega} \}$ of subsets of $[\omega]^{n}$, there exist $D\in \mathcal{I}^{+}$ and $x \in \prod_{k\in\omega} a_{k}$ such that $[D/\ell]^{n} \subseteq \mathcal{A}_{x \restriction (\ell+1)}$ for all $\ell\in D$.
    \end{itemize}  
\end{definition}

Having introduced the preceding non-hereditary properties of ideals, we now establish the following result, which naturally generalizes \cite[Theorem 3.11]{FMRS11} to higher dimensions. It is important to note that the proofs of the implications stated below are analogous to those of similar results in the hereditary context discussed in Section \ref{An ideal version of Ramsey's theorem}. For this reason, we only sketch the main arguments in the proof of the following proposition.

\begin{proposition} \label{equivalences}
Let $\mathcal{I}$ be an ideal on $\omega$ that is $q^+$. For every $n\in\omega$ with $n\geq 1$, the following statements are equivalent:
\setlist{nolistsep}
\begin{itemize}
\setlength{\itemsep}{0pt}
\item[(a)] $\omega \longrightarrow (\mathcal{I}^+)^{n+1}_{k}$ holds for all $k\geq2$.
\item[(b)] $\mathcal{I}$ is $\textrm{Mon}_n$.
\item[(c)] $\mathcal{I}$ is $\textrm{FinBW}_n$.
\item[(d)] $\mathcal{I}$ is $\textrm{SPI}_{n}(\omega)$.
\item[(e)] $\mathcal{I}$ is $\textrm{SD}_{n}(\omega)$. 
\end{itemize}
\end{proposition}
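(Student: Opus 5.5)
I would prove the cycle (a)$\Rightarrow$(b)$\Rightarrow$(c)$\Rightarrow$(d)$\Rightarrow$(e)$\Rightarrow$(a), adapting the hereditary arguments of Section~\ref{An ideal version of Ramsey's theorem} to the non-hereditary setting, with the root set $A$ replaced by $\omega$ and subsets of $A$ replaced by families of $n$-tuples. Throughout I will use that $\mathcal{I}^{+}$ is upward closed and that positivity survives splitting a positive set into finitely many pieces.

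\textbf{(a)$\Rightarrow$(b).} Given $f:[\omega]^{n}\to\mathbb{R}$, I would colour $s\in[\omega]^{n+1}$ by $0$ iff $f(\underline{s})\le f(\overline{s})$, exactly as in Proposition~\ref{Ramsey=h-Mon_n}. A positive homogeneous $B$ then makes $f\restriction[B]^{n}$ monotone along every $\triangleleft$-chain, hence along $\triangleleft^{*}$.

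\textbf{(b)$\Rightarrow$(c).} This is Lemma~\ref{h-Mon_n implies h-FinBW_n}; its argument uses only the set $B$, not heredity.

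\textbf{(c)$\Rightarrow$(d).} By the non-hereditary analogue of Fact~\ref{fact_(X,I)}, $\textrm{FinBW}_n$ holds for sequences in the compact metrizable space $\prod_k a_k$. Given a system $\{\mathcal{A}_s\}$ of subsets of $[\omega]^{n}$, I would send each $t\in[\omega]^{n}$ to the unique branch $x_t$ with $t\in\bigcap_k\mathcal{A}_{x_t\restriction k}$. If $t\mapsto x_t$ converges on $[B]^{n}$ to $x$, then for each $k$ the basic neighbourhood determined by $x\restriction k$ gives an $m_k$ with $[B/m_k]^{n}\subseteq\mathcal{A}_{x\restriction k}$.

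\textbf{(d)$\Rightarrow$(e).} This is where I would use $q^{+}$. From (d) take $B$, $x$ and $m_k$, with the $m_k$ increasing, and pick $n_0<n_1<\dots$ with $n_{k+1}>\max(m_{n_k+1},n_k)$. Applying $q^{+}$ to the partition of $B$ into the intervals $[n_k,n_{k+1})$ gives a positive $D\subseteq B$ meeting each interval in at most one point. Splitting $D$ by the parity of the interval index and keeping a positive half $D_i$, as in Proposition~\ref{SD}, then gives $[D_i/\ell]^{n}\subseteq\mathcal{A}_{x\restriction(\ell+1)}$ for $\ell\in D_i$. The reason is that for $\ell\in[n_k,n_{k+1})$ every element of $D_i/\ell$ lies beyond $n_{k+2}>m_{\ell+1}$.

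\textbf{(e)$\Rightarrow$(a).} I expect this to be the main obstacle. It suffices to handle $k=2$, since the number of colours can be raised by an argument like Fact~\ref{fact_colors}, applied using $\textrm{SD}_n(\omega)$ again on the restricted colouring. Given $\pi:[\omega]^{n+1}\to2$, I would build a system of subsets of $[\omega]^{n}$ mirroring Theorem~\ref{Ramsey_ideal_theorem}. At level $\ell$, the node $\mathcal{A}_{s}$ is split according to the function $[\ell+1]^{n}\ni t\mapsto\pi(t\cup\{\min u\})$, where $u\in\mathcal{A}_s$ has $\min u>\ell$; tuples $u$ with $\min u\le\ell$ are put into a fixed child. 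The resulting $D$ and $x$ then make $\pi(t\cup\{k\})$ depend only on $t$ whenever $t\in[D]^{n}$ and $k\in D/\max t$.

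To conclude I must reduce further to an $n$-dimensional colouring $\phi$ on a positive subset. In the hereditary case this used the inductive hypothesis, which is not available here. Instead I would encode the dependence of $\pi$ on lower tuples directly into the system, splitting at level $\ell$ by the full colour pattern of all $(n+1)$-sets whose first $n$ elements lie below $\ell$ and whose last element is $\min u$. The diagonalization should then force $\pi(t\cup\{k\})$ to depend only on the first element of $t$, and a final $2$-piece split of $D$ by the value $x(\ell)$, as in Proposition~\ref{SD}, yields a positive homogeneous set. The step I am least sure of is whether this reduction can be carried out with $\textrm{SD}_n(\omega)$ alone, or whether it forces applying $\textrm{SD}_n(\omega)$ iteratively with $q^{+}$ to handle the $n$ successive coordinates.
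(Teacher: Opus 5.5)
Your implications (a)$\Rightarrow$(b)$\Rightarrow$(c)$\Rightarrow$(d)$\Rightarrow$(e) are correct and follow the paper, including the interval bookkeeping with $q^{+}$ in (d)$\Rightarrow$(e).

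\textbf{The gap is in (e)$\Rightarrow$(a), and it comes from choosing the wrong template.} You split $u\in\mathcal{A}_s$ according to the colours of $t\cup\{\min u\}$ for $t\in[\ell+1]^{n}$. That split only ever looks at $\min u$, so $\mathrm{SD}_n(\omega)$ is being used as a one-dimensional property. What it yields is only end-homogeneity on $D$. As you note yourself, finishing would then need a positive homogeneous subset of $D$ for the induced colouring of $[D]^{n}$, and no non-hereditary hypothesis provides that. Your proposed repair (the ``full colour pattern'' of sets whose last element is $\min u$) is the same construction restated. Nothing in it forces $\pi(t\cup\{k\})$ to depend only on $\min t$.

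\textbf{The missing idea} is to make the level the \emph{first} element and the whole $n$-tuple the tail. This is exactly the proof of (b)$\Rightarrow$(a) in Proposition~\ref{SD}, with points replaced by $n$-tuples. For $|s|=\ell$ and $i<k$, let $\mathcal{A}_{s^{\smallfrown}i}$ consist of the $u\in\mathcal{A}_s$ with $\min u>\ell$ and $\pi(\{\ell\}\cup u)=i$. Put the $u\in\mathcal{A}_s$ with $\min u\le\ell$ into one fixed child. Then $\mathrm{SD}_n(\omega)$ gives $H\in\mathcal{I}^{+}$ and $x\in k^{\omega}$ with $\pi(\{\ell\}\cup u)=x(\ell)$ for all $\ell\in H$ and all $u\in[H/\ell]^{n}$. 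Whichever set $H_i=\{\ell\in H: x(\ell)=i\}$ is $\mathcal{I}$-positive is homogeneous of colour $i$. No induction on $n$ and no iteration of $q^{+}$ is needed; this is precisely what systems of $n$-tuples are for.

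This construction also treats all $k$ at once, and you need that. Your preliminary reduction to $k=2$ via an argument like Fact~\ref{fact_colors} is not valid here. That argument first finds a positive $H$ avoiding one colour and then needs a positive homogeneous set \emph{inside} $H$. Neither $\omega\longrightarrow(\mathcal{I}^{+})^{n+1}_{2}$ nor $\mathrm{SD}_n(\omega)$ lets you localize below a given positive set. In the non-hereditary setting the number of colours can genuinely matter for general ideals (compare Proposition~\ref{counterexample1}), so the reduction cannot be taken for granted.
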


\begin{proof}
$[\text{(a)} \Longrightarrow \text{(b)}].$ This fact is proved exactly as in the hereditary case treated in Proposition \ref{Ramsey=h-Mon_n}.

\medskip

$[\text{(b)} \Longrightarrow \text{(c)}].$ This fact is proved exactly as in the hereditary case considered in Lemma \ref{h-Mon_n implies h-FinBW_n}.

\medskip

$[\text{(c)} \Longrightarrow \text{(d)}].$ Suppose that $\mathcal{I}$ is $\textrm{FinBW}_{n}$, and let $\{ \mathcal{A}_{s} : s\in \omega^{<\omega} \}$ be a system of subsets of $[\omega]^{n}$ as in Definition  \ref{def_systems_dim_n}. Observe that for each $t\in [\omega]^{n}$ there exists a unique $f(t) \in \prod_{k\in\omega} a_{k}$ such that $t \in \bigcap_{k\in\omega} \mathcal{A}_{f(t) \restriction k}$. Applying the property $\textrm{FinBW}_{n}$ to the sequence $f : [\omega]^{n} \rightarrow \prod_{k\in\omega} a_{k}$, there exist $B\in \mathcal{I}^{+}$ and $x\in \prod_{k\in\omega} a_{k}$ such that the subsequence $f \!\restriction\! [B]^{n}$ converges to $x$. Finally, conclude that $[B/m_{k}]^{n} \subseteq \mathcal{A}_{x \restriction k}$ for all $k\in\omega$. Therefore, $\mathcal{I}$ is $\textrm{SPI}_{n}(\omega)$.

\medskip

$[\text{(d)} \Longrightarrow \text{(e)}].$ Suppose that $\mathcal{I}$ is $\textrm{SPI}_{n}(\omega)$. Let $\{ \mathcal{A}_{s} : s\in \omega^{<\omega} \}$ be a system of subsets of $[\omega]^{n}$ as in Definition  \ref{def_systems_dim_n}, and fix $B\in \mathcal{I}^{+}$ and $x \in \prod_{k\in\omega} a_{k}$ as in the first item of Definition \ref{Def SPI_n & SD_n}. Consider a strictly increasing sequence $\{m_{k}\}_{k\in\omega} \subseteq \omega$, with $m_{0}=0$, such that $[B/(m_{k+1}-1)]^{n} \subseteq \mathcal{A}_{x \restriction m_{k}}$ for all $k\in\omega$. Now, for each $k\in\omega$ define $f_{k} = B \cap \{ m_{k}, \ldots, m_{k+1}-1 \}$; thus, by property $q^{+}$, there exists $D\in \mathcal{I}^{+} \!\restriction\! B$ such that $|D\cap f_{k}| \leq 1$ for all $k\in\omega$. Next, split $D=D_{0} \cup D_{1}$ where $D_{i} = \bigcup_{j\in\omega} (D\cap f_{2j+i})$ for $i\in 2$, then at least one of these two sets belongs to $\mathcal{I}^{+}$. Finally, conclude that $[D_{i}/\ell]^{n} \subseteq \mathcal{A}_{x \restriction (\ell+1)}$ for all $\ell \in D_{i}$. Therefore, $\mathcal{I}$ is $\textrm{SD}_{n}(\omega)$.   

\medskip

$[\text{(e)} \Longrightarrow \text{(a)}].$ Suppose that $\mathcal{I}$ is $\textrm{SD}_{n}(\omega)$, and let $\pi :[\omega]^{n+1} \rightarrow k$ be a coloring. Recursively construct a system $\{ \mathcal{A}_{s} : s\in k^{<\omega} \}$ of subsets of $[\omega]^{n}$, with $\mathcal{A}_{\emptyset} = [\omega]^{n}$, as follows: Given $s\in k^{<\omega}$ for which $\mathcal{A}_{s}$ has already been constructed, for each $i\in k$ set $\mathcal{A}_{s^{\smallfrown}i} = \{ t\in\mathcal{A}_{s} : |s|<\min{t} \,\wedge\, \pi( \{ |s| \}\cup t ) =i \}$; moreover, for each $t\in[\omega]^{n}$ with $\min t \leq |s|$, include $t$ in any $\mathcal{A}_{s^{\smallfrown}i}$ whenever $t\in\mathcal{A}_{s}$.

\medskip

By property $\textrm{SD}_{n}(\omega)$, there exist $H\in \mathcal{I}^{+}$ and $x\in k^{\omega}$ such that $[H/\ell]^{n} \subseteq \mathcal{A}_{x\restriction(\ell+1)}$ for all $\ell\in H$. Now, partition $H= \bigcup_{i\in k} H_{i}$ where $H_{i} = \{ \ell\in H : x(\ell)=i \}$ for $i\in k$, then at least one $H_{i}$ belongs to $\mathcal{I}^{+}$. Finally, conclude that $H_{i} \in \textrm{hom}(\pi)$. Therefore, $\omega \longrightarrow (\mathcal{I}^+)^{n+1}_{k}$ holds for all $k\geq2$.
\end{proof}

\smallskip

It is worth making a few remarks about the proof of the previous proposition. First, observe that the assumption that the ideal $\mathcal{I}$ is $q^+$ is used only to show that, under this hypothesis, the property $\textrm{SPI}_n(\omega)$ implies $\textrm{SD}_n(\omega)$, whose proof is analogous to that used in the proof of the first implication of Proposition \ref{SD}. Moreover, an argument analogous to that used for proving the other implication of Proposition \ref{SD} shows that $\mathrm{SD}_{n}(\omega)$ implies $\omega \longrightarrow (\mathcal{I}^+)^{n+1}_{k}$ for all $k\geq 2$. On the other hand, note that the property $\textrm{Mon}_{n}$ follows from the property $\omega \longrightarrow (\mathcal{I}^+)^{n+1}_{2}$. Finally, to prove that $\textrm{FinBW}_n$ implies $\textrm{SPI}_n(\omega)$, we employ the same idea as in the proof of the first implication of Lemma \ref{SPI}; in fact, by adapting the argument of the second implication of that lemma, we conclude that the properties $\textrm{FinBW}_n$ and $\textrm{SPI}_n(\omega)$ are equivalent.

\medskip

In light of the above, we deduce that, for all $n,k\in\omega$ with $n\geq 1$ and $k \geq 2$, the following implications among combinatorial properties of ideals hold:

\kern-0.5em
\begin{equation*}
\textrm{SD}_{n}(\omega)
\;\;\;\Longrightarrow\;\;\;
\omega \longrightarrow (\mathcal{I}^+)^{n+1}_{k}
\;\;\;\Longrightarrow\;\;\;
\omega \longrightarrow (\mathcal{I}^+)^{n+1}_{2} \;\;\;\Longrightarrow\;\;\; 
\textrm{Mon}_n 
\;\;\;\Longrightarrow\;\;\; 
\textrm{FinBW}_n
\end{equation*}

\kern-0.5em
By Proposition \ref{equivalences}, under the property $q^{+}$, all these implications are reversible, and thus all these properties of ideals turn out to be equivalent.

\medskip

Regarding the convergence property $\textrm{FinBW}_{n}$, we now emphasize that, for all $n,k \geq 2$, the following implications among combinatorial properties of ideals hold:

\kern-0.5em
\begin{equation*}
\omega \longrightarrow (\mathcal{I}^+)^{n+1}_{2}
\;\;\;\Longrightarrow\;\;\;
\textrm{FinBW}_{n}
\;\;\;\Longrightarrow\;\;\;
\omega \longrightarrow (\mathcal{I}^+)^{n}_{k}
\end{equation*}

\begin{lemma} \label{FinBW_n-lemma}
    Let $\mathcal{I}$ be an ideal on $\omega$ and let $n\in\omega$ with $n\geq 2$. If $\mathcal{I}$ is $\textrm{FinBW}_{n}$, then $\omega \longrightarrow (\mathcal{I}^+)^{n}_{k}$ holds for all $k\geq 2$.
\end{lemma}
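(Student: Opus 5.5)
The plan is to treat a $k$-coloring as a bounded real-valued sequence and let $\textrm{FinBW}_{n}$ supply convergence, in the same way as the proof of Proposition \ref{Ramsey=h-FinBW_2}, (b) $\Rightarrow$ (a), but in the non-hereditary setting. Fix $k\geq 2$ and a coloring $\pi:[\omega]^{n}\rightarrow k$. We view $\pi$ as a bounded sequence $f:[\omega]^{n}\rightarrow \mathbb{R}$ with $f(t)=\pi(t)\in\{0,1,\ldots,k-1\}$. Since $\mathcal{I}$ is $\textrm{FinBW}_{n}$, we obtain $B\in\mathcal{I}^{+}$ and $p\in\mathbb{R}$ such that $f\!\restriction\![B]^{n}$ converges to $p$.

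Next we show that $p$ is one of the colors and that $f$ is eventually equal to $p$. Take $\varepsilon=1/3$. Convergence gives $m\in\omega$ with $f\,\text{''}\,[B/m]^{n}\subseteq(p-\tfrac13,p+\tfrac13)$. Because $B/m$ is infinite, $[B/m]^{n}\neq\emptyset$. Distinct integers lie at distance at least $1$, so the interval contains at most one of them. Hence $f$ takes a single value $i<k$ on $[B/m]^{n}$, and $B/m$ is homogeneous for $\pi$ with color $i$.

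It remains to check positivity. Since $B\in\mathcal{I}^{+}$ and $B\setminus(B/m)$ is finite, hence in $\mathcal{I}$, we have $B/m\in\mathcal{I}^{+}$. Therefore $\omega\longrightarrow(\mathcal{I}^{+})^{n}_{k}$ holds. No induction on $k$ is needed, because the discreteness of $\{0,\ldots,k-1\}$ handles every $k$ at once.

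The main point needing care is that the convergence notion for $n$-sequences is the tail-based one from \cite{KuSze}: it asks for $m$ such that the entire set $[B/m]^{n}$ is mapped into the neighborhood. With the weaker reading in which only tuples with large minimum are controlled, the argument would not go through. The definition given in the paper uses $[A/m]^{n}$, so the step is immediate. The hypothesis $n\geq 2$ is used only to stay within the range where the arrow notation is defined. The same argument works verbatim for $n=1$, where it gives the pigeonhole-type statement.
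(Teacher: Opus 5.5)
Your proof is correct and is essentially the paper's argument: you regard $\pi$ as a bounded real-valued sequence on $[\omega]^{n}$, apply $\textrm{FinBW}_{n}$ to get $B\in\mathcal{I}^{+}$ on which it converges, and use the discreteness of $\{0,\ldots,k-1\}$ to see that $\pi$ is constant on $[B/m]^{n}$ for some $m$, with $B/m\in\mathcal{I}^{+}$. You are slightly more explicit than the paper, which writes $[B]^{2}$ where $[B]^{n}$ is meant and leaves the positivity of $B/m$ implicit. One harmless slip: your closing aside contrasts the definition with a ``weaker reading'' that controls only tuples with large minimum, but $[B/m]^{n}$ is exactly the set of $n$-subsets of $B$ with minimum above $m$, so that reading is the definition itself.
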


\begin{proof}
    Suppose that $\mathcal{I}$ is $\textrm{FinBW}_{n}$. Since every coloring $\pi : [\omega]^{n} \rightarrow k$ can be viewed as a bounded sequence $\pi :[\omega]^{n} \rightarrow \mathbb{R}$ with image contained in the discrete set $\{0,\ldots,k-1\}$, then there exists $B\in \mathcal{I}^{+}$ such that the subsequence $\pi \!\restriction\! [B]^{2}$ converges to some $p\in k$. It follows that $\pi \text{\,''\,} [B/m]^{2} = \{p\}$ for some $m\in\omega$ and hence $B/m \in \textrm{hom} (\pi)$. Therefore, we conclude that $\omega \longrightarrow (\mathcal{I}^+)^{n}_{k}$ holds.   
\end{proof}

\smallskip

The previous lemma implies that the class of partition properties of the form $\omega \longrightarrow (\mathcal{I}^{+})^{n}_{k}$ is governed by the lexicographic order on $\omega^{2}$ induced by the parameters $n$ and $k$, as stated in the following proposition.

\begin{proposition} \label{Ramsey(omega)_lex}
    Let $\mathcal{I}$ be an ideal on $\omega$, and let $n, n^{\prime}, k, k^{\prime} \in\omega$ with $n, n^{\prime}, k, k^{\prime} \geq 2$ be such that $(n,k) \leq_{\textrm{lex}} (n^{\prime},k^{\prime})$. If $\omega \longrightarrow (\mathcal{I}^{+})^{n^{\prime}}_{k^{\prime}}$ holds, then $\omega \longrightarrow (\mathcal{I}^{+})^{n}_{k}$ also holds. 
\end{proposition}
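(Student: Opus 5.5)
The plan is to split into the two cases of the lexicographic order. Either $n<n'$, or $n=n'$ and $k\le k'$. The second case is immediate from Fact \ref{direct_implications}(1): fewer colors means any coloring into $k$ colors is also a coloring into $k'$ colors.

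So the only real work is the case $n<n'$, where $k$ may be much larger than $k'$. Here I use the chain of implications displayed just before Lemma \ref{FinBW_n-lemma}.

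First reduce $k'$ to $2$ and $n'$ to $n+1$. Since $k'\ge 2$, Fact \ref{direct_implications}(1) gives $\omega \longrightarrow (\mathcal{I}^{+})^{n'}_{2}$. Since $n'\ge n+1$, Fact \ref{direct_implications}(2) then gives $\omega \longrightarrow (\mathcal{I}^{+})^{n+1}_{2}$.

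Next pass from this $(n+1)$-dimensional, two-color property to $\textrm{FinBW}_{n}$. The route goes through $\textrm{Mon}_n$, and the point is that the argument of Proposition \ref{Ramsey=h-Mon_n} needs no $q^{+}$ hypothesis.

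Given $f:[\omega]^{n}\to\mathbb{R}$, color $s\in[\omega]^{n+1}$ by $0$ if and only if $f(\underline{s})\le f(\overline{s})$. A positive homogeneous $B$ then makes $f\restriction[B]^{n}$ monotone with respect to $\triangleleft^{*}$. Hence $\textrm{Mon}_n$ holds, as the remark following Proposition \ref{equivalences} also notes.

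The argument of Lemma \ref{h-Mon_n implies h-FinBW_n} also uses nothing hereditary: a bounded monotone $f\restriction[B]^n$ converges to its supremum or infimum. So $\textrm{Mon}_n$ implies $\textrm{FinBW}_n$.

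Finally, Lemma \ref{FinBW_n-lemma} gives $\omega \longrightarrow (\mathcal{I}^{+})^{n}_{k}$ for every $k\ge2$, in particular for the given $k$. Its proof treats a coloring into $k$ colors as a bounded sequence; convergence on some $B\in\mathcal I^+$ forces it to be eventually constant, and the tail $B/m$ is still positive. This needs $n\ge 2$, which holds by hypothesis.

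The main obstacle is that last step: recovering an arbitrary number of colors $k$ from only two colors in dimension $n+1$. Fact \ref{direct_implications} cannot increase the number of colors, so the detour through the convergence property $\textrm{FinBW}_n$ is essential. I would check carefully two points. First, the monotonicity and convergence arguments go through for the non-hereditary versions with $[\omega]^n$ in place of $[A]^n$. Second, in Lemma \ref{FinBW_n-lemma} the exponent should read $[B/m]^{n}$ rather than $[B/m]^{2}$, which is a harmless typo for the argument.
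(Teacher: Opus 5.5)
Your proof is correct and follows essentially the same route as the paper: Fact~\ref{direct_implications} handles the case $n=n'$, $k\le k'$. For $n<n'$ you pass from $\omega \longrightarrow (\mathcal{I}^{+})^{n+1}_{2}$ through $\textrm{Mon}_{n}$ and $\textrm{FinBW}_{n}$ to Lemma~\ref{FinBW_n-lemma}, and none of these steps needs $q^{+}$. Your explicit reduction to two colors in dimension $n+1$ is the correct way to read the paper's step, which claims $\omega \longrightarrow (\mathcal{I}^{+})^{n+1}_{k}$ directly from Fact~\ref{direct_implications}; that Fact cannot give this when $k>k'$.
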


\begin{proof}
    Suppose that $\mathcal{I}$ satisfies $\omega \longrightarrow (\mathcal{I}^{+})^{n^{\prime}}_{k^{\prime}}$ and let $(n,k) \leq_{\textrm{lex}} (n^{\prime},k^{\prime})$. On the one hand, if $n=n^{\prime}$ and $k\leq k^{\prime}$, then Fact \ref{direct_implications} yields that $\omega \longrightarrow (\mathcal{I}^{+})^{n}_{k}$ holds. On the other hand, if $n < n^{\prime}$, then Fact \ref{direct_implications} implies that $\omega \longrightarrow (\mathcal{I}^{+})^{n+1}_{k}$ holds; now, applying the two first implications of Proposition \ref{equivalences}, we obtain that $\mathcal{I}$ is $\textrm{FinBW}_{n}$, and by virtue of Lemma \ref{FinBW_n-lemma} we conclude that $\omega \longrightarrow (\mathcal{I}^{+})^{n}_{k}$ holds. 
\end{proof}

\smallskip

We conclude by mentioning that the following questions appear to be of particular interest and remain open at present.

\begin{question}
    Is there an ideal $\mathcal{I}$ on $\omega$ such that $\omega \longrightarrow (\mathcal{I}^+)^{2}_{k}$ holds for all $k\geq 2$ but it is not $\textrm{FinBW}_{2}$?
\end{question}

\begin{question}
    Is there an ideal $\mathcal{I}$ on $\omega$ that is $\textrm{FinBW}_{2}$ but not $\textrm{Mon}_{2}$?
\end{question}

\begin{question}
    Is there an ideal $\mathcal{I}$ on $\omega$ that is $\textrm{Mon}_{2}$ but $\omega \centernot\longrightarrow (\mathcal{I}^+)^{3}_{2}$ holds?
\end{question}

\subsection{Some counterexamples}

Let us recall that, in Theorem \ref{Ramsey_ideal_theorem}, we have proved that for an ideal $\mathcal{I}$ on $\omega$, the property $\i^+ \longrightarrow (\i^+)^2_2$ is equivalent to $\mathcal{I}^+ \longrightarrow (\mathcal{I}^+)^n_k$ for every $n,k \geq 2$. In contrast, this combinatorial collapse does not occur for the non-hereditary Ramsey property.

\medskip

Indeed, in the context of ideals satisfying $\omega \longrightarrow (\mathcal{I}^+)^{2}_{2}$, \cite[Theorem 5.2]{HMTU2017} establishes that the properties $\omega \longrightarrow (\mathcal{I}^+)^{2}_{2}$ and $\omega \longrightarrow (\mathcal{I}^+)^{2}_{3}$ are not equivalent; moreover, \cite[Theorem 3.1]{PG} extends this result by proving that, for all $k\geq 2$, the properties $\omega \longrightarrow (\mathcal{I}^+)^{2}_{k}$ and $\omega \longrightarrow (\mathcal{I}^+)^{2}_{k+1}$ are not equivalent. On the other hand, \cite[Theorem 6.6]{LA-OP-UA} shows that there is no $k\geq 2$ such that the properties $\omega \longrightarrow (\mathcal{I}^+)^{2}_{k}$ and $\omega \longrightarrow (\mathcal{I}^+)^{3}_{2}$ are equivalent.

\begin{proposition}[\cite{HMTU2017, PG}] \label{counterexample1}
    Let $k\in\omega$ with $k\geq2$. There exists an $F_{\sigma}$ tall ideal $\mathcal{I}$ on $\omega$ such that $\omega \longrightarrow (\mathcal{I}^+)^{2}_{k}$ holds but $\omega \centernot\longrightarrow (\mathcal{I}^+)^{2}_{k+1}$. In fact, this ideal $\mathcal{I}$ has the form $\mathcal{I}_{\textrm{hom}(\pi)}$ for some coloring $\pi: [\omega]^{2} \rightarrow k+1$.
\end{proposition}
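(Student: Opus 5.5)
We take $\mathcal{I}=\mathcal{I}_{\textrm{hom}(\pi)}$ for a suitable coloring $\pi:[\omega]^2\to k+1$ of random-graph type, in the spirit of the ideals $\mathcal{R}^n_k$ recalled in Section 2. Concretely, we fix $\pi$ with the following extension property: for every family of pairwise disjoint finite sets $F_0,\dots,F_{k}\subseteq\omega$ there is $\ell$ above all of them with $\pi(\{m,\ell\})=i$ for all $m\in F_i$ and all $i\le k$. Such a $\pi$ exists by a routine diagonal construction over an enumeration of all such tuples. The ideal generated by $\textrm{hom}(\pi)$ is proper and tall. It is proper because the extension property prevents $\omega$ from being covered by finitely many homogeneous sets; this uses a standard finite Ramsey/pigeonhole argument, and in fact each single homogeneous set fails to cover any tail. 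It is tall by Ramsey's theorem. It is $F_\sigma$ because the closure of $\textrm{hom}(\pi)$ under subsets is closed, so finite unions of at most $j$ such sets form a closed family for each $j$, and modulo finite sets this gives a countable union of closed sets.

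The failure of $\omega\to(\mathcal{I}^+)^2_{k+1}$ is immediate. Every $\pi$-homogeneous infinite set lies in $\textrm{hom}(\pi)\subseteq\mathcal{I}$, so no homogeneous set for $\pi$ itself is $\mathcal{I}$-positive.

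The main work is to prove $\omega\to(\mathcal{I}^+)^2_k$. Fix $\sigma:[\omega]^2\to k$; we must find a $\sigma$-homogeneous set $H$ that is not covered by finitely many $\pi$-homogeneous sets. The plan is to use the fact that the $k+1$ colors of $\pi$ outnumber the $k$ colors of $\sigma$ and run a pigeonhole argument on the product coloring $(\pi,\sigma)$. First we find, by the extension property, large finite configurations. Then we extract, via a canonical Ramsey-type argument (as in \cite[Theorem 5.2]{HMTU2017} and \cite[Theorem 3.1]{PG}), a $\sigma$-color $j$ and a set $H$ that is $\sigma$-homogeneous of color $j$. We build $H$ so that, for every $N$ and every way of splitting into $N$ pieces, some piece contains pairs of two different $\pi$-colors; this makes $H\notin\mathcal{I}$. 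Building $H$ inductively is the hard part. At each step we need a new point $\ell$ that has $\sigma$-color $j$ to all previous points, and that also realizes prescribed $\pi$-colors to them, so as to destroy every candidate finite cover. The danger is that $\sigma$ could correlate with $\pi$ and forbid this. The counting $k<k+1$ is what gives a way out: among the $k+1$ prescribed $\pi$-patterns, the $\sigma$-colors cannot be forced injectively, so some $\sigma$-color must accommodate two distinct $\pi$-colors infinitely often. We then choose $j$ as such a color, working first on a positive set and then shrinking.

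We expect this inductive step to be the main obstacle: turning the informal pigeonhole into a precise statement of which $\sigma$-color is robustly compatible with at least two $\pi$-colors, stable under the shrinkings. If the direct approach resists, we will fall back on citing the construction of \cite{HMTU2017} for $k=2$ and its generalization in \cite{PG}. The statement is precisely their result, so the proof may legitimately consist of recalling their coloring and verifying the two properties above.
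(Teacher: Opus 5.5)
\textbf{There is a genuine gap in the main step.} The routine parts of your plan are fine. For $\pi=u^2_{k+1}$, the ideal $\mathcal{I}_{\textrm{hom}(\pi)}=\mathcal{R}^2_{k+1}$ is proper, tall and $F_\sigma$, and $\pi$ itself witnesses $\omega\centernot\longrightarrow(\mathcal{I}^+)^2_{k+1}$. The only substantive claim is $\omega\longrightarrow(\mathcal{I}^+)^2_k$. For it you give a heuristic rather than an argument, and the heuristic fails at the very step you flag.

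\emph{The extension property does not control $\sigma$.} For the finite set $F$ built so far and a prescribed $\pi$-type over $F$, the extension property gives infinitely many witnesses $\ell$. It says nothing about $\sigma(\{x,\ell\})$ for $x\in F$. There may be no witness with $\sigma(\{x,\ell\})=j$ for \emph{every} $x\in F$, e.g.\ if some earlier $x_0$ has $\sigma(\{x_0,\ell\})\neq j$ whenever $\pi(\{x_0,\ell\})=a$. Your pigeonhole (``some $\sigma$-color meets two $\pi$-colors infinitely often'') is a statement about individual pairs. It yields neither a single $j$ that works at every stage towards every earlier point, nor any control of $\sigma$ among later points.

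\emph{Two $\pi$-colors do not give positivity.} Suppose that on $H$ the value $\pi(\{x,y\})$ depends only on $\min\{x,y\}$. Then $H$ is the union of two $\pi$-homogeneous sets, although both colors occur infinitely often. This is exactly the kind of set that a stage-by-stage construction fixing colors from each new point to the future tends to produce.

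\emph{What is actually needed.} You need a $\sigma$-homogeneous $H$ containing, above every $n$, finite sets that require arbitrarily many $\pi$-homogeneous pieces. For arbitrary $\sigma$ this requires a real canonization of $\sigma$ relative to $\pi$, on a reservoir that keeps all $k+1$ colors available. Only after such a canonization could the count $k<k+1$ be used. Your sketch does not supply this. The appeal to \cite{HMTU2017} and \cite{PG} does not supply it either, since those arguments concern their own colorings.

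\textbf{Comparison with the paper.} The paper gives no proof of this statement; it is quoted from \cite{HMTU2017} (for $k=2$) and \cite{PG}. Only later, in Proposition~\ref{newcounterexample1}, is $\mathcal{R}^2_{k+1}$ shown to work, and there it is presented as a \emph{new} example. That proof goes via Kat\v{e}tov transitivity: for the quoted ideal $\mathcal{J}$ one has $\mathcal{R}^2_{k+1}\leq_{K}\mathcal{J}$ and $\mathcal{R}^2_{k}\not\leq_{K}\mathcal{J}$, hence $\mathcal{R}^2_{k}\not\leq_{K}\mathcal{R}^2_{k+1}$. So your choice of ideal is correct. But the only proof in the paper that it works takes the present statement as input, so it cannot replace it.

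\textbf{On your fallback.} Citing the result is legitimate and is exactly what the paper does. Note, though, that citing either concerns the cited colorings rather than $u^2_{k+1}$, or needs the extra transitivity step above. Also, ``verifying'' $\omega\longrightarrow(\mathcal{I}^+)^2_k$ for the cited colorings is the whole content of those theorems, not a routine check.
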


\begin{proposition}[\cite{LA-OP-UA}] \label{counterexample2}
   There exists an $F_{\sigma}$ tall ideal $\mathcal{I}$ on $\omega$ such that $\omega \longrightarrow (\mathcal{I}^+)^{2}_{k}$ holds for all $k\geq 2$ but $\omega \centernot\longrightarrow (\mathcal{I}^+)^{3}_{2}$. In fact, this ideal $\mathcal{I}$ has the form $\mathcal{I}_{\textrm{hom}(\pi)}$ for some coloring $\pi: [\omega]^{3} \rightarrow 2$.
\end{proposition}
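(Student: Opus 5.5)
The plan is to take $\pi:[\omega]^{3}\to 2$ to be the universal ($3$-uniform random hypergraph) coloring $u^{3}_{2}$ from Section \ref{Ideals on countable sets} and to set $\mathcal{I}=\mathcal{I}_{\textrm{hom}(\pi)}=\mathcal{R}^{3}_{2}$. I then check four things: that $\mathcal{I}$ is a proper $F_{\sigma}$ tall ideal, that $\omega\centernot\longrightarrow(\mathcal{I}^+)^{3}_{2}$, and that $\omega\longrightarrow(\mathcal{I}^+)^{2}_{k}$ holds for every $k\geq 2$.

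The first three facts are routine.
\begin{itemize}
\item \emph{Tallness.} The family $\textrm{hom}(\pi)\cup[\omega]^{<\omega}$ is closed and hereditary, and it is tall by Ramsey's theorem.
\item \emph{$F_{\sigma}$.} A set $X$ lies in $\mathcal{I}$ iff $X$ is, up to a finite set, a union of $m$ members of that closed family for some $m$, so $\mathcal{I}$ is $F_{\sigma}$.
\item \emph{Properness.} If $\omega=H_{1}\cup\dots\cup H_{m}$ with each $H_{j}$ homogeneous, the extension property of $u^{3}_{2}$ gives a contradiction. Take pairs $p_{j}$ with $p_{j}\subseteq H_{j}$ whenever $|H_{j}|\geq 2$, choose a large finite set, and find an $\ell$ that gives a triple $p_{j}\cup\{\ell\}$ the colour opposite to that of $H_{j}$. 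This $\ell$ then lies in no $H_{j}$.
\item \emph{Failure of $\omega\longrightarrow(\mathcal{I}^+)^{3}_{2}$.} Every infinite $\pi$-homogeneous set belongs to $\mathcal{I}$ by definition, so no homogeneous set is positive.
\end{itemize}

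The substance is the positive partition relation. Because $\mathcal{I}$ is $F_{\sigma}$, a set $X$ is positive iff for every $m$ no cofinite part of $X$ splits into $m$ $\pi$-homogeneous pieces. I intend to combine two ingredients.

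The first is a finite statement. For every $k$, every $m$, every coloring $c:[N]^{2}\to k$ with $N$ large enough (and $N$ taken inside any prescribed infinite set, using the extension property), there is a $c$-homogeneous finite set $F$ that cannot be covered by $m$ $\pi$-homogeneous sets. I would prove it by an iterated extension argument. I build a $c$-homogeneous sequence greedily, as in the Erdős--Szekeres/Ramsey proof, while reserving witnesses. The universal property lets me choose each new element so that it realizes prescribed $\pi$-patterns with the earlier pairs, and a pigeonhole over $k$ colours keeps the sequence $c$-homogeneous. The target pattern is a finite configuration of chromatic number greater than $m$ for the triple system: for instance an iterated ``stepping-up'' configuration in which every $m$-colouring of the points yields a monochromatic triple of each $\pi$-colour. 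Since the configuration must be realized inside a $c$-homogeneous set, I would first fix a finite $3$-hypergraph template of large chromatic number that is robust under $k$-colourings of pairs. Such templates exist by the Nešetřil--Rödl partite construction / finite Ramsey theorem for ordered hypergraphs, and universality of $u^{3}_{2}$ embeds the template.

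The second ingredient amalgamates the finite blocks $F_{1}<F_{2}<\cdots$, with $F_{m}$ not $m$-coverable, into one infinite $c$-homogeneous set. First I apply Ramsey's theorem to reduce to the case where the colour of $c$ between blocks is eventually constant. Then I perform the block construction relative to a fixed colour $i$: at stage $m$ I work inside the infinite reservoir of points that are $c$-connected in colour $i$ to everything chosen so far. Such a reservoir can be maintained as an infinite set by thinning as in the standard proof of Ramsey's theorem, since each finite block only imposes finitely many pair conditions. I choose the colour $i$ at the outset by a pigeonhole argument on which colour admits arbitrarily large non-coverable blocks. Any cofinite part of the union $H=\bigcup_{m}F_{m}$ contains every $F_{m}$ with $m$ large, so $H$ is not covered by any finite number of homogeneous sets; hence $H\in\mathcal{I}^{+}$, and $H$ is $c$-homogeneous by construction.

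The main obstacle is the finite step. I must guarantee that high $\pi$-chromatic configurations appear inside $c$-homogeneous sets for arbitrary $c$ with $k$ colours, uniformly in $k$. This is essentially a Nešetřil--Rödl type hypergraph Ramsey statement, where the paper's own tools do not help. If the random coloring proves too unwieldy here, my fallback is to define $\pi$ explicitly, in the spirit of \cite{LA-OP-UA}, for example by $\pi(\{a<b<c\})=1$ iff $\lfloor\log_{2}(c-b)\rfloor>\lfloor\log_{2}(b-a)\rfloor$. With such a $\pi$ the finite non-coverable blocks can be written down by hand, and the Ramsey step for pairs can be checked directly.
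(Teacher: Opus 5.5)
Your choice of $\mathcal{I}=\mathcal{R}^{3}_{2}$ is the right one, but the argument for $\omega\longrightarrow(\mathcal{I}^{+})^{2}_{k}$ has a genuine gap at the gluing step. For context: the paper gives no proof of this statement. It is quoted from \cite{LA-OP-UA}, and $\mathcal{R}^{3}_{2}$ is shown to be an example only later (Proposition~\ref{newcounterexample2}), by Kat\v{e}tov comparison with that quoted ideal.

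Your routine checks are fine, with one slip: $\textrm{hom}(\pi)\cup[\omega]^{<\omega}$ is not closed, since its closure contains $\omega$. Use the closed hereditary family of all finite or infinite $\pi$-homogeneous sets instead. Your finite step is sound if you take it straight from the Ne\v{s}et\v{r}il--R\"odl theorem for finite ordered $3$-uniform hypergraphs. Apply it with $A$ the two-point structure and $B$ a finite piece of $u^{3}_{2}$ not coverable by $m$ homogeneous sets, then embed the resulting $C$ into $u^{3}_{2}$. However, that finite statement is exactly what compactness already extracts from the conclusion $\omega\longrightarrow(\mathcal{I}^{+})^{2}_{k}$. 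The whole content of the proposition is the passage back from finite blocks to one infinite positive homogeneous set, and that is the step that fails.

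Write $\varphi(F)$ for the least number of $\pi$-homogeneous sets covering $F$. Your gluing fails on two counts.
\begin{itemize}
\item The colour $i$ of the reservoir cannot be fixed in advance. The points $c$-connected in colour $i$ to a block may form a finite set. Ramsey-style thinning only guarantees that \emph{some} class of $c(a,\cdot)$ is infinite, and which one depends on $a$.
\item An infinite reservoir need not contain any set of large $\varphi$. The extension property of $u^{3}_{2}$ supplies witnesses somewhere in $\omega$, not inside a prescribed infinite set, which could itself be $\pi$-homogeneous. So ``$N$ taken inside any prescribed infinite set'' is false.
\end{itemize}
What you need is a richness property of reservoirs that survives splitting by $c(a,\cdot)$ in a colour-coherent way. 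The naive pigeonhole does not give it. For example, if $c(a,y)=u^{3}_{2}(\{p,a,y\})$ for an already chosen $p$, no class of $c(a,\cdot)$ realises both colours over the pair $\{p,a\}$.

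Your gluing argument uses nothing about $u^{3}_{2}$ beyond the finite step, and your own fallback refutes it. Take $\pi(\{x<y<z\})=1$ iff $\lfloor\log_{2}(z-y)\rfloor>\lfloor\log_{2}(y-x)\rfloor$, and $\chi(\{x,y\})=1$ iff $\lfloor\log_{2}x\rfloor=\lfloor\log_{2}y\rfloor$.
\begin{itemize}
\item Every $\pi$-homogeneous subset of an interval of length $L$ has $O(\log L)$ elements. Hence the dyadic blocks $[2^{J},2^{J+1})$ are $\chi$-homogeneous in colour $1$ with $\varphi\to\infty$.
\item Every infinite $\chi$-homogeneous set has colour $0$ and meets each dyadic block at most once.
\item Its even-block and odd-block parts are $\pi$-homogeneous. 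Indeed, if $y\in[2^{J},2^{J+1})$ and $z\geq 2^{J+2}$, then $\lfloor\log_{2}(z-y)\rfloor\geq J+1>J\geq\lfloor\log_{2}(y-x)\rfloor$.
\end{itemize}
So the fallback ideal fails even $\omega\longrightarrow(\mathcal{I}^{+})^{2}_{2}$. Large colour-$1$ blocks exist, but the reservoir $1$-connected to any one of them is finite. The case of $\mathcal{R}^{3}_{2}$ therefore still needs a real argument at the gluing stage.
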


Based on the preceding results, we will now present new examples of ideals that exhibit the same combinatorial properties as those described in Propositions \ref{counterexample1} and \ref{counterexample2}.

\medskip

Recall that the random graph ideal $\mathcal{R}$ admits the following higher-dimensional extensions (see \cite[Section 3]{PG}): For $n,k\in \omega$ with $n,k\geq 2$, let $\mathcal{R}^n_k = \mathcal{I}_{\textrm{hom}(u^n_k)}$, where $u^{n}_{k} : [\omega]^{n} \rightarrow k$ is a coloring satisfying the property that for every collection of pairwise disjoint finite sets $F_{0}, \ldots, F_{k-1} \subseteq [\omega]^{n-1}$, there exists $\ell\in\omega$ such that $u^{n}_{k} (x\cup \{l\}) =i$ for every $x\in F_{i}$ and every $i\in k$. In \cite[Proposition 3.4]{PG}, the following generalization of Proposition \ref{Ramsey(omega)_and_RandomGraph} is stated.


\begin{proposition}[\cite{PG}]  \label{Ramsey(omega)_and_Random^n_k}
    Let $\mathcal I$ be an ideal on $\omega$. Then, for all $n,k\in\omega$ with $n,k\geq 2$, the following statements are equivalent:
\setlist{nolistsep}
\begin{itemize}
\setlength{\itemsep}{0pt} 
\item[(a)] $\omega \longrightarrow (\mathcal{I}^+)^{n}_{k}$ holds.
\item[(b)] $\mathcal{R}^{n}_{k} \not\leq_{K} \mathcal{I}$.
\end{itemize}
\end{proposition}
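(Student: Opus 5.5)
We prove both directions by contraposition, mirroring Hru\v{s}\'{a}k's argument for Proposition \ref{Ramsey(omega)_and_RandomGraph}. For each direction we turn a coloring without $\mathcal{I}$-positive homogeneous sets into a Kat\v{e}tov reduction, or conversely.

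\textbf{(b)$\Rightarrow$(a).} Suppose $\omega \centernot\longrightarrow (\mathcal{I}^+)^{n}_{k}$. Fix a coloring $\pi:[\omega]^n\to k$ with $\mathrm{hom}(\pi)\subseteq\mathcal{I}$. The plan is to build $f\in\omega^\omega$ such that $u^n_k(f''x)=\pi(x)$ for all $x\in[\omega]^n$ for which $f\restriction x$ is injective, and such that $f$ is finite-to-one. Then the preimage of a $u^n_k$-homogeneous set $H$ is $f^{-1}[H]$. On the part where $f$ is injective, $\pi$ is constant on the $n$-subsets of $f^{-1}[H]$, so that part lies in $\mathrm{hom}(\pi)\cup\mathrm{FIN}\subseteq\mathcal{I}$. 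Since $f$ is finite-to-one, the rest reduces to a finite union of such sets. Thus $f^{-1}[A]\in\mathcal{I}$ for all $A\in\mathcal{R}^n_k$, since these ideals are generated by homogeneous sets, and so $\mathcal{R}^n_k\le_K\mathcal{I}$.

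The construction is recursive and makes $f$ injective. Having defined $f\restriction m$ injectively, consider the finite sets $F_i=\{f''y : y\in[m]^{n-1},\ \pi(y\cup\{m\})=i\}$ for $i<k$. These are pairwise disjoint subsets of $[\omega]^{n-1}$ because $f\restriction m$ is injective. By the extension property of $u^n_k$ there are infinitely many suitable $\ell$: apply the property after adding to the families singleton-padded sets forcing $\ell$ beyond any given bound, or simply note that the property yields arbitrarily large witnesses. Choose such an $\ell\notin f''m$, and set $f(m)=\ell$. Then $u^n_k(f''y\cup\{f(m)\})=\pi(y\cup\{m\})$ holds for every $y\in[m]^{n-1}$. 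The main technical point is checking that one can always pick $\ell$ new; if needed, we include all $(n-1)$-subsets of $f''m\cup\{\ell'\}$ appropriately. Since $f$ is injective, the preimage of any $u^n_k$-homogeneous set is $\pi$-homogeneous, and therefore lies in $\mathcal{I}$.

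\textbf{(a)$\Rightarrow$(b).} Suppose $f$ witnesses $\mathcal{R}^n_k\le_K\mathcal{I}$. Define $\pi(x)=u^n_k(f''x)$ when $f\restriction x$ is injective; otherwise let $\pi(x)$ be given by an auxiliary coloring. By $\omega\longrightarrow(\mathcal{I}^+)^n_k$ there is $B\in\mathcal{I}^+$ homogeneous for $\pi$. The difficulty is that $f$ need not be injective on $B$. To handle this, we first show that each fiber $f^{-1}(j)$ is in $\mathcal{I}$, since singletons lie in $\mathcal{R}^n_k$. Using the ideal version of the colorings trick, we then obtain $B'\in\mathcal{I}^+$ on which $f$ is injective. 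For this, color $x\in[\omega]^n$ by whether $f$ is injective on $x$; a set homogeneous for the non-injective color cannot be positive when $n\ge2$, because it would force $f$ to be constant on it, contradicting the fact that fibers lie in $\mathcal{I}$. Combining this with $\pi$ via a product coloring into $2k$ colors is not allowed, so instead we encode non-injectivity into one of the $k$ colors and argue as above, which is the step I expect to be most delicate. Once $f$ is injective on the homogeneous $B$, the image $f''B$ is $u^n_k$-homogeneous, so $f''B\in\mathcal{R}^n_k$, and hence $B\subseteq f^{-1}[f''B]\in\mathcal{I}$, a contradiction.
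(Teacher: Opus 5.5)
The paper gives no proof of this proposition; it is quoted from \cite{PG}. Your argument for the contrapositive of (b)$\Rightarrow$(a) is correct. The recursion produces an injective $f$ with $u^n_k(f[x])=\pi(x)$ for all $x\in[\omega]^n$. The new value $f(m)$ is automatically fresh once $m\geq n-1$: the witness $\ell$ must avoid every $x\in\bigcup_{i<k}F_i$, since otherwise $u^n_k(x\cup\{\ell\})$ is undefined, and these sets cover $f[m]$. For $m<n-1$ there are no constraints at all. Then $f^{-1}[H]$ is $\pi$-homogeneous for each $H\in\textrm{hom}(u^n_k)$, and injectivity absorbs the finite modifications, so $\mathcal{R}^n_k\leq_K\mathcal{I}$. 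Your preliminary remarks about finite-to-one maps can simply be dropped.

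The other direction has a genuine gap, at exactly the step you flagged. Putting the non-injective $n$-sets into one colour $j$ does not give a homogeneous $B\in\mathcal{I}^+$ on which $f$ is injective. A $B$ homogeneous of colour $j$ may contain several points of one fibre. Your injective/non-injective argument no longer applies, because colour $j$ also contains injective $x$ with $u^n_k(f[x])=j$. (Also, for $n\geq 3$, a set homogeneous for ``non-injective'' only has $|f[B]|<n$, not $f$ constant on it, though $B\in\mathcal{I}$ still follows.) And, as you noted, you cannot first pass to a positive set on which $f$ is injective, because $\omega\longrightarrow(\mathcal{I}^+)^n_k$ is not hereditary. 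So your final sentence, ``Once $f$ is injective on the homogeneous $B$'', is never justified.

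The missing observation is that injectivity on $B$ is not needed. Let $\pi(x)=u^n_k(f[x])$ when $f\restriction x$ is injective and $\pi(x)=0$ otherwise, and let $B\in\mathcal{I}^+$ be homogeneous of colour $i$.
\begin{itemize}
\item If $f[B]$ is finite, $B$ is covered by finitely many fibres $f^{-1}[\{j\}]$. Each fibre is in $\mathcal{I}$ because $\{j\}\in\mathcal{R}^n_k$.
\item If $f[B]$ is infinite, take any $z\in[f[B]]^n$ and pick one preimage in $B$ of each point of $z$. This gives $x\in[B]^n$ with $f\restriction x$ injective and $f[x]=z$, so $u^n_k(z)=\pi(x)=i$. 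Hence $f[B]\in\textrm{hom}(u^n_k)\subseteq\mathcal{R}^n_k$, and $B\subseteq f^{-1}[f[B]]\in\mathcal{I}$.
\end{itemize}
Either way $B\in\mathcal{I}$, a contradiction. With this, the detour through injectivity and product colourings is unnecessary.
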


Clearly, $\omega \centernot\longrightarrow ({\mathcal{R}^{n}_{k}}^{+})^{n}_{k}$ holds for all $n,k \geq 2$. Moreover, by virtue of Propositions \ref{Ramsey(omega)_lex} and \ref{Ramsey(omega)_and_Random^n_k}, it follows in particular that, for all $k\geq 2$, both $\mathcal{R}^{2}_{k+1} \leq_{K} \mathcal{R}^{2}_{k}$ and $\mathcal{R}^{3}_{2} \leq_{K} \mathcal{R}^{2}_{k}$; however, it remains to determine whether these pairs of ideals are Kat\v{e}tov equivalent. Next, we show that $\mathcal{R}^{2}_{k} \not\leq_{K} \mathcal{R}^{2}_{k+1}$ and $\mathcal{R}^{2}_{k} \not\leq_{K} \mathcal{R}^{3}_{2}$ for all $k\geq 2$.

\smallskip

\begin{proposition}
\label{newcounterexample1}
    For every $k\geq 2$, the $F_{\sigma}$ tall ideal $\mathcal{R}^{2}_{k+1}$ satisfies $\omega \longrightarrow ({\mathcal{R}^{2}_{k+1}}^{+})^{2}_{k}$ but $\omega \centernot\longrightarrow ({\mathcal{R}^{2}_{k+1}}^{+})^{2}_{k+1}$. 
\end{proposition}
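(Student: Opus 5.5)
The negative half is immediate. The coloring $u^{2}_{k+1}:[\omega]^{2}\to k+1$ has every infinite homogeneous set in $\textrm{hom}(u^{2}_{k+1})\subseteq \mathcal{R}^{2}_{k+1}$, so no homogeneous set is $\mathcal{R}^{2}_{k+1}$-positive. Hence $\omega \centernot\longrightarrow ({\mathcal{R}^{2}_{k+1}}^{+})^{2}_{k+1}$. The ideal is $F_\sigma$ because $\textrm{hom}(u^2_{k+1})$ together with the finite sets is closed, and an ideal generated by a closed family is $F_\sigma$. It is tall by Ramsey's theorem.

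For the positive half, fix a coloring $c:[\omega]^{2}\to k$. We must find an infinite $c$-homogeneous set $H$ that is not covered mod finite by finitely many $u^{2}_{k+1}$-homogeneous sets. By Proposition \ref{Ramsey(omega)_and_Random^n_k} this is equivalent to $\mathcal{R}^{2}_{k}\not\leq_K \mathcal{R}^{2}_{k+1}$, but the direct route seems cleaner.

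The idea is that the combined coloring $(u^{2}_{k+1},c):[\omega]^2\to (k+1)\times k$ cannot be "random" in all $k+1$ colours of $u$ at once, because $c$ uses only $k$ colours. I would build $H$ by a recursive greedy construction, in the spirit of the argument in \cite{HMTU2017, PG}. Choose $h_0<h_1<\cdots$ and colours so that $H$ is $c$-homogeneous of colour $j$, and so that for every finite partition of $H$ some piece meets two colours of $u$.

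Concretely, I would use a pigeonhole/extension argument. Take the canonical Ramsey-style tree construction: recursively pick $h_n$ and a set $X_n$ such that $c(h_n,x)=j_n$ for all $x\in X_n$, with $X_n$ chosen to stay "$u$-rich". Richness means that for every finite pairwise disjoint family $F_0,\dots,F_k\subseteq X_n$, infinitely many $\ell\in X_n$ realize the $u$-type $u(x,\ell)=i$ for all $x\in F_i$. The key lemma is that richness survives splitting by $c$. For fixed $h$, partition $X$ by $c(h,\cdot)$ into $k$ pieces; one piece must remain rich relative to types that include $h$.

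This is the main obstacle. Richness need not be preserved by a single piece, only "some piece works for each type". I would handle it with a Kat\v{e}tov/Hales--Jewett style argument: if every one of the $k$ pieces fails for some type $\tau_j$, combine the witnessing types $\tau_0,\dots,\tau_{k-1}$ using disjoint copies into one type over $X$. That combined type has infinitely many realizations, and each realization lands in some piece, which contradicts the failure of that piece.

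Finally, pigeonhole on $j_n$ gives an infinite subsequence of constant colour $j$, and this yields $H$. To see that $H\notin\mathcal{R}^{2}_{k+1}$, suppose $H=^* H_1\cup\dots\cup H_m$ with each $H_r$ $u$-homogeneous. Richness produces $\ell\in H$ that, against suitable finite subsets taken from each $H_r$ of size $\ge2$, takes different $u$-values. This puts $\ell$ in no $H_r$, and doing this for infinitely many $\ell$ gives the contradiction.
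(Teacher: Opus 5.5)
Writing $u=u^{2}_{k+1}$: your negative half is fine, with one slip. It is the family of all $u$-homogeneous sets, finite or infinite, that is closed and hereditary. The family $\textrm{hom}(u)$ together with all finite sets is not closed, since $\{0,\dots,n\}$ converges to $\omega$.

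The positive half has a genuine gap. First, your construction never uses that $c$ has only $k$ colours: the amalgamation argument for indivisibility works for a partition into any finite number of pieces. So the same construction, run with $c=u$ itself, goes through verbatim. It then outputs a $u$-homogeneous $H$, which lies in $\mathcal{R}^{2}_{k+1}$, so your last paragraph cannot be right.

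The failure is the transfer of richness from the reservoirs $X_n$ to the diagonal set $H=\{h_n : j_n=j\}$. $H$ has one point per stage, and nothing forces it to realize any $u$-type over its own finite subsets, so ``richness produces $\ell\in H$'' is unjustified. For $H$ to inherit richness you would need each $X_n$ to be rich over the parameters $h_0,\dots,h_n$ (your ``types that include $h$''). For that version the key lemma is false. The witnessing types for the different pieces all involve the same point $h$ and may prescribe different values of $u(\{h,\ell\})$, so they cannot be combined: there are no disjoint copies of $h$. Concretely, let $c=g\circ u$ with $g:k+1\to k$ non-constant. Then the piece $\{\ell : c(\{h,\ell\})=j\}$ contains no $\ell$ with $u(\{h,\ell\})\notin g^{-1}(j)$, so no piece is rich over $h$.

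This is exactly where $k<k+1$ must enter. In general, pigeonhole only yields a piece that is rich over $h$ for two values of $u(\{h,\cdot\})$. Keeping such a weakened invariant over all earlier parameters at once meets the same amalgamation obstruction, and your proposal has no mechanism for it.

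The paper avoids the combinatorics entirely via the Kat\v{e}tov reformulation you already mentioned. Take the $F_\sigma$ tall ideal $\mathcal{I}$ of Proposition~\ref{counterexample1}, which satisfies $\omega\longrightarrow(\mathcal{I}^{+})^{2}_{k}$ and $\omega\centernot\longrightarrow(\mathcal{I}^{+})^{2}_{k+1}$. Proposition~\ref{Ramsey(omega)_and_Random^n_k} gives $\mathcal{R}^{2}_{k+1}\leq_{K}\mathcal{I}$ and $\mathcal{R}^{2}_{k}\not\leq_{K}\mathcal{I}$. Transitivity of $\leq_K$ then forces $\mathcal{R}^{2}_{k}\not\leq_{K}\mathcal{R}^{2}_{k+1}$. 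By the same proposition, this is exactly $\omega\longrightarrow({\mathcal{R}^{2}_{k+1}}^{+})^{2}_{k}$.
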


\begin{proof}
   We know that $\omega \centernot\longrightarrow ({\mathcal{R}^{2}_{k+1}}^{+})^{2}_{k+1}$ holds and that $\mathcal{R}^{2}_{k+1} \leq_{K} \mathcal{R}^{2}_{k}$. Now, let $\mathcal{I}$ be the $F_{\sigma}$ tall ideal described by Proposition \ref{counterexample1}, which satisfies $\omega \longrightarrow (\mathcal{I}^+)^{2}_{k}$ but $\omega \centernot\longrightarrow (\mathcal{I}^+)^{2}_{k+1}$. By Proposition \ref{Ramsey(omega)_and_Random^n_k}, it follows that both $\mathcal{R}^{2}_{k} \not\leq_{K} \mathcal{I}$ and $\mathcal{R}^{2}_{k+1} \leq_{K} \mathcal{I}$. Consequently, we have $\mathcal{R}^{2}_{k} \not\leq_{K} \mathcal{R}^{2}_{k+1}$ and hence $\omega \longrightarrow ({\mathcal{R}^{2}_{k+1}}^+)^{2}_{k}$ holds.
\end{proof}

\begin{proposition}
\label{newcounterexample2}
    The $F_{\sigma}$ tall ideal $\mathcal{R}^{3}_{2}$ satisfies $\omega \longrightarrow ({\mathcal{R}^3_2}^+)^{2}_{k}$ for all $k\geq 2$ but $\omega \centernot\longrightarrow ({\mathcal{R}^3_2}^+)^{3}_{2}$. 
\end{proposition}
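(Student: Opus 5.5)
The plan mirrors the proof of Proposition \ref{newcounterexample1}, with the Kat\v{e}tov characterization of Proposition \ref{Ramsey(omega)_and_Random^n_k} as the main tool. Tallness and the $F_{\sigma}$ property of $\mathcal{R}^{3}_{2}$ come from its definition as the ideal generated by $\textrm{hom}(u^{3}_{2})$: this family is closed, and the generated ideal is tall by Ramsey's theorem. The failure of $\omega \longrightarrow ({\mathcal{R}^3_2}^+)^{3}_{2}$ is immediate, since $u^{3}_{2}$ itself is a coloring of $[\omega]^{3}$ all of whose infinite homogeneous sets lie in $\mathcal{R}^{3}_{2}$. Equivalently, $\mathcal{R}^{3}_{2} \leq_{K} \mathcal{R}^{3}_{2}$ via the identity, so Proposition \ref{Ramsey(omega)_and_Random^n_k} yields the failure.

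For the positive part, fix $k\geq 2$. By Proposition \ref{Ramsey(omega)_and_Random^n_k}, it suffices to show $\mathcal{R}^{2}_{k} \not\leq_{K} \mathcal{R}^{3}_{2}$. Let $\mathcal{I}$ be the $F_{\sigma}$ tall ideal from Proposition \ref{counterexample2}, which satisfies $\omega \longrightarrow (\mathcal{I}^+)^{2}_{k}$ for all $k$ but $\omega \centernot\longrightarrow (\mathcal{I}^+)^{3}_{2}$. Applying Proposition \ref{Ramsey(omega)_and_Random^n_k} to $\mathcal{I}$ gives $\mathcal{R}^{2}_{k} \not\leq_{K} \mathcal{I}$ and $\mathcal{R}^{3}_{2} \leq_{K} \mathcal{I}$.

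Now argue by contradiction. Suppose $\mathcal{R}^{2}_{k} \leq_{K} \mathcal{R}^{3}_{2}$. The Kat\v{e}tov order is transitive, since preimages compose: if $f$ witnesses $\mathcal{R}^{2}_{k} \leq_{K} \mathcal{R}^{3}_{2}$ and $g$ witnesses $\mathcal{R}^{3}_{2} \leq_{K} \mathcal{I}$, then $f\circ g$ witnesses $\mathcal{R}^{2}_{k} \leq_{K} \mathcal{I}$. That contradicts $\mathcal{R}^{2}_{k} \not\leq_{K} \mathcal{I}$. Hence $\mathcal{R}^{2}_{k} \not\leq_{K} \mathcal{R}^{3}_{2}$, and by Proposition \ref{Ramsey(omega)_and_Random^n_k} (a)$\Leftarrow$(b), $\omega \longrightarrow ({\mathcal{R}^3_2}^+)^{2}_{k}$ holds for every $k\geq 2$.

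There is no real obstacle here. The substantive work sits in the cited existence result, Proposition \ref{counterexample2}, and in the Kat\v{e}tov characterization. The only points needing care are checking the direction of composition in the transitivity step and confirming that $\mathcal{R}^{3}_{2}$ is $F_{\sigma}$, i.e.\ that $\textrm{hom}(u^{3}_{2})\cup[\omega]^{<\omega}$ is closed, so that the generated ideal is $F_{\sigma}$.
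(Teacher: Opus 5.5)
Your proof is correct and is essentially the paper's own argument. The paper also takes the ideal $\mathcal{I}$ of Proposition~\ref{counterexample2}, obtains $\mathcal{R}^{2}_{k}\not\leq_{K}\mathcal{I}$ and $\mathcal{R}^{3}_{2}\leq_{K}\mathcal{I}$, uses transitivity of $\leq_{K}$ (your composition $f\circ g$ is in the right direction) to get $\mathcal{R}^{2}_{k}\not\leq_{K}\mathcal{R}^{3}_{2}$, and reads off the failure at $(3,2)$ directly from $u^{3}_{2}$.

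One side remark needs correcting. $\textrm{hom}(u^{3}_{2})\cup[\omega]^{<\omega}$ is not closed, since the finite sets $\{0,\ldots,n\}$ converge to $\omega$, which is not homogeneous. The $F_{\sigma}$-ness should instead come from the closed hereditary family of all sets $H$, finite or infinite, with $u^{3}_{2}$ constant on $[H]^{3}$; this family generates the same ideal.
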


\begin{proof}
We know that $\omega \centernot\longrightarrow ({\mathcal{R}^3_2}^+)^{3}_{2}$ holds and that $\mathcal{R}^{3}_{2} \leq_{K} \mathcal{R}^{2}_{k}$ for all $k\geq 2$. Now, let $\mathcal{I}$ be the $F_{\sigma}$ tall ideal described by Proposition \ref{counterexample2}, which satisfies $\omega \longrightarrow (\mathcal{I}^+)^{2}_{k}$ for all $k\geq 2$ but $\omega \centernot\longrightarrow (\mathcal{I}^+)^{3}_{2}$. By Proposition \ref{Ramsey(omega)_and_Random^n_k}, it follows that both $\mathcal{R}^{2}_{k} \not\leq_{K} \mathcal{I}$ for all $k\geq 2$ and $\mathcal{R}^{3}_{2} \leq_{K} \mathcal{I}$. Consequently, for all $k\geq 2$ we have $\mathcal{R}^{2}_{k} \not\leq_{K} \mathcal{R}^{3}_{2}$ and hence $\omega \longrightarrow ({\mathcal{R}^3_2}^+)^{2}_{k}$ holds.    
\end{proof}

\smallskip

In what follows, we briefly examine the corresponding non-hereditary versions of the notions of Galvin ideals and Nash-Williams ideals.

\medskip

The non-hereditary Galvin property for ideals is defined as follows: An ideal $\mathcal{I}$ on $\omega$ satisfies

\kern-0.5em
\begin{equation*}
    \omega \longrightarrow (\mathcal{I}^+)^{\mathtt{G}}
\end{equation*}

\kern-0.5em
if for every $\mathcal{F} \subseteq
[\omega]^{<\omega}$ there exists $B\in \i^{+}$
such that $B\in \textrm{hom}(\mathcal{F})$.

\medskip

The non-hereditary Nash-Williams property for ideals is defined as follows: An ideal $\mathcal{I}$ on $\omega$ satisfies

\kern-0.5em
\begin{equation*}
    \omega \longrightarrow (\mathcal{I}^+)^{\mathtt{NW}}_{2}
\end{equation*}

\kern-0.5em
if for every barrier $\mathcal{B}$ on $\omega$ and every coloring $\pi : \mathcal{B} \rightarrow 2$, there exists $H\in \mathcal{I}^{+}$ such that $H \in \textrm{hom}(\pi)$.

\medskip

Clearly, for all $n \in \omega$ with $n \geq 2$, the following implications among non-hereditary Ramsey-type properties of ideals hold:

\kern-0.5em
\begin{equation*}
\omega \longrightarrow (\mathcal{I}^+)^{\mathtt{G}}
\;\;\;\Longrightarrow\;\;\; 
\omega \longrightarrow (\mathcal{I}^+)^{\mathtt{NW}}_{2}
\;\;\;\Longrightarrow\;\;\; 
\omega \longrightarrow (\mathcal{I}^+)^{n}_{2}
\end{equation*}

By Propositions \ref{counterexample2} and \ref{newcounterexample2}, we infer that $\omega \longrightarrow (\mathcal{I}^+)^{2}_{2}$ does not imply $\omega \longrightarrow (\mathcal{I}^+)^{\mathtt{NW}}_{2}$; thus, the second arrow is strict when $n=2$. We next show that the first arrow is also strict, and for this purpose, we shall use the following characterization of closed hereditary tall families, established in \cite[Proposition 2.8]{GrebikUzca2019}.

\begin{proposition}[\cite{GrebikUzca2019}]
\label{closed_hereditary_tall_families}
    For every closed hereditary tall family $\mathcal{K} \subseteq 2^{\omega}$, there exists a family $\mathcal{F} \subseteq [\omega]^{<\omega}$ such that $\textrm{hom}(\mathcal{F}) = \mathcal{K} \cap [\omega]^{\omega}$.
\end{proposition}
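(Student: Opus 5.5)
The plan is to take $\mathcal{F}$ to be the family of finite sets lying outside $\mathcal{K}$, that is,
\begin{equation*}
\mathcal{F} = \{ s\in [\omega]^{<\omega} : s\notin \mathcal{K} \}.
\end{equation*}
This is the same family used in the proof of the proposition about $\mathcal{I}\cup\mathcal{I}^{\perp}$, since for hereditary $\mathcal{K}$ we have $s\notin\mathcal{K}$ iff $s\not\subseteq A$ for all $A\in\mathcal{K}$. I will then check the two inclusions separately, using alternatives (I) and (II) in the definition of $\textrm{hom}(\mathcal{F})$.

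The preliminary observation, which I expect to be the only real point, is the following. For a closed hereditary family $\mathcal{K}$ and any $A\subseteq\omega$, we have $A\in\mathcal{K}$ if and only if every initial segment $s\sqsubset A$ belongs to $\mathcal{K}$, if and only if $[A]^{<\omega}\subseteq\mathcal{K}$.
\begin{itemize}
\item The forward directions follow from heredity.
\item For the converse, the initial segments $A\cap n$ converge to $A$ in $2^{\omega}$, so $A\in\mathcal{K}$ by closedness. This is the same compactness and closedness argument that appears in the proof of the proposition about $\mathcal{I}\cup\mathcal{I}^{\perp}$.
\end{itemize}

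Next I show that alternative (I) never occurs for an infinite $B$. If (I) held for $B$, then every $C\in[B]^{\omega}$ would have an initial segment in $\mathcal{F}$, that is, an initial segment not in $\mathcal{K}$. By heredity this gives $C\notin\mathcal{K}$, so $[B]^{\omega}\cap\mathcal{K}=\emptyset$. This contradicts tallness of $\mathcal{K}$. Hence $B\in\textrm{hom}(\mathcal{F})$ if and only if (II) holds, i.e.\ $[B]^{<\omega}\cap\mathcal{F}=\emptyset$, i.e.\ $[B]^{<\omega}\subseteq\mathcal{K}$.

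By the preliminary observation, this last condition is equivalent to $B\in\mathcal{K}$. Therefore $\textrm{hom}(\mathcal{F})=\mathcal{K}\cap[\omega]^{\omega}$.

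The main obstacle is merely making the closedness step precise. The limit of the initial segments $A\cap n$ is $A$ in the product topology, so membership of all of them in $\mathcal{K}$ gives $A\in\mathcal{K}$. Everything else is direct bookkeeping with the two alternatives.
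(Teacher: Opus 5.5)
Your proof is correct: with $\mathcal{F}=[\omega]^{<\omega}\setminus\mathcal{K}$, tallness of $\mathcal{K}$ rules out alternative (I), and heredity together with closedness identify alternative (II) with membership in $\mathcal{K}$. The paper only cites this result from Greb\'{i}k--Uzc\'{a}tegui, but your argument is essentially the one it runs itself, with the same $\mathcal{F}$ and the same closedness step, in the proof of the proposition on $\mathcal{I}\cup\mathcal{I}^{\perp}$.
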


Analogously to Proposition \ref{no analytic Galvin tall} in the hereditary setting, we also have that there are no analytic tall ideals satisfying the non-hereditary Galvin property.

\begin{proposition} \label{analytic_tall_implies_non-Galvin(omega)}
    Every analytic tall ideal $\mathcal{I}$ on $\omega$ satisfies $\omega \centernot\longrightarrow (\mathcal{I}^+)^{\mathtt{G}}$.
\end{proposition}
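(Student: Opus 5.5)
The plan is to combine Proposition~\ref{GV}, Proposition~\ref{Mazur-lemma} and Proposition~\ref{closed_hereditary_tall_families}, mirroring the proof of Lemma~\ref{Galvin_non-analytic_tall_lemma}. In the hereditary setting that lemma already rules out analytic tall Galvin ideals. Here we must exhibit a single family $\mathcal{F}$ whose homogeneous sets all lie in $\mathcal{I}$, because the non-hereditary property only asks for one positive homogeneous set below $\omega$.

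Let $\mathcal{I}$ be an analytic tall ideal. By Proposition~\ref{GV}, $\mathcal{I}$ contains an $F_{\sigma}$ tall ideal $\mathcal{G}$. By Proposition~\ref{Mazur-lemma}, $\mathcal{G}$ is generated by a closed hereditary tall family $\mathcal{K} \subseteq 2^{\omega}$, so in particular $\mathcal{K} \subseteq \mathcal{G} \subseteq \mathcal{I}$. Now apply Proposition~\ref{closed_hereditary_tall_families} to $\mathcal{K}$. This gives a family $\mathcal{F} \subseteq [\omega]^{<\omega}$ with $\textrm{hom}(\mathcal{F}) = \mathcal{K} \cap [\omega]^{\omega}$.

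Suppose, toward a contradiction, that $B \in \mathcal{I}^{+}$ and $B \in \textrm{hom}(\mathcal{F})$. Then $B \in \mathcal{K} \subseteq \mathcal{I}$, which is impossible. Hence no $\mathcal{I}$-positive set is homogeneous for $\mathcal{F}$, and so $\omega \centernot\longrightarrow (\mathcal{I}^+)^{\mathtt{G}}$.

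The only point needing care is that the closed hereditary family produced by Mazur's lemma is tall, which is the hypothesis of Proposition~\ref{closed_hereditary_tall_families}. This holds as stated in Proposition~\ref{Mazur-lemma}, and it also follows directly from the tallness of $\mathcal{G}$ together with $\mathcal{K}$ generating $\mathcal{G}$, as argued in Lemma~\ref{Galvin_non-analytic_tall_lemma}. Alternatively, one could bypass Mazur and take $\mathcal{K}$ from Proposition~\ref{GV-thm} applied to the analytic hereditary family $\mathcal{I}$. That $\mathcal{K}$ is closed, hereditary and contained in $\mathcal{I}$, and it is tall because $\mathcal{I}$ is tall and every infinite member of $\mathcal{I}$ has an infinite subset in $\mathcal{K}$.
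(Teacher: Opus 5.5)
Your proposal is correct and follows the paper's proof exactly. Proposition~\ref{GV} gives an $F_\sigma$ tall subideal, Proposition~\ref{Mazur-lemma} gives a closed hereditary tall generating family $\mathcal{K}$, and Proposition~\ref{closed_hereditary_tall_families} then yields $\mathcal{F}$ with $\textrm{hom}(\mathcal{F}) = \mathcal{K}\cap[\omega]^{\omega} \subseteq \mathcal{I}$. Your alternative shortcut through Proposition~\ref{GV-thm} applied directly to $\mathcal{I}$ is also valid, since, as you indicate, the tallness of $\mathcal{K}$ follows from the tallness of $\mathcal{I}$.
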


\begin{proof}
  Let $\mathcal{I}$ be an analytic tall ideal on $\omega$. By Proposition \ref{GV}, there exists an $F_\sigma$ tall ideal $\mathcal{J}$ on $\omega$ contained in $\mathcal{I}$. Moreover, by Proposition \ref{Mazur-lemma}, the ideal $\mathcal{J}$ is generated by a closed hereditary tall family $\mathcal{K} \subseteq 2^{\omega}$. Furthermore, by Proposition \ref{closed_hereditary_tall_families}, there exists a family $\mathcal{F} \subseteq [\omega]^{<\omega}$ such that $\textrm{hom}(\mathcal{F}) = \mathcal{K} \cap [\omega]^{\omega}$. Therefore, we infer that $\textrm{hom}(\mathcal{F}) \subseteq \mathcal{K} \subseteq \mathcal{J} \subseteq \mathcal{I}$ and hence $\omega \centernot\longrightarrow (\mathcal{I}^+)^{\mathtt{G}}$ holds.
\end{proof}

\smallskip

Given a tall ideal $\mathcal{I}$ on $\omega$, we say that $\mathcal{I}$ has a Borel selector if there exists a Borel map $S: [\omega]^{\omega} \rightarrow [\omega]^{\omega}$ such that $S(A) \in \mathcal{I} \!\restriction\! A$ for every $A\in [\omega]^{\omega}$. In connection with this notion, it was shown in \cite[Theorem 4.18]{GrebikUzca2019} and \cite[Examples 1 and 2]{Gre-Vid} that some $F_{\sigma}$ tall ideals do not admit Borel selectors.

\begin{proposition}[\cite{GrebikUzca2019, Gre-Vid}] \label{without_Borel_selector}
    There exists an $F_{\sigma}$ tall ideal $\mathcal{I}$ on $\omega$ without a Borel selector.
\end{proposition}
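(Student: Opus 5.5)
The plan is to reduce the problem to continuous selectors by a Ramsey-theoretic argument, and then to build a tall $F_{\sigma}$ ideal that defeats every continuous selector on every cube $[B]^{\omega}$.

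First I would fix which examples cannot work, since the construction has to avoid them. The ideals $\mathcal{I}_{\textrm{hom}(\pi)}$ for a coloring $\pi$ of pairs all admit Borel selectors, by Lemma \ref{efective-Ramsey}. The ideal $\mathbf{conv}$ also admits one: compactness yields a Borel choice of a convergent subsequence. So the example must be an $F_{\sigma}$ tall ideal whose small sets cannot be located by any finitary local rule.

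The first step is the reduction. Suppose $S:[\omega]^{\omega}\to[\omega]^{\omega}$ is Borel with $S(A)\in\mathcal{I}\!\restriction\!A$. For each $n$, the set $\{A : n\in S(A)\}$ is Borel. The Galvin--Prikry/Ellentuck theorem, used with a fusion argument over all $n$, then produces $B\in[\omega]^{\omega}$ such that $S\!\restriction\![B]^{\omega}$ is continuous in the metrizable topology. Continuity means that for every $A\in[B]^{\omega}$ and every $n$, whether $n\in S(A)$ is decided by some finite initial segment $s\sqsubset A$. Thus it suffices to construct an $F_{\sigma}$ tall ideal $\mathcal{I}$ such that no continuous map on any $[B]^{\omega}$ selects infinite members of $\mathcal{I}$.

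The second step is the construction. Write $\mathcal{I}$ as the ideal generated by a closed hereditary tall family $\mathcal{K}$, so that $\mathcal{I}$ is $F_{\sigma}$ in the manner of Proposition \ref{Mazur-lemma}. I would make $\mathcal{K}$ encode a growth condition on consecutive elements relative to a fixed partition of $\omega$ into intervals $I_{0}<I_{1}<\cdots$ of rapidly increasing length. A set belongs to $\mathcal{K}$ when, for each $j$, its trace on $I_{j}$ is small compared with its traces on earlier intervals, in the style of a weighted summable-type condition. Tallness comes from thinning any infinite set fast enough.

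The adversary argument then runs as follows. Given a continuous $S$ on $[B]^{\omega}$, I would build $A\in[B]^{\omega}$ in stages. At each stage, once the current finite segment $s$ has committed some new element $n$ to $S(A)$, I would extend $s$ by adding many elements of $B$ in a later interval $I_{j}$. These additions are chosen so that the points already committed to $S(A)$, together with points forced to be committed later, violate the $\mathcal{K}$-condition for every finite union of generators. The conclusion is $S(A)\notin\mathcal{I}$, a contradiction.

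The main obstacle is the second step, in particular the tension in designing $\mathcal{K}$. The family must be tall and closed under finite unions up to finite modifications, since it generates an ideal. At the same time, continuity must never let a selector commit early to a set that stays small. Membership in $\mathcal{I}$ has to depend on the whole future of the set, which is roughly why hom-type ideals fail here. I would first try an $\mathcal{I}_{1/n}$-like weighted ideal with weights adapted to the intervals $I_{j}$, and check that the adversary can always inflate the weight of $S(A)$ by enlarging $A$ after the commitments are made. If this fails, I would instead follow the Examples in \cite{Gre-Vid}, which are the constructions that realize this diagonalization.
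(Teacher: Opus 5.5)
Your first step is sound. Each set $\{A : n\in S(A)\}$ is Borel, hence completely Ramsey, so a Galvin--Prikry fusion gives $B$ with $S\!\restriction\![B]^{\omega}$ continuous. Such an $S$ is just a commitment strategy: on reading $s\sqsubset A$ it commits a finite subset of $s$.

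All the content of the statement therefore lies in exhibiting an $F_{\sigma}$ tall ideal for which whoever builds $A$ can defeat every such strategy, on every cube. Your proposal does not supply this. The paper does not prove it either: it simply cites \cite[Theorem 4.18]{GrebikUzca2019} and \cite[Examples 1 and 2]{Gre-Vid}. Your fallback of following those examples is that same citation, and then your reduction does no work.

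More seriously, the candidate you propose provably fails. Suppose $\mathcal{K}$ is given by a growth condition relative to a fixed interval partition, and tallness is witnessed by ``thinning any infinite set fast enough''. Then the thinning itself is a continuous selector. Let $a_{k+1}$ be the least element of $A$ beyond a threshold computed from $a_0,\dots,a_k$; the map $A\mapsto\{a_k\}_{k\in\omega}$ is continuous and lands in $\mathcal{K}$. For instance, for a summable ideal $\mathcal{I}_f$ with $f\to 0$, take $a_k$ to be the least element of $A$ above $a_{k-1}$ with $f(a_k)<2^{-k}$. This is why $\mathcal{I}_{1/n}$-like weighted ideals are hopeless here.

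Your adversary step fails for the same reason. Adding many points of $B$ to $A$ in a later interval $I_j$ forces the selector to commit none of them. The selector chooses what to commit and may wait for a far-away interval, so there are no ``points forced to be committed later'', and no way for you to ``inflate the weight of $S(A)$''.

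A genuine example needs a closed hereditary tall $\mathcal{K}$ with two properties. First, whether a finite committed set can still be extended inside finitely many $\mathcal{K}$-sets modulo finite must depend on the future of $A$. Second, the builder must be able to keep killing the selector's threads while the final $A$ still contains infinite $\mathcal{K}$-sets. Designing such a family and verifying this is the missing idea.
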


We are looking for an ideal that satisfies the non-hereditary Nash-Williams property but does not satisfy the non-hereditary Nash-Williams property. We claim that the ideal $\mathcal{I}$ mentioned in Proposition \ref{without_Borel_selector} provides such an example.

\begin{theorem} 
    There exists a tall ideal $\mathcal{I}$ on $\omega$ satisfying $\omega \longrightarrow (\mathcal{I}^+)^{\mathtt{NW}}_{2}$ but $\omega \centernot\longrightarrow (\mathcal{I}^+)^{\mathtt{G}}$.
\end{theorem}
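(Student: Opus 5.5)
We take $\mathcal{I}$ to be the $F_{\sigma}$ tall ideal without a Borel selector given by Proposition \ref{without_Borel_selector}. Since $\mathcal{I}$ is $F_\sigma$, it is analytic and tall, so Proposition \ref{analytic_tall_implies_non-Galvin(omega)} gives $\omega \centernot\longrightarrow (\mathcal{I}^+)^{\mathtt{G}}$ immediately. It remains to check $\omega \longrightarrow (\mathcal{I}^+)^{\mathtt{NW}}_{2}$.

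We argue by contradiction. Fix a barrier $\mathcal{B}$ on $\omega$ and a coloring $\pi:\mathcal{B}\to 2$, and suppose that $\textrm{hom}(\pi)\subseteq \mathcal{I}$, i.e. every infinite $\pi$-homogeneous set lies in $\mathcal{I}$. The goal is to build a Borel selector for $\mathcal{I}$. Let $G:[\omega]^{\omega}\times 2^{\mathcal{B}}\to[\omega]^{\omega}$ be the Borel map of Lemma \ref{efective-NW}, and define
\begin{equation*}
S(A)=G(A,\pi) \qquad (A\in[\omega]^{\omega}).
\end{equation*}
Since $\pi$ is fixed, $S$ is a section of a Borel map, hence Borel. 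By Lemma \ref{efective-NW}, $S(A)$ is an infinite subset of $A$ homogeneous for $\pi$. (Homogeneity of $S(A)$ for the coloring $\pi$, rather than for $\pi\restriction\mathcal{B}\!\restriction\!A$, is the same thing, because $\mathcal{B}\!\restriction\! S(A)\subseteq \mathcal{B}$.) Thus $S(A)\in\textrm{hom}(\pi)\subseteq\mathcal{I}$, so $S(A)\in\mathcal{I}\!\restriction\! A$ for every $A\in[\omega]^{\omega}$. This makes $S$ a Borel selector for $\mathcal{I}$, contradicting the choice of $\mathcal{I}$. Hence for every barrier and every $2$-coloring there is a positive homogeneous set, which is exactly $\omega \longrightarrow (\mathcal{I}^+)^{\mathtt{NW}}_{2}$.

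The one step that needs care is confirming that Lemma \ref{efective-NW} applies to an arbitrary barrier $\mathcal{B}$ and yields a Borel map jointly in $(A,\pi)$, so that fixing $\pi$ leaves a Borel function of $A$ alone. This is how the lemma is stated, so the step appears unproblematic. No uniformity over barriers of different ranks is needed, since each pair $(\mathcal{B},\pi)$ is handled separately. This also sidesteps the obstruction in Proposition \ref{no-Borel-barriers}.
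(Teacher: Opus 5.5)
Your proof is correct and matches the paper's argument. Take the $F_\sigma$ tall ideal with no Borel selector, and get the failure of the non-hereditary Galvin property from the result that analytic tall ideals fail it. Then, for a fixed barrier $\mathcal{B}$ and coloring $\pi$, the section $A\mapsto G(A,\pi)$ of the Borel map from the effective Nash-Williams lemma cannot be a Borel selector, so some value $G(A,\pi)$ is an $\mathcal{I}$-positive homogeneous set; you phrase this last step as a contradiction, where the paper states it directly.
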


\begin{proof}
Let $\mathcal{I}$ be an $F_{\sigma}$ tall ideal on $\omega$ without a Borel selector, whose existence is guaranteed by Proposition \ref{without_Borel_selector}. Since $\mathcal{I}$ is analytic and tall, it follows from Proposition \ref{analytic_tall_implies_non-Galvin(omega)} that $\omega \centernot\longrightarrow (\mathcal{I}^+)^{\mathtt{G}}$ holds.  

\medskip

We claim that $\mathcal{I}$ satisfies $\omega \longrightarrow (\mathcal{I}^+)^{\mathtt{NW}}_{2}$. Indeed, by Lemma \ref{efective-NW}, for any barrier $\mathcal{B}$ on $\omega$ there exists a Borel map $G: [\omega]^{\omega} \times 2^{\mathcal{B}} \rightarrow [\omega]^{\omega}$ such that, for every $A \in [\omega]^{\omega}$ and every coloring $\pi : \mathcal{B} \rightarrow 2$, it holds that $G(A,\pi) \in [A]^{\omega} \cap \textrm{hom} (\pi)$. Bearing this in mind, for each coloring $\pi : \mathcal{B} \rightarrow 2$ we consider the Borel map $G_{\pi} : [\omega]^{\omega} \rightarrow [\omega]^{\omega}$ given by $G_{\pi} (A) = G(A,\pi)$. Therefore, since $\mathcal{I}$ does not have a Borel selector, we conclude that there is some $A\in \mathcal{I}^{+}$ such that $G_{\pi}(A) \in \mathcal{I}^{+} \cap \textrm{hom} (\pi)$. 
\end{proof}

\smallskip

We conclude with the following open question concerning the connection between ideals satisfying the non-hereditary Ramsey property and those satisfying the non-hereditary Nash-Williams property.

\begin{question}
    Is there an ideal $\mathcal{I}$ on $\omega$ satisfying $\omega \longrightarrow (\mathcal{I}^+)^{n}_{2}$ for all $n\geq 2$ but $\omega \centernot\longrightarrow (\mathcal{I}^+)^{\mathtt{NW}}_{2}$?
\end{question}

We also remark that the study of non-hereditary Ramsey-type properties for ideals, both for finite colorings of barriers on $\omega$ and for finite partitions of the family of all finite subsets of $\omega$, as well as their interaction with the Kat\v{e}tov order, deserves further investigation.

\subsection{Reducing the dimension}

In this subsection, we adopt the convention that pairs in $[\omega]^{2}$ and triples in $[\omega]^{3}$ are always listed in increasing order.

\medskip

We give a sufficient condition on families $\mathcal{F} \subseteq [\omega]^{3}$ ensuring the existence of a coloring $\pi : [\omega]^{2} \rightarrow 2$ with $\textrm{hom}(\pi) \subseteq \textrm{hom}(\mathcal{F})$. To this end, we introduce notation for some intervals of $[\omega]^{3}$ with respect to the lexicographic order $\leq_{\textrm{lex}}$. For each $q,i\in \omega$ with $q<i$, let

\kern-0.5em
\begin{gather*}
I(q,i)=\{ t\in [\omega]^{3} : \{q,i,i+1\} \leq_{\textrm{lex}} t <_{\textrm{lex}} \{q,i+1,i+2\} \} \text{, and} 
	\\
\textstyle
I(q)=\{ t\in [\omega]^{3} : \{q,q+1,q+2\} \leq_{\textrm{lex}} t <_{\textrm{lex}} \{q+1,q+2,q+3\} \} = \bigcup_{q<i} I(q,i).
\end{gather*}

Observe that each $I(q,i)$ has order type $\omega$, and each $I(q)$ has order type $\omega^2$. Thus, for every $q\in\omega$ and every family $\mathcal{F} \subseteq [\omega]^{3}$, we have that $\mathcal{F}\cap I(q)$ has order type less
than $\omega^2$ if and only if there is $i_0 >q$ such that $\mathcal{F}\cap I(q,i)$
is finite for all $i\geq i_0$. Keeping this in mind, define

\kern-0.5em
\begin{equation*}
    D(\f)=\{\{q,i\}\in [\omega]^{2} : |\f\cap I(q,i)| <\omega \}.    
\end{equation*}

\begin{proposition}
\label{caso-dim-3}
Let $\mathcal{F}\subseteq [\omega]^{3}$ be such that the order type of $(\mathcal{F}, \leq_{\textrm{lex}})$ is less than $\omega^3$. Then, there exists a coloring
$\pi: [\omega]^{2}\rightarrow 2$ such that $\textrm{hom}(\pi) \subseteq \textrm{hom}(\mathcal{F})$.
\end{proposition}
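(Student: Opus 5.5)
The plan is to build a coloring $\pi$ for which one color class has no infinite homogeneous sets at all, while homogeneity in the other color forces $[H]^{3}\cap\mathcal{F}=\emptyset$. Then $\textrm{hom}(\pi)\subseteq\textrm{hom}(\mathcal{F})$ via alternative (II) of the definition of $\textrm{hom}(\mathcal{F})$.

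\emph{Step 1: translate the order-type hypothesis.} As a lexicographic ordering, $[\omega]^{3}$ is the ordered sum $\sum_{q\in\omega} I(q)$, and $I(q)=\sum_{q<i} I(q,i)$ with each $I(q,i)=\{\{q,i,j\}: j>i\}$ of type $\omega$. If infinitely many $q$ had $\mathcal{F}\cap I(q)$ of order type $\ge\omega^2$, the type of $(\mathcal{F},\leq_{\textrm{lex}})$ would be at least $\omega^2\cdot\omega=\omega^3$. Hence there is $q_0$ such that every $q\ge q_0$ has $\mathcal{F}\cap I(q)$ of type $<\omega^2$. By the remark preceding the statement, for each such $q$ there is $i_q>q$ with $\{q,i\}\in D(\mathcal{F})$ for all $i\ge i_q$. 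I expect this bookkeeping to be the only place the hypothesis is used.

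\emph{Step 2: define $\pi$.} For $\{q,i\}\in D(\mathcal{F})$ the set $\mathcal{F}\cap I(q,i)$ is finite. So there is $m(q,i)$ with $\{q,i,j\}\notin\mathcal{F}$ for all $j\ge m(q,i)$. For each $i$, put
\begin{equation*}
g(i)=\max\bigl(\{0\}\cup\{m(q,i): q<i,\ \{q,i\}\in D(\mathcal{F})\}\bigr),
\end{equation*}
which is a finite maximum. Define $\pi(\{i,j\})=0$ if and only if $\{i,j\}\in D(\mathcal{F})$ and $j\ge g(i)$; otherwise $\pi(\{i,j\})=1$.

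\emph{Step 3: color $0$.} Let $H$ be homogeneous with $\pi''[H]^2=\{0\}$ and take $q<i<j$ in $H$. Since $\pi(\{q,i\})=0$, we have $\{q,i\}\in D(\mathcal{F})$, so $m(q,i)\le g(i)$. Since $\pi(\{i,j\})=0$, we have $j\ge g(i)\ge m(q,i)$, so $\{q,i,j\}\notin\mathcal{F}$. As $\mathcal{F}\subseteq[\omega]^3$, this gives $[H]^{<\omega}\cap\mathcal{F}=\emptyset$, i.e.\ $H\in\textrm{hom}(\mathcal{F})$.

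\emph{Step 4: color $1$.} Suppose $H$ is infinite with $\pi''[H]^2=\{1\}$. Choose $i\in H$ with $i\ge q_0$. Then for every $j\in H$ with $j>\max(i_i,g(i))$ we have $\{i,j\}\in D(\mathcal{F})$ and $j\ge g(i)$, so $\pi(\{i,j\})=0$, a contradiction. Thus no infinite $1$-homogeneous set exists, and $\textrm{hom}(\pi)\subseteq\textrm{hom}(\mathcal{F})$.

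The main obstacle is exactly that a coloring of pairs cannot in general force alternative (I), namely that every infinite subset has an initial segment in $\mathcal{F}$. The design therefore avoids that alternative entirely, using the bound $g(i)$ to make color $0$ strong enough to exclude every triple. Consequently the checks to carry out carefully are that $g(i)$ is a finite maximum and that Step 1 really provides cofinitely many $q$ with $\{q,i\}\in D(\mathcal{F})$ for cofinitely many $i$.
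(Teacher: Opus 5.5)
Your proof is correct and is essentially the paper's argument: your $g(i)$ plays the role of the paper's $f_i(i)$, chosen so that $f_q(i)\le f_i(i)$ for $q<i$, and your coloring is the paper's coloring with the condition $m>f_n(n)$ replaced by $j\ge g(i)$. Your Steps 3 and 4 are exactly the paper's two verifications, and your explicit formula for $g$ and your derivation of the tail condition on $D(\mathcal{F})$ from the order-type hypothesis spell out details the paper only asserts.
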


\begin{proof}
Let $\mathcal{F}\subseteq [\omega]^{3}$ be a family of triples whose lexicographic ordering $(\mathcal{F},\leq_{\textrm{lex}})$ has order type less than $\omega^3$, then

\kern-0.5em
\begin{equation}
\label{colas en D} 
(\exists\, q_0) (\forall\, q\geq q_0) (\exists\, i_0>q) (\forall\, i\geq i_0) (\{q,i\}\in D(\f)).
\end{equation}

Now, for each $\{q,i\}\in D(\f)$ define

\kern-0.5em
\begin{equation*}
f_q(i) = 
\begin{cases} 
\max\{j\in \omega/i : \{q,i,j\}\in\f\}  & \text{if } \;\f\cap I(q,i)\not =\emptyset \\
1 & \text{if } \; \f\cap I(q,i)=\emptyset.
\end{cases}
\end{equation*}

\kern-0.5em
By an easy induction on $q$, one can extend the definition of $f_{q}(i)$ to all $q,i \in \omega$ in such a way that it satisfies the following:

\kern-0.5em
\begin{equation} 
\label{creciente_en_q} 
\mbox{If $i\leq q$, then $f_i(q)\leq f_q(q)$}.
\end{equation}

Consider the coloring $\pi: [\omega]^{2} \rightarrow 2$ defined as follows:

\kern-0.5em
\begin{equation*}
\pi(\{n,m\})=1 \text{ if and only if } (\{n,m\}\in
D(\mathcal{F}) \,\Rightarrow\, m\leq f_n(n)).
\end{equation*}

\kern-0.5em
Observe that $\pi \text{\,''\,} [H]^{2} = \{0\}$ for every $H\in \textrm{hom}(\pi)$. Indeed, by \eqref{colas en D}, we can pick $n,m\in H$, with $n<m$ and $m > f_n (n)$, such that $\{n,m\}\in D(\mathcal{F})$, and hence $\pi(\{n,m\})=0$. 

\medskip 

Finally, we claim that $\textrm{hom}(\pi)\subseteq \textrm{hom} (\mathcal{F})$. Indeed, note that it suffices to show that $[H]^{3}\cap \mathcal{F} = \emptyset$ for all $H\in \textrm{hom}(\pi)$. Suppose, toward a contradiction, that there is some $\{q,i,j\}\in
[H]^{3}\cap \mathcal{F}$. Since $\pi \text{\,''\,} [H]^{2} = \{0\}$, we have $\pi(\{q,i\})=0$ and $\pi(\{i,j\})=0$, and hence $\{q,i\},\{i,j\}\in D(\mathcal{F})$. Now, since $\{q,i,j\}\in \mathcal{F} \cap I(q,i)$, it follows that $j\leq f_q(i)$; moreover, from \eqref{creciente_en_q} we obtain $f_q(i)\leq f_i(i)$. Thus, we deduce that $j\leq f_i(i)$ and hence $\pi(\{i,j\})=1$, a contradiction.
\end{proof}

\smallskip

It should be noted that the previous proposition provides a sufficient, but not necessary, condition for a family $\mathcal{F} \subseteq [\omega]^3$ to admit a coloring $\pi : [\omega]^2 \to 2$ such that $\textrm{hom}(\pi) \subseteq \mathrm{hom}(\mathcal{F})$. Next, we present an example showing that the previous proposition is not an equivalence.

\begin{proposition}
    There exist a family $\mathcal{F} \subseteq [\omega]^{3}$ and a coloring $\pi: [\omega]^{2} \rightarrow 2$ such that the order type of $(\mathcal{F}, \leq_{\textrm{lex}})$ is $\omega^{3}$ and $\textrm{hom}(\pi) \subseteq \mathrm{hom}(\mathcal{F})$.
\end{proposition}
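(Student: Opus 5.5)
We need to exhibit a family $\mathcal{F}\subseteq[\omega]^3$ of lexicographic order type exactly $\omega^3$ together with a coloring $\pi:[\omega]^2\to 2$ with $\textrm{hom}(\pi)\subseteq\textrm{hom}(\mathcal{F})$. The plan is to pick $\mathcal{F}$ so that membership of a triple depends only on its first two elements, since such a family is essentially a family of pairs. Concretely, take
\begin{equation*}
\mathcal{F}=\{\{q,i,j\}\in[\omega]^3 : q \text{ is even}\}
\end{equation*}
and $\pi(\{n,m\})=0$ if and only if $n$ is even.

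The first step is the order-type computation. For each even $q$, the segment $\mathcal{F}\cap I(q)$ equals $I(q)$, which has order type $\omega^2$ as observed before Proposition \ref{caso-dim-3}. For odd $q$, $\mathcal{F}\cap I(q)=\emptyset$. The blocks $I(q)$ are lexicographically consecutive in $q$, so $(\mathcal{F},\leq_{\textrm{lex}})$ is an $\omega$-sum of copies of $\omega^2$, giving order type $\omega^3$. In particular, condition \eqref{colas en D} fails: for every even $q$, each $\mathcal{F}\cap I(q,i)$ is infinite. So $\mathcal{F}$ genuinely lies outside the scope of Proposition \ref{caso-dim-3}.

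The second step is to verify $\textrm{hom}(\pi)\subseteq\textrm{hom}(\mathcal{F})$. Let $H\in\textrm{hom}(\pi)$ be infinite. Pairs $\{n,m\}\subseteq H$ with $n<m$ get color $0$ exactly when $n$ is even, so homogeneity forces all elements of $H$ except possibly the largest to have the same parity. Since $H$ is infinite, all elements of $H$ have the same parity.

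If every element of $H$ is even, then every $C\in[H]^\omega$ has its first three elements forming an initial segment $s\sqsubset C$ with $\min s$ even, so $s\in\mathcal{F}$. This is alternative (I).

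If every element of $H$ is odd, then no triple in $[H]^3$ has an even minimum, so $[H]^{<\omega}\cap\mathcal{F}=\emptyset$. This is alternative (II).

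In both cases $H\in\textrm{hom}(\mathcal{F})$.

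The only step that needs care is the order-type computation, specifically that gluing the intervals $I(q)$ for even $q$ in increasing order yields exactly $\omega^3$ and not something smaller. This follows because there are infinitely many even $q$ and each contributes a full $I(q)\cong\omega^2$. If a less degenerate example were wanted, one could instead let $\mathcal{F}$ depend on $\{q,i\}$ through an arbitrary set $P\subseteq[\omega]^2$ with infinitely many $q$ having infinitely many $i$ with $\{q,i\}\in P$, and take $\pi$ to be the indicator of $P$. The same argument then applies using Ramsey's theorem for pairs.
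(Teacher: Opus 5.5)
Your proof is correct, but your example is different from the paper's. The paper partitions $\omega=\bigcup_{i\in\omega}A_i$ into infinite pieces with $i\notin A_i$, sets $\mathcal{F}=\{\{n,m,p\}\in[\omega]^3:\{m,p\}\subseteq A_n\}$, and lets $\pi$ be the ``same piece'' coloring. A $\pi$-homogeneous set then either lies inside one $A_i$ or meets each $A_i$ at most once. In both cases $[H]^3\cap\mathcal{F}=\emptyset$, so every $H\in\textrm{hom}(\pi)$ falls under alternative (II).

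Your example makes membership in $\mathcal{F}$ depend only on the parity of the minimum. Then $\textrm{hom}(\pi)$ is exactly the infinite sets of constant parity, and both alternatives (I) and (II) occur. Your argument is shorter and the order-type count is immediate. The paper's family is genuinely three-dimensional: membership of $\{n,m,p\}$ is not determined by $\{n,m\}$. So it shows the converse of Proposition \ref{caso-dim-3} fails even for families not obtained by lifting lower-dimensional data.

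You should make one contextual point explicit. For $\mathcal{F}\subseteq[\omega]^3$, $B\in\textrm{hom}(\mathcal{F})$ iff $[B]^3\subseteq\mathcal{F}$ or $[B]^3\cap\mathcal{F}=\emptyset$, so $\textrm{hom}(\mathcal{F})=\textrm{hom}([\omega]^3\setminus\mathcal{F})$. Hence the literal statement is already witnessed by $\mathcal{F}=[\omega]^3$ with any $\pi$. What makes an example a real counterexample to the converse of Proposition \ref{caso-dim-3} is that the complement also has order type $\omega^3$, so the proposition cannot be applied to it either; this is why the paper checks $\mathcal{F}^{\complement}$.

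Your parity example has this property, since the odd-minimum triples are also an $\omega$-sum of copies of $\omega^2$; you should say so. Your generalization with an arbitrary $P\subseteq[\omega]^2$ does not have it in general (take $P=[\omega]^2$). Also, that generalization needs no Ramsey's theorem: a set homogeneous of color $1$ for the indicator of $P$ satisfies (I), and one homogeneous of color $0$ satisfies (II), directly.
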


\begin{proof}
Let $\omega=\bigcup_{i\in\omega} A_i$ be a partition of $\omega$ into infinitely many infinite sets such that $i\notin A_i$ for every $i\in\omega$. Consider the family $\mathcal{F} \subseteq [\omega]^{3}$ defined by 

\kern-0.5em
\begin{equation*}
    \mathcal{F} =\{\{n,m,p\} \in [\omega]^{3} : \{m,p\} \subseteq A_n \}.
\end{equation*}

\kern-0.5em
Notice that $\mathcal{F}_{(n)} = [A_n/n]^2$ for each $n\in\omega$, then the order type of each $(\mathcal{F}_{(n)}, \leq_{\textrm{lex}})$ is $\omega^2$, and thus the order type of $(\mathcal{F}, \leq_{\textrm{lex}})$ is $\omega^3$. On the other hand, the complement of $\mathcal{F}$ relative to $[\omega]^{3}$ is given by

\kern-0.5em
\begin{equation*}
\mathcal{F}^\complement = \{\{n,m,p\} \in [\omega]^{3} : m\notin A_n \,\vee\, p\notin A_n\}.
\end{equation*}

\kern-0.5em
Observe that $\{\{m,p\} \in [\omega/n]^{2}: m\in A_i \,\wedge\, p\in A_j \,\wedge\, i\neq j\}\subseteq\f_{(n)}^\complement$ for each $n\in\omega$, then the order type of each $(\mathcal{F}^{\complement}_{(n)}, \leq_{\textrm{lex}})$ is $\omega^2$, and thus the order type of $(\mathcal{F}^{\complement}, \leq_{\textrm{lex}})$ is $\omega^3$.

\medskip 

Consider now the coloring $\pi:[\omega]^2\rightarrow 2$ defined by $\pi(\{n,m\})=1$ if and only if $\{n,m\}\subseteq A_i$ for some $i\in\omega$. Thus, $H\in \textrm{hom}(\pi)$ if and only if either $H\subseteq A_i$ for some $i\in\omega$ or $|H\cap A_i|\leq 1$ for all $i\in\omega$.

\medskip 

Finally, we claim that $\textrm{hom}(\pi)\subseteq \textrm{hom} (\mathcal{F})$. Indeed, note that it suffices to show that $[H]^{3}\cap \mathcal{F}=\emptyset$ for all $H\in \textrm{hom}(\pi)$. On the one hand, let $H\in \textrm{hom} (\pi)$ be such that $H\subseteq A_{i}$ for some $i\in\omega$, then for every $\{n,m,p\} \in [H]^{3}$ we have $n\neq i$ and hence $\{n,m\} \cap A_{n} = \emptyset$, which implies that $\{n,m,p\} \in \mathcal{F}^{\complement}$. On the other hand, let $H\in \textrm{hom} (\pi)$ be such that $|H \cap A_{i}| \leq 1$ for all $i\in\omega$, then for every $\{n,m,p\} \in [H]^{3}$ we have $|\{m,p\} \cap A_{n}| \leq 1$ and hence $m\notin A_n$ or $p\notin A_n$, which implies that $\{n,m,p\} \in \mathcal{F}^{\complement}$.
\end{proof}

\smallskip

Next, we show that the collection of all families $\mathcal{F} \subseteq [\omega]^3$ admitting a coloring $\pi : [\omega]^2 \to 2$ such that $\textrm{hom}(\pi)\subseteq \textrm{hom} (\mathcal{F})$ forms an analytic subset of $2^{[\omega]^3}$. However, at present we do not know whether this collection is Borel.

\begin{proposition}
The collection $\mathcal{A} =\{ \mathcal{F} \subseteq [\omega]^{3} : (\exists\, \pi\in 2^{[\omega]^{2}}) (\textrm{hom}(\pi) \subseteq \textrm{hom}(\mathcal{F})) \}$
 is an analytic subset of $2^{[\omega]^{3}}$.
\end{proposition}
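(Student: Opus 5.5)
The plan is to present $\mathcal{A}$ as the projection to the first coordinate of a coanalytic-looking relation, and then check that the relation is in fact Borel (even closed). Consider
\begin{equation*}
\mathcal{P}=\{(\mathcal{F},\pi)\in 2^{[\omega]^{3}}\times 2^{[\omega]^{2}} : \textrm{hom}(\pi)\subseteq \textrm{hom}(\mathcal{F})\}.
\end{equation*}
Then $\mathcal{A}$ is the projection of $\mathcal{P}$. The naive definition of $\mathcal{P}$ is $\forall H\in[\omega]^{\omega}\,(H\in\textrm{hom}(\pi)\Rightarrow H\in\textrm{hom}(\mathcal{F}))$, which is only $\mathbf{\Pi}^1_1$. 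So the whole task is to rewrite this inclusion with quantifiers over finite objects only.

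First, for $\mathcal{F}\subseteq[\omega]^{3}$, a set $H$ lies in $\textrm{hom}(\mathcal{F})$ exactly when either $[H]^{3}\subseteq\mathcal{F}$ or $[H]^{3}\cap\mathcal{F}=\emptyset$. Indeed, since every element of $\mathcal{F}$ has size $3$, alternative (I) (every infinite $C\subseteq H$ has an initial segment in $\mathcal{F}$) says that the first three elements of every $C$ form a triple in $\mathcal{F}$, which is equivalent to $[H]^{3}\subseteq\mathcal{F}$. So $\textrm{hom}(\mathcal{F})$ coincides with the homogeneous sets of the coloring $\chi_{\mathcal{F}}:[\omega]^{3}\to 2$. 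Both $\textrm{hom}(\pi)$ and $\textrm{hom}(\mathcal{F})$ are therefore closed sets defined by finitary conditions. The condition fails exactly when there is an infinite $\pi$-homogeneous $H$ containing a triple in $\mathcal{F}$ and a triple outside $\mathcal{F}$.

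The key step is a compactness reduction. I claim that $(\mathcal{F},\pi)\notin\mathcal{P}$ if and only if there are $i\in 2$ and a finite $s\in[\omega]^{<\omega}$ such that:
\begin{itemize}
\item $\pi''[s]^{2}=\{i\}$,
\item $[s]^{3}$ meets both $\mathcal{F}$ and its complement,
\item $s$ extends to an infinite $i$-homogeneous set for $\pi$.
\end{itemize}
The last clause is still an infinitary quantifier. It is handled by König's lemma: the set $\{t\in[\omega]^{<\omega}: s\sqsubseteq t,\ \pi''[t]^{2}=\{i\}\}$ is a tree, but it is infinitely branching, so compactness does not apply directly. I expect this to be the main obstacle. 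It can be circumvented, however, because we only need analyticity, not Borelness. The relation
\begin{equation*}
(\mathcal{F},\pi,H)\mapsto [\,\pi''[H]^{2}\ \text{is constant and}\ [H]^{3}\ \text{meets both}\ \mathcal{F},\ \mathcal{F}^{\complement}\,]
\end{equation*}
is Borel in $2^{[\omega]^{3}}\times 2^{[\omega]^{2}}\times[\omega]^{\omega}$. Hence the complement of $\mathcal{P}$ is analytic, $\mathcal{P}$ is coanalytic, and the projection of a coanalytic set is not obviously analytic. So I do need $\mathcal{P}$ to be Borel.

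For that I would use the Borel homogeneous-set selector of Lemma~\ref{efective-Ramsey}. Fixing $i$ and the finite $s$, we want to decide whether there is an infinite $H\supseteq s$ with $\pi''[H]^{2}=\{i\}$ and $s<H\setminus s$. Consider the set $R=\{m>\max s : \pi(\{a,m\})=i \text{ for all } a\in s\}$, which is Borel in $\pi$. The extension exists iff $R$ contains an infinite $i$-homogeneous set. This holds iff $R$ is infinite and $F(R,\pi)$, which is homogeneous of some color by Lemma~\ref{efective-Ramsey}, can be chosen of color $i$, and this last point is not guaranteed. So the honest fallback is to show instead that ``$R$ contains an infinite $i$-homogeneous set'' is Borel in $(R,\pi)$. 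I would argue this via the Galvin--Prikry/Nash-Williams theory: by Ramsey's theorem either such a set exists or every infinite subset of $R$ contains an infinite $(1-i)$-homogeneous set. This gives a $\Sigma^1_1$ and a $\Pi^1_1$ description, and Suslin's theorem would then make the condition Borel, provided both descriptions can be established. If this succeeds, $\mathcal{P}$ is a countable union over $(s,i)$ of Borel conditions, so $\mathcal{P}$ is Borel and its projection $\mathcal{A}$ is analytic. The delicate point, where I would expect to spend the effort, is exactly this Borel characterization of extendability to an infinite $i$-homogeneous set.
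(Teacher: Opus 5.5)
Your preliminary reductions are correct: for $\mathcal{F}\subseteq[\omega]^{3}$, $\textrm{hom}(\mathcal{F})$ is the family of infinite $\chi_{\mathcal{F}}$-homogeneous sets, and failure of $\textrm{hom}(\pi)\subseteq\textrm{hom}(\mathcal{F})$ is witnessed by a finite bichromatic $i$-homogeneous $s$ that extends to an infinite $i$-homogeneous set. The problem is that the step you flag as delicate is false, so your plan of proving $\mathcal{P}$ Borel cannot be completed. Fix a bijection $e:[\omega]^{<\omega}\to\omega$. For a tree $T\subseteq[\omega]^{<\omega}$, let $\pi_{T}(\{e(s),e(t)\})=0$ iff $s,t\in T$ and $s\sqsubset t$ or $t\sqsubset s$. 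The map $T\mapsto\pi_{T}$ is continuous. An infinite $0$-homogeneous set for $\pi_{T}$ is exactly an infinite $\sqsubset$-chain in $T$, whose union is a branch; so one exists iff $T\notin\mathcal{WFT}$. Hence ``there is an infinite $i$-homogeneous set'' (your case $s=\emptyset$, $R=\omega$) is already a $\mathbf{\Sigma}^{1}_{1}$-complete property of the coloring, and extendability has no Borel characterization. Your appeal to Suslin's theorem does not apply either. It needs a $\mathbf{\Sigma}^{1}_{1}$ definition of the \emph{complement}, and Ramsey's alternative ``every infinite subset contains an infinite $(1-i)$-homogeneous set'' only rewrites that complement as a $\forall\exists$ statement, which is $\mathbf{\Pi}^{1}_{2}$ in form.

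In fact no route can work, because with the paper's own results the statement is false: $\mathcal{A}$ is not analytic.

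\begin{itemize}
\item By Proposition~\ref{counterexample2} (or \ref{newcounterexample2}) there is $\phi:[\omega]^{3}\to 2$ with $\omega\longrightarrow(\mathcal{I}^{+})^{2}_{2}$ for $\mathcal{I}=\mathcal{I}_{\textrm{hom}(\phi)}$. So for $\mathcal{G}=\phi^{-1}(\{1\})$, every $\pi$ has an infinite homogeneous set outside $\textrm{hom}(\mathcal{G})$; that is, $\mathcal{G}\notin\mathcal{A}$.
\item Put $\mathcal{F}_{T}=\{\{e(s),e(t),e(r)\}: s\sqsubset t\sqsubset r \text{ in } T,\ \{|s|,|t|,|r|\}\in\mathcal{G}\}$, which is continuous in $T$.
\item If $T\in\mathcal{WFT}$, every infinite $\pi_{T}$-homogeneous set has color $1$. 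So it contains no $e(s),e(t)$ with $s,t\in T$ comparable, and hence no triple of $\mathcal{F}_{T}$. Thus $\pi_{T}$ witnesses $\mathcal{F}_{T}\in\mathcal{A}$.
\item If $A$ is a branch of $T$ with initial segments $a_{n}$ of size $n$, then $\beta(n)=e(a_{n})$ carries $\mathcal{G}$ onto $\mathcal{F}_{T}\cap[\beta[\omega]]^{3}$. Given any $\pi$, apply $\mathcal{G}\notin\mathcal{A}$ to $\{n,m\}\mapsto\pi(\{\beta(n),\beta(m)\})$ to get an infinite homogeneous $H$ that is $\mathcal{G}$-bichromatic. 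Then $\beta[H]\in\textrm{hom}(\pi)\setminus\textrm{hom}(\mathcal{F}_{T})$, so $\mathcal{F}_{T}\notin\mathcal{A}$.
\end{itemize}

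Thus $T\mapsto\mathcal{F}_{T}$ reduces the $\mathbf{\Pi}^{1}_{1}$-complete set $\mathcal{WFT}$ to $\mathcal{A}$. Likewise $T\mapsto(\mathcal{F}_{T},\pi_{T})$ reduces it to $\mathcal{P}$, so $\mathcal{P}$ is $\mathbf{\Pi}^{1}_{1}$-complete. The paper's proof breaks at the corresponding step. It calls the complement of $\mathcal{P}$ $F_{\sigma}$ as ``the projection of an $F_{\sigma}$ set on a compact space'', but the coordinate projected away is $H\in[\omega]^{\omega}$, which is not compact, and by the above that complement is not even Borel. You were right that the quantifier over infinite $H$ is the real obstruction; it cannot be removed, and $\mathcal{A}$ is a priori $\mathbf{\Sigma}^{1}_{2}$ but not $\mathbf{\Sigma}^{1}_{1}$.
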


\begin{proof}
Let $R=\{(\pi,\mathcal{F})\in 2^{[\omega]^{2}}\times 2^{[\omega]^{3}} : \textrm{hom}(\pi) \not\subseteq \textrm{hom}(\mathcal{F})\}$, and observe that

\kern-0.5em
\begin{equation*}
(\pi, \mathcal{F})\in R \text{ if and only if } (\exists\, H\in [\omega]^{\omega}) (H\in \textrm{hom}(\pi) \,\wedge\, H\not\in \textrm{hom}(\mathcal{F})).
\end{equation*}

\kern-0.5em
Moreover, note that $\{ (H,\pi) : H\in \textrm{hom}(\pi) \}$ is a closed subset of $[\omega]^{\omega} \times 2^{[\omega]^2}$. Therefore, $R$ is the projection of an $F_\sigma$ set on a compact space, and hence $R$ is itself $F_\sigma$. Consequently, $\mathcal{A}$ is the projection of a $G_\delta$ set, which implies that $\mathcal{A}$ is analytic. 
\end{proof}

\smallskip

As a final remark, we mention that Proposition \ref{caso-dim-3} admits the following natural generalization: For every $n\geq 2$ and every family $\mathcal{F}\subseteq [\omega]^{n+1}$ such that the order type of $(\mathcal{F},\leq_{\textrm{lex}})$ is less than $\omega^{n+1}$, there exists a coloring
$\pi: [\omega]^{n}\rightarrow 2$ such that $\textrm{hom}(\pi) \subseteq \textrm{hom}(\mathcal{F})$. We leave the details to the interested reader.

\section*{Acknowledgments}

The first author was partially supported by the Fondo de Investigaciones de la Facultad de Ciencias de la Universidad de los Andes, grants INV-2025-216-3411 and INV-2026-230-4016. The author also thanks both the University of Toronto and York University in Canada, as well as Universidad Nebrija in Spain, for their hospitality and partial support during the preparation of this article. 

\medskip

The second author thanks both Universidad Nebrija and UNED.

\medskip

The third author was partially supported by the Universidad Industrial de Santander. 

\medskip

The authors thank Michael Hru\v{s}\'{a}k and Paul Szeptycki for several valuable suggestions and stimulating discussions concerning the topics and results presented in this article.


\kern1em
\Addresses


\begin{thebibliography}{99}

\small

\setlength{\itemsep}{0pt}

\bibitem{BT} Baumgartner, T. $\&$ Taylor, A. \textit{Partitions theorems and ultrafilters}. Transactions of the American Mathematical Society, 241: 283--309. (1978).



\bibitem{Booth} Booth, D. \textit{Ultrafilters on a countable set}. Annals of Mathematical Logic. Vol. 2(1): 1--24. (1970).


\bibitem{Cameron} Cameron, P.J. \textit{The random graph}. In: `The mathematics of Paul Erd\H{o}s II' (Eds. Graham, R. $\&$ Ne\v{s}et\v{r}il, J.). Berlin: Springer-Verlag, 333--351. (1997).


\bibitem{CGH2025} Cancino, J.; Guzm\'{a}n, O. $\&$ Hru\v{s}\'{a}k, M. \textit{Ultrafilters and the Kat\v{e}tov order}. In: `Selected topics in combinatorial analysis II' (Ed. Kuzeljevi\'{c}, B.). Belgrade: Matemati\v{c}ki Institut SANU, 137--175. (2025).

\bibitem{Ellentuck} Ellentuck, E. \textit{A new proof that analytic sets are Ramsey}. Journal of Symbolic Logic. Vol. 39(1): 163--165. (1974).

\bibitem{Erdos-Rado}  
Erd\H{o}s, P. $\&$ Rado, R. \textit{A combinatorial theorem}. Journal of the London Mathematical Society, 25(1): 249--255. (1950).

\bibitem{Erdos-Renyi} Erd\H{o}s, P. $\&$ R\'{e}nyi, A. \textit{Asymmetric graphs}. Acta Mathematica Academiae Scientiarum Hungaricae, 14: 295--315. (1963).


\bibitem{Farah1998} 
Farah, I.  \textit{Semiselective coideals}. Mathematika, 45(1): 79--103. (1998).


\bibitem{Farah02}
Farah, I. \textit{How many boolean algebras $\wp(\mathbb{N})/I$ are there?}. Illinois Journal of Mathematics, 46(4): 999--1033. (2002).


\bibitem{FMRS07}  
Filip\'{o}w, R.; Mro\.{z}ek, N.; Rec{\l}aw, I; $\&$ Szuca, P. \textit{Ideal convergence of bounded sequences}. Journal of Symbolic Logic, 72(2): 501--512. (2007). 


\bibitem{FMRS11} 
Filip\'{o}w, R.; Mro\.{z}ek, N.; Rec{\l}aw, I; $\&$ Szuca, P. \textit{Ideal version of Ramsey's theorem}. Czechoslovak Mathematical Journal, 61(2): 280--308. (2011).


\bibitem{FMRS12}

Filip\'{o}w, R.; Mro\.{z}ek, N.; Rec{\l}aw, I; $\&$ Szuca, P. \textit{$\mathcal{I}$-selection principles for sequences of functions}. Journal of Mathematical Analysis and Applications, 396(2): 680--688. (2012).


\bibitem{Galvin} 
Galvin, F. \textit{A generalization of Ramsey's theorem}. Notices of the American Mathematical Society, 15: 548. (1968).




\bibitem{GrebikHrusak}
Greb\'{i}k, J. $\&$ Hru\v{s}\'{a}k, M. \textit{No minimal tall Borel ideal in the Kat\v{e}tov order}. Fundamenta Mathematicae, 248(2): 135--145. (2020).



\bibitem{GrebikUzca2019}
Greb\'{i}k, J. $\&$  Uzc\'{a}tegui, C.
\textit{Bases and Borel selectors for tall families}. Journal of Symbolic Logic, 84(1): 359--375. (2019).



\bibitem{Gre-Vid} 
Greb\'{i}k, J. $\&$ Vidny\'{a}nszky, Z. \textit{Tall $F_\sigma$ subideals of tall analytic ideals}.
Proceedings of the American Mathematical Society, 151(9): 4043--4046. (2023).  


\bibitem{Grigorieff} 
Grigorieff, S. \textit{Combinatorics on ideals and forcing}. Annals of Mathematical Logic, 3(4): 363--394. (1971).


\bibitem{Guzman} 
Guzm\'{a}n, O. \textit{On completely separable MAD families}. Set Theory: Reals and Topology. Proceedings of the Research Institute for Mathematical Sciences of the Kyoto University, 2198: 7--28. (2021).




\bibitem{Halbeisen} Halbeisen, L. \textit{Combinatorial set theory}. (2nd ed.). London: Springer-Verlag. (2017).


\bibitem{Hrusak2011} 
Hru\v{s}\'{a}k, M. \textit{Combinatorics of filters and ideals}. Contemporary Mathematics, 533: 29--69. (2011).


\bibitem{Hrusak2017}
Hru\v{s}\'{a}k, M. \textit{Kat\v{e}tov order on Borel ideals}. Archive for Mathematical Logic, 56: 831--847. (2017)

\bibitem{HMTU2017} 
Hru\v{s}\'{a}k, M.; Meza-Alc\'{a}ntara, D.; Th\"{u}mmel, E.; $\&$ Uzc\'{a}tegui, C. \textit{Ramsey type properties of ideals}. Annals of Pure and Applied Logic, 168(11): 2022--2049. (2017). 

\bibitem{Kechris94}
Kechris, A. S. \textit{Classical descriptive set theory}. New York: Springer-Verlag. (1994).


\bibitem{KuSze} 
Kubi\'{s}, W. $\&$ Szeptycki, P. \textit{On a topological Ramsey theorem}. Canadian Mathematical Bulletin, 66(1): 156--165. (2023). 


\bibitem{Kwela} Kwela, A. \textit{A note on a new ideal}. Journal of Mathematical Analysis and Applications, 430(2): 932--949. (2015).


\bibitem{LA-OP-UA} L\'{o}pez-Abad, J.; Olmos-Prieto, V. $\&$ Uzc\'{a}tegui-Aylwin, C. \textit{$F_{\sigma}$-ideals, colorings, and representation in Banach spaces}. arXiv:2501.15643. (pre-print).


\bibitem{Mathias} 
Mathias, A.R.D. \textit{Happy families}. Annals of Mathematical Logic, 12(1): 59--111. (1977).


\bibitem{Mazur} 
Mazur, K. \textit{$F_\sigma$-ideals and $\omega_1\omega_1^*$-gaps in the Boolean algebras $\wp(\omega) / \mathcal{I}$}. Fundamenta Mathematicae, 138(2): 103--111. (1991).

\bibitem{Meza}
Meza-Alc\'{a}ntara, D. \textit{Ideals and filters on countable sets}. PhD Thesis. Morelia: Universidad Nacional Aut\'{o}noma de M\'{e}xico. (2009).




\bibitem{NashWilliams} Nash-Williams, C. \textit{On well--quasi--ordering transfinite sequences}. Mathematical Proceedings of the Cambridge Philosophical Society, 61: 33--39. (1965).

\bibitem{PG} Pelayo-G\'{o}mez, J.J. \textit{Infinite games and Ramsey properties of $F_{\sigma}$ ideals}. Journal of Symbolic Logic. (To appear). 


\bibitem{Ramsey} 
Ramsey, F.P. \textit{On a problem of formal logic}. Proceedings of the London Mathematical Society, 30(4): 264--286. (1930).




\bibitem{Todorcevic2005} 
Todor\v{c}evi\'{c}, S. \textit{High--dimensional Ramsey theory and Banach space geometry}. In: `Ramsey methods in analysis' (Eds: Argyros, S. $\&$ Todor\v{c}evi\'c, S.). Basel: Birkh\"{a}user Verlag, 121--252. (2005).


\bibitem{Todorcevic2010}
Todor\v{c}evi\'{c}, S. \textit{Introduction to Ramsey spaces}. New Jersey: Princeton University Press. (2010).


\bibitem{Todorcevic1997} 
Todor\v{c}evi\'{c}, S. \textit{Topics in topology}. Berlin: Springer-Verlag. (1997).



\bibitem{Uzc} 
Uzc\'{a}tegui-Aylwin, C. \textit{Ideals on countable sets: a survey with questions}.  Revista Intagraci\'on, temas de matem\'aticas, 37(1): 167--198. (2019).


\end{thebibliography}
\end{document}